\titlespacing*{\section}{0pt}{3.5ex plus 0ex minus 0ex}{1.5ex plus 0ex}
\titlespacing*{\subsection}{0pt}{3.5ex plus 0ex minus 0ex}{1.5ex plus 0ex}
\newcommand{\Cech}{\v{C}ech}
\newcommand{\Cesaro}{Ces\`{a}ro}
\newcommand{\Erdos}{Erd\H{o}s}
\newcommand{\Folner}{F\o{}lner}
\newcommand{\Szemeredi}{Szemer\'{e}di}
\newcommand{\Beiglbock}{Beiglb{\"o}ck}
\newcommand{\compact}{\comp}
\newcommand{\unitary}{U}
\newcommand{\comp}{\mathsf{c}}
\newcommand{\wm}{\mathsf{wm}}
\newcommand{\bes}{\mathsf{Bes}}
\newcommand{\anti}{\mathsf{anti}}
\newcommand{\bilin}[2]{\langle {#1},\ {#2} \rangle}
\newcommand{\bilinsq}[2]{\left[ {#1},\ {#2} \right]}
\newcommand{\nbar}{\Vert}
\newcommand{\lp}[1]{\mathsf{L}\!^{\mathsf{#1}}}
\newcommand{\N}{\mathbb{N}}
\newcommand{\Z}{\mathbb{Z}}
\newcommand{\R}{\mathbb{R}}
\newcommand{\C}{\mathbb{C}}
\newcommand{\T}{\mathbb{T}}
\newcommand{\dens}{\mathsf{d}}
\newcommand{\upperdens}{\overline{\dens}}
\newcommand{\ultra}[1]{\mathsf{#1}}
\newcommand{\rmult}{\mathsf{R}}
\newcommand{\tmult}{\mathsf{T}}
\newcommand{\smult}{\mathsf{S}}
\newcommand{\umult}{\mathsf{U}}
\newcommand{\cont}{\mathsf{C}}
\newcommand{\ess}{\mathsf{Ess}}
\newcommand{\haar}{\mathsf{m}}
\newcommand{\define}[1]{\textbf{#1}}
\newcommand{\cl}{\mathsf{cl}}
\newcommand{\M}[1]{\mathcal{M}(#1)}
\newcommand{\Hilb}{\mathscr{H}}
\renewcommand{\d}{\,\mathsf{d}}
\renewcommand{\setminus}{\backslash}
\newtheoremstyle{plain}{3mm}{3mm}{\slshape}{}{\bfseries}{.}{.5em}{}
\newtheorem{theorem}{Theorem}[section]
\newtheorem{conjecture}[theorem]{Conjecture}
\newtheorem{lemma}[theorem]{Lemma}
\newtheorem{proposition}[theorem]{Proposition}
\newtheorem{corollary}[theorem]{Corollary}
\newtheoremstyle{definition}{2mm}{2mm}{}{}{\bfseries}{.}{.5em}{}
\theoremstyle{definition}
\newtheorem{definition}[theorem]{Definition}
\newtheorem{remark}[theorem]{Remark}
\newtheorem{question}[theorem]{Question}
\newtheorem{example}[theorem]{Example}
\title{A proof of a sumset conjecture of \Erdos{}}
\author{Joel Moreira\thanks{\textsc{Northwestern University}~--~\href{mailto:joel.moreira@northwestern.edu}
{\texttt{joel.moreira@northwestern.edu}}} \and Florian K.\ Richter\thanks{\textsc{Northwestern University}~--~\href{mailto:fkr@northwestern.edu}
{\texttt{fkr@northwestern.edu}}} \and Donald Robertson\thanks{\textsc{University of Utah}~--~\href{mailto:robertso@math.utah.edu}
{\texttt{robertso@math.utah.edu}}}}
\definecolor{dg}{RGB}{0,128,0}
\begin{document}

\maketitle
\small

\begin{abstract}
In this paper we show that every set $A \subset \mathbb{N}$ with positive density
contains $B+C$ for some pair $B,C$ of infinite subsets of $\mathbb{N}$, settling a conjecture of \Erdos{}.
The proof features two different decompositions of an arbitrary bounded sequence into a structured component and a pseudo-random component.
Our methods are quite general, allowing us to prove a version of this conjecture for countable amenable groups.
\end{abstract}

\tableofcontents

\thispagestyle{empty}
\normalsize

%SECTION
\section{Introduction}
\label{sec_intro}

\paragraph{History and previous results.}

Sumsets $B + C \coloneqq \{ b + c : b \in B, c \in C \}$ for $B,C \subset \N$ are a central object of study in additive combinatorics.
%In particular, it is natural to ask which sets $A$ contain $B + C$ for sets $B,C$ of natural numbers.
In particular, it is natural to ask which sets $A\subset\N$ contain a sumset $B + C$ with $B$ and $C$ infinite.
It follows from the infinite version of  Ramsey's theorem \cite[Theorem~A]{MR1576401} that, whenever $\N$ is finitely partitioned, one of the cells contains $B+C$ for infinite sets $B,C \subset \N$; this is also an immediate corollary of Hindman's theorem~\cite{Hindman79}.
The following conjectured density analogue, attributed to \Erdos{} in \cite{Nathanson80}, is called an ``old problem'' in \cite[85]{Erdos_Graham80}.

\begin{conjecture}[\Erdos{} sumset conjecture]
\label{conjecture_erdossumset}
If $A\subset\N$ has positive upper density, i.e.
\[
\limsup_{N\to\infty} \frac{|A\cap \{1,\ldots,N\}|}{N} > 0,
\]
then $A$ contains $B + C $, where $B$ and $C$ are infinite subsets of $\N$.
\end{conjecture}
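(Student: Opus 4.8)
The plan is to recast the statement in ergodic-theoretic terms via Furstenberg's correspondence principle, and then to build the two infinite sets by a recursive procedure in which a structured--pseudorandom decomposition of $\mathbf 1_A$ guarantees, at every stage, a nonempty reservoir of admissible future elements.

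\textbf{Reformulation.} I would first note that it suffices to produce an infinite $C\subset\N$ for which $\bigcap_{c\in C}(A-c)$ is infinite: any infinite $B$ contained in that intersection satisfies $B+C\subset A$. Then, by the correspondence principle, set $X=\{0,1\}^{\N}$ with $T$ the shift, $x=\mathbf 1_A\in X$, and let $\mu$ be a weak-$\ast$ limit of $\frac1{N_k}\sum_{n<N_k}\delta_{T^{n}x}$ along a sequence $N_k$ realizing $\upperdens(A)$; then $\mu$ is $T$-invariant, $E:=\{y:y(0)=1\}$ is clopen with $\mu(E)=\upperdens(A)>0$, $x$ is generic for $\mu$ along $(N_k)$, and $n\in A\iff T^{n}x\in E$. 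The target becomes: find infinite $B,C\subset\N$ with $T^{c}x\in\bigcap_{b\in B}T^{-b}E$ for all $c\in C$. Although $\bigcap_{b\in B}T^{-b}E$ is only a $G_\delta$, each \emph{finite} sub-intersection $\bigcap_{i\le k}T^{-b_i}E$ is clopen, so a single limit point of the orbit lying in such a set already certifies that infinitely many iterates land in it; this is the mechanism that will make the construction go through.

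\textbf{Decomposition.} Next I would decompose $\mathbf 1_E=f_{\mathrm{ap}}+f_{\wm}$ in $\lp{2}(\mu)$, with $f_{\mathrm{ap}}$ the projection onto the Kronecker factor and $f_{\wm}$ in the weak-mixing component: so $\E_\mu f_{\mathrm{ap}}=\mu(E)>0$, the orbit $\{T^{n}f_{\mathrm{ap}}\}$ is precompact in $\lp{2}$ (hence for each $\varepsilon>0$ the $\varepsilon$-return times $\{n:\|T^{n}f_{\mathrm{ap}}-f_{\mathrm{ap}}\|<\varepsilon\}$ are syndetic), while $\frac1N\sum_{n<N}|\langle T^{n}f_{\wm},g\rangle|\to 0$ for every $g\in\lp{2}(\mu)$. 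In practice I would carry the splitting out at the level of the bounded sequence $\mathbf 1_A$ itself, as a Besicovitch almost periodic part plus a Besicovitch pseudorandom part, which is the form that behaves well under the iterated correlation estimates below.

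\textbf{Recursive construction and the obstacle.} I would build $b_1<b_2<\cdots$ and $c_1<c_2<\cdots$ alternately, maintaining after $n$ steps: $b_i+c_j\in A$ for all $i,j\le n$; a positive-density reservoir $C^{(n)}\subset(c_n,\infty)$ together with a limit point $w_n$ of $\{T^{c}x:c\in C^{(n)}\}$ satisfying $T^{b_i}w_n\in E$ for $i\le n$ — so that, by clopenness of $\bigcap_{i\le n}T^{-b_i}E$, infinitely many $c\in C^{(n)}$ are admissible as $c_{n+1}$ — and symmetrically a positive-density reservoir $B^{(n)}$ of admissible future $b$'s with its own limit point. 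In the inductive step one chooses $c_{n+1}\in C^{(n)}$ using syndeticity of the return times of $f_{\mathrm{ap}}$ to keep the structured part of the relevant correlation bounded below by a positive power of $\mu(E)$, and using the vanishing of the weak-mixing averages together with a pigeonhole/density argument to pick $c_{n+1}$ outside the bad set where the $f_{\wm}$-error is large; this forces $B^{(n)}\cap(A-c_{n+1})$ to again have positive density, yielding $B^{(n+1)}$ and its limit point, and symmetrically for $b_{n+1}$. A diagonal extraction of the limit points then produces $B=\{b_i\}$, $C=\{c_j\}$ with $B+C\subset A$. I expect the real difficulty to lie precisely in the passage from ``every finite sub-intersection $\bigcap_{c\in F}(A-c)$ has density bounded below'' — which multiple-recurrence-type estimates supply — to ``$\bigcap_{c\in C}(A-c)$ is infinite for a genuinely infinite $C$'': the finite-intersection densities necessarily decay as $|F|\to\infty$, so infiniteness of the whole intersection is not formal and must be forced through the stronger invariant (the positive-measure limit point in a clopen set) above. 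Inside this, the genuinely hard point is controlling the pseudorandom part within a $k$-fold product — the mixed $f_{\mathrm{ap}}$–$f_{\wm}$ cross terms are not annihilated by weak mixing alone — which is why I would expect a single Kronecker-type splitting to be insufficient, and a second, finer decomposition adapted to iterating correlation bounds rather than to a fixed $\lp{2}$ decomposition to be required.
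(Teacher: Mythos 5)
Your outline correctly identifies the architecture of a plausible proof --- a correspondence-principle reformulation, a structured/pseudorandom splitting of $1_A$, and a back-and-forth construction of $B$ and $C$ driven by reservoirs of admissible future elements --- and this is genuinely close in spirit to the published argument. But the proposal stops exactly where the proof has to begin: the two points you yourself flag as ``the real difficulty'' and ``the genuinely hard point'' are not side issues, they are the entire content of the theorem, and nothing in the sketch resolves them. Concretely, your inductive step requires a single \emph{infinite} reservoir $L$ such that $\dens\big((A-m)\cap L\big)>\epsilon$ for infinitely many admissible $m$, compatible with all previously chosen $b_i$. A Kronecker splitting plus syndeticity of approximate return times gives lower bounds on correlations $\dens((A-m)\cap(A-m'))$ for well-chosen pairs, but it does not manufacture such an $L$. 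The paper takes $L=A-\ultra{p}$ for a non-principal ultrafilter $\ultra{p}$ and proves $\lim_{m\to\ultra{p}}\dens_\Psi\big((A-m)\cap(A-\ultra{p})\big)>0$; constructing $\ultra{p}$ (\cref{thm_choosingUltra}) requires four simultaneous properties (\ref{req_essential}--\ref{req_beig}), none of which appear in your sketch. Moreover, even granting uniform lower bounds $\dens((A-e_n)\cap L)>\epsilon$, the passage to infinitude of all finite intersections along a subsequence is supplied by Bergelson's intersectivity lemma (\cref{cor_bergelsonlemma}), not by a diagonal extraction of limit points: your positive-measure limit point certifies recurrence into each \emph{fixed} finite intersection, but does not let you keep the same reservoir as the finite set grows.

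On the decomposition side, you correctly suspect that a single Kronecker-type splitting is insufficient and that the mixed structured/pseudorandom cross terms are the obstruction, but you leave the ``second, finer decomposition'' entirely unspecified --- and that is where the work is. In the paper the second splitting is $1_A=f_\bes+f_\anti$ into a Besicovitch almost periodic part and its orthogonal complement, and it is applied only to the ultrafilter-shifted function $\rmult^{\ultra{p}}1_A$, precisely because the compact/weak-mixing splitting is not stable under shifts by ultrafilters while the Besicovitch part is (\cref{thm_Besicovitchpshift}). The surviving cross term $\lim_{m\to\ultra{p}}\bilin{\rmult^m f_\comp}{\rmult^{\ultra{p}}f_\anti}_\Psi$ is then shown to be non-negative by an adaptation of an argument of \Beiglbock{} (\cref{thm_beig}), which has no counterpart in your proposal. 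As it stands, what you have is an accurate map of where the difficulties lie together with a reasonable guess at the shape of the solution; the load-bearing steps are missing.
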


Nathanson \cite{Nathanson80} showed that a set $A$ with positive upper density contains a sum $B+C$ for a set $B$ of positive density and a set $C$ of any finite cardinality.
More recently, Di Nasso, Goldbring, Jin, Leth, Lupini and Mahlburg~\cite{DGJLLM15} employed non-standard analysis and ideas from ergodic theory to show that a set $A\subset\N$ with upper density greater than $1/2$ contains a sum $B+C$ where $B$ and $C$ are infinite sets.
As a corollary, derived using Ramsey's theorem and a result of Hindman \cite[Theorem 3.8]{MR0677568}, it follows that if $A$ has positive upper density, then for some $t\in\N$ the union $A\cup(A-t)$ contains a sum $B+C$ where $B$ and $C$ are infinite sets.
Some further progress on a variant of \cref{conjecture_erdossumset} was also made in \cite{arXiv:1701.07791}.

\paragraph{Main results.}

The goal of this paper is to verify \cref{conjecture_erdossumset}.
In fact we prove a stronger result.
Recall that a \define{\Folner{} sequence} in $\N$ is any sequence $\Phi \colon N \mapsto \Phi_N$ of finite, non-empty subsets of $\N$ satisfying
\[
\lim_{N\to\infty}\frac{\big|(\Phi_N + m) \triangle \Phi_N\big|}{|\Phi_N|} = 0
\]
for all $m \in \N$.
For example, any sequence $N \mapsto\{a_N+1,a_N+2,\dots,b_N\}$ of intervals in $\N$ with length $b_N-a_N$ tending to infinity is a \Folner{} sequence.
Given a \Folner{} sequence $\Phi$ and a set $A\subset\N$ the quantity
\[
\upperdens_\Phi(A)\coloneqq\limsup_{N\to\infty}\frac{|A\cap \Phi_N|}{|\Phi_N|}
\]
is the \define{upper density} of $A$ with respect to $\Phi$.
If
\[
\lim_{N \to \infty} \frac{|A\cap \Phi_N|}{|\Phi_N|}
\]
exists we denote it by $\dens_\Phi(A)$ and call it the \define{density} of $A$ with respect to $\Phi$.
The following is our main result, which verifies a generalization of \cref{conjecture_erdossumset} to \Folner{} sequences.

\begin{theorem}
\label{thm_erdosNatural}
For every $A \subset \N$ that satisfies $\upperdens_\Phi(A)>0$ for some \Folner{} sequence $\Phi$ one can find infinite sets $B,C \subset \N$ with $B+C \subset A$.
\end{theorem}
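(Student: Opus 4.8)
The plan is to recast the problem in ergodic theory, to build $B$ and $C$ by a recursion carried out inside a single orbit, and to steer that recursion with a structure-versus-randomness decomposition. \emph{Reduction to a generic point.} Passing to a subsequence of $\Phi$ along which $|A\cap\Phi_N|/|\Phi_N|\to\upperdens_\Phi(A)=:a>0$, the averages $\tfrac1{|\Phi_N|}\sum_{n\in\Phi_N}\delta_{T^n\mathbf 1_A}$ converge weak-$*$ on $\{0,1\}^{\Z}$ (with the shift $T$) to a $T$-invariant probability measure $\mu$ for which $\mathbf 1_A$ is generic and $\mu(E)=a$, where $E=\{x:x(0)=1\}$ is clopen. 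Because $E$ is clopen, genericity shows that every set $\{n:T^n\mathbf 1_A\in D\}$, with $D$ a finite Boolean combination of shifts of $E$, has density exactly $\mu(D)$; since $\{n:T^n\mathbf 1_A\in E\}=A$, it suffices to produce infinite $B,C$ with $T^{b+c}\mathbf 1_A\in E$ for all $b\in B$, $c\in C$ (this is literally $b+c\in A$, so no separate pull-back is needed).

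\emph{The decompositions.} Splitting the Koopman operator via Jacobs--de Leeuw--Glicksberg, write $\mathbf 1_E=f_{\comp}+f_{\wm}$ in $L^2(\mu)$ with $0\le f_{\comp}\le1$, $\int f_{\comp}\d\mu=a$, the orbit $\{f_{\comp}\circ T^n\}$ precompact in $L^2$, and $n\mapsto\langle f_{\wm}\circ T^n,h\rangle$ vanishing in density for every $h\in L^2(\mu)$. The compact part descends to the Kronecker factor $\pi\colon(X,\mu,T)\to(Z,m,R)$, a rotation on a compact abelian group, as $f_{\comp}=g\circ\pi$ with $g=\E[\mathbf 1_E\mid Z]$ and $\int g\d m=a$. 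A \emph{second}, finer decomposition is needed at this level: the relevant bounded sequences in $n$ (the correlation sequences arising below, and $\mathbf 1_A$ itself) must be split into a genuinely Bohr almost periodic part, built from characters of $Z$, plus a part carrying no Bohr correlation. The reason is that it is Bohr sets, not merely Besicovitch-structured ones, that are visibly closed under the sumset operation: if $R^b$ and $R^c$ are both close to the identity of $Z$ then so is $R^{b+c}$, so a Bohr-$0$ set already contains $B+C$ for suitable infinite $B,C$ obtained by simultaneous approximation.

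\emph{The recursive construction.} Write $E_{B_m}=\bigcap_{i\le m}T^{-b_i}E$ and $E_{C_m}=\bigcap_{j\le m}T^{-c_j}E$. Since $\int g\d m=a$, the set $W=\{g>a/2\}$ has $m(W)\ge a/2>0$, and with $z_0:=\pi(\mathbf 1_A)$ (after a harmless $m$-continuity adjustment of $W$) the set $\{n:R^nz_0\in W\}$ has density $m(W)$. Using the group structure of $Z$ one first fixes ``model'' infinite sets $B_0,C_0$ with $R^{b+c}z_0\in W$ throughout, and then builds $B=\{b_1<b_2<\cdots\}$ and $C=\{c_1<c_2<\cdots\}$ in $\N$ recursively inside the pertinent Bohr sets, maintaining after stage $m$ that (i) $b_i+c_j\in A$ for all $i,j\le m$ and (ii) the reservoirs $\mu(E_{B_m})$ and $\mu(E_{C_m})$ remain positive. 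Adjoining $b_{m+1}$ requires it to lie in $P_1:=\{n:T^n\mathbf 1_A\in E_{C_m}\}$, which preserves (i) and has positive density by genericity since $E_{C_m}$ is clopen, and in $P_2:=\{n:\mu(E_{B_m}\cap T^{-n}E)\ \text{is large}\}$, which preserves (ii). Expanding $\mathbf 1_E$ by the first decomposition, the quantity defining $P_2$ equals $\langle\E[\mathbf 1_{E_{B_m}}\mid Z],\,g\circ R^n\rangle$ up to an error that is negligible for a density-one set of $n$ by weak mixing; the main term, as a function of $n$, is a non-negative Bohr almost periodic sequence of positive mean, so $P_2$ too has positive density.

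\emph{The main obstacle, and the conclusion.} The heart of the matter is that $P_1\cap P_2\neq\emptyset$, which is \emph{not} implied by the two densities being positive (positive-density sets can be disjoint). What forces it is genericity used beyond counting: the orbit of $\mathbf 1_A$ equidistributes along the Kronecker factor, so that once the $b_i,c_j$ have been confined, in the model step, to Bohr sets keeping the supports of $\E[\mathbf 1_{E_{C_m}}\mid Z]$ and $\E[\mathbf 1_{E_{B_m}}\mid Z]$ positioned around $z_0$ relative to the support of $g$, the ``concrete'' event $T^n\mathbf 1_A\in E_{C_m}$ and the ``structural'' largeness of $\mu(E_{B_m}\cap T^{-n}E)$ are governed by the \emph{same} Bohr almost periodic sequence, whose average against the orbit works out to a triple integral of $g$ and the two conditional expectations and is positive. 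Sustaining this positivity stage after stage — the naive $n$-average of $\mu(E_{B_m}\cap T^{-n}E)$ is only $a$ times the previous reservoir, so $b_{m+1}$ must be taken near the maximum, which is precisely what the Bohr structure supplies — together with the accompanying bookkeeping on supports, is where the bulk of the work lies, with the weakly mixing component handled throughout only in this averaged sense and never at a point. Granting the construction, $B=\bigcup_mB_m$ and $C=\bigcup_mC_m$ are infinite, and for $b_i\in B$, $c_j\in C$ property (i) at stage $\max(i,j)$ gives $b_i+c_j\in A$; hence $B+C\subset A$.
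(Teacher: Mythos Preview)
Your outline has the right overall architecture --- Furstenberg correspondence, a compact/weak-mixing splitting, and a back-and-forth construction of $B$ and $C$ --- and you correctly isolate the crux as showing $P_1\cap P_2\neq\emptyset$ at each stage. But the resolution you offer there is a genuine gap, not just an omitted detail.

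The difficulty is this: $P_1=\{n:T^n\mathbf 1_A\in E_{C_m}\}$ is a concrete subset of $\N$ whose indicator has both compact and weak-mixing components, while $P_2$ is (up to a density-zero error) governed by the Bohr function $\psi_B(w)=\int_Z\E[\mathbf 1_{E_{B_m}}\mid Z]\,g(\cdot+w)\,dm$. Genericity does let you compute the density of $P_1\cap P_2$ as $\int_Z\E[\mathbf 1_{E_{C_m}}\mid Z]\cdot\mathbf 1_{\{\psi_B>\delta\}}\,dm$, but you then need this integral to be \emph{positive}, and for that you need control over where $\E[\mathbf 1_{E_{C_m}}\mid Z]$ lives. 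Confining the $c_j$ to a Bohr$_0$ set does \emph{not} give this: conditional expectation does not distribute over products, so $\E[\prod_j\mathbf 1_E\circ T^{c_j}\mid Z]$ bears no simple relation to $\prod_j g\circ R^{c_j}$, and its support can drift in $Z$ in ways your ``model step'' cannot pin down. Your assertion that the two events are ``governed by the same Bohr almost periodic sequence'' is precisely the unproven step. A related issue is that your reservoirs $\mu(E_{B_m}),\mu(E_{C_m})$ have no uniform lower bound, so even granting each step one must argue the recursion does not stall; this requires something like Bergelson's intersectivity lemma, which you do not invoke.

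The paper sidesteps this obstruction by working with ultrafilters rather than directly with the recursion. The recursion (their Proposition~2.4) is fed by a single hypothesis: for some non-principal $\ultra p$ and some $L\subset\N$, the set $\{m:\dens_\Phi((A-m)\cap L)>\epsilon\}$ meets every finite intersection $\bigcap_{\ell\in F}(A-\ell)$ for $F\subset L$. Taking $L=A-\ultra p$ and establishing $\lim_{m\to\ultra p}\dens_\Psi((A-m)\cap(A-\ultra p))>0$ makes all these intersections members of $\ultra p$, hence infinite --- so the analogue of $P_1\cap P_2\neq\emptyset$ is automatic. The positivity itself is proved by splitting $\mathbf 1_A$ \emph{twice} (compact/weak-mixing on the shifted copy, Besicovitch/anti-Besicovitch on the ultrafilter-shifted copy), and the delicate cross term $\langle f_\comp,\rmult^{\ultra p}f_\anti\rangle$ is handled by a measure-theoretic argument on $\beta\N$ adapted from Beiglb\"ock: integrating over $\ultra p$ in a Bohr neighbourhood and using Fatou shows the term is nonnegative for a positive-measure set of $\ultra p$. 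Your proposal has no counterpart to this step.
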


In fact, our methods are flexible enough to prove a version of \cref{thm_erdosNatural} in countable amenable groups.
A \define{two-sided \Folner{} sequence} on a discrete countable group $G$ is any sequence $\Phi\colon N \mapsto \Phi_N$ of finite, non-empty subsets of $G$ satisfying
\begin{equation}
\label{eq_folner}
\lim_{N \to \infty} \frac{|(\Phi_N g)\, \triangle \, \Phi_N|}{|\Phi_N|}
= 0 =
\lim_{N \to \infty} \frac{|\Phi_N\, \triangle \, (g \Phi_N)|}{|\Phi_N|}
\end{equation}
for all $g \in G$.
A countable group $G$ is called \define{amenable} if and only if it admits a two-sided \Folner{} sequence (cf.\ \cite{Greenleaf69,Tomkowicz_Wagon16}).
Given a two-sided \Folner{} sequence $\Phi$ on $G$ and a set $A\subset G$, the quantity
\begin{equation}
\label{eqn_folnerDensity}
\upperdens_\Phi(A)\coloneqq\limsup_{N \to \infty} \frac{|A \cap \Phi_N|}{|\Phi_N|}
\end{equation}
is the \define{upper density} of $A$ with respect to $\Phi$.
If
\[
\lim_{N \to \infty} \frac{|A \cap \Phi_N|}{|\Phi_N|}
\]
exists then we denote it by $\dens_\Phi(A)$ and call it the \define{density} of $A$ with respect to $\Phi$.

\begin{theorem}
\label{thm_erdos_sumset_amenable}
Let $G$ be a countable group, let $\Phi$ be a two-sided \Folner{} sequence on $G$ and let $A \subset G$ be such that $\upperdens_\Phi(A) > 0$.
Then there are infinite sets $B,C \subset G$ with $BC= \{ bc : b \in B, c \in C \} \subset A$.
\end{theorem}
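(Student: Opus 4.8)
The plan is to construct the infinite sets $B=\{b_1,b_2,\dots\}$ and $C=\{c_1,c_2,\dots\}$ by a back-and-forth recursion, having first reduced the problem via a correspondence principle to a statement in ergodic theory that is driven by two decompositions of $\mathbf 1_A$. Observe at the outset that it is equivalent to produce a single infinite set $B\subseteq G$ for which $\bigcap_{b\in B}b^{-1}A=\{c\in G: bc\in A\text{ for all }b\in B\}$ is infinite: given such a $B$, take $C$ to be this intersection; conversely any valid pair $(B,C)$ satisfies $C\subseteq\bigcap_{b\in B}b^{-1}A$. I would therefore build $b_1,c_1,b_2,c_2,\dots$ in this order, maintaining after stage $n$ the invariant that $b_ic_j\in A$ for all $i,j\le n$ and that the two ``tail sets'' $\bigcap_{i\le n}b_i^{-1}A$ (the admissible future $c$'s) and $\bigcap_{j\le n}Ac_j^{-1}$ (the admissible future $b$'s) remain \emph{large} in a sense robust enough to survive one further intersection. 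The whole difficulty is to choose each new element so that this largeness does not decay to nothing as the intersections accumulate.

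To carry this out I would pass, via the correspondence principle, to a measure-preserving $G$-system $(X,\mu,(T_g)_{g\in G})$ with a clopen set $\tilde A\subseteq X$ satisfying $\mu(\tilde A)=\upperdens_\Phi(A)=\delta>0$ and a point $x_0$ (namely $\mathbf 1_A\in\{0,1\}^G$) with $A=\{g\in G:T_gx_0\in\tilde A\}$, where $\mu$ is a weak$^{\ast}$ limit of the empirical measures $\tfrac{1}{|\Phi_N|}\sum_{g\in\Phi_N}\delta_{T_gx_0}$ along a subsequence of $\Phi$; a standard ergodic-decomposition argument lets one reduce to the case $(X,\mu)$ ergodic. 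The two decompositions advertised in the abstract now enter, and may be read either inside this system or directly as decompositions of the sequence $\mathbf 1_A$ on $G$ with respect to the \Folner{} (Besicovitch) averages. The first is a Koopman / Jacobs--de Leeuw--Glicksberg splitting: write $\mathbf 1_{\tilde A}=\phi+\psi$ with $\phi=\E[\mathbf 1_{\tilde A}\mid\mathcal K]$ the projection onto the maximal isometric (Kronecker) factor $\mathcal K$ — so $0\le\phi\le 1$ and $\int \phi \d\mu=\delta$ — and $\psi$ in the weakly mixing part $\Hilb_{\wm}$. Modelling $\mathcal K$ as a rotation on a compact group $Z$ with a homomorphism $\rho\colon G\to Z$ of dense image, for every neighbourhood $V$ of the identity of $Z$ the set $\{g:\rho(g)\in V\}$ is syndetic and its elements $g$ have $\|T_g\phi-\phi\|_2$ small; drawing the $b_i$ and $c_j$ from such near-return sets keeps the \emph{structured part} of each tail set — essentially a small perturbation of $\phi$ — bounded below on a set of $\mu$-measure at least a fixed positive constant. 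The second decomposition is of the sequence $\mathbf 1_A$ itself: one writes it as an almost periodic sequence plus a remainder that is \besmixing{}, i.e.\ whose \Cesaro{} correlations against every almost periodic sequence (and against its own shifts along a density-one set of shifts) vanish. Showing this remainder is negligible along the sparse sets $\{b_ic_j\}$ we build is what promotes ``the tail sets are structurally large'' to ``the tail sets are genuinely witnessed by honest elements of $A$''.

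In order, the steps would be: (i) the reduction and the correspondence principle above, including the reduction to the ergodic case; (ii) the two decompositions $\mathbf 1_{\tilde A}=\phi+\psi$ and $\mathbf 1_A=(\text{almost periodic})+(\text{\besmixing{}})$, together with the extraction from $\mathcal K$ of a fixed positive-measure target region on which $\phi\ge\delta/2$ and of the associated syndetic family of near-return sets; (iii) the recursion itself: given $b_1,\dots,b_n$ and $c_1,\dots,c_n$ obeying the invariant, pick $c_{n+1}$ that (a) lies in the current tail set $\bigcap_{i\le n}b_i^{-1}A$, (b) is a near-return for $\mathcal K$, and (c) is generic for the \besmixing{} remainder relative to $b_1,\dots,b_n,c_1,\dots,c_n$ — checking these constraints are simultaneously satisfiable and that the updated tail set $\bigcap_{j\le n+1}Ac_j^{-1}$ is again large — and then pick $b_{n+1}$ symmetrically; (iv) conclude that $B=\{b_i:i\ge 1\}$ and $C=\{c_j:j\ge 1\}$ are infinite with $BC\subseteq A$, and unwind the correspondence.

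The step I expect to be the main obstacle is (iii), and it is exactly why a single Koopman-type decomposition does not close the argument: the recursion must be controlled along one fixed orbit $\{T_gx_0\}$ and along the extremely sparse, self-referentially chosen sets $\{b_ic_j\}$, while simultaneously preventing the accumulating intersections that define the tail sets from collapsing to a finite set — whereas weak mixing and almost periodicity are $L^2$- and \Cesaro{}-average statements about $\mu$ and say nothing a priori about a single orbit or a single sparse set. Bridging this gap is the role of the second decomposition: being a statement about the bounded sequence $\mathbf 1_A$ on $G$ rather than about $L^2(\mu)$, it is robust enough to force the \besmixing{} remainder to wash out along the structured sequences $b_i,c_j$ one is free to choose, so that $b_ic_j\in A$ is forced at \emph{every} pair $(i,j)$ and not merely on average. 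Once the argument is set up this way for $\Z$, the extension to an arbitrary countable amenable group $G$ and a two-sided \Folner{} sequence should be largely bookkeeping — characters are replaced by matrix coefficients of finite-dimensional unitary representations, and one-sided by two-sided averages — yielding \cref{thm_erdos_sumset_amenable} and hence \cref{thm_erdosNatural} and \cref{conjecture_erdossumset}.
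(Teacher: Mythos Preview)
Your outline correctly identifies the two decompositions the paper uses and the back-and-forth shape of the construction, but step (iii) --- where you must show that conditions (a)--(c) are simultaneously satisfiable \emph{and} that the updated tail set remains large --- has a genuine gap. Choosing $b_i,c_j$ as near-returns for the Kronecker factor controls $\|T_{c_j}\phi-\phi\|_2$, an $L^2$ statement with no pointwise consequences on the sparse set $\{b_ic_j\}$; and your claim that the \besmixing{} remainder ``washes out along the structured sequences $b_i,c_j$ one is free to choose'' is unjustified, since orthogonality to almost-periodic sequences is a \Cesaro{}-average statement and says nothing about values along a sparse, recursively defined set. Without further input there is no mechanism preventing the intersections $\bigcap_{j\le n}Ac_j^{-1}$ from collapsing to finite sets.

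The paper closes this gap differently and does \emph{not} seek pointwise control along $\{b_ic_j\}$. It introduces a non-principal ultrafilter $\ultra{p}$, sets $L=A\ultra{p}^{-1}$, and reduces the problem to the \emph{average} inequality $\lim_{g\to\ultra{p}}\dens_\Psi\bigl(Ag^{-1}\cap A\ultra{p}^{-1}\bigr)>0$, which is exactly where the two decompositions act: the compact/weak-mixing splitting is applied to $\rmult^g 1_A$ and the Besicovitch splitting to $\rmult^{\ultra{p}}1_A$. The delicate cross term $\bilin{f_\comp}{\rmult^{\ultra{p}}f_\anti}_\Psi$ (compact against anti-Besicovitch) is handled by an adaptation of a lemma of \Beiglbock{}, showing it is non-negative for a set of $\ultra{p}$ of positive measure. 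Once the displayed inequality holds, Bergelson's intersectivity lemma --- absent from your outline --- is what guarantees that a subsequence of the sets $Ae_n^{-1}\cap L$ has all finite intersections infinite, and the back-and-forth runs with $L$ as the reservoir for the $b$'s. Your proposal is missing the ultrafilter reformulation and the \Beiglbock{}/Bergelson ingredients; these are precisely what replace the pointwise control you are trying (and unable) to obtain.
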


\paragraph{Strategy of the proof.}
We outline here quite broadly the main ideas in the proof of \cref{thm_erdosNatural}.
We freely make use of terminology that is only defined later in the paper.
In particular, the relevant background on ultrafilters is given at the beginning of \cref{sec_outlineNatural}.

To begin with, we borrow ideas from \cite{DGJLLM15} to show that whenever one has
\begin{equation}
\label{eqn:strategy}
\lim_{m \to \ultra{p}} \dens_\Psi((A-m) \cap (A-\ultra{p}))
>0
\end{equation}
for some \Folner{} sequence $\Psi$ and some non-principal ultrafilter $\ultra{p}$, necessarily $A$ contains a sum $B+C$ with $B,C\subset\N$ infinite.
Here we write $A-\ultra{p}$ for the set $\{n \in \N : A-n \in \ultra{p}\}$.
Thus the main part of our proof of \cref{thm_erdosNatural} consists of finding, for every \Folner{} sequence $\Phi$ and every $A\subset\N$ with $\upperdens_\Phi(A)>0$, a non-principal ultrafilter $\ultra{p}$ and a \Folner{} subsequence $\Psi$ of $\Phi$ such that \eqref{eqn:strategy} is satisfied.

Given $f\colon\N\to\C$ and $m \in \N$, write $\rmult^m f$ for the function $n\mapsto f(m+n)$.
If in addition $\ultra{p}$ is an ultrafilter on $\N$ we write $\rmult^\ultra{p} f$ for the function
\[
n\mapsto \lim_{m\to\ultra{p}}f(n+m)
\]
for all $n \in \N$.
In doing so one can rewrite $1_{A - m}$ as $\rmult^m 1_A$ and $1_{A - \ultra{p}}$ as $\rmult^\ultra{p} 1_A$.
We can therefore rewrite \eqref{eqn:strategy} in the form
\begin{equation}
\label{eqn:strategyFunctional}
\lim_{m \to \ultra{p}}\bilin{\rmult^m 1_A}{\rmult^{\ultra{p}}1_A}_{\Psi}
>
0
\end{equation}
where, for two bounded functions $f,g\colon\N\to\C$, the inner product $\bilin{ \cdot}{\cdot}_{\Psi}$ is defined as
$$
\bilin{ f}{g}_{\Psi}\coloneqq  \lim_{N \to \infty} \frac{1}{|\Psi_N|} \sum_{n \in \Psi_N}  f(n) \, \overline{g(n)}.
$$

The utility of ultrafilters in our proof is two-fold.
On the one hand, the language of ultrafilters leads us to \eqref{eqn:strategy} and \eqref{eqn:strategyFunctional}, which are similar to expressions encountered in other problems of additive combinatorics.
In fact, having reduced the proof of \cref{thm_erdosNatural} to a statement involving the bilinear functional $(f,g)\mapsto\lim_{m \to \ultra{p}}\bilin{\rmult^m f}{\rmult^{\ultra{p}}g}_{\Psi}$ is particularly useful, since it opens the door for using tools and ideas from functional analysis and ergodic Ramsey theory.
On the other hand, shifts by ultrafilters are more versatile than shifts by natural numbers, which we exploit at numerous different places in the proof of \cref{thm_erdosNatural}.

In \cite[Theorem~5.5]{DGJLLM15} the language of non-standard analysis was used to verify \eqref{eqn:strategyFunctional} when $A$ is ``pseudo-random''.
Roughly speaking, the set $A$ is pseudo-random if it is almost independent from most of its shifts.
It is natural to ask~\cite[Questions~5.6, 5.7]{DGJLLM15} what happens when $A$ is not pseudo-random.
In this case, it is beneficial to employ a decomposition of $1_A$ into structured and pseudo-random components.
Inspired by the Jacobs--de Leeuw--Glicksberg splitting on Hilbert spaces \cite{MR0077092,MR0131784}, we prove that $1_A$ can always be decomposed as a sum $f_\wm + f_\comp$ of a weak mixing function $f_\wm$ and a compact function $f_\comp$.
We think of $f_\wm$ as being the ``pseudo-random'' component of $1_A$ and of $f_\comp$ as the ``structured'' component of $1_A$.

The decomposition $1_A = f_\wm + f_\comp$ is stable under shifts by $m \in \N$ in the sense that $\rmult^m f_\wm + \rmult^m f_\comp$ is the decomposition of $\rmult^m 1_A = 1_{A-m}$ into weak mixing and compact functions.
In light of this fact, we can consider the left hand side of \eqref{eqn:strategyFunctional} as a sum of two terms, one with $\rmult^m 1_A$ replaced by the weak mixing function $\rmult^m f_\wm$, the other with $\rmult^m 1_A$ replaced by the compact function $ \rmult^m f_\comp$:
\begin{equation}
\label{eqn:first_splitting}
\lim_{m \to \ultra{p}}\bilin{ \rmult^m 1_A}{ \rmult^{\ultra{p}} 1_A}_{\Psi,\ultra{p}}= \lim_{m \to \ultra{p}}\bilin{ \rmult^m f_\wm}{ \rmult^{\ultra{p}} 1_A}_{\Psi}+ \lim_{m \to \ultra{p}}\bilin{ \rmult^m f_\comp}{  \rmult^{\ultra{p}} 1_A}_{\Psi}.
\end{equation}

Unfortunately, the decomposition into compact and weak mixing components is not stable under shifts by ultrafilters, so we are unable to use it to understand $\rmult^\ultra{p}1_{A}$.
For this reason we devise a second splitting whose interaction with ultrafilters we are able to control.
This second splitting asserts that $1_A = f_\anti + f_\bes$, where the ``structured'' component $f_\bes$ is a Besicovitch almost periodic function, which is a stronger property then being a compact function, and the complement $f_\anti$ is characterized by being orthogonal to $e^{2\pi i n \theta}$ for all $\theta\in[0,1)$, which is a weaker form of ``pseudo-randomness'' than weak mixing.
It is the specialized nature of $f_\bes$ that reacts well with ultrafilters.

Applying our second splitting to $\rmult^{\ultra{p}}1_{A}$ in the last term of \eqref{eqn:first_splitting} leaves us with a sum of the following three terms.
\begin{align}
\label{eqn:strategyFunctional_-1}
&
\lim_{m \to \ultra{p}}\bilin{ \rmult^m f_\comp}{ \rmult^{\ultra{p}} f_\bes}_{\Psi}
\\
\label{eqn:strategyFunctional_-2}
&
\lim_{m \to \ultra{p}}\bilin{ \rmult^m f_\comp}{ \rmult^{\ultra{p}} f_\anti}_{\Psi}
\\
\label{eqn:strategyFunctional_-3}
&
\lim_{m \to \ultra{p}}\bilin{ \rmult^m f_\wm}{ \rmult^{\ultra{p}}1_A}_{\Psi}
\end{align}
We show that \eqref{eqn:strategyFunctional_-3} is zero using the pseudo-randomness of weak mixing.
Positivity of the term \eqref{eqn:strategyFunctional_-1} follows from the close relationship between $f_\bes$ and its shifts by ultrafilters.
The remaining term, \eqref{eqn:strategyFunctional_-2}, which involves $f_\comp$ and $f_\anti$, is the most delicate.
To show it is non-negative we adapt an argument of \Beiglbock{}~\cite{Beiglbock11}.
All together, this proves that the sum of the three terms in \eqref{eqn:strategyFunctional_-1}, \eqref{eqn:strategyFunctional_-2}, and \eqref{eqn:strategyFunctional_-3} is positive, which implies \eqref{eqn:strategyFunctional}.

It is reasonable to ask why we do not apply the splitting $f_\bes + f_\anti$ to both occurrences of $1_{A}$ in \eqref{eqn:strategyFunctional}.
The reason lies in the strength of the pseudo-randomness that weak mixing provides.
We would not be able to handle the hypothetical term
\[
\lim_{m \to \ultra{p}}\bilin{ \rmult^m f_\anti}{\rmult^{\ultra{p}} 1_A}_{\Psi}
\]
pairing $f_\anti$ with $1_A$, whereas we are able to handle \eqref{eqn:strategyFunctional_-3}.

\paragraph{Structure of the paper.}
The purpose of \cref{sec_outlineNatural} is to review the relevant material on ultrafilters and then to prove that \eqref{eqn:strategy} implies \cref{thm_erdosNatural}.
In \cref{sec_two_decomps} we prove our two splitting results.
The proof of \cref{thm_erdosNatural} is concluded in  \cref{sec_finding_p}.
In \cref{sec:erdosGroups} we explain the few steps where the proof of \cref{thm_erdos_sumset_amenable} differs from that of \cref{thm_erdosNatural}.
Finally, in \cref{sec_questions} we discuss some relevant open questions.

\paragraph{Acknowledgements.}
The first author is grateful for the support of the NSF via grant DMS-1700147.
The third author is grateful for the support of the NSF via grant DMS-1703597.
We thank John H.\ Johnson for providing useful references and Vitaly Bergelson for reading and commenting on an early draft of this paper.
We would also like to thank the anonymous referees for their careful reading of the manuscript and their detailed suggestions.
Their efforts have improved the readability of the paper.
We thank Bernard Host and Bryna Kra for pointing out a mistake in an earlier version of the proof of \cref{thm_jdlgSplitting}.

The first author became interested in the \Erdos{} sumset conjecture while visiting Martino Lupini at CalTech in June 2017 and is thankful for the hospitality provided.
We are also thankful to Michael Bj\"orklund, Alexander Fish and Anush Tserunyan for interesting discussions and to AIM for hosting the workshop ``Nonstandard methods in combinatorial number theory'' in August 2017 at which these discussions took place.

%SECTION
\section{Ultrafilter reformulation}
\label{sec_outlineNatural}

For the proofs of \cref{thm_erdosNatural} and \cref{thm_erdos_sumset_amenable} we found it crucial to rely on the theory of ultrafilters, which has proven to be very effective in solving problems in Ramsey theory in the past.
In this section we recall briefly some of the basic definitions and facts that we will utilize in this paper and then reduce \cref{thm_erdosNatural} to a statement of the form \eqref{eqn:strategy}.
Readers in want of a friendly introduction to ultrafilters may well enjoy \cite[Section 3]{Bergelson96}; for a comprehensive treatment see \cite{MR2893605}.

An \define{ultrafilter} on $\N$ is any non-empty collection $\ultra{p}$ of subsets of $\N$ that is closed under finite intersections and supersets and satisfies
\[
A\in\ultra{p}\iff \N \setminus A \notin\ultra{p}
\]
for every $A \subset \N$.
Given $n\in\N$, the collection $\ultra{p}_n\coloneqq \{A \subset \N : n \in A \}$ is an ultrafilter; ultrafilters of this kind are called \define{principal}.
We embed $\N$ in $\beta \N$ using the map $n \mapsto \ultra{p}_n$.
For the existence of non-principal ultrafilters, which follows from the axiom of choice, see \cite[Theorem~3.8]{MR2893605}.

The set of all ultrafilters on $\N$ is denoted by $\beta \N$.
Given $A \subset \N$ and using the above embedding of $\N$ in $\beta \N$, write $\cl(A) \coloneqq \{ \ultra{p} \in \beta \N : A \in \ultra{p} \}$ for the \define{closure} of $A$ in $\beta \N$.
The family $\{ \cl(A) : A \subset \N \}$ forms a base for a topology on $\beta \N$ with respect to which $\beta \N$ is a compact Hausdorff space.
We note that $\cl(A) \cap \cl(B) = \cl(A \cap B)$ for all $A,B \subset \N$.
The map $n \mapsto \ultra{p}_n$ embeds $\N$ densely in $\beta\N$.
Endowed with this topology, $\beta\N$ can be identified with the Stone--\Cech{} compactification of $\N$, which means that it has the following universal property:
for any function $f \colon \N \to K$ into a compact Hausdorff space $K$ there is a unique continuous function $\beta f \colon \beta\N \to K$ such that $(\beta f)(\ultra{p}_n)=f(n)$ for all $n \in \N$.
When no confusion may arise we denote $\ultra{p}_n$ simply by $n$.

Given a function $f\colon\N\to K$ with $K$ a compact Hausdorff space and given an ultrafilter $\ultra{p}\in\beta\N$, one can characterize $(\beta f)(\ultra{p})$ as the unique point $x$ in $K$ such that, for any neighborhood $U$ of $x$, the set $\{n\in\N : f(n) \in U\}$ belongs to $\ultra{p}$.
For this reason we use the notation
\[
\lim_{n\to\ultra{p}}f(n)\coloneqq (\beta f)(\ultra{p}).
\]

Given a set $A\subset\N$ we define
\[
A-\ultra{p}\coloneqq\{n\in\N:A-n\in\ultra{p}\}
\]
for all ultrafilters $\ultra{p}$ on $\N$.
Addition on $\N$ can be extended to a binary operation $+$ on $\beta \N$ by
\[
\ultra{p} + \ultra{q} = \{ A \subset \N : A - \ultra{q} \in \ultra{p} \}=\lim_{n\to\ultra{p}}\lim_{m\to\ultra{q}}n+m
\]
for all $\ultra{p},\ultra{q}$ in $\beta \N$.
We remark that despite being represented with the symbol $+$, this operation is \emph{not} commutative.
We mention this operation only to present the following lemma giving a criterion for a set of natural numbers to contain $B+C$; it will not be used throughout in the proof of \cref{thm_erdosNatural}.
This lemma was independently discovered by Di Nasso and a proof was presented in \cite[Proposition 3.1]{arXiv:1701.07791}.

\begin{lemma}[cf.\ \cref{lem:ultrafilterErdosGroups}]
\label{lem:ultrafilterErdos}
Fix $A \subset \N$.
There are non-principal ultrafilters $\ultra{p}$ and $\ultra{q}$ with the property that $A \in \ultra{p} +\ultra{q}$ and $A \in \ultra{q} + \ultra{p}$ if and only if there are infinite sets $B,C \subset \N$ with $B+C \subset A$.
\end{lemma}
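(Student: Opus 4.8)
The plan is to prove the biconditional in two parts, the implication from the combinatorial statement to the ultrafilter statement being essentially a direct unwinding of the definitions, and the converse an interleaved recursive construction.

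For the direction that produces the ultrafilters from given infinite sets: suppose $B,C\subset\N$ are infinite with $B+C\subset A$. Since $B$ and $C$ are infinite, choose non-principal ultrafilters $\ultra{p}$ and $\ultra{q}$ with $B\in\ultra{p}$ and $C\in\ultra{q}$. For every $b\in B$ we have $b+C\subset A$, hence $C\subset A-b$ and therefore $A-b\in\ultra{q}$; this shows $B\subset A-\ultra{q}$, so $A-\ultra{q}\in\ultra{p}$, which is precisely the assertion $A\in\ultra{p}+\ultra{q}$. The symmetric computation, using $B+c\subset A$ for every $c\in C$, gives $C\subset A-\ultra{p}$, hence $A-\ultra{p}\in\ultra{q}$, i.e.\ $A\in\ultra{q}+\ultra{p}$.

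For the converse, suppose $\ultra{p}$ and $\ultra{q}$ are non-principal with $A\in\ultra{p}+\ultra{q}$ and $A\in\ultra{q}+\ultra{p}$. Writing $A^{\star}\coloneqq A-\ultra{q}$ and $A^{\star\star}\coloneqq A-\ultra{p}$, the two hypotheses say exactly that $A^{\star}\in\ultra{p}$ and $A^{\star\star}\in\ultra{q}$. I would then build strictly increasing sequences $b_1<c_1<b_2<c_2<\cdots$ recursively, choosing at each stage $b_k$ inside $A^{\star}\cap\bigcap_{j<k}(A-c_j)$ with $b_k>c_{k-1}$, and then $c_k$ inside $A^{\star\star}\cap\bigcap_{i\le k}(A-b_i)$ with $c_k>b_k$. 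The observations that keep this going are that $c_j\in A^{\star\star}$ forces $A-c_j\in\ultra{p}$ (so the set from which $b_k$ is drawn lies in $\ultra{p}$, hence is infinite), and $b_i\in A^{\star}$ forces $A-b_i\in\ultra{q}$ (so the set from which $c_k$ is drawn lies in $\ultra{q}$, hence is infinite). Setting $B=\{b_k:k\in\N\}$ and $C=\{c_k:k\in\N\}$, these are infinite by strict monotonicity, and a case split shows $b_i+c_j\in A$ for all $i,j$: if $i\le j$ this holds because $c_j$ was chosen in $A-b_i$, and if $i>j$ because $b_i$ was chosen in $A-c_j$. Hence $B+C\subset A$.

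I expect the only real obstacle to be organizational: one must interleave the construction of the two sequences so that, at each step, the set being selected from is certified to belong to the appropriate ultrafilter — this is what makes it infinite and allows the choice of a strictly larger element, keeping $B$ and $C$ infinite. It is also the point at which both hypotheses, rather than just one, are used, since with only $A\in\ultra{p}+\ultra{q}$ one could not guarantee that $A-c_j$ lies in $\ultra{p}$ at the moment $b_k$ must be chosen.
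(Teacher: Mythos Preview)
Your proposal is correct and follows essentially the same approach as the paper's proof (given explicitly for the group version in \cref{lem:ultrafilterErdosGroups}): both directions match, and the interleaved recursive construction is the same idea, with your use of strict monotonicity being a harmless variant of the paper's requirement that the chosen elements be distinct from earlier ones.
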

\iffalse
\begin{proof}
First suppose that $B+C \subset A$ for infinite sets $B,C \subset\N$.
Let $\ultra{p}$ and $\ultra{q}$ be non-principal ultrafilters containing $B$ and $C$ respectively.
For all $c \in C$ we have $B \subset A - c$ so $A$ belongs to $\ultra{p} +\ultra{q}$.
For all $b \in B$ we have $C \subset A - b$ so $A$ also belongs to $\ultra{q} +\ultra{p}$.

Conversely, suppose that we can find non-principal ultrafilters $\ultra{p}$ and $\ultra{q}$ with $A$ belonging to both $\ultra{p}+ \ultra{q}$ and $\ultra{q} +\ultra{p}$.
Then $\{ n \in \N : A-n \in \ultra{q} \} \in \ultra{p}$ and $\{ n \in \N : A - n \in \ultra{p} \} \in \ultra{q}$.
We construct injective sequences $n \mapsto b_n$ and $n \mapsto c_n$ in $\N$ such that $b_i + c_j \in A$ for all $i,j \in \N$.
First choose $b_1 \in \N$ with $b_1^{-1} A \in \ultra{q}$.
Next, choose $c_1 \in \N$ from
\[
(A - b_1) \cap \{ y \in \N : A - y \in \ultra{p} \}
\]
which is possible since both sets above belong to $\ultra{q}$.
Next, choose $b_2 \in \N$ from
\[
(A - c_1) \cap \{ y \in \N : A - y \in \ultra{q} \}
\]
and not equal to $b_1$, choose $c_2$ from
\[
(A - b_1) \cap (A - b_2) \cap \{ y \in \N : A - y \in \ultra{p} \}
\]
not equal to $c_1$ and so on.
We can choose at each step a never before chosen element of $\N$ because all intersections belong to non-principal ultrafilters and are therefore infinite.
\end{proof}
\fi

Here is the main theorem of this section, which is inspired by the proof of \cite[Theorem~3.2]{DGJLLM15}.

\begin{theorem}
\label{prop_withultrafilters}
Let $A\subset\N$.
If there exist a \Folner{} sequence $\Phi$ in $\N$ and a non-principal ultrafilter $\ultra{p}\in\beta\N$ such that
$\dens_\Phi\big((A-n)\cap(A-\ultra{p})\big)$ exists for all $n\in\N$ and
\begin{equation}
\label{eq_prop_withultrafilters}
\lim_{n\to\ultra{p}}\dens_\Phi\big((A-n)\cap(A-\ultra{p})\big)>0
\end{equation}
then there exist infinite sets $B,C\subset\N$ such that $A\supset B+C$.
\end{theorem}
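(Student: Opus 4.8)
The plan is to build the infinite sets $B = \{b_1, b_2, \ldots\}$ and $C = \{c_1, c_2, \ldots\}$ greedily, alternating between choosing a new $b_i$ and a new $c_j$, while maintaining the invariant that all partial sums $b_i + c_j$ already chosen lie in $A$ and that a suitable ``largeness'' condition is preserved so that the construction never gets stuck. The key quantity to track is $\delta \coloneqq \lim_{n \to \ultra{p}} \dens_\Phi\big((A-n) \cap (A - \ultra{p})\big) > 0$. The ultrafilter $\ultra{p}$ will supply the $c_j$'s and the limit-largeness will supply the $b_i$'s.

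First I would unwind the hypothesis \eqref{eq_prop_withultrafilters}. By definition of $\lim_{n \to \ultra{p}}$, the set $P_0 \coloneqq \{ n \in \N : \dens_\Phi\big((A-n) \cap (A-\ultra{p})\big) > \delta/2 \}$ belongs to $\ultra{p}$; in particular it is infinite, and every $n \in P_0$ satisfies $n \in A - \ultra{p}$ once we also intersect with the set $A - \ultra{p} = \{ n : A - n \in \ultra{p}\}$, which itself lies in $\ultra{p}$ (this follows from the hypothesis since a set of positive $\Phi$-density is nonempty, forcing $(A-n) \cap (A-\ultra{p})$ nonempty, hence $A - n \in \ultra{p}$; more carefully one argues directly that $A - \ultra{p} \in \ultra{p}$ from \eqref{eq_prop_withultrafilters}). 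Set $E \coloneqq P_0 \cap (A - \ultra{p}) \in \ultra{p}$. The point of $E$ is: for each $b \in E$ we have (i) $A - b \in \ultra{p}$, so $b$ can serve as a ``row element'' whose shift of $A$ is $\ultra{p}$-large, and (ii) $\dens_\Phi\big((A-b) \cap (A-\ultra{p})\big) > \delta/2$, so the overlap of that shift with the universal ``column constraint set'' $A - \ultra{p}$ is $\Phi$-dense. More generally I would want the finite-intersection version: for any finite $F \subset E$, the set $\bigcap_{b \in F}(A - b)$ has positive lower $\Phi$-density (bounded below by a quantity depending only on $|F|$ and $\delta$ — in fact one shows $\upperdens_\Phi\big(\bigcap_{b \in F}(A-b) \cap (A - \ultra{p})\big) \geq \delta/2$ is too optimistic, but a pigeonhole/inclusion–exclusion argument over shifts, in the style of the standard proof of the $\Delta$-system or of the finite version of this density-sumset lemma, gives that $\bigcap_{b \in F}(A-b)$ is infinite whenever $\bigcap_{b \in F'}(A - b)$ was and the new $b$ is chosen from inside $A - \ultra{p}$ intersected with that previous intersection set, which is still in $\ultra{p}$ hence infinite).

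The construction then runs as follows. Choose $b_1 \in E$ arbitrarily; then $A - b_1 \in \ultra{p}$. Choose $c_1 \in (A - b_1) \cap E$, which is legal since $A - b_1 \in \ultra{p}$ and $E \in \ultra{p}$, so the intersection is in $\ultra{p}$ and hence infinite; note $c_1 \in E$ guarantees $A - c_1 \in \ultra{p}$. Now choose $b_2 \in (A - c_1) \cap E$, distinct from $b_1$, again possible because both sets lie in $\ultra{p}$; we get $b_2 + c_1 \in A$ and $A - b_2 \in \ultra{p}$. Then choose $c_2 \in (A - b_1) \cap (A - b_2) \cap E$, distinct from $c_1$ — legal because each $A - b_i \in \ultra{p}$ and $E \in \ultra{p}$. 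Continuing, at stage $n$ we pick $b_n \in \bigcap_{j < n}(A - c_j) \cap E$ (avoiding earlier $b$'s) and then $c_n \in \bigcap_{i \le n}(A - b_i) \cap E$ (avoiding earlier $c$'s); at each step the relevant set is a finite intersection of members of $\ultra{p}$ and so is infinite, which lets us always make a fresh choice. By construction $b_i + c_j \in A$ for all $i \le j$ and all $j < i$, i.e. for all $i, j$, so $B + C \subset A$ with $B, C$ infinite.

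I expect the main obstacle to be purely bookkeeping: verifying that $A - \ultra{p} \in \ultra{p}$ and, more delicately, that one genuinely never needs the $\dens_\Phi$ largeness in the recursion — the hypothesis \eqref{eq_prop_withultrafilters} is used only to guarantee that $A - \ultra{p}$ is itself $\ultra{p}$-large (equivalently, that $\ultra{p}$ is an idempotent-like ultrafilter relative to $A$ in the sense that $A \in \ultra{p} + \ultra{p}$), after which the argument is the standard two-ultrafilter greedy construction exactly as in the (commented-out) proof of \cref{lem:ultrafilterErdos} with $\ultra{q} = \ultra{p}$. The one place care is needed is making sure the positivity of the density, not merely nonemptiness, is what forces $A - \ultra{p} \in \ultra{p}$: one shows $\{ n : (A - n) \cap (A - \ultra{p}) \neq \emptyset\} \supset \{ n : \dens_\Phi((A-n)\cap(A-\ultra{p})) > \delta/2\} \in \ultra{p}$, and $(A-n) \cap (A-\ultra{p}) \neq \emptyset$ with $A - \ultra{p}$ in hand gives a witness $k \in A - n$ with $A - k \in \ultra{p}$; iterating or arguing along the ultrafilter then upgrades this to $A - n \in \ultra{p}$ for $\ultra{p}$-many $n$, i.e. $A - \ultra{p} \in \ultra{p}$. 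Everything else is the routine alternating selection above.
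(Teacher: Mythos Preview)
There is a genuine gap. Your construction rests on the claim that $E = P_0 \cap (A-\ultra{p})$ lies in $\ultra{p}$, equivalently that $A-\ultra{p}\in\ultra{p}$. You argue for this by noting that $(A-n)\cap(A-\ultra{p})\neq\emptyset$ for $\ultra{p}$-many $n$ and then asserting that this ``upgrades'' to $A-n\in\ultra{p}$ for $\ultra{p}$-many $n$. It does not: having $k\in(A-n)\cap(A-\ultra{p})$ gives $n+k\in A$ and $A-k\in\ultra{p}$, which says nothing about whether $A-n\in\ultra{p}$. A concrete counterexample: let $A$ be the odd integers, $\Phi$ the standard \Folner{} sequence, and $\ultra{p}$ any non-principal ultrafilter containing the odd integers. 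Then $A-\ultra{p}$ is exactly the set of even integers, so $A-\ultra{p}\notin\ultra{p}$ and your set $E=P_0\cap(A-\ultra{p})$ is empty (since $P_0$ consists of the odds). Yet $\lim_{n\to\ultra{p}}\dens_\Phi\big((A-n)\cap(A-\ultra{p})\big)=\tfrac12>0$, so the hypothesis \eqref{eq_prop_withultrafilters} is satisfied.

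The paper's proof never claims $A-\ultra{p}\in\ultra{p}$. With $L=A-\ultra{p}$, the $b_i$'s are drawn from $L$ (so $A-b_i\in\ultra{p}$ by definition), and the $c_j$'s are drawn from $\bigcap_{\ell\in F}(A-\ell)\cap\{m:\dens_\Phi((A-m)\cap L)>\epsilon\}$ for finite $F\subset L$, which is in $\ultra{p}$. The $c_j$'s need \emph{not} lie in $L$, so one cannot conclude $A-c_j\in\ultra{p}$, and hence the set $\bigcap_j(A-c_j)\cap L$ from which the next $b$ must come is not obviously infinite. This is exactly where the density hypothesis does real work: since each $c_j$ satisfies $\dens_\Phi\big((A-c_j)\cap L\big)>\epsilon$, the Bergelson intersectivity lemma (\cref{thm_bergelson}, \cref{cor_bergelsonlemma}) furnishes a subsequence along which all finite intersections have positive upper density, hence are infinite. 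Your proposal tries to bypass this lemma with a pure ultrafilter argument, and that is precisely where it breaks.
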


The following result of Bergelson~\cite{Bergelson85} will be crucial for the proof of \cref{prop_withultrafilters}.
We present a short proof of it for completeness.
\begin{lemma}
[{cf.\ \cite[Theorem 1.1]{Bergelson85}}]\label{thm_bergelson}
Let $(X,\mathcal{B},\mu)$ be a probability space and let $n \mapsto B_n$ be a sequence in $\mathcal{B}$.
Assume that there exists $\epsilon>0$ such that $\mu(B_n)\geq\epsilon$ for all $n\in\N$.
Then there exists an injective map $\sigma \colon \N \to \N$ such that
\begin{equation}\label{eq_thm_bergelson}
\mu\left(B_{\sigma(1)}\cap\cdots\cap B_{\sigma(n)}\right)>0
\end{equation}
for every $n \in \N$.
\end{lemma}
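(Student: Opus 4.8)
The plan is to exploit the fact that the indicator functions $1_{B_n}$ all have integral at least $\epsilon$, pass to a weak-* limit along an ultrafilter, and use a pigeonhole/compactness argument on the space of finite subsets to build the desired injective sequence greedily. Concretely, I would first fix a non-principal ultrafilter $\ultra{q}$ on $\N$ and consider the function $g \coloneqq \lim_{n \to \ultra{q}} 1_{B_n}$, where the limit is taken in the weak-* topology of $\lp{\infty}(X,\mathcal{B},\mu)$ (equivalently, $\int_X g \, h \d\mu = \lim_{n\to\ultra{q}} \int_X 1_{B_n} h \d\mu$ for every $h \in \lp{1}(\mu)$). Since the constant function $1$ lies in $\lp{1}(\mu)$, we get $\int_X g \d\mu = \lim_{n\to\ultra{q}} \mu(B_n) \geq \epsilon$, and $0 \leq g \leq 1$ almost everywhere. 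In particular the set $E \coloneqq \{ x : g(x) > 0 \}$ has positive measure, and moreover $\int_E g \d\mu \geq \epsilon$.

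The key observation is a lower-bound estimate: for any finite set $F \subset X$ of indices, I claim $\mu\bigl( \bigcap_{i \in F} B_{\sigma(i)} \bigr)$ can be kept bounded below provided the $\sigma(i)$ are chosen so that $1_{B_{\sigma(i)}}$ is, on a fixed positive-measure set, uniformly close to $g$. Here is the greedy construction. Suppose $\sigma(1), \dots, \sigma(k)$ have been chosen with the property that the set
\[
G_k \coloneqq \Bigl\{ x : 1_{B_{\sigma(i)}}(x) \geq \tfrac{1}{2}\, 2^{-i}\, g(x) \text{ for all } i \le k \Bigr\}
\]
has measure at least $\delta_k$ for some $\delta_k > 0$ to be tracked. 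Actually the cleaner route, and the one I would follow, is this: since $\int_X (1 - 1_{B_n}) g \d\mu \to \int_X (1-g) g \d\mu$ along $\ultra{q}$ and $\int_X g^2 \d\mu > 0$ while $\int_X (1-g)g \d\mu < \int_X g \d\mu$, one can arrange that along a subsequence $\int_X 1_{B_n} g \d\mu \geq \tfrac{1}{2}\int_X g^2 \d\mu \eqqcolon 2\eta > 0$. Then for any finite index set, a Bonferroni-type inequality
\[
\int_X \prod_{i \in F} 1_{B_{\sigma(i)}} \, g \d\mu \;\geq\; \int_X g \d\mu \;-\; \sum_{i \in F} \int_X (1 - 1_{B_{\sigma(i)}}) \, g \d\mu
\]
would give a positive lower bound if each term in the sum were small — which is \emph{not} automatic from $\int 1_{B_n} g \geq 2\eta$ alone. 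So the honest approach is the greedy one: having chosen $\sigma(1),\dots,\sigma(k)$ so that $C_k \coloneqq B_{\sigma(1)} \cap \dots \cap B_{\sigma(k)}$ has $\mu(C_k) > 0$, apply the hypothesis to the conditional measure on $C_k$: since $\mu(B_n \mid C_k)$ need not be bounded below, instead re-run the weak-* limit argument relative to $1_{C_k}$: the function $g_k \coloneqq \lim_{n \to \ultra{q}} 1_{B_n}$ satisfies $\int_{C_k} g_k \d\mu = \lim_{n \to \ultra{q}} \mu(B_n \cap C_k)$; we need this to be positive.

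This is exactly where the main obstacle lies, and it is resolved by choosing the ultrafilter and the sequence more carefully. I would instead argue by a direct compactness-and-pigeonhole scheme: for each $n$, regard $1_{B_n}$ as a point in the compact metrizable space $[0,1]^{(X,\mu)}$ — more precisely, fix a countable measure-dense subalgebra, or work in $\lp{2}(\mu)$ with the weak topology, which is metrizable on the bounded ball. The sequence $(1_{B_n})$ has a weak-$\lp{2}$ limit point $g$ along some $\ultra{q}$, with $\langle g, 1 \rangle \geq \epsilon$, hence $\|g\|_2^2 = \langle g, g \rangle \geq \langle g, 1 \rangle^2 \geq \epsilon^2$ (as $g \le 1$ gives $\langle g,g\rangle \ge \langle g, g \rangle$... use $\langle g,1\rangle \le \|g\|_2 \|1\|_2 = \|g\|_2$, so $\|g\|_2 \ge \epsilon$). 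Now build $\sigma$ so that $\langle 1_{B_{\sigma(k)}}, g \rangle \to \|g\|_2^2 > 0$: possible since $g$ is a weak limit point. Then $\langle 1_{B_{\sigma(1)}} \cdots 1_{B_{\sigma(k)}}, g\rangle$ is controlled from below by induction — at step $k$, replacing the target function $g$ by $g \cdot 1_{B_{\sigma(1)}} \cdots 1_{B_{\sigma(k-1)}}$ and re-extracting a weak limit point of $(1_{B_n})$ against this new $\lp{2}$ function — and each such inner product stays positive because it converges to $\| g\, 1_{C_{k-1}} \|_2^2$, which is positive as long as $g \cdot 1_{C_{k-1}}$ is not a.e. zero; and $\int g\, 1_{C_{k-1}} \d\mu = \langle 1_{C_{k-1}}, g\rangle \geq \mu(C_{k-1}) - \sum_{i<k}\langle 1 - 1_{B_{\sigma(i)}}, \cdot\rangle$, which the greedy choices keep positive. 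In short: maintain the invariant that $\int_X \bigl(\prod_{i \le k} 1_{B_{\sigma(i)}}\bigr) g \d\mu > 0$, which forces $\mu(C_k) > 0$, and at each stage pick $\sigma(k+1)$ — from the infinitely many admissible indices, so injectivity is free — using the weak-limit characterization to make the incremental loss $\int (1 - 1_{B_{\sigma(k+1)}}) \bigl(\prod_{i\le k}1_{B_{\sigma(i)}}\bigr) g \d\mu$ as small as we like, in particular smaller than half the current value of the invariant. The hard part is packaging this cleanly so that the ``as small as we like'' is legitimate — that is, verifying that for a weak limit point $g'$ of $(1_{B_n})$ against a fixed $h \in \lp{1}(\mu)$ we truly have $\int 1_{B_n} h \to \int g' h$ along a suitable subsequence, and iterating this countably often while keeping a single injective $\sigma$ — but this is a standard diagonalization once the one-step estimate is in place.
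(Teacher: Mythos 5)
Your final scheme (the ``in short'' paragraph) does yield a correct proof, and it is genuinely different from the paper's. Two points need repair before it is airtight. First, the claim that the incremental loss $\int (1-1_{B_{\sigma(k+1)}})\,h_k \d\mu$, where $h_k \coloneqq g\cdot 1_{C_k}$ and $C_k = B_{\sigma(1)}\cap\cdots\cap B_{\sigma(k)}$, can be made ``as small as we like, in particular smaller than half the current value of the invariant'' is false: along $\ultra{q}$ that loss converges to $\int (1-g)h_k\d\mu$, which can be nearly all of $\int h_k\d\mu$ (take the $B_n$ independent of measure $\epsilon$, so $g\equiv\epsilon$). What saves you — and what you state correctly one sentence earlier — is that the \emph{retained} quantity $\int 1_{B_n}h_k\d\mu$ converges along $\ultra{q}$ to $\int g\,h_k\d\mu \ge \int h_k^2\d\mu>0$ (using $0\le h_k\le g$ and the inductive hypothesis $\int h_k\d\mu>0$), so the set of admissible indices $n$ belongs to the non-principal ultrafilter $\ultra{q}$ and is therefore infinite; the invariant survives even though it may decay geometrically. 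Second, the ``hard part'' you flag at the end is a non-issue once you commit to a single non-principal $\ultra{q}$ throughout: the weak-$^*$ limit $g=\lim_{n\to\ultra{q}}1_{B_n}$ exists in the weak-$^*$ compact unit ball of $\lp{\infty}(\mu)$ and satisfies $\lim_{n\to\ultra{q}}\int 1_{B_n}h\d\mu=\int gh\d\mu$ for all $h\in\lp{1}(\mu)$ simultaneously, so no re-extraction or diagonalization is ever needed.

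For comparison, the paper's proof avoids weak limits entirely and is much shorter: the collection $\mathcal F$ of finite $F\subset\N$ with $\mu(\bigcap_{n\in F}B_n)=0$ is countable, so $X_0=\bigcup_{F\in\mathcal F}\bigcap_{n\in F}B_n$ is null; Fatou's lemma applied to $\frac1N\sum_{n\le N}1_{B_n}$ produces a point $x\notin X_0$ lying in infinitely many $B_n$; enumerating $\{n:x\in B_n\}$ defines $\sigma$, and each finite intersection $B_{\sigma(1)}\cap\cdots\cap B_{\sigma(k)}$ contains $x\notin X_0$, hence cannot be null. Your functional-analytic route proves the same statement and makes the positivity quantitative at each stage, but it buys nothing extra here; the pointwise argument is the one to prefer for brevity.
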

\begin{proof}
The collection ${\mathcal F}$ of all finite sets $F\subset\N$ with the property that $\mu(\bigcap_{n\in F}B_n)=0$ is countable, and therefore the union $X_0=\bigcup_{F\in{\mathcal F}}(\bigcap_{n\in F}B_n)$ has $\mu(X_0)=0$.

For each $N\in\N$ let $f_N\coloneqq\frac1N\sum_{n=1}^N1_{B_n}$. It is clear that $\int_X f_N\d\mu\geq\epsilon$ for every $N\in\N$.
By Fatou's lemma, the function $f:=\limsup_{N\to\infty}f_N$ also satisfies $\int_Xf\d\mu\geq\epsilon$.
Therefore there exists a point $x\in X\setminus X_0$ with $f(x)>0$, and in particular the set $\{n\in\N:x\in B_n\}$ is infinite.
Let $\sigma(n)$ be an enumeration of that set.

To show that \eqref{eq_thm_bergelson} holds notice that, for every $n\in\N$, the set $\{\sigma(1),\ldots,\allowbreak \sigma(n)\}$ can not be in ${\mathcal F}$ because $x\in B_{\sigma(1)}\cap\ldots\cap B_{\sigma(n)}$ but $x\notin X_0$.
\end{proof}

Given a \Folner{} sequence $\Phi$ on $\N$ write $\M{\Phi}$ for the set of Radon probability measures on $\beta \N$ that are weak$^*$ accumulation points of the set $\big\{\mu_N:N\in\N\big\}$, where
\begin{equation}
\label{eqn_measures_on_betaN}
\mu_N\coloneqq \frac{1}{|\Phi_N|} \sum_{n \in \Phi_N} \delta_n
\end{equation}
and $\delta_n$ is the unit mass at the principal ultrafilter $\ultra{p}_n$.
%Since the weak$^*$
\begin{corollary}
\label{cor_bergelsonlemma}
Let $\Phi$ be a \Folner{} sequence on $\N$ and, for each $n\in\N$, let $A_n\subset\N$.
Assume $\dens_\Phi(A_n)$ exists for all $n \in \N$ and that there exists $\epsilon>0$ such that $\dens_\Phi(A_n)\geq\epsilon$ for all $n\in\N$.
Then there exists an injective sequence $\sigma\colon\N\to\N$ such that
\[
\upperdens_\Phi\left(A_{\sigma(1)}\cap\cdots\cap A_{\sigma(n)}\right)>0
\]
for every $n \in \N$.
\end{corollary}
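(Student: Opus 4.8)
The plan is to transport the statement to $\beta\N$ and apply \cref{thm_bergelson} there, using the basic clopen sets $\cl(A_n)$ as the sets $B_n$. First I would note that $\M{\Phi}\neq\emptyset$: since $\beta\N$ is compact Hausdorff, the space of Radon probability measures on it is weak$^*$ compact (Banach--Alaoglu together with the Riesz representation theorem), so the sequence $(\mu_N)$ from \eqref{eqn_measures_on_betaN} has at least one weak$^*$ accumulation point. Fix $\mu\in\M{\Phi}$, realized as $\mu=\lim_\alpha\mu_{N_\alpha}$ along some subnet. The key observation is that each $\cl(A_n)$ is clopen in $\beta\N$ (its complement is $\cl(\N\setminus A_n)$), so $1_{\cl(A_n)}$ is continuous, and $\mu_N(\cl(A_n))=|A_n\cap\Phi_N|/|\Phi_N|$. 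Since $\dens_\Phi(A_n)$ exists by hypothesis, this sequence converges to $\dens_\Phi(A_n)$, hence so does the subnet, giving $\mu(\cl(A_n))=\dens_\Phi(A_n)\geq\epsilon$ for every $n$.

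Next I would apply \cref{thm_bergelson} to the probability space $(\beta\N,\mathcal{B},\mu)$ with $B_n=\cl(A_n)$, obtaining an injective $\sigma\colon\N\to\N$ with
\[
\mu\big(\cl(A_{\sigma(1)})\cap\cdots\cap\cl(A_{\sigma(n)})\big)>0
\]
for every $n$. Using the identity $\cl(A)\cap\cl(B)=\cl(A\cap B)$ repeatedly, the set above equals $\cl(D_n)$, where $D_n\coloneqq A_{\sigma(1)}\cap\cdots\cap A_{\sigma(n)}$. Finally, $\mu(\cl(D_n))=\lim_\alpha\mu_{N_\alpha}(\cl(D_n))=\lim_\alpha|D_n\cap\Phi_{N_\alpha}|/|\Phi_{N_\alpha}|$, and since this is a subnet limit of the sequence $\big(|D_n\cap\Phi_N|/|\Phi_N|\big)_N$ it is at most the $\limsup$ of that sequence; therefore $\upperdens_\Phi(D_n)\geq\mu(\cl(D_n))>0$, which is the desired conclusion.

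There is no serious obstacle here; the one point requiring care is that the correspondence between $\Phi$-densities and $\mu$ is used asymmetrically. On the ``input'' side one needs the \emph{equality} $\mu(\cl(A_n))=\dens_\Phi(A_n)$ so that the uniform lower bound $\epsilon$ transfers to a uniform lower bound on the $\mu$-masses of the $\cl(A_n)$ --- this is precisely where the hypothesis that each $\dens_\Phi(A_n)$ exists is used, since for an arbitrary set only an inequality would be available along a subnet. On the ``output'' side the intersections $D_n$ need not have a $\Phi$-density, but $\mu(\cl(D_n))$ is still a subnet limit of the sequence defining $\upperdens_\Phi(D_n)$, so the one-sided inequality $\upperdens_\Phi(D_n)\geq\mu(\cl(D_n))$ holds and is exactly what is needed.
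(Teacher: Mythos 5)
Your proof is correct and follows essentially the same route as the paper: fix $\mu\in\M{\Phi}$, use that the $\cl(A_n)$ are clopen so that $\mu(\cl(A_n))=\dens_\Phi(A_n)\geq\epsilon$, apply \cref{thm_bergelson} on $(\beta\N,\mathcal{B},\mu)$, and transfer back via $\cl(A)\cap\cl(B)=\cl(A\cap B)$ and $\upperdens_\Phi(D_n)\geq\mu(\cl(D_n))$. The extra details you supply (non-emptiness of $\M{\Phi}$ and the asymmetric use of equality on the input side versus inequality on the output side) are correct and merely make explicit what the paper leaves implicit.
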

\begin{proof}
Let $\mu\in\M\Phi$ and let $B_n=\cl(A_n)$.
The set $B_n$ is clopen and the density of $A_n$ along $\Phi$ exists so $\mu(B_n)=\dens_\Phi(A_n)$ for all $n \in \N$.
Apply \cref{thm_bergelson} to the probability space $(\beta\N,\mathcal{B},\mu)$, where $\mathcal{B}$ is the Borel $\sigma$-algebra on $\beta\N$, to find an injective map $\sigma\colon\N\to\N$ such that \eqref{eq_thm_bergelson} holds for every $n\in\N$.
Since $B_{\sigma(1)}\cap\cdots\cap B_{\sigma(n)}=\cl(A_{\sigma(1)}\cap\cdots\cap A_{\sigma(n)})$, this implies that $\upperdens_\Phi\left(A_{\sigma(1)}\cap\cdots\cap A_{\sigma(n)}\right)\geq\mu\big(B_{\sigma(1)}\cap\cdots\cap B_{\sigma(n)}\big)>0$ as desired.
\end{proof}

The next proposition, whose statement (and proof) is heavily influenced by the paper \cite{DGJLLM15}, can be seen as an ultrafilter-free version of \cref{prop_withultrafilters}.

\begin{proposition}
\label{prop_depoint}
Let $A\subset\N$.
If there exist a \Folner{} sequence $\Phi$ in $\N$, a set $L\subset\N$ and $\epsilon>0$ such that $\dens_\Phi\big((A-m)\cap L\big)$ exists for every $m\in\N$, and for every finite subset $F\subset L$
\begin{equation}
\label{eq_prop_depoint}
\bigcap_{\ell\in F}(A-\ell)\ \cap\ \Big\{m\in\N:\dens_\Phi\big((A-m)\cap L\big)>\epsilon\Big\}\text{ is infinite}
\end{equation}
then there exist infinite sets $B,C$ such that $A\supset B+C$.
\end{proposition}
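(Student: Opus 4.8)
The plan is to build the infinite sets $B = \{b_1, b_2, \ldots\}$ and $C = \{c_1, c_2, \ldots\}$ by a greedy diagonal construction, using the hypothesis \eqref{eq_prop_depoint} to guarantee that at each stage the relevant intersection from which we must choose the next element is infinite, hence non-empty and in fact allows us to avoid all previously chosen elements. The guiding principle is that $B$ will be built inside $L$ (so that each $b$ has the property that $\dens_\Phi((A-b)\cap L)$ is large, which keeps the construction going), while $C$ will be built so that every partial sum $b_i + c_j$ already lies in $A$.

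First I would set up the induction. Suppose we have already chosen $b_1, \ldots, b_k \in L$ and $c_1, \ldots, c_{k-1} \in \N$ (all distinct within each set) such that $b_i + c_j \in A$ for all $i \le k$, $j \le k-1$, and additionally each $b_i$ lies in the set $S \coloneqq \{m \in \N : \dens_\Phi((A-m)\cap L) > \epsilon\}$. To choose $c_k$: I need $c_k \in \bigcap_{i=1}^{k}(A - b_i)$, and I want $c_k$ distinct from $c_1, \ldots, c_{k-1}$. Since each $b_i \in S$, in particular $b_1 \in S$, so $\dens_\Phi((A - b_1)\cap L) > \epsilon > 0$; but I need more than that — I need the full intersection to be infinite. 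This is exactly what one gets by applying hypothesis \eqref{eq_prop_depoint} with $F = \{b_1, \ldots, b_k\} \subset L$: the set $\bigcap_{i=1}^{k}(A - b_i) \cap S$ is infinite. Intersecting $A - b_i$ with the slightly larger set $A - b_i$ (i.e., just using $\bigcap (A-b_i)$) is implied, so I may pick $c_k$ in $\bigcap_{i=1}^k (A-b_i)$ avoiding $\{c_1,\ldots,c_{k-1}\}$. (In fact I will even pick $c_k$ in $\bigcap_{i=1}^k(A-b_i) \cap S$ if that is convenient, though for $c_k$ I don't strictly need membership in $S$.) To choose $b_{k+1}$: I need $b_{k+1} \in \bigcap_{j=1}^{k}(A - c_j)$ so that the new sums $b_{k+1} + c_j$ land in $A$; I need $b_{k+1} \in L$ so the induction hypothesis on $B \subset L$ persists; I need $b_{k+1} \in S$ so that later applications of \eqref{eq_prop_depoint} are available; and I need $b_{k+1}$ distinct from $b_1, \ldots, b_k$. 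The key point here is that $(A - c_j) \cap L$ is a set whose $\Phi$-density is precisely what appears in \eqref{eq_prop_depoint}: since $c_j$ was chosen inside $\bigcap(A-b_i) \cap S$ (or, more directly, since by construction $\dens_\Phi((A-c_j)\cap L)$ exists and — here is the subtle part — we need to know it exceeds $\epsilon$). So the crux is ensuring that the $c_j$ themselves land in $S$, i.e., that we choose $c_j \in \bigcap_{i\le j}(A - b_i) \cap S$; then $c_j \in L$? No — $S$ need not be contained in $L$. Let me reorganize: I should instead pick $b_{k+1}$ from $\bigcap_{j=1}^k (A - c_j) \cap \{m \in \N : \dens_\Phi((A-m)\cap L) > \epsilon\}$ where this set is furthermore required to meet $L$.

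Here is the clean way to see the induction step for $b_{k+1}$ works. Apply hypothesis \eqref{eq_prop_depoint} with $F = \{c_1, \ldots, c_k\}$ — but wait, $F$ must be a subset of $L$, and the $c_j$ are not a priori in $L$. So the real design constraint is: \emph{both} $B$ and $C$ must be built inside $L \cap S$, and simultaneously all cross-sums must lie in $A$. Re-examining: at the step where I choose $b_{k+1}$, I want to invoke \eqref{eq_prop_depoint} with $F = \{c_1,\ldots,c_k\} \subset L$, which tells me $\bigcap_{j\le k}(A - c_j) \cap S$ is infinite; I then pick $b_{k+1}$ in this set and \emph{also} in $L$ — but why is there such an element? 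The set $\bigcap_{j\le k}(A-c_j) \cap S$ is infinite, and I need it to meet $L$; that is not automatic. The resolution is to observe that being in $S$ forces having positive density of $(A-m)\cap L$, hence in particular $(A-m)\cap L \ne \emptyset$, hence there exists $\ell \in L$ with $m + \ell \in A$ — but that gives an element of $L$, not that $m$ itself is in $L$. So the correct construction puts $B \subset S$ and $C \subset L$, with the asymmetry built in from the start: choose $c_1 \in L$ first (any element of $L$ with $\dens_\Phi((A-c_1)\cap L) > \epsilon$ — does one exist? apply \eqref{eq_prop_depoint} with $F = \emptyset$ to get $S$ infinite, then... no, I need an element of $S \cap L$).

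I expect the main obstacle to be exactly this bookkeeping: arranging the two sequences so that $B$ is drawn from $\{m : \dens_\Phi((A-m) \cap L) > \epsilon\}$ and $C$ is drawn from $L$, with every partial sum already in $A$, and verifying at each stage that the set we draw from is infinite. The resolution I would pursue: choose $b_1 \in \{m : \dens_\Phi((A-m)\cap L) > \epsilon\}$ (nonempty, even infinite, by \eqref{eq_prop_depoint} with $F = \emptyset$); then choose $c_1 \in (A - b_1) \cap L$ (nonempty because $\dens_\Phi((A-b_1)\cap L) > \epsilon > 0$); inductively, having $b_1,\ldots,b_k$ in $S$ and $c_1,\ldots,c_k$ in $L$ with all cross-sums in $A$, choose $b_{k+1}$ in $\bigcap_{j=1}^k (A-c_j) \cap \{m : \dens_\Phi((A-m)\cap L) > \epsilon\}$ — infinite by \eqref{eq_prop_depoint} applied to $F = \{c_1,\ldots,c_k\} \subset L$ — avoiding $b_1,\ldots,b_k$; then choose $c_{k+1}$ in $\bigcap_{i=1}^{k+1}(A - b_i) \cap L$, which is infinite: indeed $\bigcap_{i=1}^{k+1}(A-b_i) \cap L = \bigl(\bigcap_{i=1}^{k}(A-b_i) \cap L\bigr) \cap (A - b_{k+1})$, and since $b_{k+1} \in S$ we have $\dens_\Phi((A - b_{k+1}) \cap L) > \epsilon$; combined with the fact that $\dens_\Phi((A-b_i)\cap L) > \epsilon$ for each $i \le k$, a union bound shows $\upperdens_\Phi\bigl(\bigcap_{i=1}^{k+1}(A-b_i)\cap L\bigr) > 0$ provided $(k+1)(1-\epsilon) < 1$ — which fails for large $k$. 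So a union bound is too weak, and I must instead invoke \cref{cor_bergelsonlemma}. The right move: apply \cref{cor_bergelsonlemma} to the sets $(A - b_i) \cap L$ — but the $b_i$ are being constructed along the way, so this requires interleaving \cref{cor_bergelsonlemma} with the greedy choice. Concretely, I would first use \eqref{eq_prop_depoint} to extract an infinite sequence $b_1, b_2, b_3, \ldots \in S$ with all needed compatibility ($b_{k+1} \in \bigcap_{j \le k}(A - c_j)$) \emph{and} simultaneously, by \cref{cor_bergelsonlemma} applied to $\{(A - b_i) \cap L\}_i$ (each of density $> \epsilon$), pass to a subsequence along which every finite intersection $\bigcap_{i \in F}(A - b_i) \cap L$ has positive upper density, hence is infinite, so that $c_k$ can always be chosen in $\bigcap_{i \le k}(A-b_i) \cap L$ avoiding earlier $c_j$'s. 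Threading these two requirements together — the forward compatibility of the $b$'s with already-chosen $c$'s, and the Bergelson-type richness of the $b$'s needed to choose future $c$'s — is the delicate heart of the argument, and I would handle it by a single induction that at each stage extends both sequences by one element, drawing $b_{k+1}$ from the (infinite) set $\bigcap_{j\le k}(A-c_j) \cap S$ and $c_{k+1}$ from the (infinite, by \cref{cor_bergelsonlemma} applied to the $b_i$ chosen so far together with $b_{k+1}$) set $\bigcap_{i \le k+1}(A - b_i) \cap L$. Once both sequences are built, $B = \{b_i\}$ and $C = \{c_j\}$ are infinite, and by construction $b_i + c_j \in A$ for all $i, j$, so $B + C \subset A$, completing the proof.
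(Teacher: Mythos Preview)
Your diagnosis is right up to the last paragraph: the crux is that a union bound on the sets $(A-b_i)\cap L$ is too weak, and \cref{cor_bergelsonlemma} is the correct tool. But your final resolution---``a single induction that at each stage extends both sequences by one element, drawing \ldots\ $c_{k+1}$ from the (infinite, by \cref{cor_bergelsonlemma} applied to the $b_i$ chosen so far together with $b_{k+1}$) set $\bigcap_{i \le k+1}(A - b_i) \cap L$''---does not work. \cref{cor_bergelsonlemma} takes as input an \emph{infinite} sequence of sets and outputs a \emph{subsequence} along which all finite intersections are large; it says nothing about the intersection of a fixed finite family $\{(A-b_1)\cap L,\ldots,(A-b_{k+1})\cap L\}$ that you have already committed to. In your interleaved scheme the $b_i$'s are chosen one at a time with compatibility conditions involving the already-chosen $c_j$'s, so you cannot go back and pass to a Bergelson subsequence of the $b_i$'s without destroying those compatibilities. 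The circularity you flagged (``Threading these two requirements together\ldots is the delicate heart'') is real and is not dissolved by the single-induction sentence that follows it.

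The paper breaks the circularity by \emph{decoupling} the two choices. First it fixes an exhaustion $F_1\subset F_2\subset\cdots$ of $L$ and uses the hypothesis \eqref{eq_prop_depoint} once for each $n$ (with $F=F_n$) to produce an entire infinite sequence $(e_n)$ with $e_n\in S\cap\bigcap_{\ell\in F_n}(A-\ell)$. Only then does it apply \cref{cor_bergelsonlemma} a single time to the infinite family $\{(A-e_n)\cap L\}_{n\in\N}$, obtaining a subsequence $(e_{\sigma(n)})$ along which all finite intersections with $L$ are infinite. Now the back-and-forth is straightforward: $C$ is taken from the $e_{\sigma(\cdot)}$'s and $B$ from $L$; each new $b$ is chosen in some $F_{\sigma(j)}$, and the pre-built property $e_{\sigma(j)}\in\bigcap_{\ell\in F_{\sigma(j)}}(A-\ell)$ automatically gives $b+e_{\sigma(j)}\in A$ for every earlier $b$. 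The point is that Bergelson is applied once, before any $b$ is chosen, to a sequence that was itself constructed without reference to any $b$.
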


\begin{proof}
Let $F_1 \subset F_2 \subset \cdots$ be an increasing exhaustion of $L$ by finite subsets.
Construct a sequence $n \mapsto e_n$ in $\N$ of distinct elements such that
\[
e_n \in \bigcap_{\ell \in F_n} (A-\ell)\cap\Big\{ m \in \N : \dens_\Phi\big((A-m) \cap L\big) > \epsilon \Big\}
\]
for each $n \in \N$.
This can be done because each of the sets above is infinite by hypothesis.

In particular $\dens_\Phi\big((A-e_n) \cap L\big) > \epsilon$ for all $n \in \N$.
The Bergelson intersectivity lemma (\cref{cor_bergelsonlemma}) implies that, for some subsequence $n \mapsto e_{\sigma(n)}$ of $e$ the intersection
\[
\Big( (A -e_{\sigma(1)}) \cap L \Big) \cap \cdots \cap \Big( (A-e_{\sigma(n)})\cap L \Big)
\]
is infinite for all $n \in \N$.

Choose $b_1 \in F_{\sigma(1)}$ and put $j_1 = 1$.
Choose $c_1 = e_{\sigma(1)}$.
Thus $c_1 \in A-b_1$.
Next choose $b_2 \in (A-c_1) \cap L$ outside $F_{\sigma(1)}$ and let $j_2$ be minimal with $b_2 \in F_{\sigma(j_2)}$.
(In particular $b_2$ is not equal to $b_1$.)
Then choose $c_2 = e_{\sigma(j_2)} \in (A-b_1) \cap(A-b_2)$.
Continue this process inductively, choosing
\[
b_{n+1}
\in
(A -c_1) \cap \cdots \cap (A- c_n) \cap L
=
(A -e_{\sigma(j_1)}) \cap \cdots \cap (A- e_{\sigma(j_n)}) \cap L
\]
outside $F_{\sigma(j_n)}$ and choosing $j_{n+1}$ minimal with $b_{n+1} \in F_{\sigma(j_{n+1})}$ and then choosing
\[
c_{n+1} = e_{\sigma(j_{n+1})} \in (A-b_1) \cap \cdots \cap (A-b_{n+1})
\]
which is distinct from $c_1,\dots,c_n$ because $e$ is injective.
Take $B = \{ b_n : n \in \N \}$ and $C = \{ c_n : n \in \N \}$ to conclude the proof.
\end{proof}

The proof of \cref{prop_withultrafilters} is now quite straightforward.

\begin{proof}[Proof of \cref{prop_withultrafilters}]
Let $L=A-\ultra{p}=\{\ell\in\N:A-\ell\in\ultra{p}\}$ and let $$\epsilon=\lim_{n\to\ultra{p}}\dens\big((A-n)\cap(A-\ultra{p})\big)/2.$$
Then the set $\{n\in\N:\dens\big((A-n)\cap L\big)>\epsilon\}$ is in $\ultra{p}$ and hence, for any finite set $F\subset L$, also the intersection
\[
\bigcap_{\ell\in F}(A-\ell)\ \cap\ \Big\{m\in\N:\dens_\Phi\big((A-m)\cap L\big)>\epsilon\Big\}
\]
is in $\ultra{p}$.
Since $\ultra{p}$ is non-principal, this intersection can not be finite.
The desired conclusion now follows from \cref{prop_depoint}.
\end{proof}

In view of \cref{prop_withultrafilters}, the proof of \cref{thm_erdosNatural} now follows from the following theorem.

\begin{theorem}
\label{thm_functionalErdosNatural}
Let $A\subset\N$ and let $\Phi$ be a \Folner{} sequence on $\N$ with $\dens_\Phi(A)$ existing.
For every $\epsilon>0$ there exists a \Folner{} subsequence $\Psi$ of $\Phi$ and a non-principal ultrafilter $\ultra{p} \in \beta \N$ such that $\dens_{\Psi}( (A-m) \cap (A - \ultra{p}) )$ exists for all $m\in \N$ and
\begin{equation}
\label{eqn_functionalErdosNatural}
\lim_{m \to \ultra{p}} \dens_{\Psi} \big( (A-m) \cap (A - \ultra{p}) \big)
\geq
\dens_{\Psi}(A)^2 - \epsilon
\end{equation}
holds.
\end{theorem}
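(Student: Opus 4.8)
The plan is to establish \eqref{eqn_functionalErdosNatural} by first passing to the functional/ultrafilter reformulation sketched in the introduction, then executing the three-term splitting from \eqref{eqn:strategyFunctional_-1}--\eqref{eqn:strategyFunctional_-3}. Concretely, I would fix $A$ and a F\o lner sequence $\Phi$ along which $\dens_\Phi(A)$ exists, and set $f = 1_A$. The first step is to apply the Jacobs--de Leeuw--Glicksberg-type splitting (proved in \cref{sec_two_decomps}) to write $1_A = f_\wm + f_\comp$ relative to the inner product $\bilin{\cdot}{\cdot}_\Phi$, where $f_\comp$ lies in the compact (almost periodic) subspace and $f_\wm$ is weak mixing; crucially this decomposition commutes with the shifts $\rmult^m$. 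Next I would apply the second splitting $1_A = f_\anti + f_\bes$, with $f_\bes$ Besicovitch almost periodic, since that is the component whose behaviour under $\rmult^\ultra{p}$ we can control.

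The heart of the argument is to choose the F\o lner subsequence $\Psi$ and the non-principal ultrafilter $\ultra{p}$ correctly. Because $f_\bes$ is Besicovitch almost periodic, its orbit $\{\rmult^m f_\bes : m \in \N\}$ is totally bounded in an appropriate $\lp{2}$-type seminorm, so along a suitable subsequence $\Psi$ of $\Phi$ (and using the Stone--\v Cech universal property together with compactness, so that $\rmult^\ultra{p} 1_A$ is a well-defined limit and all the relevant densities $\dens_\Psi((A-m)\cap(A-\ultra{p}))$ exist) one can arrange that $\lim_{m\to\ultra{p}} \rmult^m f_\bes$ agrees with $\rmult^\ultra{p} f_\bes$ in seminorm, and moreover that $\rmult^\ultra{p} f_\bes$ is "close to" $\rmult^\ultra{p} 1_A$ in the sense dictated by the anti/Besicovitch splitting. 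Then I would expand
\[
\lim_{m\to\ultra{p}} \bilin{\rmult^m 1_A}{\rmult^\ultra{p} 1_A}_\Psi
\]
into the three terms \eqref{eqn:strategyFunctional_-1}, \eqref{eqn:strategyFunctional_-2}, \eqref{eqn:strategyFunctional_-3}, after first peeling off the weak-mixing piece as in \eqref{eqn:first_splitting}. Term \eqref{eqn:strategyFunctional_-3} vanishes because $f_\wm$ is weak mixing, so its correlations with any fixed function (here $\rmult^\ultra{p}1_A$) average to zero along $\ultra{p}$. Term \eqref{eqn:strategyFunctional_-1} is bounded below by $\dens_\Psi(A)^2$ up to the $\epsilon$ error, using that $\rmult^\ultra{p} f_\bes$ is essentially a shift of $f_\bes$ (a return-time / almost-periodicity computation, of the form $\bilin{\rmult^m f_\comp}{\rmult^\ultra{p} f_\bes}_\Psi \approx \bilin{f_\comp}{f_\bes}_\Psi$ and then $\E f_\comp \cdot \E f_\bes = \dens_\Psi(A)^2$). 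Term \eqref{eqn:strategyFunctional_-2}, pairing the compact $f_\comp$ with the anti-uniform $f_\anti$, is shown to be non-negative by adapting \Beiglbock{}'s argument \cite{Beiglbock11}: roughly, one uses that $f_\comp \geq 0$ is dominated (in a suitable averaged sense) by $1_A$ and that $f_\anti = 1_A - f_\bes$ integrates against Fej\'er-type kernels non-negatively.

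I expect the main obstacle to be the simultaneous coordination of the two splittings together with the choice of $\ultra{p}$ and $\Psi$: the first splitting ($f_\wm + f_\comp$) does not interact well with $\rmult^\ultra{p}$, while the second ($f_\anti + f_\bes$) is the one that does, so the whole argument hinges on applying the first splitting only to the $\rmult^m$-factor and the second only to the $\rmult^\ultra{p}$-factor, and then verifying that the cross term \eqref{eqn:strategyFunctional_-2} — the one genuinely mixing the two decompositions — is controllable. Making the \Beiglbock{}-style positivity argument work in this F\o lner/ultrafilter setting, and ensuring all the limits $\dens_\Psi((A-m)\cap(A-\ultra{p}))$ genuinely exist (not merely as $\limsup$), will require the careful diagonal extraction of $\Psi$; this bookkeeping, rather than any single inequality, is where the difficulty lies. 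Once the three terms are in hand, adding them gives $\lim_{m\to\ultra{p}}\dens_\Psi((A-m)\cap(A-\ultra{p})) \geq \dens_\Psi(A)^2 - \epsilon$, which is \eqref{eqn_functionalErdosNatural}, and combined with \cref{prop_withultrafilters} this yields \cref{thm_erdosNatural}.
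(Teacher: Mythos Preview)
Your outline matches the paper's strategy closely, but there are two concrete gaps you should address.

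First, for the weak-mixing term \eqref{eqn:strategyFunctional_-3}: weak mixing of $f_\wm$ only tells you that $\{m : |\bilin{\rmult^m f_\wm}{\rmult^\ultra{p} 1_A}_\Psi| \ge \delta\}$ has zero density along $\Psi$. That does \emph{not} by itself force the $\ultra{p}$-limit to vanish, since an arbitrary non-principal ultrafilter can live on a set of zero density. The paper handles this by insisting that $\ultra{p}$ be $\Psi$-\emph{essential} (every member of $\ultra{p}$ has positive $\Psi$-upper-density); then zero-density sets are automatically excluded from $\ultra{p}$ and the limit is forced to $0$. You need to build this requirement into your choice of $\ultra{p}$, and check it is compatible with the other constraints.

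Second, your lower bound for term \eqref{eqn:strategyFunctional_-1} is mis-computed. The inner product $\bilin{f_\comp}{f_\bes}_\Psi$ is not $\E f_\comp \cdot \E f_\bes$ (that would be an independence claim). The correct identity is $\bilin{f_\comp}{f_\bes}_\Psi = \|f_\bes\|_\Psi^2$, obtained by writing $f_\comp - f_\bes = f_\anti - f_\wm$ and using that both $f_\anti$ and $f_\wm$ lie in $\bes(\N,\Psi)^\perp$; only then does Cauchy--Schwarz give $\|f_\bes\|_\Psi^2 \ge \bilin{1}{f_\bes}_\Psi^2 = \bilin{1}{f}_\Psi^2 = \dens_\Psi(A)^2$. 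Relatedly, the mechanism that makes $\rmult^\ultra{p} f_\bes$ close to $f_\bes$ and $\rmult^m f_\comp$ close to $f_\comp$ along $\ultra{p}$ runs through \emph{Bohr$_0$ sets}: both the compact-return set for $f_\comp$ and the good set for $f_\bes$ contain Bohr$_0$ sets, their intersection is again Bohr$_0$, and the \Beiglbock{} argument for \eqref{eqn:strategyFunctional_-2} shows the set of $\ultra{p}$ in the closure of that Bohr$_0$ set satisfying the required positivity has positive measure under any $\mu \in \M{\Psi}$. That measure-theoretic compatibility argument (rather than Fej\'er kernels or domination of $f_\comp$ by $1_A$) is how one simultaneously secures all four properties of $\ultra{p}$.
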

\begin{proof}[Proof of \cref{thm_erdosNatural} assuming \cref{thm_functionalErdosNatural}]
Fix $A \subset \N$ with $\upperdens_\Phi(A) > 0$ for some \Folner{} sequence $\Phi$.
By passing to a subsequence of $\Phi$ we may assume that $\dens_\Phi(A)$ is defined and positive.
Apply Theorem~\ref{thm_functionalErdosNatural} with $\epsilon = \dens_\Phi(A)^2/2$.
Since $\dens_\Phi(A) = \dens_\Psi(A)$ for every further subsequence $\Psi$ of $\Phi$ the inequality \eqref{eqn_functionalErdosNatural} implies the hypothesis \eqref{eq_prop_withultrafilters} of Theorem~\ref{prop_withultrafilters}, so $A$ indeed contains $B+C$ for infinite sets $B,C \subset \N$.
\end{proof}

We conclude this section by reformulating \cref{thm_functionalErdosNatural} in a functional analytic language as in \eqref{eqn:strategyFunctional}.
Given a bounded function $f \colon \N \to \C$ define, for all $m\in\N$, the shift $\rmult^m f \colon \N \to \C$ by
\[
(\rmult^m f)(n) \coloneqq f(n + m)
\]
for all $n \in \N$.
We extend this to all $\ultra{p}\in\beta\N$ by defining the function $\rmult^\ultra{p} f \colon \N \to \C$ by
\[
(\rmult^\ultra{p} f)(n) \coloneqq \lim_{m \to \ultra{p}} f(n + m)
\]
for all $n \in \N$.
Observe that $\rmult^\ultra{p_m} f=\rmult^m f$ for all principal ultrafilters $\ultra{p}_m$. Also, the indicator function of the set $A-\ultra{p}$ is the function $\rmult^\ultra{p}1_A$, where $1_A$ is the indicator function of $A$.

Given a \Folner{} sequence $\Phi$ in $\N$ and functions $f,h\colon \N \to \C$, define the \define{Besicovitch seminorm} of $f$ along $\Phi$ to be
\begin{equation}
\label{eqn:besSeminorm}
\nbar f \nbar_\Phi = \left( \limsup_{N \to \infty} \frac{1}{|\Phi_N|} \sum_{n \in \Phi_N} |f(n)|^2 \right)^{1/2}
\end{equation}
and the inner product
\[
\bilin{f}{h}_\Phi = \lim_{N \to \infty} \frac{1}{|\Phi_N|} \sum_{n \in \Phi_N} f(n) \overline{h(n)}
\]
whenever the limit exists.
Minkowski's inequality
\begin{equation}
\label{eqn:mink}
\left( \sum_{n \in \Phi_N} |f(n) + h(n)|^2 \right)^\frac{1}{2}
\le
\left( \sum_{n \in \Phi_N} |f(n)|^2 \right)^\frac{1}{2}
+
\left( \sum_{n \in \Phi_N} |h(n)|^2\right)^\frac{1}{2}
\end{equation}
implies that $\nbar f+h \nbar_\Phi\leq \|f\|_\Phi + \|h\|_\Phi$, and hence $\nbar \cdot \nbar_\Phi$ is indeed a seminorm on the set of functions $f : \N \to \C$ for which $\nbar f \nbar$ is finite.
The following facts will be used throughout the paper.
\begin{enumerate}
\item
If $\Psi$ eventually agrees with a subsequence of $\Phi$ then $\nbar f \nbar_\Psi \le \nbar f \nbar_\Phi$ for all $f : \N \to \C$;
\item
(Cauchy-Schwarz)
$|\bilin{f}{h}_\Phi| \le \nbar f \nbar_\Phi \nbar h \nbar_\Phi$ whenever $\bilin{f}{h}_\Phi$ exists and both $\nbar f \nbar_\Phi$, $\nbar h \nbar_\Phi$ are finite.
\item
If $\nbar f \nbar_\Phi$ is finite then there is a subsequence $\Psi$ of $\Phi$ such that $\nbar f \nbar_\Xi = \nbar f \nbar _\Phi$ for every subsequence $\Xi$ of $\Psi$.
\item
If $\nbar f \nbar_\Phi$ and $\nbar h \nbar_\Phi$ are both finite then there is a subsequence $\Psi$ of $\Phi$ such that $\bilin{f}{h}_\Psi$ exists.
\end{enumerate}

The following result, whose proof is given in \cref{sec_finding_p} using the material of \cref{sec_two_decomps}, implies \cref{thm_functionalErdosNatural} by choosing $f=1_A$.

\begin{theorem}
\label{thm_hilbertErdosNatural}
Let $f$ be a non-negative bounded function on $\N$ and let $\Phi$ be a \Folner{} sequence on $\N$ such that $\bilin{1}{f}_\Phi$ exists.
For every $\epsilon > 0$ there exists a subsequence $\Psi$ of $\Phi$ and a non-principal ultrafilter $\ultra{p} \in \beta \N$ such that $\bilin{\rmult^m f}{\rmult^\ultra{p} f}_\Psi$ exists for all $m \in \N$ and
\begin{equation}
\label{eqn_hilbertErdosNatural}
\lim_{m \to \ultra{p}} \bilin{ \rmult^m f }{ \rmult^\ultra{p} f }_\Psi
\geq
\bilin{1}{f}_\Psi^2  - \epsilon
\end{equation}
holds.
\end{theorem}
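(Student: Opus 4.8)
The plan is to follow the strategy outlined in the introduction, splitting one copy of $f$ with the Jacobs--de Leeuw--Glicksberg-type decomposition and the ultrafilter-shifted copy with the Besicovitch decomposition. First I would apply the two decomposition results of \cref{sec_two_decomps} to $f$, obtaining, after passing to a subsequence of $\Phi$ if necessary, a splitting $f = f_{\wm} + f_{\comp}$ into a weak mixing and a compact function, and a splitting $f = f_{\anti} + f_{\bes}$ into an ``anti-character'' function $f_{\anti}$ --- orthogonal along the relevant subsequence to $n \mapsto e^{2\pi i n\theta}$ for every $\theta$ --- and a Besicovitch almost periodic function $f_{\bes}$. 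The feature of the second splitting that I would rely on is its compatibility with ultrafilter shifts: $\rmult^{\ultra{p}} f_{\bes}$ is again Besicovitch almost periodic, with $\nbar \rmult^{\ultra{p}} f_{\bes} \nbar_\Psi = \nbar f_{\bes} \nbar_\Psi$ for every non-principal $\ultra{p}$ and every subsequence $\Psi$. I would then fix $\epsilon$, choose a non-principal ultrafilter $\ultra{p}$ with suitable pseudo-randomness relative to $f_{\wm}$, and pass to a \Folner{} subsequence $\Psi$ of $\Phi$ along which $\bilin{1}{f}_\Psi = \bilin{1}{f}_\Phi$ and all the inner products below exist for every $m$; making these choices coherently is one of the two main difficulties.

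Writing $\rmult^m f = \rmult^m f_{\wm} + \rmult^m f_{\comp}$ and $\rmult^{\ultra{p}} f = \rmult^{\ultra{p}} f_{\anti} + \rmult^{\ultra{p}} f_{\bes}$, I would expand
\[
\bilin{\rmult^m f}{\rmult^{\ultra{p}} f}_\Psi = \bilin{\rmult^m f_{\wm}}{\rmult^{\ultra{p}} f}_\Psi + \bilin{\rmult^m f_{\comp}}{\rmult^{\ultra{p}} f_{\bes}}_\Psi + \bilin{\rmult^m f_{\comp}}{\rmult^{\ultra{p}} f_{\anti}}_\Psi
\]
and handle the three terms after applying $\lim_{m \to \ultra{p}}$. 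The weak mixing term I would dispose of using the defining pseudo-randomness of $f_{\wm}$ --- that $\bilin{\rmult^m f_{\wm}}{g}_\Psi$ is small for most $m$ and every fixed bounded $g$ --- applied with $g = \rmult^{\ultra{p}} f$ and with $\ultra{p}$ concentrated on the corresponding good sets, giving $\lim_{m \to \ultra{p}} \bilin{\rmult^m f_{\wm}}{\rmult^{\ultra{p}} f}_\Psi = 0$ (or at least $\geq -\epsilon$). For the Besicovitch term I would first observe that $f_{\comp} - f_{\bes} = f_{\anti} - f_{\wm}$ is orthogonal to every character $e^{2\pi i n\theta}$, and, by a \Folner{}-averaging argument, so is each shift $\rmult^m(f_{\comp} - f_{\bes})$; since $\rmult^{\ultra{p}} f_{\bes}$ lies in the $\nbar\cdot\nbar_\Psi$-closure of the trigonometric polynomials, this gives $\bilin{\rmult^m f_{\comp}}{\rmult^{\ultra{p}} f_{\bes}}_\Psi = \bilin{\rmult^m f_{\bes}}{\rmult^{\ultra{p}} f_{\bes}}_\Psi$. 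Approximating $f_{\bes}$ in $\nbar\cdot\nbar_\Psi$ by a trigonometric polynomial $\sum_j c_j e^{2\pi i n\theta_j}$ --- on which $\rmult^m$ and $\rmult^{\ultra{p}}$ act by multiplying $c_j$ by the unimodular scalars $e^{2\pi i m\theta_j}$ and $\lim_{m\to\ultra{p}} e^{2\pi i m\theta_j}$, which become complex conjugates of one another as $m \to \ultra{p}$ --- the cross terms cancel by orthogonality of distinct characters and I would be left with
\[
\lim_{m \to \ultra{p}} \bilin{\rmult^m f_{\comp}}{\rmult^{\ultra{p}} f_{\bes}}_\Psi = \nbar f_{\bes} \nbar_\Psi^2 \geq \bilin{1}{f_{\bes}}_\Psi^2 = \bilin{1}{f}_\Psi^2,
\]
the inequality being Cauchy--Schwarz with $\nbar 1 \nbar_\Psi = 1$ and the last equality holding because $f_{\anti} \perp 1$.

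The remaining term $\lim_{m \to \ultra{p}} \bilin{\rmult^m f_{\comp}}{\rmult^{\ultra{p}} f_{\anti}}_\Psi$ is where I expect the real work to lie. Here I would adapt \Beiglbock's argument from \cite{Beiglbock11}, which treats exactly such a pairing of a compact function against a character-orthogonal one: use precompactness of the orbit $\{\rmult^m f_{\comp}\}$ in $\nbar \cdot \nbar_\Psi$ to reduce the limit to $\bilin{\rmult^{\ultra{p}} f_{\comp}}{\rmult^{\ultra{p}} f_{\anti}}_\Psi$, and then leverage the non-negativity of $f$ (hence of $f_{\comp}$ and of its ultrafilter shifts) together with the identity $f_{\anti} = f - f_{\bes}$ to see that this quantity is $\geq 0$. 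Granting this, summing the three estimates yields $\lim_{m\to\ultra{p}} \bilin{\rmult^m f}{\rmult^{\ultra{p}} f}_\Psi \geq \bilin{1}{f}_\Psi^2 - \epsilon$, which is the assertion of the theorem. The two points I expect to be delicate are this non-negativity step and, as flagged above, the coherent choice of a single non-principal ultrafilter $\ultra{p}$ (and subsequence $\Psi$) that simultaneously validates the vanishing of the weak mixing term, the existence of all the densities and inner products in play, and \Beiglbock's argument --- the obstacle being that the relevant ``good'' sets of shifts entering the weak mixing step themselves depend on $\ultra{p}$.
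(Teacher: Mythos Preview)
Your overall architecture matches the paper's exactly: both decompositions are applied, the bilinear form is expanded into the same three pieces, and the target inequalities for each piece are the ones the paper actually proves.  Two of your steps, however, contain genuine gaps that the paper has to work hard to close.

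\textbf{The Besicovitch term.}  Your claim that $\rmult^{\ultra{p}} f_{\bes}$ is again Besicovitch with $\nbar \rmult^{\ultra{p}} f_{\bes}\nbar_\Psi = \nbar f_{\bes}\nbar_\Psi$ for \emph{every} non-principal $\ultra{p}$ is false.  \cref{lem_ultraBohrIsBohr} gives this only for trigonometric polynomials; for a general $f_{\bes}\in\bes(\N,\Psi)$ the operator $\rmult^{\ultra{p}}$ need not be an isometry, because if $\nbar f_{\bes}-a\nbar_\Psi$ is small there is no reason for $\nbar \rmult^{\ultra{p}}(f_{\bes}-a)\nbar_\Psi$ to be small.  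Consequently your approximation argument breaks at exactly the step where you replace $\rmult^{\ultra{p}} f_{\bes}$ by $\rmult^{\ultra{p}} a$.  The paper repairs this via \cref{thm_pointwise} and \cref{thm_Besicovitchpshift}: after passing to a tempered subsequence and invoking Lindenstrauss's pointwise ergodic theorem, one gets $\nbar \rmult^{\ultra{p}} f_{\bes}-f_{\bes}\nbar_\Psi<\epsilon/3$ only for $\Psi$~almost every $\ultra{p}$ lying in the closure of a suitable Bohr$_0$ set.

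\textbf{The cross term.}  Your sketch of the \Beiglbock{} step does not work as written.  Precompactness of $\{\rmult^m f_{\comp}\}$ in $\nbar\cdot\nbar_\Psi$ does not let you replace $\lim_{m\to\ultra{p}}\rmult^m f_{\comp}$ by the pointwise function $\rmult^{\ultra{p}} f_{\comp}$ (these two limits live in different senses and need not coincide), and even granting that reduction, writing $f_{\anti}=f-f_{\bes}$ gives $\bilin{\rmult^{\ultra{p}} f_{\comp}}{\rmult^{\ultra{p}} f}_\Psi - \bilin{\rmult^{\ultra{p}} f_{\comp}}{\rmult^{\ultra{p}} f_{\bes}}_\Psi$, whose sign is not evident.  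The paper's actual argument (\cref{thm_beig}) is measure-theoretic on $\beta\N$: one integrates $\ultra{p}\mapsto (\rmult^{\ultra{p}} f_{\anti})(n)$ against the normalized restriction $\mu_B$ of some $\mu\in\M{\Psi}$ to $\cl(B)$, uses Fatou, and shows the \emph{inner} integral vanishes for each fixed $n$ because $1_{B+n}\in\bes(\N,\Psi)$ while $f_{\anti}\in\bes(\N,\Psi)^\perp$.  This yields a set of $\ultra{p}$'s of positive $\mu$-measure on which the cross term is non-negative, rather than a statement about a single pre-chosen $\ultra{p}$.

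These two points also resolve the circularity you flagged at the end.  The paper does not choose $\ultra{p}$ first and then verify properties; instead \cref{thm_choosingUltra} intersects the Bohr$_0$ sets coming from compactness (\cref{cor_compact_returns_Bohralternative}) and from \cref{thm_Besicovitchpshift}, applies \cref{thm_beig} inside $\cl(B)$, and uses \cref{lemma_support} to ensure $\Psi$~almost every $\ultra{p}$ is $\Psi$-essential (which is exactly what makes the weak-mixing term vanish along $\ultra{p}$).  A single $\ultra{p}$ satisfying \ref{req_essential}--\ref{req_beig} is then extracted from a set of positive $\mu$-measure.
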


%SECTION
\section{Two decompositions for functions\texorpdfstring{ in $\lp{2}(\N,\Phi)$}{}}
\label{sec_two_decomps}
In this section we establish several structural results about the space
\[
\lp{2}(\N,\Phi) \coloneqq \{ f \colon \N \to \C : \nbar f \nbar_\Phi < \infty \}
\]
where $\nbar \cdot \nbar_\Phi$ is the seminorm defined in \eqref{eqn:besSeminorm}.
In particular, we prove two ways in which elements of $\lp{2}(\N,\Phi)$ can be decomposed into pseudo-random and structured components.
These decomposition theorems will play crucial roles in the proof of \cref{thm_hilbertErdosNatural}.

Related decompositions of functions on $\N$ into orthogonal components have been studied in \cite{Host_Kra09} and \cite{Frantzikinakis15}.
However, those decompositions required some additional regularity on the function being decomposed and do not apply to all bounded functions on $\N$.
Also, similar but more quantitative decompositions are known for complex-valued functions over finite intervals $\{1,\ldots,N\}$ (cf.\ \cite{GT10-2}), but they don't possess qualitative (i.e.\ infinitary) analogues for functions over $\N$.

In \cref{sec_completeness_natural} we prove a completeness result for the space $\lp{2}(\N,\Phi)$.
Then in \cref{subsec_phiDichotomy} we introduce the space $\bes(\N,\Phi)$ of Besicovitch almost periodic functions along a \Folner{} sequence $\Phi$.
Members of $\bes(\N,\Phi)$ play the role of the structured part in our first decomposition result, \cref{thm_besicSplitting}.

Our second splitting, of functions from $\lp{2}(\N,\Phi)$ into compact and weak mixing functions, is based on the Jacobs--de Leeuw--Glicksberg splitting and is the topic of \cref{sec_jdlgSplitting_for_N}.

%SUBSECTION
\subsection{A completeness lemma\texorpdfstring{ for $\lp{2}(\N,\Phi)$}{}}
\label{sec_completeness_natural}

Minkowski's inequality~\eqref{eqn:mink} implies that the space $\lp{2}(\N,\Phi)$ is a vector space over $\C$.
However $\lp{2}(\N,\Phi)$ is not a Hilbert space.
Indeed, $\nbar \cdot \nbar_\Phi$ is not a norm: the limit defining the inner product $\bilin{f}{h}_\Phi$ need not exist for all $f,h\in \lp{2}(\N,\Phi)$, and the space $\lp{2}(\N,\Phi)$ need not be complete with respect to $\nbar \cdot \|_\Phi$.
To address the latter issue, we make use of the following proposition.
We say that a sequence $j \mapsto f_j : \N \to \C$ of functions is \define{Cauchy} with respect to $\nbar \cdot \nbar_\Phi$ if, for all $\epsilon>0$, there exists $N\in \N$ such that for all $j,k \geq N$ one has $\nbar f_k-f_j\nbar_\Phi \leq \epsilon$.

\begin{proposition}
\label{prop_weak-completeness}
Let $j \mapsto f_j$ be a sequence in $\lp{2}(\N,\Phi)$ that is Cauchy with respect to $\nbar \cdot \nbar_\Phi$.
Then there exists a subsequence $\Psi$ of $\Phi$ and $f\in \lp{2}(\N,\Psi)$ such that $\nbar f - f_j \nbar_{\Psi} \to 0$ as $j \to \infty$.
Moreover, if all the $f_j$ take values in an interval $[a,b]$, then so does $f$.
\end{proposition}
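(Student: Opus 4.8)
The plan is to construct the limit $f$ by hand, block by block along a suitably sparse subsequence of $\Phi$. No soft argument seems available here: the Hilbert-space completion of $\lp{2}(\N,\Phi)$ yields an abstract vector, not a function on $\N$; and a naive pointwise limit of the $f_j$ need not converge, while even on the set where it does it cannot be controlled on individual \Folner{} sets because the $f_j$ may be unbounded.

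First I would reduce to a convenient normalization. Since $\|h\|_\Psi\le\|h\|_\Phi$ for every subsequence $\Psi$ of $\Phi$, the sequence $(f_j)$ is also Cauchy with respect to $\|\cdot\|_\Psi$, so by the triangle inequality it suffices to produce $\Psi$, $f$, and a subsequence $(f_{j_k})$ with $\|f-f_{j_k}\|_\Psi\to 0$. Passing to a subsequence, I may therefore assume $\|f_j-f_{j'}\|_\Phi\le 4^{-\min(j,j')}$ for all $j,j'$. Throughout, write $\|g\|_F=\big(|F|^{-1}\sum_{n\in F}|g(n)|^2\big)^{1/2}$ for finite $F\subset\N$, and note that $\limsup_N\|g\|_{\Phi_N}=\|g\|_\Phi$.

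Next I would choose $\Psi=(\Phi_{N_k})_k$ recursively, using that $|\Phi_N|\to\infty$ for every \Folner{} sequence in $\N$ (since $|(\Phi_N+1)\triangle\Phi_N|\ge 2$). Given $N_1<\dots<N_{k-1}$, pick $N_k>N_{k-1}$ so large that: (i) $\|f_k-f_l\|_{\Phi_{N_k}}\le 2\cdot 4^{-l}$ for every $l<k$, which is possible since $\limsup_N\|f_k-f_l\|_{\Phi_N}=\|f_k-f_l\|_\Phi\le 4^{-l}$; and (ii) $|\Phi_{N_k}|\ge k\big(1+\sum_{i<k}|\Phi_{N_i}|+\sum_{i<k}\sum_{i'<k}|\Phi_{N_{i'}}|\,\|f_i\|_{\Phi_{N_{i'}}}^2\big)$, which is possible because the right-hand side is a fixed finite number and $|\Phi_N|\to\infty$. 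Put $U_k=\Phi_{N_k}\setminus\bigcup_{i<k}\Phi_{N_i}$; then the $U_k$ are pairwise disjoint and, by (ii), $|\Phi_{N_k}\setminus U_k|\le\sum_{i<k}|\Phi_{N_i}|\le|\Phi_{N_k}|/k$. Fixing $c\in[a,b]$ in the bounded case and $c=0$ otherwise, define $f(n)=f_k(n)$ if $n\in U_k$ and $f(n)=c$ if $n\notin\bigcup_kU_k$; then $f$ takes values in $[a,b]$ whenever all the $f_j$ do.

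Finally I would estimate $\|f-f_l\|_\Psi$. For $k>l$, split $\Phi_{N_k}=U_k\sqcup(\Phi_{N_k}\setminus U_k)$. On $U_k$ one has $f=f_k$, so $|\Phi_{N_k}|^{-1}\sum_{n\in U_k}|f-f_l|^2\le\|f_k-f_l\|_{\Phi_{N_k}}^2\le 4\cdot 16^{-l}$ by (i). On the complement, $|f(n)|^2\le c^2+\sum_{i<k}|f_i(n)|^2$ pointwise, and since $\Phi_{N_k}\setminus U_k\subset\bigcup_{i'<k}\Phi_{N_{i'}}$, summing and invoking (ii) gives $|\Phi_{N_k}|^{-1}\sum_{n\in\Phi_{N_k}\setminus U_k}|f(n)|^2\le(c^2+1)/k$ and, likewise, $|\Phi_{N_k}|^{-1}\sum_{n\in\Phi_{N_k}\setminus U_k}|f_l(n)|^2\le 1/k$, whence $|\Phi_{N_k}|^{-1}\sum_{n\in\Phi_{N_k}\setminus U_k}|f-f_l|^2\le(2c^2+4)/k$. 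Adding the two pieces and letting $k\to\infty$ gives $\|f-f_l\|_\Psi^2\le 4\cdot 16^{-l}$, so $\|f-f_l\|_\Psi\to 0$; in particular $\|f\|_\Psi\le\|f_1\|_\Phi+\|f-f_1\|_\Psi<\infty$, so $f\in\lp{2}(\N,\Psi)$, and the reduction in the second paragraph completes the proof. The genuinely delicate point is arranging (ii): $|\Phi_{N_k}|$ must dominate not merely the cardinalities of the earlier \Folner{} sets but also the $\ell^2$-masses $\sum_{i'<k}|\Phi_{N_{i'}}|\,\|f_i\|_{\Phi_{N_{i'}}}^2$ that the possibly unbounded $f_1,\dots,f_{k-1}$ deposit on them; this is precisely what renders the overlaps among the chosen \Folner{} sets, and hence the unboundedness of the $f_j$, harmless.
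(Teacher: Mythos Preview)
Your proof is correct and follows essentially the same approach as the paper's: pass to a rapidly Cauchy subsequence, choose $\Psi_k=\Phi_{N_k}$ with $|\Phi_{N_k}|$ large enough to swamp the masses accumulated on earlier \Folner{} sets, define $f$ to equal $f_k$ on the ``new part'' $U_k=\Psi_k\setminus\bigcup_{i<k}\Psi_i$, and estimate $\|f-f_l\|_{\Psi_k}^2$ by splitting into $U_k$ and its complement. The only cosmetic difference is that the paper controls the overlap via the differences $\sum_{n\in\Psi_i}|f_M(n)-f_i(n)|^2$ (so their growth condition on $|\Psi_M|$ involves $f_M$), whereas you control $|f|^2$ and $|f_l|^2$ separately via the masses $\sum_{i,i'<k}|\Psi_{i'}|\,\|f_i\|_{\Psi_{i'}}^2$; both bookkeeping schemes work.
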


\begin{remark}
If the \Folner{} sequence $\Phi$ satisfies $\Phi_N\subset \Phi_{N+1}$ for all $N\in\N$, then one can adapt the proof of \cite[II\,\S2]{MR0013443} to show that $\lp{2}(\N,\Phi)$ is complete with respect to $\nbar\cdot\nbar_\Phi$, meaning that any sequence of functions in $\lp{2}(\N,\Phi)$ that is Cauchy with respect to $\nbar\cdot\nbar_\Phi$ has a limit in $\lp{2}(\N,\Phi)$.
In particular, in this case it is not necessary to pass to a subsequence of $\Phi$.
We do not pursue this here for two reasons: on the one hand, the proof of \cref{prop_weak-completeness} is much shorter. On the other hand, we find it necessary to pass to subsequences of \Folner{} sequences frequently for many reasons, so we see no reason not to do so here as well.
\end{remark}

\begin{proof}[Proof of \cref{prop_weak-completeness}]
Since $j \mapsto f_j$ is Cauchy and all Besicovitch seminorms \eqref{eqn:besSeminorm} satisfy the triangle inequality, it suffices to find a subsequence $\Psi$ of $\Phi$ and a subsequence $j \mapsto f_{\sigma(j)}$ such that $\nbar f  - f_{\sigma(j)} \nbar_\Psi \to 0$ as $j \to \infty$.
To this end we assume, by passing to a subsequence if necessary, that for all $j\in\N$ and all $k\geq j$ we have $\nbar f_k-f_j\nbar_{\Phi}^2 \leq \tfrac{1}{j}$.
In particular, with $C \coloneqq(\nbar f_1\nbar_{\Phi}+1)^2$, the estimate $\nbar f_k\nbar_{\Phi}^2 \leq C$ is valid for all $k\in\N$.
Now, for every $k\in\N$, pick $N(k) \in\N$ such that $N(k+1) > N(k)$ for all $k \in \N$ and that, for all $N \geq N(k)$ and all $j\in\{1,\ldots,k\}$, one has
\[
\frac{1}{|\Phi_N|}\sum_{n\in\Phi_N} |f_j(n)-f_k(n)|^2\leq \frac{2}{j}\qquad\text{ and }\qquad\frac{1}{|\Phi_N|}\sum_{n\in\Phi_N} |f_j(n)|^2\leq 2C.
\]
Also, by further refining the subsequence $k \mapsto N(k)$ if necessary, we can assume that
\[
|\Phi_{N(k)}| > k^2 \max \left\{\sum_{n\in\Phi_{N(i)}}\big|f_k(n)-f_i(n)\big|^2:1\leq i<k\right\}
\]
for all $k > 1$.
Define the \Folner{} sequence $\Psi$ by $\Psi_k \coloneqq \Phi_{N(k)}$ for all $k \in \N$.

Let $\Xi_M \coloneqq \Psi_M \setminus \left(\bigcup_{k=1}^{M-1} \Psi_k\right)$ and set $\zeta_M \coloneqq \Psi_M \setminus \Xi_M$, the latter being a subset of $\bigcup_{i=1}^{M-1}\Psi_i$.
Define $f \colon \N \to \C$ by
\[
f(n)
\coloneqq
\sum_{M=1}^{\infty} 1_{\Xi_M}(n) f_M(n)
=
\begin{cases}
0, & \text{if}\ n \notin \bigcup\limits_{K=1}^\infty \Psi_K \\
f_M(n), & \text{if}\ M = \min \{ K \in \N : n \in \Psi_K \}
\end{cases}
\]
for all $n \in \N$.
By construction, $f$ takes values in an interval $[a,b]$ if all the functions $f_M$ do.
Using $|x+y|^2/2\leq|x|^2+|y|^2$, for each $j\leq M\in\N$ we have the estimate
\begin{align*}
\frac{1}{2} \sum_{n\in\Psi_M} |f_j(n)-f(n)|^2
&
\leq
\sum_{n\in\Psi_M} |f_j(n)-f_M(n)|^2+\sum_{n\in\zeta_M} |f_M(n)-f(n)|^2
\\
&
\leq
\frac{2|\Psi_M|}{j}+\sum_{i=1}^{M-1}\sum_{n\in\Xi_i}|f_M(n)-f_i(n)|^2
\\
&
\leq
\frac{2|\Psi_M|}{j}+\frac{|\Psi_M|}{M}
\end{align*}
which proves that $\nbar f-f_j\nbar_{\Psi}\leq 4/j$, which tends to $0$ as $j \to \infty$.
\end{proof}

We will also make use of the following version of Bessel's inequality.

\begin{lemma}[Bessel's inequality]\label{lemma_bessel}
Let $u_1,u_2,\dots$ be a sequence in $\lp{2}(\N,\Phi)$ such that $\nbar u_j \nbar_\Phi = 1$ for all $j \in \N$ and $\bilin{ u_j }{ u_k}_\Phi$ exists and is $0$ for all $j \ne k$.
If $u \in \lp{2}(\N,\Phi)$ is such that $\bilin{u}{u_j}_\Phi$ exists for all $j \in \N$, then
\[
\sum_{j=1}^\infty \big|\bilin{u}{u_j}_\Phi \big|^2
\leq
\nbar u \nbar_\Phi^2
\]
holds.
\end{lemma}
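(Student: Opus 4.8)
The plan is to mimic the classical finite-dimensional argument: for each fixed $M \in \N$, consider the "partial Fourier projection" $v_M \coloneqq \sum_{j=1}^M \bilin{u}{u_j}_\Phi \, u_j$ and expand $\nbar u - v_M \nbar_\Phi^2 \geq 0$. The only subtlety is that $\nbar\cdot\nbar_\Phi$ is a limsup rather than an honest limit, so the bilinear expansion of $\nbar u - v_M\nbar_\Phi^2$ into a sum of inner products is not automatic; one must either pass to a subsequence of $\Phi$ along which all the relevant finitely many limits exist, or argue directly. Since $M$ is fixed, only finitely many inner products $\bilin{u}{u_j}_\Phi$ and $\bilin{u_j}{u_k}_\Phi$ are involved, and by hypothesis these all exist. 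Hence one can extract a subsequence $\Psi$ of $\Phi$ along which $\frac{1}{|\Psi_N|}\sum_{n\in\Psi_N}|u(n)-v_M(n)|^2$ converges; call this limit $\ell$. Then $\ell = \nbar u - v_M\nbar_\Psi^2 \le \nbar u - v_M\nbar_\Phi^2$ is not quite what we want — rather, we want a lower bound on the limsup. It is cleaner to note that along \emph{any} subsequence the expansion $\frac{1}{|\Psi_N|}\sum_{n}|u-v_M|^2 = \frac{1}{|\Psi_N|}\sum_n |u|^2 - 2\Re\sum_n \overline{v_M} u /|\Psi_N| + \sum_n |v_M|^2/|\Psi_N|$ holds termwise, and the last two groups of terms converge (along all of $\Phi$) to $2\sum_{j\le M}|\bilin{u}{u_j}_\Phi|^2$ and $\sum_{j\le M}|\bilin{u}{u_j}_\Phi|^2$ respectively, using orthonormality. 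Taking $\limsup_{N\to\infty}$ of the left side (which is $\ge 0$) and of the right side then yields $0 \le \nbar u\nbar_\Phi^2 - \sum_{j=1}^M |\bilin{u}{u_j}_\Phi|^2$.

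Concretely, the key steps in order are: (1) fix $M$, set $c_j \coloneqq \bilin{u}{u_j}_\Phi$ for $j \le M$, and write out the identity
\[
\frac{1}{|\Phi_N|}\sum_{n\in\Phi_N}\Bigl|u(n)-\sum_{j=1}^M c_j u_j(n)\Bigr|^2
=
\frac{1}{|\Phi_N|}\sum_{n\in\Phi_N}|u(n)|^2
- 2\Re\sum_{j=1}^M \overline{c_j}\cdot\frac{1}{|\Phi_N|}\sum_{n\in\Phi_N} u(n)\overline{u_j(n)}
+ \sum_{j,k=1}^M c_j\overline{c_k}\cdot\frac{1}{|\Phi_N|}\sum_{n\in\Phi_N} u_j(n)\overline{u_k(n)};
\]
(2) let $N\to\infty$: the middle sum converges to $2\Re\sum_j \overline{c_j}c_j = 2\sum_j|c_j|^2$ since $\bilin{u}{u_j}_\Phi = c_j$ exists, and the double sum converges to $\sum_{j,k}c_j\overline{c_k}\bilin{u_j}{u_k}_\Phi = \sum_j |c_j|^2$ by the assumed orthonormality (the cross terms vanish, the diagonal terms give $|c_j|^2\cdot 1$); (3) take $\limsup_{N\to\infty}$ of both sides — the left side is a limsup of nonnegative quantities, hence $\ge 0$, and applying $\limsup(a_N - b_N) \le \limsup a_N - \lim b_N$ with $b_N$ the (now convergent) second and third groups gives
\[
0 \le \nbar u\nbar_\Phi^2 - 2\sum_{j=1}^M |c_j|^2 + \sum_{j=1}^M |c_j|^2 = \nbar u\nbar_\Phi^2 - \sum_{j=1}^M |c_j|^2;
\]
(4) since this holds for every $M$, let $M\to\infty$ to conclude $\sum_{j=1}^\infty |\bilin{u}{u_j}_\Phi|^2 \le \nbar u\nbar_\Phi^2$.

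The main point requiring care — the only "obstacle," though a mild one — is step (3): because $\nbar\cdot\nbar_\Phi$ is defined via $\limsup$, one cannot split the limsup of a sum into a sum of limsups in general. The saving grace is that in the displayed identity all terms on the right \emph{except} $\frac{1}{|\Phi_N|}\sum|u(n)|^2$ genuinely converge as $N\to\infty$ (they are finite linear combinations of inner products that exist by hypothesis), so $\limsup$ distributes over them correctly: $\limsup_N(a_N + b_N) = \limsup_N a_N + \lim_N b_N$ whenever $\lim_N b_N$ exists. Everything else is a routine expansion, so no passage to a subsequence of $\Phi$ is actually needed.
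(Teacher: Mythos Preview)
Your approach is essentially the paper's: expand the nonnegative quantity $\frac{1}{|\Phi_N|}\sum_{n\in\Phi_N}|u(n)-v_M(n)|^2$ and pass to the limit, then let $M\to\infty$. The paper uses $N$-dependent coefficients $[u,u_j]_N \coloneqq \frac{1}{|\Phi_N|}\sum_n u\,\overline{u_j}$ in place of your fixed $c_j=\bilin{u}{u_j}_\Phi$, but this is a cosmetic difference.

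One small slip: you assert that the double sum $\sum_{j,k\le M} c_j\overline{c_k}\cdot\frac{1}{|\Phi_N|}\sum_n u_j\overline{u_k}$ \emph{converges} as $N\to\infty$. The off-diagonal terms do (to $0$), but the hypothesis $\nbar u_j\nbar_\Phi=1$ only gives $\limsup_N \frac{1}{|\Phi_N|}\sum_n|u_j|^2=1$, not convergence of the diagonal averages. This is harmless for your argument: the diagonal part is a finite sum of nonnegative terms, so its $\limsup$ is at most $\sum_{j\le M}|c_j|^2$ by subadditivity, and feeding $\limsup(A_N+D_N)\le\nbar u\nbar_\Phi^2+\sum_{j\le M}|c_j|^2$ into your step (3) still yields $0\le\nbar u\nbar_\Phi^2-\sum_{j\le M}|c_j|^2$. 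The paper's ``taking the limit $N\to\infty$'' is equally casual on exactly this point.
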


\begin{proof}
It suffices to show that
\begin{equation}
\label{eq_besselproof1}
\sum_{j=1}^J \big| \bilin{u}{u_j}_\Phi \big|^2
\leq
\nbar u \nbar_\Phi^2
\end{equation}
for every $J \in \N$.
Fix $N \in \N$ and write
\[
\bilinsq{f}{h}_N
=
\frac{1}{|\Phi_N|} \sum_{n \in \Phi_N} f(n) \, \overline{h(n)}
\]
for all $f,h \colon \N \to \C$.
Since $\bilinsq{f}{f}_N \ge 0$ for all $f \colon \N \to \C$ we have
\begin{align*}
0
\le
&
\bilinsq{
u - \sum_{j=1}^J u_j \bilinsq{u}{u_j}_N
}{
u - \sum_{k=1}^J u_k \bilinsq{u}{u_k}_N
}_N
\\
=
&
\bilinsq{u}{u}_N
-
2 \sum_{j =1}^J \big|\bilinsq{u}{u_j}_N\big|^2
+
\sum_{j,k=1}^J
\bilinsq{u}{u_j}_N \overline{\bilinsq{u}{u_k}_N} \bilinsq{u_j}{u_k}_N
\end{align*}
whence
\begin{equation}
\label{eq_proofbessel2}
2 \sum_{j=1}^J  \big|\bilinsq{u}{u_j}_N\big|^2
\le
\bilinsq{u}{u}_N
+
\sum_{j,k=1}^J
\bilinsq{u}{u_j}_N \overline{\bilinsq{u}{u_k}_N} \bilinsq{u_j}{u_k}_N
\end{equation}
holds.
Since the $u_j$ are pairwise orthogonal,
\[
\lim_{N \to \infty} \sum_{j,k=1}^J
\bilinsq{u}{u_j}_N \overline{\bilinsq{u}{u_k}_N} \bilinsq{u_j}{u_k}_N
-
\sum_{j=1}^J  \big|\bilinsq{u}{u_j}_N\big|^2
=
0.
\]
Taking the limit $N \to \infty$ in \eqref{eq_proofbessel2} gives \eqref{eq_besselproof1} as desired.
\end{proof}

%SUBSECTION
\subsection{A general splitting technique\texorpdfstring{ for $\lp{2}(\N,\Phi)$}{}}
\label{subsec_phiDichotomy}

Our first decomposition result involves a notion of almost periodicity introduced over $\R$ by Besicovitch in \cite{Besicovitch26}.
We refer the reader to \cite{Besicovitch55, BL85} and the references therein for more on what have become known as Besicovitch almost periodic functions.
Over $\N$ they are defined as follows.

\begin{definition}
By a \define{trigonometric polynomial} we mean any function $a\colon \N \to \C$ of the form
\begin{equation}
\label{eqn:trigPoly}
a(n) = \sum_{j=1}^J c_j e^{2\pi i \theta_j n}
\end{equation}
for some $c_1,\dots,c_J \in \C$ and some \define{frequencies} $0 \le \theta_1,\dots,\theta_J < 1$.
A function $f \colon \N \to \C$ is \define{Besicovitch almost periodic} along $\Phi$ if, for every $\epsilon > 0$, one can find a trigonometric polynomial $a$ with $\nbar f - a \nbar_\Phi < \epsilon$.
\end{definition}

Write $\bes(\N,\Phi)$ for the set of all Besicovitch almost periodic functions along $\Phi$ and notice that $\bes(\N,\Phi)\subset \lp{2}(\N,\Phi)$.
The notion of pseudo-randomness complementary to Besicovitch almost periodicity is defined next.

\begin{definition}
The set $\bes(\N,\Phi)^\perp$ is defined to consist of those functions $f\in\lp{2}(\N,\Phi)$ such that
\[
\lim_{N \to \infty}  \frac{1}{|\Phi_N|} \sum_{n \in \Phi_N} f(n) e^{2\pi i n \theta} = 0
\]
for all frequencies $\theta\in[0,1)$.
\end{definition}

One can show directly from the definitions that $\bilin{f}{h}_\Phi = 0$ whenever $f\in\bes(\N,\Phi)$ and $h\in\bes(\N,\Phi)^\perp$.
Our main focus is the following splitting result.
Throughout this paper we will use $f_\anti$ to denote elements in $\bes(\N,\Phi)^\perp$.

\begin{theorem}
\label{thm_besicSplitting}
For every \Folner{} sequence $\Phi$ on $\N$ and any $f\in\lp{2}(\N,\Phi)$ there is a subsequence $\Psi$ of $\Phi$ and functions $f_\bes\in\bes(\N,\Psi)$ and $f_\anti  \in\bes(\N,\Psi)^\perp$ such that $f= f_\bes+f_\anti$.
Moreover, $f_\bes$ minimizes the distance between $f$ and $\bes(\N,\Psi)$ in the sense that
\[
\nbar f - f_\bes \nbar_\Psi = \inf\{\nbar f - g \nbar_\Psi: g\in \bes(\N,\Psi)\}
\]
and if $f$ takes values in an interval $[a,b]$, then so does $f_\bes$.
\end{theorem}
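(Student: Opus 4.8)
The strategy is a Hilbert-space style orthogonal projection argument, executed carefully so as to cope with the fact that $\lp{2}(\N,\Phi)$ is only a seminormed space whose inner product is not everywhere defined, and with the fact that $\bes(\N,\Psi)$ changes as we pass to subsequences. First I would fix $f\in\lp{2}(\N,\Phi)$ and set
\[
D=\inf\{\,\liminf_{N\to\infty}\tfrac{1}{|\Phi_N|}\textstyle\sum_{n\in\Phi_N}|f(n)-a(n)|^2 : a \text{ a trigonometric polynomial}\,\},
\]
and choose a sequence of trigonometric polynomials $a_j$ whose approximation error tends down to $D$. The first key step is to show this sequence $(a_j)$ is Cauchy with respect to $\nbar\cdot\nbar_\Phi$: this is the parallelogram-law computation. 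For two trigonometric polynomials $a,b$, the parallelogram identity along $\Phi_N$ gives
$\tfrac1{|\Phi_N|}\sum_{\Phi_N}|a-b|^2 = 2\tfrac1{|\Phi_N|}\sum_{\Phi_N}|f-a|^2 + 2\tfrac1{|\Phi_N|}\sum_{\Phi_N}|f-b|^2 - 4\tfrac1{|\Phi_N|}\sum_{\Phi_N}|f-\tfrac{a+b}2|^2$, and since $(a+b)/2$ is again a trigonometric polynomial, the last term is $\ge D - o(1)$; taking $\limsup$ and using that $a,b$ are close-to-optimal forces $\nbar a-b\nbar_\Phi$ small. So $(a_j)$ is Cauchy.

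Next I apply \cref{prop_weak-completeness} to extract a subsequence $\Psi$ of $\Phi$ and a limit function $f_\bes\in\lp{2}(\N,\Psi)$ with $\nbar f_\bes-a_j\nbar_\Psi\to 0$; moreover by the "moreover" clause of that proposition, if $f$ (hence each best-approximating truncation — here one must be mildly careful, since the $a_j$ are not interval-valued) — rather, one should note that replacing $a_j$ by its pointwise truncation to $[a,b]$ only decreases the approximation error, so we may assume each $a_j$ takes values in $[a,b]$ when $f$ does, and then \cref{prop_weak-completeness} hands us $f_\bes\in[a,b]$. By a standard $3\varepsilon$-argument, $f_\bes$ lies in the $\nbar\cdot\nbar_\Psi$-closure of the trigonometric polynomials along $\Psi$, i.e. $f_\bes\in\bes(\N,\Psi)$. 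Set $f_\anti := f - f_\bes$; since $f\in\lp{2}(\N,\Phi)\subset\lp{2}(\N,\Psi)$ and $f_\bes\in\lp{2}(\N,\Psi)$, also $f_\anti\in\lp{2}(\N,\Psi)$.

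It remains to verify $f_\anti\in\bes(\N,\Psi)^\perp$ and the distance-minimization statement. For minimization: by construction $\nbar f-f_\bes\nbar_\Psi = \lim_j\nbar f-a_j\nbar_\Psi \le \sqrt{D}$, while property (1) in the list of facts ($\nbar\cdot\nbar_\Psi\le\nbar\cdot\nbar_\Phi$) combined with the definition of $D$ shows no $g$ of the form "trigonometric polynomial" does better along $\Psi$; passing from trigonometric polynomials to arbitrary $g\in\bes(\N,\Psi)$ is again a density $3\varepsilon$-argument, and along the way one may need to refine $\Psi$ once more (using fact (3)) so that $\nbar f-f_\bes\nbar_\Xi=\nbar f-f_\bes\nbar_\Psi$ for all further subsequences $\Xi$, which legitimizes treating $\nbar f - f_\bes\nbar_\Psi$ as a genuine limit. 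For orthogonality: fix $\theta\in[0,1)$ and apply the minimization property to the competitor $g_t = f_\bes + t\,e^{2\pi i n\theta}$ for $t\in\C$; expanding $\nbar f - g_t\nbar_\Psi^2 = \nbar f_\anti\nbar_\Psi^2 - 2\,\mathrm{Re}\big(\bar t\,\langle f_\anti, e^{2\pi i n\theta}\rangle_\Psi\big) + |t|^2$ — here I must first secure that $\langle f_\anti, e^{2\pi i n\theta}\rangle_\Psi$ exists, which I arrange by a final diagonal refinement of $\Psi$ over a countable dense set of $\theta$ together with fact (4) and a continuity argument in $\theta$ — minimality at $t=0$ forces $\langle f_\anti, e^{2\pi i n\theta}\rangle_\Psi = 0$, i.e. $f_\anti\in\bes(\N,\Psi)^\perp$.

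The main obstacle is bookkeeping the successive passages to subsequences: the definition of $\bes(\N,\Psi)$, the existence of the relevant inner products $\langle f_\anti,e^{2\pi i n\theta}\rangle_\Psi$, and the stabilization of $\nbar f-f_\bes\nbar_\Psi$ all depend on $\Psi$, so the argument must interleave "choose best approximants", "apply \cref{prop_weak-completeness}", and "diagonalize so that all countably many relevant limits exist and are subsequence-stable" in the right order — and then check that the earlier-established properties survive the later refinements (they do, since all the quantities involved are monotone or stable under passing to subsequences, by facts (1) and (3)). The genuinely analytic content is confined to the parallelogram-law Cauchy estimate and the variational characterization of orthogonality; everything else is soft.
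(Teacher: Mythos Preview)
Your overall strategy—minimizing sequence, parallelogram law for Cauchy, weak completeness for the limit, variational characterization for orthogonality—is exactly the paper's approach (the paper packages it as the general result \cref{thm_general_dichotomy}, applied to the projection family $\bes(\N,\cdot)$). But there is a genuine gap in how you handle the $\limsup$/$\liminf$ bookkeeping, and it bites in two places.

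First, the parallelogram step. You define $D$ via $\liminf$, so that for the competitor $(a+b)/2$ you get $\liminf_N \tfrac{1}{|\Phi_N|}\sum|f-(a+b)/2|^2\ge D$. But then ``$a,b$ close to optimal'' only controls the $\liminf$ of $\tfrac{1}{|\Phi_N|}\sum|f-a|^2$, not its $\limsup$; and it is the $\limsup$ you need when you bound $\nbar a-b\nbar_\Phi^2$ from above. If instead you define $D$ via $\limsup$ the problem flips: the first two terms are controlled but the lower bound on the subtracted term fails, since an infimum of $\limsup$'s says nothing about a $\liminf$. Either way the Cauchy estimate does not close. The underlying issue is that $\bilin{f}{a}_\Phi$ need not exist for a general trigonometric polynomial $a$, so $\nbar f-a\nbar_\Phi^2$ is a genuine $\limsup$ and the parallelogram identity only holds pointwise in $N$.

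Second, the orthogonality step. Your plan to secure existence of $\bilin{f_\anti}{e^{2\pi i n\theta}}_\Psi$ for all $\theta$ by diagonalizing over a countable dense set and then invoking ``continuity in $\theta$'' cannot work: the characters $e_\theta$ are pairwise orthogonal with $\nbar e_\theta-e_{\theta'}\nbar_\Psi^2=2$ for $\theta\ne\theta'$ (by \cref{lemma_exponentialfolnervanish}), so $\theta\mapsto e_\theta$ is nowhere continuous into $\lp{2}(\N,\Psi)$, and there is no reason for $\theta\mapsto\bilin{f_\anti}{e_\theta}_\Psi$ to extend continuously off a dense set.

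The paper resolves both issues in one stroke via \cref{lem_spectrumn-is-coutable}: before running the projection argument it produces a subsequence $\Psi$ along which $\bilin{f}{u}_\Psi$ exists for \emph{every} $u\in\bes(\N,\Psi)$, in particular for every $e_\theta$. The proof of that lemma is not a countable diagonalization; it is an inductive Gram--Schmidt construction combined with Bessel's inequality (\cref{lemma_bessel}) to show that only countably many orthonormal directions can carry nontrivial correlation with $f$. With this in hand, all the relevant averages in the parallelogram computation become honest limits (so $\limsup=\liminf$ and the identity survives passage to the limit), and the variational argument for orthogonality goes through for every $\theta$ at once. This lemma is the missing ingredient in your sketch.
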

\begin{proof}
Combine \cref{thm_besicProjectionFamily} and \cref{thm_general_dichotomy} below.
\end{proof}

Instead of directly proving \cref{thm_besicSplitting}, we establish a general framework for decomposition results in $\lp{2}(\N,\Phi)$ that will in particular imply \cref{thm_besicSplitting}.
In fact, \cref{thm_besicSplitting} follows immediately from combining \cref{thm_besicProjectionFamily} and \cref{thm_general_dichotomy} below.

Suppose that for every \Folner{} sequence $\Phi$ we are given a $U(\Phi)$ of $\lp{2}(\N,\Phi)$ satisfying the following properties:
\begin{itemize}
\item
$U(\Phi)$ is a vector subspace of $\lp{2}(\N,\Phi)$;
\item
$U(\Phi)$ contains the constant functions and is closed under pointwise complex conjugation;
\item
for all $u,v\in U(\Phi)$ the inner product $\bilin{u}{v}_\Phi$ exists;
\item If $u,v\in U(\Phi)$ are real valued, then the function $n\mapsto\max\{u(n),v(n)\}$ is in $U(\Phi)$;
\item
$U(\Phi)$ is closed with respect to the topology on $\lp{2}(\N,\Phi)$ induced by the semi-norm $\nbar \cdot \nbar_\Phi$;
\item
if $\Psi$ eventually agrees with a subsequence of $\Phi$ then $U(\Psi)\supset U(\Phi)$.
\end{itemize}
Call any such assignment $U$ of subspaces to \Folner{} sequences a \define{projection family}.
Given a projection family one can consider, for each \Folner{} sequence $\Phi$, the subspace
\[
U(\Phi)^\perp \coloneqq \big\{ v\in\lp{2}(\N,\Phi): \bilin{u}{v}_\Phi \textup{ exists and equals $0$ for all } u\in U(\Phi) \big\}
\]
of $\lp{2}(\N,\Phi)$.
With a view towards proving \cref{thm_besicSplitting} we first verify that $\Phi \mapsto \bes(\N,\Phi)$ is a projection family.
The following fact can be viewed as von Neumann's ergodic theorem on the $1$-dimensional Hilbert space $\C$; we provide a short proof for the sake of completeness.
\begin{lemma}
\label{lemma_exponentialfolnervanish}
Let $\theta\in(0,1)$ and let $\Phi$ be a \Folner{} sequence. Then
\begin{equation}
\label{eqn_exponentialfolnervanish}
\lim_{N\to\infty}\frac1{|\Phi_N|}\sum_{n\in\Phi_N}e^{2\pi in\theta}=0.
\end{equation}
In particular $\bilin{a}{b}_\Phi$ exists for all trigonometric polynomials $a$ and $b$.
\end{lemma}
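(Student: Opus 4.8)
The plan is to treat \eqref{eqn_exponentialfolnervanish} as a one-dimensional instance of the mean ergodic theorem and deduce it directly from the \Folner{} property by a shift-and-cancel argument. Set $S_N \coloneqq \sum_{n\in\Phi_N} e^{2\pi i n\theta}$. Since $\theta\in(0,1)$ we have $e^{2\pi i\theta}\neq 1$, so it suffices to estimate $(e^{2\pi i\theta}-1)S_N$. First I would reindex: $e^{2\pi i\theta}S_N = \sum_{n\in\Phi_N}e^{2\pi i(n+1)\theta} = \sum_{n\in\Phi_N+1}e^{2\pi i n\theta}$. Subtracting $S_N$, the exponentials indexed by $(\Phi_N+1)\cap\Phi_N$ cancel, leaving
\[
(e^{2\pi i\theta}-1)S_N = \sum_{n\in(\Phi_N+1)\setminus\Phi_N}e^{2\pi i n\theta} \ - \sum_{n\in\Phi_N\setminus(\Phi_N+1)}e^{2\pi i n\theta},
\]
a sum of at most $|(\Phi_N+1)\triangle\Phi_N|$ terms of modulus $1$. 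Hence $|S_N| \le |e^{2\pi i\theta}-1|^{-1}\,|(\Phi_N+1)\triangle\Phi_N|$; dividing by $|\Phi_N|$ and invoking the \Folner{} condition with $m=1$ gives $|S_N|/|\Phi_N|\to 0$, which is \eqref{eqn_exponentialfolnervanish}.

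For the ``in particular'' clause I would expand. Writing $a(n)=\sum_{j=1}^J c_j e^{2\pi i\theta_j n}$ and $b(n)=\sum_{k=1}^K d_k e^{2\pi i\phi_k n}$ with all $\theta_j,\phi_k\in[0,1)$, the product $a(n)\overline{b(n)}$ is the finite sum $\sum_{j,k} c_j\overline{d_k}\,e^{2\pi i(\theta_j-\phi_k)n}$. Each difference $\theta_j-\phi_k$ lies in $(-1,1)$. If $\theta_j=\phi_k$ the corresponding average along $\Phi_N$ equals $1$ for every $N$. Otherwise $\psi\coloneqq|\theta_j-\phi_k|\in(0,1)$, and the average of $e^{2\pi i(\theta_j-\phi_k)n}$ over $\Phi_N$ is either $\frac1{|\Phi_N|}\sum_{n\in\Phi_N}e^{2\pi i\psi n}$ or its complex conjugate, which tends to $0$ by the first part. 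Summing over $j,k$ shows that $\bilin{a}{b}_\Phi$ exists and equals $\sum_{j,k:\,\theta_j=\phi_k} c_j\overline{d_k}$.

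There is no real obstacle here; the computation is routine. The only points requiring a word of care are that the shift in the \Folner{} estimate must be by a fixed element of $\N$ (here $m=1$), and that among the frequency differences $\theta_j-\phi_k$ one may encounter the value $0$ --- excluded from the hypothesis of the first part but harmless, since it simply contributes a constant term --- as well as negative values, which reduce to the positive case by complex conjugation.
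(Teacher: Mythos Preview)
Your proof is correct and follows essentially the same approach as the paper: both exploit the identity $e^{2\pi i\theta}\sum_{n\in\Phi_N}e^{2\pi in\theta}=\sum_{n\in\Phi_N+1}e^{2\pi in\theta}$ together with the \Folner{} condition at $m=1$ to bound the average by a constant times $|(\Phi_N+1)\triangle\Phi_N|/|\Phi_N|$, and both handle the second claim by expanding $a\overline{b}$ as a trigonometric polynomial. Your treatment of the negative frequency differences via complex conjugation is a nice explicit touch that the paper leaves implicit.
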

\begin{proof}
Let $N\in\N$ be large and let
\[
\epsilon_N \coloneqq \big|(\Phi_N+1)\triangle\Phi_N\big|/|\Phi_N|
\qquad
A_N \coloneqq \frac{1}{|\Phi_N|} \sum_{n\in\Phi_N}e^{2\pi in\theta}
\]
and
\[
B_N\coloneqq\frac1{|\Phi_N|}\sum_{n\in\Phi_N+1}e^{2\pi in\theta}.
\]
On the one hand $|A_N-B_N|\leq\epsilon_N$ but on the other hand $B_N=e^{2\pi i\theta}A_N$, which implies that $|A_N|<\epsilon_N/|1-e^{2\pi i\theta}|$. Since $\epsilon_N\to0$ we conclude that $A_N\to0$ as desired.

Now, if $a$ and $b$ are trigonometric polynomials then so is $n \mapsto a(n) \overline{b(n)}$ and the limit $\bilin{a}{b}_\Phi$ exists as it is a linear combination of constants and of limits of the form \eqref{eqn_exponentialfolnervanish}.
\end{proof}

\begin{theorem}
\label{thm_besicProjectionFamily}
The assignment $\Phi \mapsto \bes(\N,\Phi)$ is a projection family.
\end{theorem}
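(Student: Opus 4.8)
The plan is to verify, in turn, each of the six properties in the definition of a projection family for the assignment $U(\Phi)\coloneqq\bes(\N,\Phi)$, isolating the one point that requires a genuine idea.

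First I would clear away the structural bookkeeping. The trigonometric polynomials contain the constants and form an algebra: they are closed under pointwise products because $e^{2\pi i\theta n}\,e^{2\pi i\theta'n}=e^{2\pi i\theta''n}$ for $n\in\N$, where $\theta''$ is $\theta+\theta'$ reduced modulo~$1$, and under complex conjugation because $\overline{e^{2\pi i\theta n}}=e^{2\pi i\theta'''n}$ with $\theta'''=-\theta$ reduced modulo~$1$. Since $\nbar\cdot\nbar_\Phi$ is a seminorm, the closure $\bes(\N,\Phi)$ of this algebra is a $\C$-vector subspace of $\lp{2}(\N,\Phi)$ containing the constants; it is closed under conjugation because conjugation is a $\nbar\cdot\nbar_\Phi$-isometry carrying trigonometric polynomials to trigonometric polynomials, and it is $\nbar\cdot\nbar_\Phi$-closed because the closure of any set under a seminorm is itself closed (triangle inequality). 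Monotonicity under refinement is immediate from the fact recorded above that $\nbar f\nbar_\Psi\le\nbar f\nbar_\Phi$ whenever $\Psi$ eventually agrees with a subsequence of $\Phi$: any trigonometric approximant of $f$ that is good along $\Phi$ remains good along $\Psi$, so $f\in\bes(\N,\Phi)$ forces $f\in\bes(\N,\Psi)$, and in particular $f\in\lp{2}(\N,\Psi)$.

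Next I would address the existence of the inner product on $\bes(\N,\Phi)$. For $u,v\in\bes(\N,\Phi)$ and $\epsilon>0$, choose trigonometric polynomials $a,b$ with $\nbar u-a\nbar_\Phi<\epsilon$ and $\nbar v-b\nbar_\Phi<\epsilon$; writing $\bilinsq{f}{h}_N$ for the honest average over $\Phi_N$ and expanding $\bilinsq{u}{v}_N=\bilinsq{a}{b}_N+\bilinsq{u-a}{v}_N+\bilinsq{a}{v-b}_N$, the Cauchy--Schwarz inequality applied to the last two terms bounds $\limsup_N\big|\bilinsq{u}{v}_N-\bilinsq{a}{b}_N\big|$ by $\epsilon\big(\nbar u\nbar_\Phi+\nbar v\nbar_\Phi+\epsilon\big)$. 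Since $\bilinsq{a}{b}_N$ converges by \cref{lemma_exponentialfolnervanish}, it follows that $(\bilinsq{u}{v}_N)_N$ is Cauchy in $\C$, hence convergent; so $\bilin{u}{v}_\Phi$ exists. This is routine once \cref{lemma_exponentialfolnervanish} is available.

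The one step that is not pure bookkeeping is closure under maxima, and this is where I expect the main obstacle to lie. Using the identity $\max\{u,v\}=\tfrac12(u+v)+\tfrac12|u-v|$ and the fact that $\bes(\N,\Phi)$ is a vector space, the task reduces to showing that $|w|\in\bes(\N,\Phi)$ for every real-valued $w\in\bes(\N,\Phi)$. Given a trigonometric polynomial $a$ with $\nbar w-a\nbar_\Phi<\epsilon$, one may replace $a$ by $\operatorname{Re}(a)$, which is again a trigonometric polynomial and still within $\epsilon$ of $w$ (because $w$ is real), so I may assume $a$ is real valued; then the pointwise estimate $\big||w|-|a|\big|\le|w-a|$ gives $\nbar\,|w|-|a|\,\nbar_\Phi<\epsilon$ and reduces matters to approximating $|a|$ for a real trigonometric polynomial $a$. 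Such an $a$ is bounded, say $|a(n)|\le M$ for all $n$, so by the Weierstrass approximation theorem there is a real polynomial $P$ with $\sup_{|t|\le M}\big||t|-P(t)\big|<\epsilon$; since trigonometric polynomials are closed under products, $P\circ a$ is a trigonometric polynomial, and $\nbar\,|a|-P(a)\,\nbar_\Phi\le\sup_{|t|\le M}\big||t|-P(t)\big|<\epsilon$. Hence $|a|\in\bes(\N,\Phi)$, and therefore $|w|$ and $\max\{u,v\}$ do too. Assembling the six verifications then gives the theorem; the only genuinely non-trivial ingredient is the combination of the Weierstrass theorem with the multiplicative closure of trigonometric polynomials used to handle maxima, everything else reducing to the seminorm axioms, Cauchy--Schwarz, and \cref{lemma_exponentialfolnervanish}.
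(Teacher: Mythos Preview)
Your proposal is correct and follows essentially the same route as the paper: the routine axioms are dispatched via the seminorm properties and \cref{lemma_exponentialfolnervanish}, and closure under maxima is reduced to $|w|\in\bes(\N,\Phi)$ via the identity $\max\{u,v\}=\tfrac12(u+v+|u-v|)$ and then handled by Weierstrass approximation of $t\mapsto|t|$ together with multiplicative closure of trigonometric polynomials. Your extra step of passing to $\operatorname{Re}(a)$ before invoking Weierstrass on a real interval is a clean touch the paper leaves implicit.
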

\begin{proof}
It follows from the triangle inequality that $\bes(\N,\Phi)$ is a subspace of $\lp{2}(\N,\Phi)$.
Since constant functions are trigonometric polynomials, and since the complex conjugation of a trigonometric polynomial remains such, it is immediate that $\bes(\N,\Phi)$ contains the constant functions and is closed under pointwise complex conjugation.

The fact that the space $\bes(\N,\Phi)$ is closed with respect to $\nbar\cdot\nbar_\Phi$ is an immediate consequence of the definition of $\bes(\N,\Phi)$ as the closure in $\lp{2}(\N,\Phi)$ of the space of trigonometric polynomials with respect to $\nbar\cdot\nbar_\Phi$.
%For any $f\in\lp{2}(\N,\Phi)$ and any subsequence $\Psi$ of $\Phi$ we have $\nbar f\nbar_\Psi\leq\nbar f\nbar_\Phi$, which implies that $\bes(\N,\Phi)\subset\bes(\N,\Psi)$.

Fix now $u,v$ in $\bes(\N,\Phi)$ both real-valued.
From the relation
\[
\max \{ u, v \} = \frac{1}{2}(u+v+|u-v|)
\]
and linearity, the fact that $\max \{ u, v \}$ belongs to $\bes(\N,\Phi)$ would follow from the knowledge that $|w|$ belongs to $\bes(\N,\Phi)$ whenever $w$ does.
That knowledge is the content of \cite[Lemma 5$^\circ$ in Chapter II, {\S}5]{Besicovitch55}; see also \cite{Bohr25-1,Bohr25-2}.
We give here a proof for completeness.
Fix $w \in \bes(\Phi,\N)$ and $\epsilon > 0$.
Let $a$ be a trigonometric polynomial with $\nbar u - a \nbar_\Phi < \epsilon/2$.
The reverse triangle inequality gives $\nbar |u| - |a| \nbar_\Phi < \epsilon/2$.
Apply the Stone-Weierstrass theorem to find a polynomial $b \in \C[z]$ with $|b(z) - |z|| < \epsilon/2$ for all $z \le \sup \{ |a(n)| : n \in \N \}$.
(This is possible because trigonometric polynomials have bounded range.)
The trigonometric polynomial $n \mapsto b(a(n))$ is then within $\epsilon$ of $|u|$ with respect to the $\nbar \cdot \nbar_\Phi$ semi-norm.

Next, we prove that $\bilin{u}{v}_\Phi$ exists for any $u,v\in\bes(\N,\Phi)$.
For this we use \cref{lemma_exponentialfolnervanish} and the inequality
\begin{align*}
&{}
\limsup_{N \to \infty} \left| \frac{1}{|\Phi_N|} \sum_{n \in \Phi_N} \Big( u(n) - w(n) \Big) \overline{v(n)} \right|
\\
\le{}
&{}
\nbar u - w \nbar_\Phi \sup \Big\{ \Big( \frac{1}{|\Phi_N|} \sum_{n \in \Phi_N} |v(n)|^2 \Big)^{1/2} : N \in \N \Big\}
\end{align*}
which is true for all $u,v,w \in \lp{2}(\N,\Phi)$ and implies continuity of $\bilin{\cdot}{\cdot}_\Phi$ in the first variable.
Fix $u \in \bes(\Phi,\N)$ and a trigonometric polynomial $a$.
Fix a sequence $n \mapsto b_n$ of trigonometric polynomials converging to $u$ with respect to $\nbar \cdot \nbar_\Phi$.
The sequence $n \mapsto b_n$ is Cauchy for $\nbar \cdot \nbar_\Phi$ so $n \mapsto \bilin{b_n}{a}_\Phi$ is Cauchy by the Cauchy-Schwarz inequality.
Denote by $\alpha$ its limit.
The above inequality implies that $\bilin{u}{a}_\Phi = \alpha$.

A similar inequality gives continuity of the form $\bilin{\cdot}{\cdot}_\Phi$ in the second variable, and the above argument can be repeated to prove that if $u,v \in \bes(\N,\Phi)$ and $c_n$ are trigonometric polynomials converging to $v$ with respect to $\nbar \cdot \nbar_\Phi$ then $\bilin{u}{v}_\Phi$ is the limit of the Cauchy sequence $n \mapsto \bilin{u}{c_n}_\Phi$.

Lastly, since $\nbar f \nbar_\Psi \le \nbar f \nbar_\Phi$ for all $f : \N \to \C$ whenever $\Psi$ eventually agrees with a subsequence of $\Phi$, it is immediate that $\bes(\Psi,\N) \supset \bes(\Phi,\N)$ whenever $\Psi$ eventually agrees with a subsequence of $\Phi$.
\end{proof}

In view of \cref{thm_besicProjectionFamily}, the following general decomposition result extends \cref{thm_besicSplitting}.

\begin{theorem}
\label{thm_general_dichotomy}
Let $U$ be a projection family and let $\Phi$ be a \Folner{} sequence.
For every $f \in \lp{2}(\N,\Phi)$ there exists a subsequence $\Psi$ of $\Phi$ and there is $f_U\in U(\Psi)$ such that:
\begin{enumerate}
  \item $f - f_U\in U(\Psi)^\perp$,
  \item $f_U$ minimizes the distance between $f$ and $U(\Psi)$ in the sense that $\nbar f - f_U \nbar_\Psi = \inf \{\nbar f - g \nbar_\Psi: g\in U(\Psi)\}$,
  \item if $f$ takes values in an interval $[a,b]$ then $f_U$ takes values in $[a,b]$.
\end{enumerate}
\end{theorem}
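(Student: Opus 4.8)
The plan is to carry out the classical Hilbert-space projection of $f$ onto $U(\Psi)$, while coping with the three features that set our situation apart from a Hilbert space: $\nbar\cdot\nbar_\Phi$ is only a seminorm, $\lp{2}(\N,\Phi)$ need not be complete, and $U(\Psi)$ may enlarge when $\Psi$ is refined. I would begin with two reductions. First, if $f$ takes values in $[a,b]$ then, for any $g\in U(\Phi)$, the function $\min\{\max\{\operatorname{Re}g,\,a\},\,b\}$ again belongs to $U(\Phi)$ (using that $U(\Phi)$ is a vector space containing the constants, is closed under conjugation and under $\max$ of real-valued functions, and $\min=-\max(-\,\cdot\,,-\,\cdot\,)$) and is pointwise no farther from $f$ than $g$ is; so for every \Folner{} subsequence $\Psi$ the number $d(\Psi)\coloneqq\inf\{\nbar f-g\nbar_\Psi:g\in U(\Psi)\}$ does not change if the infimum is taken only over real functions valued in $[a,b]$, and every minimising sequence below may be drawn from this smaller class, which via \cref{prop_weak-completeness} forces the limit $f_U$ to be valued in $[a,b]$ and thereby gives conclusion~(3). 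Second---the decisive structural point---I would \emph{stabilise} the \Folner{} sequence. Whenever $\Lambda$ is a subsequence of $\Psi$ one has $U(\Lambda)\supseteq U(\Psi)$ and $\nbar\cdot\nbar_\Lambda\le\nbar\cdot\nbar_\Psi$, so $d$ is non-increasing under refinement, and $d(\Psi)\le\nbar f\nbar_\Phi<\infty$; hence a diagonal argument (at stage $k$ pass to a subsequence $\Lambda_{k+1}$ of $\Lambda_k$ with $d(\Lambda_{k+1})$ within $2^{-k}$ of the infimum of $d$ over all subsequences of $\Lambda_k$, then take a diagonal subsequence) yields a subsequence $\Psi_*$ of $\Phi$ and a number $D\ge 0$ with $d(\Psi)=D$ for \emph{every} subsequence $\Psi$ of $\Psi_*$.

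Next I would run the minimiser argument inside $\Psi_*$. Choose $g_j\in U(\Psi_*)$ with $\nbar f-g_j\nbar_{\Psi_*}^2<D^2+\tfrac1j$. Averaging the pointwise parallelogram identity $4\,\bigl|f-\tfrac{1}{2}(g_i+g_j)\bigr|^2=2|f-g_i|^2+2|f-g_j|^2-|g_i-g_j|^2$ over $(\Psi_*)_N$ and letting $N\to\infty$---using that $\tfrac1{|(\Psi_*)_N|}\sum_{n\in(\Psi_*)_N}|g_i(n)-g_j(n)|^2$ genuinely converges (to $\nbar g_i-g_j\nbar_{\Psi_*}^2$, because $g_i-g_j\in U(\Psi_*)$), so may be pulled out of a $\limsup$, and that $\nbar f-\tfrac12(g_i+g_j)\nbar_{\Psi_*}\ge d(\Psi_*)=D$---one obtains
\[
\nbar g_i-g_j\nbar_{\Psi_*}^2\ \le\ 2\nbar f-g_i\nbar_{\Psi_*}^2+2\nbar f-g_j\nbar_{\Psi_*}^2-4D^2\ <\ \tfrac2i+\tfrac2j ,
\]
so $(g_j)$ is $\nbar\cdot\nbar_{\Psi_*}$-Cauchy. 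By \cref{prop_weak-completeness} there are a subsequence $\Psi$ of $\Psi_*$ and $f_U\in\lp{2}(\N,\Psi)$ with $\nbar f_U-g_j\nbar_\Psi\to0$, and since $U(\Psi)$ is $\nbar\cdot\nbar_\Psi$-closed and contains every $g_j$ we get $f_U\in U(\Psi)$. For any subsequence $\Lambda$ of $\Psi$, $\nbar f-f_U\nbar_\Lambda\le\nbar f-f_U\nbar_\Psi=\lim_j\nbar f-g_j\nbar_\Psi\le\lim_j\nbar f-g_j\nbar_{\Psi_*}=D$, while $\nbar f-f_U\nbar_\Lambda\ge d(\Lambda)=D$ by stability (as $f_U\in U(\Lambda)$); hence $\nbar f-f_U\nbar_\Lambda=D=d(\Lambda)$ for all such $\Lambda$, which for $\Lambda=\Psi$ is conclusion~(2).

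The last and hardest point is conclusion~(1). I would show $r\coloneqq f-f_U\in U(\Psi)^\perp$ via the variational fact: \emph{if $\Lambda$ is a subsequence of $\Psi$ and $v\in U(\Psi)$ is such that $\bilin{v}{r}_\Lambda$ exists, then $\bilin{v}{r}_\Lambda=0$.} Indeed, after normalising to $\nbar v\nbar_\Lambda=1$ (the case $\nbar v\nbar_\Lambda=0$ being immediate from Cauchy--Schwarz) and writing $\alpha=\bilin{v}{r}_\Lambda$, the convergence $\tfrac1{|\Lambda_N|}\sum_{n\in\Lambda_N}\overline{v(n)}\,r(n)\to\alpha$ lets one pull the cross term out of the $\limsup$ defining $\nbar r-\alpha v\nbar_\Lambda^2$, and since $\nbar r\nbar_\Lambda=\nbar f-f_U\nbar_\Lambda=D$,
\[
\nbar f-(f_U+\alpha v)\nbar_\Lambda^2=\nbar r-\alpha v\nbar_\Lambda^2\le\nbar r\nbar_\Lambda^2+|\alpha|^2\nbar v\nbar_\Lambda^2-2|\alpha|^2=D^2-|\alpha|^2 ;
\]
as $f_U+\alpha v\in U(\Lambda)$ and $d(\Lambda)=D$, this forces $\alpha=0$. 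Now fix $u\in U(\Psi)$. The sequence $\tfrac1{|\Psi_N|}\sum_{n\in\Psi_N}\overline{u(n)}\,r(n)$ is bounded (Cauchy--Schwarz), and if it failed to converge it would have two distinct subsequential limits $\gamma_1\ne\gamma_2$ attained along subsequences $\Lambda_1,\Lambda_2$ of $\Psi$, forcing $\bilin{u}{r}_{\Lambda_1}=\gamma_1$ and $\bilin{u}{r}_{\Lambda_2}=\gamma_2$ to both vanish by the variational fact---a contradiction; hence $\bilin{u}{r}_\Psi$ exists, and it is $0$ by the variational fact with $\Lambda=\Psi$. Since $u\in U(\Psi)$ was arbitrary, $f-f_U\in U(\Psi)^\perp$, which is conclusion~(1). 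The main obstacle throughout is precisely that $\nbar\cdot\nbar$ is merely a seminorm, so the parallelogram and Pythagoras identities survive only as inequalities unless the relevant averages actually converge; the stabilisation step (which fixes the target subspace once and for all) and the ``two subsequential limits'' trick are what make it possible to extract the honest equalities one needs.
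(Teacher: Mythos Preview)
Your argument is correct and follows a genuinely different route from the paper's. The paper first proves an auxiliary lemma (\cref{lem_spectrumn-is-coutable}) via a Gram--Schmidt--type construction: it produces a subsequence $\Psi$ along which $\bilin{f}{u}_\Psi$ exists for \emph{every} $u\in U(\Psi)$, and only then runs the minimiser/parallelogram/variational argument. You bypass that lemma entirely. Your key device is the stabilisation of the distance $d$, so that $d(\Lambda)=D$ and $\nbar f-f_U\nbar_\Lambda=D$ persist along \emph{every} further subsequence $\Lambda$ of $\Psi$; this lets you run the variational inequality not just along $\Psi$ but along any $\Lambda$, and then the ``two subsequential limits'' trick converts the variational inequality into the \emph{existence} of $\bilin{u}{r}_\Psi$ (not merely its vanishing when it exists). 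The paper's approach isolates a reusable structural fact about projection families at the cost of a somewhat involved inductive construction; your approach is shorter and more self-contained, trading that lemma for a monotonicity/diagonalisation step and an elegant convergence argument. Both proofs share the parallelogram-law Cauchy estimate, the appeal to \cref{prop_weak-completeness}, and the clipping argument for conclusion~(3).
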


\cref{thm_general_dichotomy} would be immediate if $\lp{2}(\N,\Phi)$ were a Hilbert space and $U(\Phi)$ were a closed subspace, because then one could simply define $f_U$ as the orthogonal projection of $f$ onto $U(\Phi)$.
However, $\lp{2}(\N,\Phi)$ is not a Hilbert space, which requires us to overcome some difficulties.
In particular, it is problematic that $\bilin{f}{u}_\Phi$ may not exist for all $u \in U(\Phi)$.
The following technical lemma offers a way around this issue.

\begin{lemma}
\label{lem_spectrumn-is-coutable}
Let $U$ be a projection family and let $\Phi$ be a \Folner{} sequence.
For every $f\in\lp{2}(\N,\Phi)$ there exists a subsequence $\Psi$ of $\Phi$ such that the inner product $\bilin{f}{u}_{\Psi}$ exists whenever $u\in U(\Psi)$.
\end{lemma}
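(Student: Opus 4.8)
The plan is to construct, by a greedy exhaustion that repeatedly refines the \Folner{} sequence, a single countable orthonormal system living inside the (ever-enlarging) space $U(\Psi)$ which ``absorbs'' all of the correlation of $f$ with functions from $U$. The key device is the \emph{upper correlation}
\[
\rho_\Psi(f,v)\coloneqq\limsup_{N\to\infty}\Bigl|\frac{1}{|\Psi_N|}\sum_{n\in\Psi_N}f(n)\overline{v(n)}\Bigr|,
\]
which, by the Cauchy--Schwarz inequality for finite sums, satisfies $\rho_\Psi(f,v)\le\nbar f\nbar_\Psi\nbar v\nbar_\Psi$, is homogeneous in $v$, does not increase when $\Psi$ is replaced by a subsequence, and equals $|\bilin fv_\Psi|$ whenever $\bilin fv_\Psi$ exists.

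Now the construction. Put $\Psi^{(0)}\coloneqq\Phi$. Assume inductively that we have a subsequence $\Psi^{(n-1)}$ of $\Phi$ together with functions $w_1,\dots,w_{n-1}\in U(\Psi^{(n-1)})$ that are orthonormal along $\Psi^{(n-1)}$ and for which each $\bilin f{w_i}_{\Psi^{(n-1)}}$ exists. Set
\[
\alpha_n\coloneqq\sup\Bigl\{\rho_\Xi(f,v):\Xi\text{ a subsequence of }\Psi^{(n-1)},\ v\in U(\Xi),\ \nbar v\nbar_\Xi=1,\ \bilin v{w_i}_\Xi=0\text{ for }i<n\Bigr\}.
\]
If $\alpha_n=0$ the construction halts; otherwise pick a subsequence $\Xi$ of $\Psi^{(n-1)}$ and a function $v_n\in U(\Xi)$, orthogonal along $\Xi$ to $w_1,\dots,w_{n-1}$ and with $\nbar v_n\nbar_\Xi=1$, such that $\rho_\Xi(f,v_n)>\alpha_n/2$; passing to a subsequence that realises this $\limsup$ as an honest limit of the same modulus, we obtain $\Psi^{(n)}\subset\Xi$ with $\bilin f{v_n}_{\Psi^{(n)}}$ existing and $|\bilin f{v_n}_{\Psi^{(n)}}|>\alpha_n/2$. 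Put $w_n\coloneqq v_n$; the earlier inner products $\bilin f{w_i}$ for $i<n$ automatically survive this refinement. Since each successive supremum ranges over a subcollection of the previous one, $(\alpha_n)$ is non-increasing. Finally let $\Psi$ be a diagonal subsequence of $(\Psi^{(n)})_n$. Then $\{w_n\}\subset U(\Psi)$ is orthonormal along $\Psi$ and each $\bilin f{w_n}_\Psi$ exists, so \cref{lemma_bessel} gives $\sum_n\bigl|\bilin f{w_n}_\Psi\bigr|^2\le\nbar f\nbar_\Psi^2<\infty$, which together with $|\bilin f{w_n}_\Psi|>\alpha_n/2$ forces $\alpha_n\to0$.

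To finish, fix an arbitrary $u\in U(\Psi)$. The coefficients $c_i\coloneqq\bilin u{w_i}_\Psi$ exist because the inner product is defined on $U(\Psi)$, and $\sum_i|c_i|^2\le\nbar u\nbar_\Psi^2$ by \cref{lemma_bessel}, so the partial sums $s_k\coloneqq\sum_{i\le k}c_iw_i$ lie in $U(\Psi)$ and satisfy $\nbar u-s_k\nbar_\Psi^2=\nbar u\nbar_\Psi^2-\sum_{i\le k}|c_i|^2$. Since $u-s_k\in U(\Psi)$ is orthogonal along $\Psi$ to $w_1,\dots,w_k$, its normalisation $(u-s_k)/\nbar u-s_k\nbar_\Psi$ (when nonzero) belongs to the collection defining $\alpha_{k+1}$, whence $\rho_\Psi(f,u-s_k)\le\nbar u-s_k\nbar_\Psi\,\alpha_{k+1}\le\nbar u\nbar_\Psi\,\alpha_{k+1}\to0$. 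On the other hand $\bilin f{s_k}_\Psi=\sum_{i\le k}\overline{c_i}\,\bilin f{w_i}_\Psi$ exists for every $k$ and, being an absolutely convergent series in $k$, converges to some $\beta\in\C$. Splitting $\frac{1}{|\Psi_N|}\sum_{n\in\Psi_N}f(n)\overline{u(n)}=\frac{1}{|\Psi_N|}\sum_{n\in\Psi_N}f(n)\overline{s_k(n)}+\frac{1}{|\Psi_N|}\sum_{n\in\Psi_N}f(n)\overline{(u-s_k)(n)}$ and letting $N\to\infty$ gives $\limsup_{N\to\infty}\bigl|\frac{1}{|\Psi_N|}\sum_{n\in\Psi_N}f(n)\overline{u(n)}-\beta\bigr|\le\nbar u\nbar_\Psi\,\alpha_{k+1}+\bigl|\bilin f{s_k}_\Psi-\beta\bigr|$ for every $k$; letting $k\to\infty$ shows that $\bilin fu_\Psi$ exists (and equals $\beta$).

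The main obstacle is that $U$ is not a fixed subspace: every refinement of the \Folner{} sequence enlarges it, so an element of $U(\Psi)$ for the final $\Psi$ need not have belonged to any $U(\Psi^{(n)})$. This is exactly why $\alpha_n$ must be defined as a supremum over \emph{all} subsequences of $\Psi^{(n-1)}$ rather than merely over $U(\Psi^{(n-1)})$, and the delicate point is to arrange the construction so that, after diagonalising, a single application of \cref{lemma_bessel} simultaneously bounds all the numbers $\alpha_n$ and drives them to zero; once that is secured, the splitting $u=s_k+(u-s_k)$ does the rest (the same splitting, with $\alpha=0$, also covers the case in which the construction halts at a finite stage). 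We note that for the family $U=\bes$ relevant to \cref{thm_besicSplitting} the same scheme applies, and one may argue more concretely by taking the $w_n$ among the explicit orthonormal functions $n\mapsto e^{2\pi in\theta}$, so that $\{w_n\}$ records a countable set of frequencies off which $f$ has a well-defined, vanishing Fourier coefficient along $\Psi$.
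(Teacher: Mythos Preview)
Your proof is correct and follows essentially the same route as the paper's: a greedy construction of an orthonormal system $\{w_n\}\subset U$ while successively refining the \Folner{} sequence, with the supremum $\alpha_n$ taken over \emph{all} further subsequences so that it dominates the residual correlation of any $u\in U(\Psi)$ after diagonalisation; Bessel's inequality then forces $\alpha_n\to 0$, and the splitting $u=s_k+(u-s_k)$ finishes the argument. The only cosmetic differences are your use of the threshold $\alpha_n/2$ in place of the paper's $\delta_k-\tfrac1k$ and your packaging of the $\limsup$ as the ``upper correlation'' $\rho_\Psi$; note also that orthonormality of the $w_n$ along the final $\Psi$ is preserved precisely because $\bilin{w_n}{w_n}$ is an honest limit (by the projection-family axiom), a point worth making explicit.
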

\begin{proof}
Fix $f\in\lp{2}(\N,\Phi)$.
We start with an inductive construction.
%by inductively constructing for every $k \in \N \cup \{0\}$ elements $u_0,u_1,\ldots,u_k\in U(\Phi^{(k)})$ and a \Folner{} sequence $\Phi^{(k)}$.
Put $u_0\coloneqq 0$ and $\Phi^{(0)}\coloneqq \Phi$.
Certainly $u_0 \in U(\Phi^{(0)})$ and $\bilin{f}{u_0}_{\Phi^{(0)}}$ exists.
Suppose for some $k \in \N$ that we have defined functions $u_0,\dots,u_{k-1} \in U(\Phi^{(k-1)})$ and \Folner{} sequences $\Phi^{(0)},\dots,\Phi^{(k-1)}$, each a subsequence of the previous one, such that $\bilin{f}{u_i}_{\Phi^{(k-1)}}$ exists for all $0 \le i \le k-1$.
For each \Folner{} subsequence $\Phi'$ of $\Phi^{(k-1)}$, let
\[
O_{k-1}(\Phi')\coloneqq \big\{u\in U(\Phi'): \bilin{u}{u_i}_{\Phi'}=0,~\forall i\in \{0,\ldots,k-1\}\big\}
\]
which is a linear subspace of $U(\Phi')$ that contains the constant functions.

We now distinguish two cases depending on whether or not there are a subsequence $\Phi'$ of $\Phi^{(k-1)}$ and a member $u$ of $O_{k-1}(\Phi')$ with $\nbar u \nbar_{\Phi'} \ne 0$.

In the first case we assume, for every subsequence $\Phi'$ of $\Phi^{(k-1)}$, that every $u \in U(\Phi')$ satisfying $\bilin{u}{u_i}_{\Phi'}$ for all $0 \le i \le k-1$ has the property $\nbar u \nbar_{\Phi'} = 0$.
If this happens we terminate our inductive construction, the result being a \Folner{} sequence $\Phi^{(k-1)}$ and a collection $u_0,\dots,u_{k-1}$ of members of $U(\Phi^{(k-1)})$ such that $\bilin{f}{u_i}_{\Phi^{(k-1)}}$ exists for all $0 \le i \le k-1$.

We claim in this first case that the conclusion of the lemma is true with $\Psi = \Phi^{(k-1)}$.
Fix $u \in U(\Psi)$.
The function
\[
v = u - \sum_{i=0}^{k-1} u_i \bilin{u}{u_i}_\Psi
\]
belongs to $O_{k-1}(\Psi)$ and therefore has a $\nbar \cdot \nbar_\Psi$ norm of zero.
It follows that
\begin{align*}
&
\frac{1}{|\Psi_N|} \sum_{n \in \Psi_N} f(n) \overline{u(n)}
\\
={}
&
\frac{1}{|\Psi_N|} \sum_{n \in \Psi_N} f(n) \overline{v(n)}
+
\sum_{i=0}^{k-1} \frac{1}{|\Psi_N|} \sum_{n \in \Psi_N} f(n) \overline{u_i(n)} \bilin{u}{u_i}_\Psi
\end{align*}
converges as $N \to \infty$ as desired.

In the second case we assume there is a subsequence $\Phi'$ of $\Phi^{(k-1)}$ and a member $u$ of $O_{k-1}(\Phi')$ with $\nbar u \nbar_{\Phi'} \ne 0$.
If this happens then the set
\begin{equation*}
Q_k
\coloneqq
\left\{
\left| \bilin{f}{u}_{\Phi'} \right|
:
\begin{aligned}
& \Phi' \text{ is a \Folner{} subsequence of } \Phi^{(k-1)} \\
& u \in O_{k-1}(\Phi') \text{  with } \nbar u\nbar_{\Phi'}=1 \\
& \bilin{f}{u}_{\Phi'} \text{ exists}
\end{aligned}
\right\}
\end{equation*}
is non-empty.
Indeed if, for some subsequence $\Phi'$ of $\Phi^{(k-1)}$, one can find some member $u$ of $O_{k-1}(\Phi')$ with $\nbar u \nbar_{\Phi'} \ne 0$, then note that $u/\nbar u \nbar$ belongs to $O_{k-1}(\Xi)$ for every subsequence $\Xi$ of $\Phi'$ and that $\bilin{f}{u}_\Xi$ will exist for a suitable choice of $\Xi$.

Write $\delta_k$ for the supremum of $Q_k$, which will be at most $\nbar f \nbar_\Phi$ by Cauchy-Schwarz.
Choose a \Folner{} subsequence $\Phi^{(k)}$ of $\Phi^{(k-1)}$ and $u_k\in O_{k-1}(\Phi^{(k)})$ with $\nbar u_k\nbar_{\Phi^{(k)}} = 1$ such that $\bilin{f}{u_k}_{\Phi^{(k)}}$ exists and $|\bilin{f}{u_k}_{\Phi^{(k)}}|>\delta_k-\frac{1}{k}$.
Then $\bilin{f}{u_i}_{\Phi^{(k)}}$ exists for all $0 \le i \le k$.

This concludes the consideration of the second case, and the inductive construction.
If, at any stage, we find ourselves in the first case discussed above then the proof is complete.
We therefore find ourselves with a sequence $u_0,u_1,\dots$ of functions, a sequence $\Phi^{(0)},\Phi^{(1)},\dots$ of \Folner{} sequences, and a sequence $\delta_1,\delta_2,\dots$ of suprema, as described in the second case.

Define $\Psi_N\coloneqq \Phi_N^{(N)}$.
The sequence $\Psi$ is a subsequence of $\Phi^{(1)}$ and is therefore itself a \Folner{} sequence.
We claim that for every $u\in U(\Psi)$ the inner product $\bilin{f}{u}_{\Psi}$ exists.
More precisely, we claim that
$$
\bilin{f}{u}_{\Psi}=\sum_{i=1}^\infty \bilin{f}{u_i}_{\Psi}\overline{\bilin{u}{u_i}_{\Psi}}.
$$
Note that the terms in the above series are well defined, since $\bilin{u}{u_i}_{\Psi}$ exists because $u,u_i\in U(\Psi)$ and $\bilin{f}{u_i}_{\Psi}$ exists by construction of $\Psi$.
Moreover, this series is absolutely convergent, because \cref{lemma_bessel} implies that the sequences $i \mapsto \bilin{f}{u_i}_{\Psi}$ and $i \mapsto \bilin{u}{u_i}_\Psi$ are in $\ell^2(\N)$.

For each $k\in\N$, define
\[
v_k\coloneqq u - \sum_{i=1}^{k-1}u_i \bilin{u}{u_i}_{\Psi}
\]
and observe that $v_k\in O_{k-1}(\Psi)$ and that $\nbar v_k\nbar_{\Psi}\leq \nbar u\nbar_{\Psi}$.
Therefore
\begin{eqnarray*}
  &&\limsup_{N\to\infty}\left|\frac1{|\Psi_N|}\sum_{n\in\Psi_N} f(n)\overline{u(n)}-\sum_{i=1}^\infty \bilin{f}{u_i}_{\Psi}\overline{\bilin{u}{u_i}_{\Psi}}\right|
  \\&\leq&
  \limsup_{N\to\infty}\left|\frac1{|\Psi_N|}\sum_{n\in\Psi_N} f(n)\overline{v_k(n)}\right|+\left|\sum_{i=k}^\infty \bilin{f}{u_i}_{\Psi}\overline{\bilin{u}{u_i}_{\Psi}}\right|
  \\&\leq&
  \delta_k\nbar v_k\nbar_{\Psi}+\left|\sum_{i=k}^\infty \bilin{f}{u_i}_{\Psi}\overline{\bilin{u}{u_i}_{\Psi}}\right|.
\end{eqnarray*}

It thus suffices to show that $\delta_k \to 0$ as $k\to\infty$.
But by \cref{lemma_bessel}, we get
$$
\nbar f\nbar_{\Psi}^2\geq \sum_{k=1}^\infty |\bilin{f}{u_k}_{\Psi}|^2\geq \sum_{k=1}^\infty \big(\delta_k -\tfrac{1}{k}\big)^2
$$
and since $f\in\lp{2}(\N,\Phi)$, the series converges, which implies that indeed $\delta_k \to 0$ as $k\to\infty$.
\end{proof}

\begin{proof}[Proof of \cref{thm_general_dichotomy}]
As guaranteed by \cref{lem_spectrumn-is-coutable}, let $\Psi$ be a \Folner{} subsequence of $\Phi$ such that for every $u\in U(\Psi)$ the limit $\bilin{f}{u}_\Psi$ exists.
Define
\[
\delta\coloneqq \inf\big\{\nbar f-u\nbar_{\Psi}^2: u\in U(\Psi)\big\}.
\]
For each $k\in\N$ choose $u_k\in U(\Psi)$ with
$\nbar f-u_k\nbar_{\Psi}^2 < \delta+\frac{1}{k}$.

%Suppose $f$ takes values in $[a,b]$. Since $U(\Psi)$ is closed under pointwise complex conjugation, the real part of $f_U$, which we denote by $\Re f_U$, also belongs to $U(\Psi)$. Since $f$ is real valued, we have $|f(n)- \Re f_U(n)|\leq|f(n)-f_U(n)| $ for all $n\in\N$, which implies $\nbar f- \Re f_U\nbar_{\Psi}\leq\nbar f-f_U\nbar_{\Psi}$. Therefore $\nbar f_U- \Re f_U\nbar_{\Psi}=0$ by part \ref{itm_vii2}. Next let $f_U'$ be defined as
If $f$ takes values in $[a,b]$, then we can replace $u_k$ with the function
$$
v_k:n\mapsto\begin{cases}
a, &\text{if}~ \Re u_k(n) < a,
\\
\Re u_k(n), &\text{if}~ a\leq \Re u_k(n) \leq b,
\\
b, &\text{if}~ \Re u_k(n) > b,
\\
\end{cases}
$$
where $\Re z$ denotes the real part of a complex number $z$.
Indeed, it is clear that $\nbar f-v_k\nbar_{\Psi}^2\leq \nbar f-u_k\nbar_{\Psi}^2 < \delta+\frac{1}{k}$.
On the other hand, it follows from the definition of a projection family that $U(\Psi)$ is closed under the operation of the pointwise minimum, so $v_k$ still belongs to $U(\Phi)$.
Therefore we can assume without loss of generality that when $f$ takes values in $[a,b]$, then so do the functions $u_k$.

Next, an application of the parallelogram law to the vectors $f-u_j$ and $f-u_k$ shows that $\nbar u_j-u_k\nbar_{\Psi}^2 \leq\frac2{j}+\frac2{k}$, which implies that $(u_k)_{k\in\N}$ is a Cauchy sequence with respect to $\nbar \cdot \nbar_{\Psi}$.
Using \cref{prop_weak-completeness} and by refining $\Psi$ if necessary, we can find $f_U\in\lp{2}(\N,\Psi)$ such that $\lim_{k\to\infty}\nbar f_{U}-u_k\nbar_{\Psi}=0$.
If $f$ takes values in $[a,b]$ (and hence so do all the $u_k$), then $f_U$ also takes values in $[a,b]$.
Since $U(\Psi)$ is closed, it follows that $f_U$ belongs to $U(\Psi)$.
Minkowski's inequality implies that $\nbar f-f_U \nbar_{\Psi}^2=\delta$.
In particular, $f_U$ minimizes the distance between $f$ and $U(\Psi)$.

Write $h \coloneqq f-f_U$.
We claim that $h$ belongs to $U(\Psi)^\perp$.
First note that $\bilin{h}{u}_\Psi$ exists for all $u \in U(\Psi)$ because both $\bilin{f}{u}_\Psi$ and $\bilin{f_U}{u}_\Psi$ exist.
Next, fix $u\in U(\Psi)$ with $\|u\|_\Psi\leq 1$ and define $I \coloneqq \bilin{h}{u}_\Psi$.
We have
\begin{align*}
&
\big\nbar h  - I u\big\nbar_{\Psi}^2
\\
=
&
\lim_{N\to\infty}\frac{1}{|\Psi_{N}|} \sum_{n\in\Psi_{N}} |h(n)|^2-  h(n) \overline{I u(n)}-  \overline{h(n)}I u(n)+|I|^2 |u(n)|^2
\\
\leq
&
\nbar h \nbar_{\Psi}^2- |I|^2 (2-\nbar u\nbar_{\Psi}^2).
\end{align*}
Since $\nbar u\nbar_{\Psi}^2\leq 1$ and $\nbar h \nbar_{\Psi}^2=\delta$, we conclude that $\nbar h \nbar_{\Psi}^2 - |I|^2 (2-\nbar u\nbar_{\Psi}^2)\leq \delta -|I|^2$.
Therefore
\begin{equation}
\label{eq_Bohr-aperiodic-splitting-1}
\big\nbar h - I u\big\nbar_{\Psi}^2\leq \delta  -|I|^2.
\end{equation}
On the other hand, $h - Iu = f - (f_U + I u)$ and $f_U + I u \in U(\Psi)$.
So
\begin{equation}
\label{eq_Bohr-aperiodic-splitting-2}
\big\nbar h - I u\big\nbar_{\Psi}^2\geq \delta.
\end{equation}
Combining \eqref{eq_Bohr-aperiodic-splitting-1} and \eqref{eq_Bohr-aperiodic-splitting-2} proves that $I=0$.
\end{proof}

\begin{remark}
\label{rem_values_in_interval}
Under the assumptions of \cref{thm_general_dichotomy}, the function $f_U\in U(\Psi)$ is unique in the following two senses:
\begin{enumerate}	
[label=(\alph{enumi}),ref=(\alph{enumi}),leftmargin=1cm]
\item
\label{itm_vii1}
If $f_{U}'\in U(\Psi)$ is such that $f-f_U'\in U(\Psi)^\perp$ then $\nbar f_U-f_U'\nbar_\Psi=0$.
\item
\label{itm_vii2}
If $f_U'\in U(\Psi)$ also minimizes the distance between $f$ and $U(\Psi)$ (i.e.\ $\nbar f - f_U' \nbar_\Psi = \inf \{\nbar f - g \nbar_\Psi: g\in U(\Psi)\}$), then $\nbar f_U-f_U'\nbar_\Psi=0$.
\end{enumerate}
In the second half of the proof of \cref{thm_general_dichotomy} we show that a function $f_U'\in U(\Psi)$ that minimizes the distance between $f$ and $U(\Psi)$ must satisfy $f-f_U'\in U(\Psi)^\perp$; therefore
part \ref{itm_vii2} follows from part \ref{itm_vii1}.

To verify part \ref{itm_vii1}, note that $f-f_U,f-f_U'\in U(\Psi)^\perp$ implies that $f_U-f_U'\in U(\Psi)^\perp$, while $f_U,f_U'\in U(\Psi)$ implies that $f_U-f_U'$, and therefore $\|f_U-f_U'\|^2={\bilin{f_U-f_U'}{f_U-f_U'}}=0$.
%so $\bilin{(f-f_U)}{f_U}_\Psi=\bilin{(f-f_U')}{f_U'}_\Psi=\bilin{(f-f_U)}{f_U'}_\Psi=0$. Hence
%\begin{eqnarray*}
%\nbar f_U \nbar_\Psi^2
%&=&\bilin{f_U}{f_U}_\Psi
%\\
%&=&\bilin{(f_U-f)+f}{f_U}_\Psi
%\\
%&=&\bilin{f}{f_U}_\Psi
%\\
%&=&\bilin{(f-f_U')+f_U'}{f_U}_\Psi
%\\
%&=&\bilin{f_U'}{f_U}_\Psi.
%\end{eqnarray*}
%By symmetry, we also get $\nbar f_U' \nbar_\Psi^2=\bilin{f_U}{f_U'}_\Psi$.
%This implies $\nbar f_U-f_U'\nbar_\Psi^2 = \bilin{f_U-f_U'}{f_U-f_U'}=\nbar f_U \nbar_\Psi^2+\nbar f_U' \nbar_\Psi^2-  \bilin{f_U'}{f_U}_\Psi- \bilin{f_U}{f_U'}_\Psi=0$.
\end{remark}

We conclude this subsection with a small detour on the further applicability of \cref{thm_general_dichotomy}; this remarks are unrelated to the proof of \cref{thm_erdosNatural}.

By a \define{nilsystem} we mean a pair $(G/\Gamma,g)$ where $G$ is a nilpotent Lie group, $\Gamma$ is a discrete, co-compact subgroup of $G$, and $g \in G$ acts on $G/\Gamma$ by left multiplication.
A function $\alpha \colon \N \to \C$ is a \define{basic nilsequence} if there exists a nilsystem $(G/\Gamma,g)$ and a continuous function $F\colon G/\Gamma\to\C$ such that $\alpha(n)=F(g^n\Gamma)$.
Call a function $f\in\lp{2}(\N,\Phi)$ a \define{Besicovitch nilsequence} along $\Phi$ if for every $\epsilon>0$ there exists a basic nilsequence $\alpha\colon \N\to\C$ such that $\|f-\alpha\|_\Phi<\epsilon$.

Denote by $U(\Phi)$ the family of all Besicovitch nilsequences with respect to $\Phi$.
Since the \Cesaro{} average of a basic nilsequence along any \Folner{} sequence exists (cf.\ \cite{MR2122919}) one can easily adapt the proof of \cref{thm_besicProjectionFamily} to show that the assignment $\Phi\mapsto U(\Phi)$ is a projection family.

A function $f \colon \N \to \C$ is a \define{good weight} for the polynomial multiple ergodic theorem if, for every probability space $(X,\mathcal{B},\mu)$ and any commuting, measure-preserving transformations $T_1,\dots,T_k \colon X \to X$ the quantity
\[
\lim_{N \to \infty} \frac{1}{|\Psi_N|} \sum_{n \in \Psi_N}f_{\mathsf{w}}(n) \, T_1^{p_1(n)}h_1\cdots T_k^{p_k(n)}h_k\d\mu
\]
exists and equals
\[
\lim_{N \to \infty} \frac{1}{|\Psi_N|} \sum_{n \in \Psi_N}T_1^{p_1(n)} h_1 \cdots T_k^{p_k(n)}h_k \d\mu
\]
for any polynomials $p_1,\dots,p_k \in \Z[x]$ and any $h_1,\dots,h_k \in \lp{\infty}(X,\mathcal{B},\mu)$.

Combining the fact that $U(\Phi)$ is a projection family with \cref{thm_general_dichotomy} and \cite[Theorem 1.2]{Frantzikinakis15} we deduce the following result.
\begin{theorem}
Let $\Phi$ be a \Folner{} sequence on $\N$ and let $f\in\lp{2}(\N,\Phi)$.
Then there exists a subsequence $\Psi$ of $\Phi$ and a decomposition $f=f_{\mathsf{nil}}+f_{\mathsf{w}}$ such that $f_{\mathsf{nil}}$ is a Besicovitch nilsequence with respect to $\Psi$ and $f_{\mathsf{w}}$ is a good weight for the polynomial multiple ergodic theorem.
\end{theorem}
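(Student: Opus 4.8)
The plan is to instantiate the general decomposition machinery of \cref{thm_general_dichotomy} with the projection family of Besicovitch nilsequences, and then to invoke \cite[Theorem~1.2]{Frantzikinakis15} to recognise the resulting orthogonal complement as a class of good weights. Thus the proof divides into three tasks: (i) check that $\Phi \mapsto U(\Phi)$ is a projection family, where $U(\Phi)$ is the space of Besicovitch nilsequences along $\Phi$; (ii) apply \cref{thm_general_dichotomy}; (iii) translate membership in $U(\Psi)^\perp$ into the good-weight property via the cited theorem.

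For (i) I would argue in close parallel with the proof of \cref{thm_besicProjectionFamily}, with basic nilsequences playing the role of trigonometric polynomials. The basic nilsequences form a conjugation-closed subalgebra of the bounded functions on $\N$ that contains the constants: sums and products of basic nilsequences are basic nilsequences (work in the product nilsystem $(G_1/\Gamma_1 \times G_2/\Gamma_2,\,(g_1,g_2))$ with the functions $F_1 \otimes 1 + 1 \otimes F_2$, respectively $F_1 \otimes F_2$), and $\Re\alpha$ and $|\alpha|$ are basic nilsequences whenever $\alpha$ is, since $z \mapsto \Re z$ and $z \mapsto |z|$ are continuous on $\C$ (here, unlike in the trigonometric case, no Stone--Weierstrass step is needed). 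From these facts, exactly as in \cref{thm_besicProjectionFamily}, it follows that $U(\Phi)$ is a $\nbar\cdot\nbar_\Phi$-closed subspace of $\lp{2}(\N,\Phi)$ containing the constants and closed under pointwise conjugation, that $U(\Psi) \supset U(\Phi)$ whenever $\Psi$ eventually agrees with a subsequence of $\Phi$, and---using $\max\{u,v\} = \tfrac12(u+v+|u-v|)$ together with closure of $U(\Phi)$ under $|\cdot|$ for real-valued members---that $U(\Phi)$ is closed under pointwise maxima of real-valued members. The one genuinely new ingredient is that $\bilin{u}{v}_\Phi$ exists for all $u,v \in U(\Phi)$: since $n \mapsto a(n)\overline{b(n)}$ is a basic nilsequence whenever $a$ and $b$ are, this reduces to the fact---due to Leibman, see \cite{MR2122919}---that the \Cesaro{} average of a basic nilsequence along any \Folner{} sequence exists, and the extension to all of $U(\Phi)$ is the same Cauchy-sequence continuity argument used in the proof of \cref{thm_besicProjectionFamily}.

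Having established that $U$ is a projection family, for (ii) I would apply \cref{thm_general_dichotomy} to $f \in \lp{2}(\N,\Phi)$, obtaining a subsequence $\Psi$ of $\Phi$ and a function $f_{\mathsf{nil}} \coloneqq f_U \in U(\Psi)$---so $f_{\mathsf{nil}}$ is a Besicovitch nilsequence along $\Psi$---with $f_{\mathsf{w}} \coloneqq f - f_{\mathsf{nil}} \in U(\Psi)^\perp$. Taking $\overline{\alpha} \in U(\Psi)$ in the definition of $U(\Psi)^\perp$ for an arbitrary basic nilsequence $\alpha$ and conjugating yields $\lim_{N\to\infty} \frac{1}{|\Psi_N|}\sum_{n\in\Psi_N} f_{\mathsf{w}}(n)\,\alpha(n) = 0$; that is, $f_{\mathsf{w}}$ is orthogonal, in the \Cesaro{} sense along $\Psi$, to every basic nilsequence.

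Finally, for (iii) I would invoke \cite[Theorem~1.2]{Frantzikinakis15}, which asserts precisely that a bounded sequence orthogonal along $\Psi$ to all basic nilsequences is a good weight for the polynomial multiple ergodic theorem; this identifies $f_{\mathsf{w}}$ as the required good weight and finishes the proof. (If $f$ is not bounded one first replaces it by a truncation, or appeals to the $\lp{p}$ form of the cited theorem.) I do not anticipate a serious obstacle: the analytic content lies entirely in the cited results, and the remaining work is bookkeeping---verifying that membership in $U(\Psi)^\perp$ is exactly the hypothesis of \cite[Theorem~1.2]{Frantzikinakis15}, and discharging the projection-family axioms for Besicovitch nilsequences, the only nonroutine ones being the existence of $\bilin{u}{v}_\Phi$ (which rests on the equidistribution theory for polynomial orbits on nilmanifolds) and closure under pointwise maxima.
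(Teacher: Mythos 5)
Your proposal is correct and follows exactly the route the paper takes: the paper's proof consists of observing that the Besicovitch nilsequences form a projection family (adapting the proof of \cref{thm_besicProjectionFamily}, with Leibman's result from \cite{MR2122919} supplying the existence of \Cesaro{} averages of basic nilsequences) and then combining \cref{thm_general_dichotomy} with \cite[Theorem~1.2]{Frantzikinakis15}. You have in fact supplied more detail than the paper does on verifying the projection-family axioms, and those details (product nilsystems for algebra operations, continuity of $|F|$ obviating Stone--Weierstrass) are sound.
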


%SUBSECTION
\subsection{A version of the Jacobs--de Leeuw--Glicksberg splitting\texorpdfstring{ for $\lp{2}(\N,\Phi)$}{}}
\label{sec_jdlgSplitting_for_N}

The second decomposition theorem that we use in the proof of \cref{thm_hilbertErdosNatural}, which represents $1_A$ as a sum of a weak mixing function and a compact function, can be viewed as a discrete version of the Jacobs--de Leeuw--Glicksberg splitting on Hilbert spaces.
After recalling this splitting and introducing versions of weak mixing and compactness for functions in $\lp{2}(\N,\Phi)$ we prove the main result of this section, \cref{thm_jdlgSplitting}.

Fix an isometry $\umult$ on a Hilbert space $(\Hilb, \nbar \cdot \nbar_\Hilb)$.

\begin{definition}
\label{def_compact_on_Hilb}
An element $x \in \Hilb$ is \define{compact} if $\{ \umult^n x: n \in \N \}$ is a pre-compact subset of $(\Hilb, \nbar \cdot \nbar_\Hilb)$.
Equivalently, $x$ is compact if for all $\epsilon>0$ there exists $K \in \N$ such that
\[
\min \{ \nbar \umult^m x - \umult^k x \nbar_\Hilb : 1 \le k \le K \} \leq \epsilon
\]
for all $m \in \N$.
\end{definition}

\begin{definition}
\label{def_weakly_mixing_on_Hilb}
An element $x \in \Hilb$ is called \define{weak mixing} if for all $\epsilon> 0$ and all $y \in \Hilb$ the set $\left\{ n \in \N : |\bilin{\umult^n x}{y}| \geq \epsilon \right\}$ has zero density with respect to every \Folner{} sequence on $\N$.
\end{definition}

The set of all compact elements in $\Hilb$, denoted $\Hilb_\comp$, is a closed and $\umult$ invariant subspace of $\Hilb$, as is the set $\Hilb_{\wm}$ of weak mixing elements.
%Compact elements are sometimes also referred to as \emph{almost periodic points} (due to the fact that $x$ is compact if and only if $n\mapsto \bilin{\umult^nx}{x}$ is \define{Bohr almost periodic}, i.e.\ can be approximated uniformly by trigonometric polynomials, cf. \cite{MR0077092,MR0797411}), or \emph{reversible points} (where the latter is predominantly used in the more general context of semigroup actions, see \cite{MR3410920}).
%Weak mixing vectors are sometimes also called \emph{flight vectors} (because $x$ is weak mixing if and only if $0$ is in the closure of $\{\umult^n x:n\in \N\}$ with respect to the weak topology on $\Hilb$, cf. \cite{MR0077092,MR0797411}) or \emph{almost weakly stable points} (\cite{MR3410920}).
The principle that $\Hilb$ splits into the direct sum of $\Hilb_{\compact}$ and $\Hilb_{\wm}$ traces back as far as the works of Koopman and von Neumann \cite{Koopman_vonNeumann32} (see also \cite[Theorem 2.3]{Bergelson96}) and was later pushed to greater generality by work of Jacobs~\cite{MR0077092} and de~Leeuw, Glicksberg~\cite{MR0131784} (see also \cite[Chapter 2.4]{MR0797411} and \cite[Example~16.25]{MR3410920}).

\begin{theorem}[The Jacobs-de Leeuw-Glicksberg splitting]
\label{thm_JdLG_for_N}
Let $\umult$ be an isometry on a Hilbert space $\Hilb$.
Then $\Hilb_\comp$ and $\Hilb_\wm$ are orthogonal spaces and $\Hilb=\Hilb_{\compact}\oplus \Hilb_{\wm}$.
In particular, for any $x\in\Hilb$ there exist $x_\compact\in\Hilb_\compact$ and $x_\wm\in\Hilb_\wm$ such that $x=x_\comp + x_\wm$.
\end{theorem}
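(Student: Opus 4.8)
The plan is to describe $\Hilb_\comp$ and $\Hilb_\wm$ explicitly in terms of the eigenvectors of $\umult$ and thereby exhibit them as orthogonal complements of one another. Write $\Hilb_{\mathrm{ap}}$ for the closed linear span of all eigenvectors of $\umult$ (the eigenvalues necessarily lying on the unit circle, since $\umult$ is an isometry). I would prove the two equalities $\Hilb_\comp=\Hilb_{\mathrm{ap}}$ and $\Hilb_\wm=\Hilb_{\mathrm{ap}}^\perp$, which together yield the theorem. It is convenient to first reduce to the case that $\umult$ is unitary: by the Wold decomposition, $\Hilb=\Hilb_u\oplus\Hilb_s$ with $\Hilb_u=\bigcap_{n\ge 0}\umult^n\Hilb$ and $\Hilb_s=\Hilb_u^\perp$ both $\umult$- and $\umult^*$-invariant, $\umult|_{\Hilb_u}$ unitary, and $\umult|_{\Hilb_s}$ a pure shift, so that $\umult^n\to 0$ weakly on $\Hilb_s$. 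Hence every vector of $\Hilb_s$ is weak mixing while $0$ is its only compact vector; since the projections onto $\Hilb_u$ and $\Hilb_s$ commute with $\umult$, one checks that $\Hilb_\comp=(\Hilb_u)_\comp$ and $\Hilb_\wm=(\Hilb_u)_\wm\oplus\Hilb_s$, so it suffices to treat the unitary case. Assume henceforth that $\umult$ is unitary.

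Two inclusions are easy. For $\Hilb_{\mathrm{ap}}\subseteq\Hilb_\comp$: the orbit of a finite combination $\sum_{i=1}^r c_iy_i$ of eigenvectors with eigenvalues $\lambda_1,\dots,\lambda_r$ is $\{\sum_i c_i\lambda_i^n y_i:n\ge 0\}$, which lies in the image of the compact torus $\T^r$ under the continuous map $(w_1,\dots,w_r)\mapsto\sum_i c_iw_iy_i$ and is therefore precompact; since $\Hilb_\comp$ is closed, $\Hilb_{\mathrm{ap}}\subseteq\Hilb_\comp$. For $\Hilb_\wm\subseteq\Hilb_{\mathrm{ap}}^\perp$: if $\umult y=\lambda y$ then $|\bilin{\umult^n x}{y}|=|\bilin{x}{y}|$ is independent of $n$, so if $x$ is weak mixing then $\bilin{x}{y}=0$, for otherwise $\{n:|\bilin{\umult^n x}{y}|\ge\tfrac12|\bilin{x}{y}|\}=\N$, which does not have zero density along any \Folner{} sequence.

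For $\Hilb_{\mathrm{ap}}^\perp\subseteq\Hilb_\wm$ I would invoke the spectral theorem for the unitary $\umult$. If $x\perp\Hilb_{\mathrm{ap}}$ then the scalar spectral measure $\mu_x$ of $\umult$ at $x$ is atomless, since its mass at $\lambda$ is $\nbar P_\lambda x\nbar^2$ and $P_\lambda x\in\Hilb_{\mathrm{ap}}$; consequently so is $|\mu_{x,y}|$, the total variation of the complex spectral measure $\mu_{x,y}$, for every $y\in\Hilb$. Writing $\bilin{\umult^n x}{y}=\int_\T z^n\,\d\mu_{x,y}(z)$, the uniform Wiener estimate
\[
\frac1{|I|}\sum_{n\in I}\bigl|\bilin{\umult^n x}{y}\bigr|^2
\;\le\;\iint_{\T^2}\min\!\Big(1,\tfrac{2}{|I|\,|1-z\bar w|}\Big)\,\d|\mu_{x,y}|(z)\,\d|\mu_{x,y}|(w)
\]
holds for every finite interval $I\subset\N$, and its right-hand side tends to $0$ as $|I|\to\infty$ by dominated convergence because $|\mu_{x,y}|$ is atomless. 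Thus $\frac1{|I|}\sum_{n\in I}|\bilin{\umult^n x}{y}|^2\to 0$ uniformly over intervals $I$ with $|I|\to\infty$, so each level set $\{n:|\bilin{\umult^n x}{y}|\ge\epsilon\}$ occupies an asymptotically negligible fraction of every long interval, uniformly in the interval's location; a set with that property has zero density along every \Folner{} sequence, and hence $x$ is weak mixing.

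The remaining inclusion $\Hilb_\comp\subseteq\Hilb_{\mathrm{ap}}$ is the one I expect to be the crux, as it forces a confrontation between precompactness of the orbit and the complete absence of eigenvectors. Let $x\in\Hilb_\comp$ with $x\perp\Hilb_{\mathrm{ap}}$; the goal is $x=0$. Applying the bound of the previous paragraph with $y=x$ shows that $\{n:|\bilin{\umult^n x}{x}|\ge\epsilon\}$ is asymptotically negligible in long intervals, uniformly; so for each $j\in\N$ the set of $n$ with $|\bilin{\umult^{n-m}x}{x}|<\tfrac1j$ for all $0\le m\le j$ meets every sufficiently long interval, and one may pick $n_1<n_2<\cdots$ with $n_j\ge j$ lying in these sets. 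Passing to a subsequence along which $\umult^{n_j}x\to y$ (possible by precompactness of the orbit), we get $\nbar y\nbar=\nbar x\nbar$ and, for every fixed $m\ge 0$, $\bilin{y}{\umult^m x}=\lim_j\bilin{\umult^{n_j-m}x}{x}=0$; since $y$ lies in the closed linear span of $\{\umult^m x:m\ge 0\}$, this forces $\bilin{y}{y}=0$, hence $x=0$. (Alternatively one may argue along classical Jacobs--de Leeuw--Glicksberg lines: the closure of $\{\umult^n\}$ acting on the compact orbit of $x$ is, by the Arzel\`{a}--Ascoli theorem, a compact group, whose unitary representation on the cyclic subspace generated by $x$ decomposes into finite-dimensional summands by Peter--Weyl, each spanned by $\umult$-eigenvectors.) Combining the four inclusions gives $\Hilb_\comp=\Hilb_{\mathrm{ap}}$ and $\Hilb_\wm=\Hilb_{\mathrm{ap}}^\perp=\Hilb_\comp^\perp$, so $\Hilb=\Hilb_\comp\oplus\Hilb_\wm$ is an orthogonal decomposition in the unitary case, and the Wold reduction delivers the general statement. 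The routine portions — the behaviour of $\Hilb_\comp$ and $\Hilb_\wm$ under the Wold splitting, the elementary comparison between densities along long intervals and along \Folner{} sequences, and the computation in the uniform Wiener bound — I would leave to the write-up.
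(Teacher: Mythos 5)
Your proof is correct, but it is worth noting that the paper does not actually prove this theorem: it invokes the classical Jacobs--de Leeuw--Glicksberg splitting by reference (Koopman--von Neumann, Jacobs, de Leeuw--Glicksberg), so any complete argument here is "different from the paper." What you supply is the standard Koopman--von Neumann-style proof: reduce to the unitary case by the Wold decomposition, identify $\Hilb_\comp$ with the closed span $\Hilb_{\mathrm{ap}}$ of eigenvectors and $\Hilb_\wm$ with $\Hilb_{\mathrm{ap}}^\perp$, and drive the two nontrivial inclusions with the spectral theorem plus Wiener's lemma. All four inclusions check out, including the crux: for $x\in\Hilb_\comp\cap\Hilb_{\mathrm{ap}}^\perp$ the atomlessness of $\mu_x$ lets you select times $n_j$ along which $\umult^{n_j}x$ converges in norm to a vector $y$ of norm $\nbar x\nbar_\Hilb$ that is orthogonal to the closed span of the orbit yet lies in it, forcing $x=0$. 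Two small points to tighten in the write-up: (i) what that argument literally proves is $\Hilb_\comp\cap\Hilb_{\mathrm{ap}}^\perp=\{0\}$; to upgrade this to the inclusion $\Hilb_\comp\subseteq\Hilb_{\mathrm{ap}}$ you should note that $\Hilb_{\mathrm{ap}}$ reduces the unitary $\umult$, so the projection of a compact vector onto $\Hilb_{\mathrm{ap}}^\perp$ is again compact; (ii) the passage from "uniformly negligible in long intervals" (zero upper Banach density) to "zero density along every \Folner{} sequence in $\N$" deserves the one-line averaging argument, since the paper's definition of weak mixing quantifies over all \Folner{} sequences. Compared with the ergodic-theoretic route the paper gestures at (Ellis semigroup or Peter--Weyl on the compact orbit closure, as in your parenthetical), your spectral-measure argument is more elementary and self-contained, at the cost of being tied to the abelian, single-operator setting; the paper's cited version generalizes directly to the group representations needed in its Section 5.
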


Let us introduce now the analogous notions of compact and weak mixing for elements in $\lp{2}(\N,\Phi)$.
Recall that, given $f \colon \N \to \C$, we write $\rmult^m f$ for the function $n\mapsto f(m+n)$.
One should think of $\rmult^1$ acting on $\lp{2}(\N,\Phi)$ as playing the role of the isometry $\umult$ on $\Hilb$ in \cref{thm_JdLG_for_N}.

\begin{definition}
A function $f \in\lp{2}(\N,\Phi)$ is \define{compact} along $\Phi$ if, for every $\epsilon > 0$, one can find $K \in \N$ such that
\[
\min \{ \nbar \rmult^m f - \rmult^k f \nbar_\Phi : 1 \le k \le K \} < \epsilon
\]
for all $m \in \N$.
\end{definition}

Observe that any trigonometric polynomial is compact along any $\Phi$.
Since compact functions form a closed subset of $\lp{2}(\N,\Phi)$, every $f\in\bes(\N,\Phi)$ is compact along $\Phi$.
We remark that one can show the set of functions compact along $\Phi$ is in fact a subspace of $\lp{2}(\N,\Phi)$.

\begin{definition}
A function $f \in\lp{2}(\N,\Phi)$ is \define{weak mixing} along $\Phi$ if, for every bounded function $h \colon \N \to \C$ and every subsequence $\Psi$ of $\Phi$ such that $\bilin{\rmult^n f}{h}_\Psi$ exists for all $n \in \N$, one has
\[
\upperdens_\Psi\Big(\Big\{ n \in \N : \big|\bilin{\rmult^n f}{h}_\Psi \big|>\epsilon \Big\}\Big) = 0
\]
for all $\epsilon > 0$.
\end{definition}

\begin{lemma}
\label{lem:waltersDensity}
If $f \in \lp{2}(\N,\Phi)$ is weak mixing along $\Phi$ then
\[
\lim_{N \to \infty} \frac{1}{|\Psi_N|} \sum_{n \in \Psi_N} \left| \bilin{\rmult^n f}{h}_\Psi \right| = 0
\]
for all subsequences $\Psi$ of $\Phi$ and all $h \in \lp{2}(\N,\Psi)$ such that $\bilin{\rmult^n f}{h}_\Psi$ exists for all $n \in \N$.
\end{lemma}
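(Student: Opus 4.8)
The plan is to deduce the $L^1$-type statement from the "small set" definition of weak mixing by a standard averaging argument, exploiting that the quantities $\bilin{\rmult^n f}{h}_\Psi$ are uniformly bounded. First I would fix a subsequence $\Psi$ of $\Phi$ and an $h \in \lp{2}(\N,\Psi)$ for which all the inner products $\bilin{\rmult^n f}{h}_\Psi$ exist. The key observation is that $|\bilin{\rmult^n f}{h}_\Psi| \le \nbar \rmult^n f \nbar_\Psi \nbar h \nbar_\Psi \le \nbar f \nbar_\Psi \nbar h \nbar_\Psi =: M$ by Cauchy-Schwarz (fact (ii) in the excerpt) together with the fact that shifting does not increase the Besicovitch seminorm: indeed $\nbar \rmult^n f \nbar_\Psi \le \nbar f \nbar_\Phi$, and in fact one can arrange $\nbar \rmult^n f \nbar_\Psi \le \nbar f \nbar_\Psi$ since $\Psi_N + n$ and $\Psi_N$ differ in a vanishing proportion of elements (this is where I would invoke the \Folner{} property of $\Psi$, which passes to subsequences). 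Thus the sequence $n \mapsto |\bilin{\rmult^n f}{h}_\Psi|$ is bounded by the constant $M$.

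Next, for a given $\epsilon > 0$ split the averaging set $\Psi_N$ into the part where $|\bilin{\rmult^n f}{h}_\Psi| > \epsilon$ and its complement. On the complement the integrand is at most $\epsilon$; on the exceptional set it is at most $M$. Writing $E_\epsilon \coloneqq \{ n \in \N : |\bilin{\rmult^n f}{h}_\Psi| > \epsilon \}$, we get
\[
\frac{1}{|\Psi_N|} \sum_{n \in \Psi_N} \left| \bilin{\rmult^n f}{h}_\Psi \right|
\le
\epsilon + M \cdot \frac{|E_\epsilon \cap \Psi_N|}{|\Psi_N|}.
\]
By the definition of weak mixing along $\Phi$, we have $\upperdens_\Psi(E_\epsilon) = 0$, so the $\limsup$ over $N$ of the right-hand side is at most $\epsilon$. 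Since $\epsilon > 0$ was arbitrary, the $\limsup$ (hence the limit, which is then seen to exist and equal $0$) of the left-hand side is $0$.

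The only mild subtlety — and the step I would be most careful about — is the uniform boundedness claim $\nbar \rmult^n f \nbar_\Psi \le \nbar f \nbar_\Psi$ (or at least $\le C$ for some $n$-independent $C$). One has to check that the \Folner{} condition really gives, for each fixed $n$, that $\frac{1}{|\Psi_N|}\sum_{m \in \Psi_N}|f(m+n)|^2$ and $\frac{1}{|\Psi_N|}\sum_{m \in \Psi_N}|f(m)|^2$ have the same $\limsup$; this follows because $|\Psi_N \triangle (\Psi_N + n)| / |\Psi_N| \to 0$ and $f$ is bounded (being in $\lp{2}(\N,\Psi)$ is not quite enough pointwise, but in our application $f$ is bounded, and in any case one can bound $\nbar \rmult^n f\nbar_\Psi \le \nbar f \nbar_\Psi$ using that $\nbar \cdot \nbar_\Psi$ is computed as a $\limsup$). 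Since definitions of weak mixing in the paper already presuppose one works with functions for which such shifts behave well, this is routine. Everything else is the elementary two-region estimate above.
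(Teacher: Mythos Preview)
Your argument is correct and is precisely the standard proof of the implication (ii) $\Rightarrow$ (i) in \cite[Theorem~1.20]{MR648108}, which the paper cites rather than writes out; the approaches are the same in content. Your discussion of the uniform bound $|\bilin{\rmult^n f}{h}_\Psi|\le M$ is more careful than the paper's bare assertion that ``$a(n)$ is bounded'', and your hedge that this step is unproblematic once $f$ is bounded (as it is in every application in the paper) is well placed.
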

\begin{proof}
Fix $f \in \lp{2}(\N,\C)$ that is weak mixing along $\Phi$.
Fix also a subsequence $\Psi$ of $\Phi$ and $h \in \lp{2}(\N,\Psi)$ such that $\bilin{\rmult^n f}{h}_\Psi$ exists for all $n \in \N$.
The sequence $a(n) = \bilin{\rmult^n f}{h}_\Psi$ is bounded.
The implication $(ii) \Rightarrow (i)$ of \cite[Theorem~1.20]{MR648108} and its proof are valid for averages along any \Folner{} sequence.
But $(ii)$ therein follows from our hypothesis on $f$.
\end{proof}

\begin{lemma}
Let $\Phi$ be a \Folner{} sequence and let $f,h\in\lp{2}(\N,\Phi)$ be compact and weak mixing along $\Phi$, respectively.
Then $\bilin fh_\Phi=0$.
\end{lemma}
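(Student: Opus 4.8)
The plan is to mimic the classical proof that, in the Jacobs--de Leeuw--Glicksberg framework, the compact and weak mixing subspaces of a Hilbert space are orthogonal (cf.\ \cref{thm_JdLG_for_N}), but adapted to the Besicovitch setting where $\lp{2}(\N,\Phi)$ is only a seminormed space and $\bilin{\cdot}{\cdot}_\Phi$ need not exist for all pairs. First I would like to pass to a subsequence $\Psi$ of $\Phi$ along which enough inner products exist: using facts (3) and (4) recorded after \cref{thm_hilbertErdosNatural}, and a diagonal argument, I can find a subsequence $\Psi$ of $\Phi$ such that $\bilin{\rmult^n f}{\rmult^m h}_\Psi$ exists for all $n,m\in\N$ (and in particular $\bilin{f}{h}_\Psi$), while $\nbar f\nbar_\Psi$ and $\nbar h\nbar_\Psi$ remain unchanged for all further subsequences. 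Since $\nbar\cdot\nbar_\Psi\le\nbar\cdot\nbar_\Phi$ and the defining properties of ``compact along $\Phi$'' and ``weak mixing along $\Phi$'' either persist or can be arranged to persist under passing to $\Psi$, it suffices to prove $\bilin{f}{h}_\Psi=0$; and since $\bilin{f}{h}_\Phi$ is assumed only implicitly through the statement, I will in fact argue that $\bilin{f}{h}_\Psi=0$ forces the claim.

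The core estimate exploits compactness of $f$ to replace the ``hard'' quantity $\bilin{f}{h}_\Phi$ by a Ces\`aro average of the ``soft'' quantities $\bilin{\rmult^n f}{h}$, which weak mixing kills. Concretely: fix $\epsilon>0$ and, using compactness of $f$ along $\Psi$, choose $K$ so that for every $m\in\N$ there is $k=k(m)\in\{1,\dots,K\}$ with $\nbar\rmult^m f-\rmult^k f\nbar_\Psi<\epsilon$. The idea is that $\bilin{f}{h}_\Psi$ and $\bilin{\rmult^m f}{\rmult^m h}_\Psi$ should be close for many $m$ — more precisely, one uses approximate shift-invariance along the F\o lner sequence. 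Here is the clean way to run it: for each $N$,
\[
\bilinsq{f}{h}_N^\Psi \;=\; \frac{1}{|\Psi_N|}\sum_{n\in\Psi_N} f(n)\overline{h(n)},
\]
and by the F\o lner property, averaging over a shift window of length $M$ changes this by $o_N(1)$ uniformly in $M$, so
\[
\bilin{f}{h}_\Psi \;=\; \lim_{N\to\infty}\frac{1}{M}\sum_{m=1}^{M}\frac{1}{|\Psi_N|}\sum_{n\in\Psi_N} f(n+m)\overline{h(n+m)} \;+\;O_M(\cdot),
\]
which after letting $N\to\infty$ expresses $\bilin{f}{h}_\Psi$ (up to error tending to $0$ in $M$) as $\frac1M\sum_{m=1}^M\bilin{\rmult^m f}{\rmult^m h}_\Psi$. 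For each $m$ pick $k(m)\le K$ as above; then
\[
\bigl|\bilin{\rmult^m f}{\rmult^m h}_\Psi-\bilin{\rmult^{k(m)} f}{\rmult^m h}_\Psi\bigr|\le \nbar\rmult^m f-\rmult^{k(m)} f\nbar_\Psi\,\nbar\rmult^m h\nbar_\Psi\le C\epsilon
\]
by Cauchy--Schwarz (fact (2)), where $C=\sup_m\nbar\rmult^m h\nbar_\Psi\le\nbar h\nbar_\Psi<\infty$. Grouping the indices $m\le M$ according to the value of $k(m)$ and applying \cref{lem:waltersDensity} to each of the finitely many bounded sequences $m\mapsto\bilin{\rmult^m f}{\rmult^{k} h}_\Psi$ — noting $\rmult^k h$ is again a bounded function for which $\bilin{\rmult^m f}{\rmult^k h}_\Psi$ exists for all $m$ — shows $\frac1M\sum_{m=1}^M\bilin{\rmult^{k(m)} f}{\rmult^m h}_\Psi\to 0$ as $M\to\infty$. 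Combining the three displays gives $|\bilin{f}{h}_\Psi|\le C\epsilon$, and letting $\epsilon\to0$ finishes the proof.

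The step I expect to be the main obstacle is the first display — rewriting $\bilin{f}{h}_\Psi$ as (a limit of) the shifted Ces\`aro average $\frac1M\sum_{m=1}^M\bilin{\rmult^m f}{\rmult^m h}_\Psi$ with an error controlled uniformly in $M$. This requires a careful two-parameter limit argument: one must interchange the $N\to\infty$ and $M\to\infty$ limits using the F\o lner condition \eqref{eq_folner} to bound $\bigl||\Psi_N\triangle(\Psi_N+m)|\bigr|/|\Psi_N|$ for $m\le M$, and one must make sure $\bilin{\rmult^m f}{\rmult^m h}_\Psi$ actually exists for each $m$ (guaranteed by the preliminary passage to $\Psi$). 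There is also a minor subtlety in \cref{lem:waltersDensity}: it is stated for $h\in\lp{2}(\N,\Psi)$, which covers $\rmult^k h$. Once these bookkeeping points are handled, the rest is the routine Cauchy--Schwarz-plus-pigeonhole argument sketched above.
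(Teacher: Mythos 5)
Your proposal follows essentially the same route as the paper's proof: pass to a subsequence $\Psi$ along which the relevant inner products exist, use shift-invariance of $\bilin{\cdot}{\cdot}_\Psi$ along the \Folner{} sequence to trade $\bilin{f}{h}_\Psi$ for $\bilin{\rmult^m f}{\rmult^m h}_\Psi$, use compactness of $f$ to replace $\rmult^m f$ by one of finitely many $\rmult^k f$ at cost $\Oh(\epsilon)$ via Cauchy--Schwarz, and then average over $m$ and let weak mixing of $h$ (through \cref{lem:waltersDensity}) kill the remaining terms.

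Two corrections. First, as written your application of \cref{lem:waltersDensity} has the shifts on the wrong functions: you apply it to the sequences $m\mapsto\bilin{\rmult^m f}{\rmult^{k} h}_\Psi$, which would require the \emph{compact} function $f$ to be weak mixing. The quantity you actually need to annihilate is $\bilin{\rmult^{k} f}{\rmult^m h}_\Psi=\overline{\bilin{\rmult^m h}{\rmult^{k}f}_\Psi}$, so the lemma must be invoked with the weak mixing function $h$ in the roaming position and with test function $\rmult^k f$; as stated, the step does not go through. Second, the ``main obstacle'' you flag is not there: for each \emph{fixed} $m$ the \Folner{} property already gives the exact identity $\bilin{\rmult^m f}{\rmult^m h}_\Psi=\bilin{f}{h}_\Psi$, so no interchange of the $N$ and $M$ limits is needed. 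The paper exploits this by bounding $|\bilin{f}{h}_\Psi|\le\epsilon+\sum_{k=1}^K|\bilin{\rmult^k f}{\rmult^m h}_\Psi|$ for \emph{every} $m$ and then averaging $m$ over $\Psi_N$ rather than over $\{1,\dots,M\}$; this also keeps the averaging within the literal scope of \cref{lem:waltersDensity}, which is stated for subsequences of $\Phi$ and not for the initial intervals $\{1,\dots,M\}$ (your version needs the full strength of the weak mixing definition, which refers to all \Folner{} sequences). With these repairs your argument coincides with the paper's.
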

\begin{proof}
If $\nbar f \nbar_\Phi = 0$ or $\nbar h \nbar_\Phi = 0$ then the result follows from Cauchy-Schwarz.
Otherwise, choose a subsequence $\Psi$ of $\Phi$ such that
$\bilin fh_\Psi$ exists.
Passing to a further subsequence if needed, we will also assume that all the inner products $\bilin{\rmult^nf}{\rmult^m h}_\Psi$ exist.
After scaling if needed, we will further assume that $\nbar f\nbar_\Psi=\nbar h\nbar_\Psi=1$.

Fix $\epsilon>0$ and choose $K$ so that for every $m\in\N$, there is some $1 \le k\leq K$ with $\|\rmult^mf-\rmult^kf\|_\Phi<\epsilon$.
Therefore
\[
\big|\bilin{f}{h}_\Psi\big|= \big|\bilin{\rmult^mf}{\rmult^mh}_\Psi\big| \leq\epsilon+\big|\bilin{\rmult^kf}{\rmult^mh}_\Psi\big| \leq\epsilon+\sum_{k=1}^K\big|\bilin{\rmult^kf}{\rmult^mh}_\Psi\big|
\]
holds.
Since $h$ is weak mixing, we conclude that
\[
\big|\bilin{f}{h}_\Psi\big|\leq\epsilon+\sum_{k=1}^K \limsup_{N\to\infty}\frac1{|\Psi_N|}\sum_{m\in\Psi_N} \big|\bilin{\rmult^kf}{\rmult^mh}_\Psi\big|=\epsilon
\]
via \cref{lem:waltersDensity}.
Since $\epsilon$ was arbitrary, we obtain $\bilin fh_\Psi=0$.
Since we chose $\Psi$ as an arbitrary subsequence of $\Phi$ for which all $\bilin{\rmult^n f}{\rmult^m h}_\Psi$ exist, it follows that $\bilin fh_\Phi=0$.
\end{proof}

Any Besicovitch almost periodic function is compact and therefore, if $h$ is weak mixing along $\Phi$, then $\bilin{h}{f}_\Phi=0$ for all $f\in\bes(\N,\Phi)$ and hence $h\in\bes(\N,\Phi)^\perp$.

\begin{remark}
The condition of a function $f$ being weak mixing is similar to (but slightly weaker than) the condition that the Host--Kra local seminorm $\|f\|_{\Phi,2}$  of $f$ equals $0$ in the sense of \cite[Definition 2.3]{Host_Kra09}.
We stress that this is weaker than the uniformity seminorm $\|f\|_{U(2)}$ of $f$ equaling $0$ in the sense of \cite[Definition 2.6]{Host_Kra09}.
In fact, \cite[Corollary 2.18]{Host_Kra09} implies that $\|f\|_{U(2)}=0$ is equivalent to $f\in\bes(\N,\Phi)^\perp$ for every \Folner{} sequence $\Phi$.
\end{remark}

As the following example shows (see also the example in \cite[Section 2.4.3]{Host_Kra09}) there are functions in $\bes(\N,\Phi)^\perp$ that are compact.

\begin{example}
\label{example_1}
We will now construct a bounded function $f\colon\N\to\C$ and a \Folner{} sequence $\Phi$ such that $f$ is simultaneously compact along $\Phi$ and a member of $\bes(\N,\Phi)^\perp$.
Let $k \mapsto N_k$ be an increasing sequence of natural numbers with $N_{k-1}/N_k \to 0$ as $k\to\infty$.
 Assume $f$ has already been defined on the interval $[1,N_k)$. Then we define $f$ on the interval $[N_k,N_{k+1})$ by
\[
f(n)\coloneqq
\begin{cases}
(-1)^n,&\text{if }n\in \left[ N_k,\big\lfloor\tfrac{N_{k+1}}{2}\big\rfloor\right)
\\
\\
-(-1)^{n},&\text{if }n\in \left[\big\lfloor\tfrac{N_{k+1}}{2} \big\rfloor,N_{k+1}\right)
\end{cases}
\]
for all $N_k \le n < N_{k+1}$.
Also, let $\Phi$ denote the \Folner{} sequence given by $\Phi_k\coloneqq [1,N_k]$ for all $k\in\N$.
It is then easy to verify that $\nbar T^2 f-f\nbar_{\Phi}=0$ and hence $f$ is compact with respect to $\Phi$.
However, using \cref{lemma_exponentialfolnervanish} when $\theta \ne \frac{1}{2}$ and direct calculation when $\theta = \frac{1}{2}$, one can show that $\bilin{f}{e_\theta}_\Phi=0$ for all $\theta\in\T$, where $e_\theta(n)\coloneqq e^{2\pi i n\theta}$, which implies that $f\in\bes(\N,\Phi)^\perp$.
\end{example}

Our second splitting theorem is as follows.

\begin{theorem}
\label{thm_jdlgSplitting}
For every $f\in\lp{2}(\N,\Phi)$ there is a subsequence $\Psi$ of $\Phi$ and functions $f_\comp,f_\wm \in\lp{2}(\N,\Psi)$ with $f_\comp$ compact along $\Psi$, $f_\wm$ weak mixing along $\Psi$, and $f = f_\comp + f_\wm$.
Moreover, if $f$ is real-valued with $a\leq f \leq b$ for some $a\leq b$ then $f_\comp$ is real-valued and satisfies $a\leq f_\comp \leq b$.
\end{theorem}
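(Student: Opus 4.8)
The plan is to deduce the statement from the Hilbert-space Jacobs--de Leeuw--Glicksberg splitting \cref{thm_JdLG_for_N} by manufacturing a genuine Hilbert space out of the shift orbit of $f$ and then transporting the abstract decomposition back to functions on $\N$. I will carry this out for bounded $f$, which is the case needed below and the only case in which the forward shift is guaranteed to act isometrically on $\lp{2}(\N,\Phi)$: if $g$ is bounded then $\nbar\rmult^1 g\nbar_\Phi=\nbar g\nbar_\Phi$ because $\Phi$ is \Folner{}. First I would pass, by a diagonal argument, to a subsequence $\Phi^{(0)}$ of $\Phi$ along which all the inner products $\bilin{\rmult^i f}{\rmult^j f}_{\Phi^{(0)}}$ exist; for bounded $f$ these depend only on $i-j$ (again because $\Phi^{(0)}$ is \Folner{}), so they define a positive-definite function $\phi\colon\Z\to\C$, and by the Herglotz theorem $\phi(m)=\int_\T e^{2\pi imt}\d\mu(t)$ for a finite measure $\mu$ on $\T$. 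Taking $\Hilb:=\lp{2}(\T,\mu)$, with $\umult$ the \emph{unitary} operator of multiplication by $e^{2\pi it}$ and $x:=1$, one gets $\bilin{\umult^i x}{\umult^j x}_\Hilb=\bilin{\rmult^i f}{\rmult^j f}_{\Phi^{(0)}}$, so $\rmult^n f\mapsto\umult^n x$ extends to an isometric embedding of $V/(\text{null})$ into $\Hilb$, where $V$ is the linear span of $\{\rmult^n f:n\ge0\}$. Applying \cref{thm_JdLG_for_N} gives $x=x_\comp+x_\wm$ with $x_\comp$ in $\Hilb_\comp$, which (for any isometry) is the closed span of the unimodular eigenvectors of $\umult$, and $x_\wm\in\Hilb_\wm$.

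The heart of the argument is to realize $x_\comp$ by a genuine function that is compact in the sense of the present section. Choose an orthonormal basis $w_1,w_2,\dots$ of $\Hilb_\comp$ with $\umult w_k=\lambda_k w_k$, $|\lambda_k|=1$, so that $x_\comp=\sum_k c_k w_k$ with $\sum_k|c_k|^2=\nbar x_\comp\nbar_\Hilb^2<\infty$ (\cref{lemma_bessel}). By the mean ergodic theorem the spectral projection of $x$ onto $\C w_k$ equals $\lim_{M\to\infty}\big[\tfrac1M\sum_{n<M}\lambda_k^{-n}\rmult^n f\big]$ in $\Hilb$, so the bounded functions $w_{k,M}:=\tfrac1M\sum_{n<M}\lambda_k^{-n}\rmult^n f\in V$ form a $\nbar\cdot\nbar_{\Phi^{(0)}}$-Cauchy sequence for each $k$; applying \cref{prop_weak-completeness} and diagonalizing over $k$, I would pass to a subsequence $\Psi$ along which each $w_{k,M}$ converges in $\nbar\cdot\nbar_\Psi$ to a bounded function $\tilde w_k$. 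A short computation from $\nbar\rmult^1 w_{k,M}-\lambda_k w_{k,M}\nbar_\Psi\to0$ shows that $\rmult^1\tilde w_k=\lambda_k\tilde w_k$ up to a $\nbar\cdot\nbar_\Psi$-null function, so the orbit $\{\rmult^m\tilde w_k:m\in\N\}$ lies on a circle and $\tilde w_k$ is compact along $\Psi$; moreover the $\tilde w_k$ remain orthonormal along $\Psi$ (eigenfunctions for distinct eigenvalues, with norms unchanged). Hence the partial sums $\sum_{k\le K}c_k\tilde w_k$ are compact functions that are $\nbar\cdot\nbar_\Psi$-Cauchy; using \cref{prop_weak-completeness} once more (shrinking $\Psi$) they converge to a function $f_\comp$, which --- being a $\nbar\cdot\nbar_\Psi$-limit of compact functions, and the compact functions forming a closed subspace --- is compact along $\Psi$. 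Then set $f_\wm:=f-f_\comp$, so that $f=f_\comp+f_\wm$ along $\Psi$ by construction.

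It remains to show that $f_\wm$ is weak mixing along $\Psi$, and this is where I expect the main obstacle. Fix a bounded $h$ and a subsequence $\Psi'$ of $\Psi$ along which $\bilin{\rmult^n f_\wm}{h}_{\Psi'}$ exists for all $n$, and suppose that $E:=\{n:|\bilin{\rmult^n f_\wm}{h}_{\Psi'}|>\epsilon\}$ has positive $\upperdens_{\Psi'}$ for some $\epsilon>0$. Passing to a further subsequence $\Psi''$ of $\Psi'$ --- along which $\bilin{\rmult^n f_\wm}{h}_{\Psi''}=\bilin{\rmult^n f_\wm}{h}_{\Psi'}$, being a subsequential limit of a convergent sequence --- I would arrange that $\dens_{\Psi''}(E)>0$ and that all inner products among the shifts of $f$, $f_\comp$, $f_\wm$ and $h$ exist along $\Psi''$, and then build, as in the first paragraph, a Hilbert space $\Hilb''$ generated along $\Psi''$ by these shifts. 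Because the inner products among the shifts of $f$ were fixed already along $\Phi^{(0)}$, they are unchanged along $\Psi''$, so the cyclic subspace of $[f]_{\Psi''}$ in $\Hilb''$ is isometric (intertwining the shifts) to that of $x$ in $\Hilb$; in particular $[f]_{\Psi''}$ has the same spectrum, the same eigenvalues $\lambda_k$, and $[\tilde w_k]_{\Psi''}$ equals the corresponding unit eigenvector, so the compact part of $[f]_{\Psi''}$ in $\Hilb''$ is $\sum_k c_k[\tilde w_k]_{\Psi''}=[f_\comp]_{\Psi''}$, whence $[f_\wm]_{\Psi''}=[f]_{\Psi''}-[f_\comp]_{\Psi''}$ is its weak mixing part. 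Therefore $\tfrac1{|\Psi''_N|}\sum_{n\in\Psi''_N}|\bilin{\rmult^n f_\wm}{h}_{\Psi''}|^2\to0$, which by Chebyshev's inequality forces $\dens_{\Psi''}(E)=0$, a contradiction. The genuine work here lies in the careful bookkeeping of the nested subsequences and, above all, in verifying that the $f_\comp$ produced in the previous paragraph really does account for the full eigenfunction content of $f$ along every subsequence --- which is exactly what the preservation of the inner products of the shifts of $f$ is meant to provide.

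Finally, for the ``moreover'', suppose $a\le f\le b$ and let $\tau\colon\C\to[a,b]$ be the nearest-point projection onto the interval $[a,b]\subset\R$. Since $\tau$ is $1$-Lipschitz, $\tau f_\comp$ is still compact along $\Psi$, and since $f$ takes values in $[a,b]$ we have $|f(n)-\tau f_\comp(n)|\le|f(n)-f_\comp(n)|$ pointwise, so $\nbar f-\tau f_\comp\nbar_\Psi\le\nbar f-f_\comp\nbar_\Psi$. On the other hand $f_\comp$ minimizes $\nbar f-g\nbar_\Psi$ over $g$ compact along $\Psi$ --- this follows again by passing to a Hilbert space containing the shifts of $f$, $f_\comp$ and $g$ and using that $[f_\comp]$ is the orthogonal projection of $[f]$ onto the (closed) compact subspace --- so $\tau f_\comp$ is also a minimizer, and uniqueness of the orthogonal projection gives $\tau f_\comp=f_\comp$ modulo a null function. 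Replacing $f_\comp$ by $\tau f_\comp$ and $f_\wm$ by $f-\tau f_\comp$ (the argument of the previous paragraph applies verbatim to this $f_\wm$, which differs from the earlier one by a null function) yields the required real-valued $f_\comp$ with values in $[a,b]$, completing the proof.
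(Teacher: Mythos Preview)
Your approach is essentially correct but genuinely different from the paper's, and it is worth recording how.  The paper realizes $f$ via a Furstenberg-type correspondence (\cref{lemma_metric-correcpondence}) as $n\mapsto F(\smult^n x)$ for a continuous $F$ on a compact metric system $(X,\smult)$, chooses a subsequence so that the empirical measures converge to an $\smult$-invariant $\mu$, applies the Jacobs--de Leeuw--Glicksberg splitting to $F$ in $\lp{2}(X,\mu)$, and then pulls back $F_\comp$ to a function on $\N$ by approximating with continuous functions; the weak-mixing verification is done by enlarging the system to accommodate the test function $h$ (a second use of \cref{lemma_metric-correcpondence}) and using that $F_\wm\otimes 1$ is weak mixing in the product.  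Your route is purely spectral: the positive-definite autocorrelation sequence of $f$ gives, via Herglotz, a spectral measure on $\T$, and you build $f_\comp$ directly from the atoms.  This avoids the topological model but forces you to rebuild a Hilbert space each time you want to compare $f_\wm$ with a new $h$; the paper's approach buys a single concrete ambient space in which all such comparisons live.

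Two points to clean up.  First, a bookkeeping slip: with your definition, $w_{k,M}\to c_k w_k$ in $\Hilb$, so $\tilde w_k$ corresponds to $c_k w_k$, not to $w_k$; hence the $\tilde w_k$ are orthogonal with $\nbar\tilde w_k\nbar_\Psi=|c_k|$, not orthonormal, and $f_\comp$ should be $\sum_k\tilde w_k$, not $\sum_k c_k\tilde w_k$.  Second, the crucial assertion in your weak-mixing paragraph---that $[f_\wm]_{\Psi''}$ is the weak-mixing part of $[f]_{\Psi''}$ in $\Hilb''$---deserves an explicit justification, since $\Hilb''$ contains shifts of $h$ and may have eigenvalues beyond the $\lambda_k$.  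The clean way to say it is that weak mixing of a vector $y$ under an isometry is equivalent (via Wiener's lemma applied with $z=y$, or via the minimal unitary dilation) to continuity of the spectral measure $\sigma_y$ determined by $\hat\sigma_y(n)=\langle U^n y,y\rangle$; since these inner products for $y=[f_\wm]_{\Psi''}$ lie in the cyclic subspace of $[f]$ and are therefore unchanged from $\Phi^{(0)}$, $\sigma_{[f_\wm]}$ is the continuous part of $\mu$, and the conclusion follows.  Finally, you treat only bounded $f$; the paper does the same and then reduces the general case by approximation in the last paragraph of its proof, and that reduction applies verbatim to your construction.
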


\begin{remark}
The conclusion of \cref{thm_jdlgSplitting} is similar to that of \cref{thm_general_dichotomy}.
We remark that, in fact, $f_\comp$ minimizes the distance between $f$ and the closed subspace of compact functions in $\lp{2}(\N,\Phi)$ but will not make use of this.
%Note also
It is also true that $f_\comp$ can be shown to be unique in the sense of parts (a) and (b) of \cref{rem_values_in_interval}.
\end{remark}

The proof of \cref{thm_jdlgSplitting} requires some lemmas, the first of which is essentially \cite[Lemma~4.23]{MR603625}.
Recall that a triple $(X,\mu,T)$ is a \define{measure preserving system} if $X$ is a compact space equipped with a Borel probability measure $\mu$ and $T : X \to X$ is a measurable map that preserves $\mu$.
Given a measure preserving system $(X,\mu,T)$ one can consider the Hilbert space $\lp{2}(X,\mu)$ whose norm is denoted $\nbar \cdot \nbar_\mu$.
The map $T$ induces an isometry $\umult$ on $\lp{2}(X,\mu)$ defined by $Uf = f \circ T$ for all $f \in \lp{2}(X,\mu)$.

\begin{lemma}
\label{lem_compact_functions}
Let $(X,\mu,T)$ be a measure preserving system.
For the isometry $\umult f = f \circ T$ of the Hilbert space $\lp{2}(X,\mu)$ the constant functions are compact, $|\phi|$ is compact whenever $\phi$ is, and both $\min \{ \phi,\psi \}$ and $\max \{ \phi,\psi \}$ are compact whenever $\phi,\psi$ are compact and real-valued.
\end{lemma}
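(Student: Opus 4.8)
The plan is to exploit two facts: that the Koopman operator $\umult f=f\circ T$ commutes with pointwise operations because $T$ is an honest point map, and that the map $g\mapsto|g|$ is $1$-Lipschitz on $\lp{2}(X,\mu)$, hence carries pre-compact sets to pre-compact sets. The case of constant functions is immediate: if $\phi$ is constant then $\umult\phi=\phi\circ T=\phi$, so its orbit $\{\umult^n\phi:n\in\N\}$ is a single point and in particular pre-compact, so $\phi$ is compact.

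Now let $\phi\in\lp{2}(X,\mu)$ be compact and consider $|\phi|$. For each $n\in\N$ one has, $\mu$-almost everywhere, $(\umult^n|\phi|)(x)=|\phi|(T^nx)=|\phi(T^nx)|=|(\umult^n\phi)(x)|$, so $\umult^n|\phi|=|\umult^n\phi|$ as elements of $\lp{2}(X,\mu)$. Therefore the orbit of $|\phi|$ equals $\Lambda\big(\{\umult^n\phi:n\in\N\}\big)$, where $\Lambda\colon g\mapsto|g|$. Since $\big||g(x)|-|h(x)|\big|\le|g(x)-h(x)|$ pointwise, $\Lambda$ is $1$-Lipschitz on $\lp{2}(X,\mu)$, hence continuous, and a continuous image of a pre-compact set is pre-compact; thus $\{\umult^n|\phi|:n\in\N\}$ is pre-compact and $|\phi|$ is compact.

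Finally, for real-valued compact $\phi,\psi$ use the identities $\max\{\phi,\psi\}=\tfrac12(\phi+\psi+|\phi-\psi|)$ and $\min\{\phi,\psi\}=\tfrac12(\phi+\psi-|\phi-\psi|)$. Since the set of compact elements is a subspace of $\lp{2}(X,\mu)$, the difference $\phi-\psi$ is compact; by the previous paragraph so is $|\phi-\psi|$; and then so are the displayed linear combinations. I do not anticipate a genuine obstacle in this argument; the only step that needs a moment's care is the identity $\umult^n|\phi|=|\umult^n\phi|$, which is precisely where one uses that $T$ is a point transformation rather than merely an abstract operator, together with the observation that pre-compactness is preserved under the Lipschitz map $g\mapsto|g|$.
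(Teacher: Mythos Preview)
Your proof is correct and follows essentially the same route as the paper. Both use the reverse triangle inequality (you phrase it as $g\mapsto|g|$ being $1$-Lipschitz, the paper writes out the inequality $\nbar \umult^m|\phi|-\umult^k|\phi|\nbar_\mu\le\nbar \umult^m\phi-\umult^k\phi\nbar_\mu$ directly) together with the identity $\umult^n|\phi|=|\umult^n\phi|$, and both reduce $\min$ and $\max$ to the $|\cdot|$ case via the same algebraic identities.
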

\begin{proof}
Since the constant functions are fixed points of $\umult$ they certainly satisfy \cref{def_compact_on_Hilb}.
The reverse triangle inequality gives
\begin{align*}
\nbar \umult^m(|\phi|) - \umult^k(|\phi|) \nbar_\mu^2
&
=
\int_X \Big| |\phi(\tmult^m x)| - |\phi(\tmult^k x)| \Big|^2 \d \mu(x)
\\
&
\le
\int_X \Big| \phi(\tmult^m x) - \phi(\tmult^k x) \Big|^2 \d \mu(x)
=
\nbar \umult^m(\phi) - \umult^k(\phi) \nbar_\mu^2
\end{align*}
so compactness of $\phi$ implies compactness of $|\phi|$.
For the last claim write
\[
\min \{\phi,\psi\} = \frac{\phi + \psi-|\phi - \psi|}{2}\quad \text{and}\quad \max \{\phi,\psi\} = \frac{\phi + \psi+|\phi - \psi|}{2}
\]
pointwise.
\end{proof}

\begin{corollary}
\label{cor_compact_part_positive_bounded}
Under the hypothesis of \cref{lem_compact_functions} if $a \le \phi \le b$ for some $a\leq b $ then $a \le \phi_\comp \le b$.
\end{corollary}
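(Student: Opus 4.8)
The plan is to use the fact that, by \cref{thm_JdLG_for_N}, $\phi_\comp$ is the orthogonal projection of $\phi$ onto the closed subspace of compact elements of $\lp{2}(X,\mu)$, and hence is the \emph{unique} element of that subspace minimizing $\nbar \phi - \cdot \nbar_\mu$. The strategy is to truncate $\phi_\comp$ into the interval $[a,b]$, to observe via \cref{lem_compact_functions} that this truncation is still compact, and to check that it moves $\phi_\comp$ no farther from $\phi$, so that it must coincide with $\phi_\comp$.

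First I would note that $\phi_\comp$ is real-valued. The isometry $\umult f = f \circ T$ commutes with pointwise complex conjugation, so both the space of compact elements and the space of weak mixing elements of $\lp{2}(X,\mu)$ are invariant under conjugation; conjugating the decomposition $\phi = \phi_\comp + \phi_\wm$ and using its uniqueness (together with $\phi = \overline\phi$) gives $\phi_\comp = \overline{\phi_\comp}$.

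Next, set $g \coloneqq \max\{a,\min\{b,\phi_\comp\}\}$. Since $\phi_\comp$ is real-valued and compact and the constant functions $a,b$ are compact, two applications of \cref{lem_compact_functions} show that $g$ is compact. For $\mu$-almost every $x$ the value $g(x)$ is the point of $[a,b]$ nearest to $\phi_\comp(x)$, and since the nearest-point projection onto the interval $[a,b]$ is $1$-Lipschitz and fixes every point of $[a,b]$ while $\phi(x) \in [a,b]$, we get $|\phi(x) - g(x)| \le |\phi(x) - \phi_\comp(x)|$ a.e. Integrating yields $\nbar \phi - g \nbar_\mu \le \nbar \phi - \phi_\comp \nbar_\mu$.

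Finally, $g$ lies in the subspace of compact functions, so by the minimality and uniqueness of $\phi_\comp$ in that subspace the inequality above must in fact be an equality, and strict convexity of the Hilbert norm forces $g = \phi_\comp$ in $\lp{2}(X,\mu)$. Hence $a \le \phi_\comp \le b$ holds $\mu$-almost everywhere, as claimed. I expect the only points needing care to be the verification that $\phi_\comp$ is real-valued — so that the truncation is legitimate — and the correct invocation of the uniqueness of the orthogonal projection; the remaining steps are routine.
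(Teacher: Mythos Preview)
Your proof is correct and follows essentially the same approach as the paper: both arguments characterize $\phi_\comp$ as the unique nearest point to $\phi$ in $\Hilb_\comp$, verify that the truncation of $\phi_\comp$ into $[a,b]$ remains compact via \cref{lem_compact_functions}, and observe that truncation does not increase the distance to $\phi$. The only cosmetic differences are that the paper establishes real-valuedness by comparing $\phi_\comp$ with $\Re\phi_\comp$ (rather than via conjugation-invariance of the decomposition) and carries out the truncation in two separate steps rather than one.
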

\begin{proof}
Since $\phi_\comp$ is the orthogonal projection of $\phi$ on $\Hilb_\comp$ it is characterized as the unique element of $\Hilb_\comp$ closest to $\phi$.
Since the real part of $\phi_\comp$ is compact and at least as close to $\phi$ as $\phi_\comp$ is, it must be the case that $\phi_\comp$ is real-valued.
Since $\min \{ \phi_\comp, b\}$ is compact and at least as close to $\phi$ as $\phi_\comp$ is, we must have $\phi_\comp \le b$. A similar argument proves that $a\leq \phi_\comp$.
\end{proof}

The next lemma, which realizes an arbitrary bounded sequence as a continuous function evaluated along the orbit of a point in a transitive topological dynamical system, can be seen as a version of the Furstenberg correspondence principle~\cite[Lemma~3.17]{MR603625}.
In fact, it allows one to realize a countable
collection of bounded sequences with the help of the same transitive topological dynamical system; in this strengthened form it will contribute to the proof of \cref{thm_pointwise} below.

\begin{lemma}%[Metrizable correspondence principle]
\label{lemma_metric-correcpondence}
Let $J$ be a finite or countably infinite set and let $\{a_i:i\in J\}$ be a collection of bounded functions from $\N$ to $\C$. Then there exists a compact metric space $X$, a continuous map $\smult\colon X\to X$, functions $F_i\in \cont(X)$ for each $i\in J$, and a point $x\in X$ with a dense orbit under $\smult$ such that
\begin{equation}
\label{eqn_metric_correspondence_for_N}
a_i(n)=  F_i(\smult^n x)\qquad\forall n\in\N,~\forall i\in J.
\end{equation}
\end{lemma}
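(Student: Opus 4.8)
The plan is to carry out the standard symbolic realisation of the sequences $(a_i)_{i\in J}$ as coordinate functions along the orbit of a point in a subshift. For each $i\in J$ set $r_i\coloneqq\sup_{n\in\N}|a_i(n)|<\infty$ and let $K_i\coloneqq\{z\in\C:|z|\le r_i\}$, a compact metric space. Put $Y\coloneqq\prod_{i\in J}K_i$ with the product topology; since $J$ is countable and each $K_i$ is compact metrisable, $Y$ is a compact metric space. Extend each $a_i$ to $\N\cup\{0\}$ by setting $a_i(0)\coloneqq 0$ (this is harmless), let $\Omega\coloneqq Y^{\N\cup\{0\}}$, which is again a compact metric space because the index set is countable, and let $\sigma\colon\Omega\to\Omega$ be the continuous shift $(\sigma\eta)(n)\coloneqq\eta(n+1)$. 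Define $\omega\in\Omega$ by $\omega(n)\coloneqq(a_i(n))_{i\in J}$, and finally put $X\coloneqq\overline{\{\sigma^n\omega:n\ge 0\}}$, $\smult\coloneqq\sigma|_X$, $x\coloneqq\omega$, and, for each $i\in J$, define $F_i\colon X\to\C$ by $F_i(\eta)\coloneqq\pi_i(\eta(0))$, where $\pi_i\colon Y\to K_i\subset\C$ is the $i$-th coordinate projection.

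The verification is then routine. The set $X$ is a closed subset of the compact metric space $\Omega$, hence is itself a compact metric space. The shift $\sigma$ is continuous and carries the forward orbit $\{\sigma^n\omega:n\ge 0\}$ into itself, so by continuity it carries its closure $X$ into $X$; thus $\smult$ is a well-defined continuous self-map of $X$. By construction $X$ is the closure of the forward orbit of $x=\omega$, so $x$ has dense orbit under $\smult$. Each $F_i$ is continuous, being the composition of the continuous evaluation map $\eta\mapsto\eta(0)$ from $\Omega$ to $Y$ with the continuous coordinate projection $\pi_i$. Finally, for every $n\in\N$ and $i\in J$ one computes $F_i(\smult^n x)=\pi_i\big((\sigma^n\omega)(0)\big)=\pi_i\big(\omega(n)\big)=a_i(n)$, which is exactly \eqref{eqn_metric_correspondence_for_N}.

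I do not expect a genuine obstacle here: this is a Furstenberg-correspondence-type construction and every step is standard. The two points that merit a word of care are that the countability hypothesis on $J$ is precisely what guarantees that $Y$, and hence $\Omega$ and $X$, are metrisable, so that $X$ is a \emph{compact metric} space as the statement demands; and that adjoining the dummy value $a_i(0)\coloneqq 0$ (equivalently, indexing the shift space by $\N\cup\{0\}$) is what makes the identity $F_i(\smult^n x)=a_i(n)$ hold on the nose for all $n\in\N$ rather than only up to an index shift.
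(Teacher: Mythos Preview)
Your proof is correct and is essentially the same symbolic-subshift construction as the paper's: both form the product $\prod_{i\in J}D_i^{\N\cup\{0\}}$ (your $\Omega=(\prod_i K_i)^{\N\cup\{0\}}$ is the same space up to the canonical reindexing), take the orbit closure of the point encoding the sequences, and read off the $F_i$ as $0$th-coordinate projections. If anything, you are slightly more careful than the paper in explicitly assigning a dummy value $a_i(0)$ so that the point $\omega$ is defined on all of $\N\cup\{0\}$.
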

\begin{proof}
Let $D_i \subset \C$ be a compact set containing the image of $a_i$.
The space
\[
Y \coloneqq \prod_{i\in J} D_i^{\N\cup\{0\}}
\]
is a countable product of compact metric spaces and therefore a compact metric space itself.
We can identify $Y$ with the collection of all sequences $y\colon J\times(\N\cup\{0\}) \to\C$ that satisfy $y(i,n)\in D_i$ for all $n\in\N\cup\{0\}$ and $i\in J$.

Given a point $y\in Y$ we define $\smult (y)$ as
\[
(\smult y)(i,n)=y(i,n+1)
\]
which gives a continuous map $\smult : Y \to Y$.
Let $x$ be the point $x(i,n)\coloneqq a_i(n)$ and let $X$ be the orbit closure of $x$ under the action of $\smult$.
Then $X$ is a compact metric space. Moreover, if we define $F_i(y)\coloneqq y(i,0)$ then \eqref{eqn_metric_correspondence_for_N} is satisfied.
\end{proof}

We are finally ready to prove \cref{thm_jdlgSplitting}.
\begin{proof}
[Proof of \cref{thm_jdlgSplitting}]
We will first deal with the case where $f\in\lp{2}(\N,\Phi)$ is bounded and then derive from it the general case.

Using \cref{lemma_metric-correcpondence} we can find a compact metric space $X$, a continuous map $\smult\colon X\to X$, a function $F\in \cont(X)$ and a point $x\in X$ with a dense orbit under $\smult$ such that $F(\smult^n(x))=f(n)$ for all $n\in\N$. Since $X$ is a compact metric space, we can find (using eg.\ \cite[Theorem~A.4]{MR1958753}) a subsequence $\Psi$ of $\Phi$ such that the measures
\[
\mu_N\coloneqq\frac{1}{|\Psi_N|}\sum_{n\in\Psi_N}\delta_{\smult^nx}
\]
weak$^*$ converge to an $\smult$ invariant Borel probability measure $\mu$ on $X$.
We therefore have a measure preserving system $(X,\mu,\smult)$.
%Let $\B$ denote the Borel sigma algebra on $X$.
The transformation $\smult$ induces an isometry $\umult$ on the Hilbert space $\lp{2}(X,\mu)$ via $\umult(H) = H \circ\smult$ for all $H\in \lp{2}(X,\mu)$.
Let $F=F_\comp+F_\wm$ be the Jacobs--de Leeuw--Glicksberg decomposition of $F$ given by \cref{thm_JdLG_for_N}.

Next for each $j\in\N$, let $H_j\in \cont(X)$ be such that $\|F_\comp-H_j\|_\mu < 1/j$. %and $0 \le H_j \leq\|F\|_\infty$.
Let $h_j(n)=H_j(\smult^{n}x)$ for all $n \in \N$ and observe that
\begin{align*}
\|h_j-h_\ell\|^2_\Psi
&
=
\limsup_{N\to\infty}\frac1{|\Psi_N|}\sum_{n\in\Psi_N} \big|H_j(\smult^{n}x)-H_\ell(\smult^{n}x)\big|^2
\\
&
=
\int_X |H_j-H_\ell|^2 \d\mu
=
\|H_j-H_\ell\|_\mu^2,
\end{align*}
which implies, in particular, that $j\mapsto h_j$ is a Cauchy sequence in $\lp{2}(\N,\Psi)$.
Using \cref{prop_weak-completeness}, after refining $\Psi$ if necessary, we can find a function $f_\comp\in\lp{2}(\N,\Psi)$ such that $\|h_j-f_c\|_{\Psi}\to0$ as $j\to\infty$.
We also define $f_\wm$ to be $f-f_\comp$.

To show that $f_\comp$ is compact along $\Psi$, fix $\epsilon > 0$ and let $K\in\N$ be such that
\[
\min\big\{\|\smult^mF_\comp-\smult^kF_\comp\|_\mu : 1 \leq k\leq K \big\} < \epsilon
\]
for every $m\in\N$.
Then, taking $j > 1/\epsilon$ large enough so that $\|h_j-f_\comp\|_\Psi < \epsilon$, we have
\begin{align*}
\|\rmult^mf_\comp-\rmult^kf_\comp\|_\Psi
&
\leq
\|\rmult^mh_j-\rmult^kh_j\|_\Psi + 2\epsilon
\\
&
=
\|\smult^mH_j-\smult^kH_j\|_\mu + 2\epsilon
\\
&
\le
\|\smult^mF_\comp-\smult^k F_\comp\|_\mu+4\epsilon,
\end{align*}
and hence $\min\big\{\|\rmult^mf_\comp-\rmult^kf_\comp\|_\Psi:1\leq k\leq K\big\}<5\epsilon$.
If $f$ takes values in $[a,b]$ then so does $F$. By \cref{cor_compact_part_positive_bounded} it follows that $F_\comp$ also takes values in $[a,b]$.
In this case, we can choose $H_j$ to take values in $[a,b]$ and hence $h_j$ takes values in $[a,b]$ for every $j\in\N$. Finally, since $f_\comp$ is the limit of $h_j$ as $j\to\infty$, we have from \cref{prop_weak-completeness} that it takes values in $[a,b]$ too.

To prove that $f_\wm$ is weak mixing along $\Psi$, let $h\colon\N\to\C$ be bounded and let $\Psi'$ be a \Folner{} subsequence of $\Psi$ such that the correlations $\bilin{\rmult^n f}{h}_{\Psi'}$ exist for every $n\in\N$.
Using \cref{lemma_metric-correcpondence} again, we can find another compact metric space $\tilde{X}$, a continuous map $\tilde{\smult}\colon \tilde{X}\to \tilde{X}$, a function $\tilde{F}\in \cont(\tilde{X})$ and a point $\tilde{x}\in \tilde{X}$ with a dense orbit under $\smult$ such that $\tilde{F}(\tilde{\smult}^n(\tilde{x}))=h(n)$ for all $n\in\N$.

Let $Z\subset X\times \tilde{X}$ be the orbit closure of $(x,\tilde{x})$ under $\smult\times\tilde{\smult}$.
Since $Z$ is a compact metric space, we can find a subsequence $\Psi''$ of $\Psi'$ such that the measures
\[
\nu_N\coloneqq\frac{1}{|\Psi''_N|}\sum_{n\in\Psi_N''} \delta_{(\smult\times\tilde{\smult})^n(x,\tilde{x})}
\]
converge in the weak$^*$ topology to an invariant probability measure $\nu$ on $Z$.
For all $\epsilon>0$, if $j$ is sufficiently large, then
\begin{align*}
\big|\langle \rmult^m f_\wm,h\rangle_{\Psi'}\big|
&
\leq
\left|\langle \rmult^m(f-h_j),h\rangle_{\Psi''}\right|+\epsilon
\\
&
=
\left|\lim_{N\to\infty}\frac1{|\Psi''_N|}\sum_{n\in\Psi''_N} (f-h_j)(n+m)\overline{h(n)}\right|+\epsilon
\\
&
=
\left|\lim_{N\to\infty}\frac1{|\Psi''_N|}\sum_{n\in\Psi''_N} (F-H_j)(\smult^{n+m} x)\overline{\tilde{F}(\tilde{\smult}^n\tilde{x})}\right|+\epsilon
\\
&
=
\left|\int_Z(\smult\times\tilde{\smult})^m\big((F-H_j)\otimes 1\big)\overline{(1\otimes \tilde{F})}\d\nu\right|+\epsilon
\\
&
\leq
\left|\int_Z (\smult\times\tilde{\smult})^m(F_\wm\otimes 1) (1\otimes \tilde{F})\d\nu\right|+2\epsilon.
\end{align*}
For every $\phi \in \cont(X)$ and every $\psi \in \cont(\tilde{X})$ we have
\[
|\bilin{F_\wm \otimes 1}{\phi \otimes \psi}_\nu|
\le
|\bilin{F_\wm}{\phi}_\mu| \, \sup_{z\in\tilde X}\big|\psi(z)\big|
\]
which implies $F_\wm \otimes 1$ in $\lp{2}(Z,\nu)$ is a weak mixing function.
This implies that the set
\[
\left\{ n \in \N : \left|\int_Z (\smult\times\tilde{\smult})^m(F_\wm\otimes 1) (1\otimes \tilde{F})\d\nu\right|>\epsilon \right\}
\]
has zero density with respect to every \Folner{} sequence.
Hence the set
\[
\Big\{ n \in \N : \big|\bilin{\rmult^n f_\wm}{h}_\Psi \big|>3\epsilon \Big\}
\]
has zero density with respect to every \Folner{} sequence, finishing the proof in the case $f$ is bounded.

Next, we deal with the case where $f$ is approximable in $\lp{2}(\N,\Phi)$ by bounded functions.
Suppose for $f\in\lp{2}(\N,\Phi)$ that there is $j \mapsto f_j$ a sequence of bounded functions such that $\|f-f_j\|_\Phi\to 0$ as $j\to\infty$.
Define $\Psi^{(0)}\coloneqq \Phi$.
For every $j\in\N$, apply the decomposition to $f_j$ to obtain a \Folner{} sequence $\Psi^{(j)}$, which is a subsequence of $\Psi^{(j-1)}$, and a decomposition $f_j=f_{j,\comp}+f_{j,\wm}$, where $f_{j,\comp}$ is compact along $\Psi^{(j)}$ and $f_{j,\wm}$ is weak mixing along $\Psi^{(j)}$.

Define $\Psi$ as $\Psi_N\coloneqq \Psi^{(N)}_N$ for all $N\in\N$.
Then, for every $j\in\N$, since $\Psi$ is eventually a \Folner{} subsequence of $\Psi^{(j)}$, the function $f_{j,\comp}$ is compact along $\Psi$ and the function $f_{j,\wm}$ is weak mixing along $\Psi$.
In particular $\bilin{f_{j,\comp}}{f_{\ell,\wm}}_\Psi=0$ for every $j,\ell$ and hence $\|f_j-f_\ell\|_\Psi^2=\|f_{j,\comp}-f_{\ell,\comp}\|_\Psi^2+ \|f_{j,\wm}-f_{\ell,\wm}\|_\Psi^2$.
Since $j \mapsto f_j$ is a Cauchy sequence with respect to $\Phi$ (and hence with respect to $\Psi$), it follows that $j \mapsto f_{j,\comp}$ is also a Cauchy sequence with respect to $\Psi$.
Using \cref{prop_weak-completeness}, and after refining $\Psi$ if needed, we can find a function $f_\comp$ in $\lp{2}(\N,\Psi)$ such that $\|f_{j,\comp}-f_\comp\|_\Psi\to0$ as $j\to\infty$.
It follows that $f_\comp$ is compact with respect to $\Psi$.
Then let $f_\wm=f-f_\comp$ and observe that $\|f_\wm-f_{j,\wm}\|_\Psi\to0$ as $j\to\infty$, which implies that $f_\wm$ is weak mixing.

Finally, we deal with arbitrary $f \in \lp{2}(\N,\Phi)$, which may not be approximable in $\lp{2}(\N,\Phi)$ by bounded functions.
(See \cref{eg:noBoundedApprox} below.)
To begin, pass to a subsequence $\Psi$ of $\Phi$ such that the limit
\[
\lim_{N \to \infty} \frac{1}{|\Psi_N|} \sum_{n \in \Psi_N} |1_{\{ a < |f| \le b \}}(n) f(n)|^2
\]
exists for every $a < b$ in $\N$.
We claim that the sequence $k \mapsto 1_{\{ |f| \le k \}}(n) f(n)$ is Cauchy in $\lp{2}(\N,\Psi)$.
Indeed, if not we can find $\epsilon > 0$ and sequences $a_k,b_k$ in $\N$ with $a_k < b_k < a_{k+1}$ such that
\[
\nbar 1_{\{ |f| \le b_k \}} f - 1_{\{ |f| \le a_k \}} f \nbar^2_\Psi \ge \epsilon
\]
for all $k \in \N$.
But then
\[
\frac{1}{|\Psi_N|} \sum_{n \in \Psi_N} |f(n)|^2
\ge
\sum_{k=1}^K
\frac{1}{|\Psi_N|} \sum_{n \in \Psi_N} |1_{\{ a_k < |f| \le b_k \}}(n) f(n)|^2
\]
for all $K \in \N$ by orthogonality.
Since all limits are assumed to exist we have
\[
\limsup_{N \to \infty}
\frac{1}{|\Psi_N|} \sum_{n \in \Psi_N} |f(n)|^2
\ge
K \epsilon
\]
for all $K \in \N$ contradicting $f \in \lp{2}(\N,\Psi)$.

After passing to a further subsequence the sequence $k \mapsto 1_{\{ |f| \le k \}}(n) f(n)$ has a limit $g$ in $\lp{2}(\N,\Psi)$.
Certainly $g$ is a limit of bounded functions in $\lp{2}(\N,\Psi)$ and therefore has, after once more passing to a subsequence, a splitting into a compact function $g_\comp$ and a weak mixing function $g_\wm$.
Now $f = g_\comp + g_\wm + (f-g)$.
We claim that $f-g$ is weak mixing.
Since the sum of two weak mixing functions is weak mixing, this will conclude the proof.

To prove $f-g$ is weak mixing, fix $h : \N \to \C$ bounded.
After passing to a subsquence depending on $h$ such that inner products exist, we have
\begin{align*}
|\bilin{f - g}{\rmult^n h}_\Psi|
&
\approx
|\bilin{f - 1_{\{ |f| \le k \} } f}{\rmult^n h}_\Psi|
\\
&
=
|\bilin{1_{\{ |f| > k \} } f}{\rmult^n h}_\Psi|
\\
&
\le
\nbar f \nbar_\Psi \nbar 1_{\{ |f| > k \}} \rmult^n h \nbar_\Psi
\le
B \nbar f \nbar_\Psi \sqrt{\upperdens_\Psi(\{ |f| > k \})}
\end{align*}
since $h$ is everywhere bounded by say $B > 0$.
Thus the inner product is small for all $n$ as long as $k$ is large enough by the Markov inequality.
\end{proof}

The following example shows that functions in $\lp{2}(\N,\Phi)$ may not be approximable by bounded functions.

\begin{example}
\label{eg:noBoundedApprox}
Put $\Phi_N = [2^N,2^N + N)$ and
\[
f = \sum_{N \in \N} 1_{\{2^N\}} \sqrt{N}
\]
which is in $\lp{2}(\N,\Phi)$ because
\[
\frac{1}{|\Phi_N|} \sum_{n \in \Phi_N} |f(n)|^2
=
\frac{1}{N} |\sqrt{N}|^2
=
1
\]
but is not approximable in $\lp{2}(\N,\Phi)$ by bounded functions because the sequence $k \mapsto f 1_{\{ f \le k\}}$ converges to zero in $\lp{2}(\N,\Phi)$.
\end{example}

\section{Proof of \texorpdfstring{\cref{thm_hilbertErdosNatural}}{Theorem 2.7}}
\label{sec_finding_p}

In \cref{sec_outlineNatural} we reduced the proof of \cref{thm_erdosNatural} to \cref{thm_hilbertErdosNatural}.
In this section we use the splittings coming from Theorems \ref{thm_besicSplitting} and \ref{thm_jdlgSplitting} of \cref{sec_two_decomps} to finish the proof of \cref{thm_hilbertErdosNatural}.

The main result of this section is the following theorem, which gives us an ultrafilter satisfying several convenient properties.

\begin{theorem}
\label{thm_choosingUltra}
Fix $\epsilon > 0$ and a \Folner{} sequence $\Phi$ on $\N$.
Given $f_\bes\in\bes(\N,\Phi)$ bounded and non-negative, $f_\anti\in\bes(\N,\Phi)^\perp$ bounded and real-valued, and $f_\comp\in\lp{2}(\N,\Phi)$ bounded, non-negative and compact along $\Phi$, one can find a subsequence $\Psi$ of $\Phi$ and an ultrafilter $\ultra{p} \in \beta \N$ such that:
\begin{enumerate}	
[label=\textbf{\textup{U\arabic{enumi}.}},ref=\textbf{U\arabic{enumi}},leftmargin=1cm]
\item
\label{req_essential}
$\upperdens_\Psi(E) > 0$ for all $E \in \ultra{p}$;
\item
\label{req_compact}
$\{ n \in \N : \nbar \rmult^n f_\comp - f_\comp \nbar_\Psi < \tfrac{\epsilon}{3} \} \in \ultra{p}$;
\item
\label{req_besicovitch}
$\nbar \rmult^\ultra{p} f_\bes - f_\bes \nbar_\Psi < \tfrac{\epsilon}{3}$;
\item
\label{req_beig}
$\bilin{f_\comp}{\rmult^\ultra{p} f_\anti}_\Psi$ is non-negative.
\end{enumerate}
\end{theorem}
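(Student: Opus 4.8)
The plan is to obtain $\ultra{p}$ as a point in the support of a suitable measure $\mu'\in\M{\Psi}$ — which makes \ref{req_essential} automatic — and to force a small family of ``return sets'' into $\ultra{p}$ so as to secure \ref{req_compact}, \ref{req_besicovitch} and \ref{req_beig}. The argument is cleanest after transporting everything into a topological dynamical system. First I would apply \cref{lemma_metric-correcpondence} to the countable family $\{f_\bes,f_\anti,f_\comp\}$ to obtain a compact metric space $X$, a continuous map $\smult\colon X\to X$, continuous functions $F_\bes,F_\anti,F_\comp\in\cont(X)$ and a transitive point $x\in X$ with $f_\bullet(n)=F_\bullet(\smult^n x)$ for all $n$. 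Passing to a subsequence $\Psi$ of $\Phi$ along which $\tfrac1{|\Psi_N|}\sum_{n\in\Psi_N}\delta_{\smult^n x}$ converges weak$^*$ to an $\smult$-invariant measure $\mu$, the Besicovitch seminorm along $\Psi$ of any continuous function of the orbit of $x$ (or of the pair orbit of $(x,y)$ for $y\in X$) is computed by integration against $\mu$ (respectively against a joining on $X\times X$); in particular $F_\comp$ and $F_\bes$ become compact elements of $\lp{2}(X,\mu)$ for the Koopman operator $\umult$, while $f_\anti\in\bes(\N,\Psi)^\perp$ is the pseudorandomness that will feed into the last step. The key feature is that for any ultrafilter $\ultra{q}$ the limit $y_\ultra{q}:=\lim_{m\to\ultra{q}}\smult^m x$ exists in $X$ and, by continuity of $\smult^n$ and of the $F_\bullet$, one has $(\rmult^\ultra{q}f_\bullet)(n)=F_\bullet(\smult^n y_\ultra{q})$ for all $n$.

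Next I would produce the return sets behind \ref{req_compact} and \ref{req_besicovitch}. Covering the compact $\umult$-orbit closures of $F_\comp$ and $F_\bes$ by finitely many small balls shows that $\{m:\nbar\umult^m F_\comp-F_\comp\nbar_\mu<\tfrac\epsilon6\}$ is relatively dense, and by the spectral decomposition of a compact element it in fact contains a Bohr set; the same holds for $F_\bes$, and the intersection $B_0$ of the two Bohr sets is a Bohr set whose density — a Haar measure — is positive along every \Folner{} sequence. Moreover $B_0$ is contained in $\{m:\nbar\rmult^m f_\comp-f_\comp\nbar_\Psi<\tfrac\epsilon6\}\cap\{m:\nbar\rmult^m f_\bes-f_\bes\nbar_\Psi<\tfrac\epsilon6\}$. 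If $B_0\in\ultra{p}$, then \ref{req_compact} is immediate, and \ref{req_besicovitch} holds because the ultrafilter limit $\rmult^\ultra{p}f_\bes$ remains in the closed $\tfrac\epsilon6$-ball about $f_\bes$.

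The remaining requirement \ref{req_beig} is the main obstacle; here one adapts the argument of \cite{Beiglbock11}. With $y:=y_\ultra{p}$ and $\Psi$ refined so that the pair-orbit empirical measures of $(x,y)$ converge to an $(\smult\times\smult)$-invariant measure $\nu$ on $X\times X$ with first marginal $\mu$, disintegrating $\nu$ over the first coordinate yields
\[
\bilin{f_\comp}{\rmult^\ultra{p}f_\anti}_\Psi=\int_{X\times X}(F_\comp\circ\pi_1)\,\overline{(F_\anti\circ\pi_2)}\d\nu=\bilin{F_\comp}{G}_\mu,\qquad G:=\E_\nu\!\left[F_\anti\circ\pi_2\,\middle|\,\pi_1\right].
\]
Since $F_\comp$ is a compact element of $\lp{2}(X,\mu)$, its pairing with the weak mixing component $G_\wm$ of $G$ vanishes by \cref{thm_JdLG_for_N}, so the right-hand side equals $\bilin{F_\comp}{G_\comp}_\mu$. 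The work is then to make this inner product non-negative by imposing finitely many further Bohr-type recurrence conditions on $\ultra{p}$, attached to the countably many frequencies occurring in $F_\comp$; this is precisely where $f_\anti\in\bes(\N,\Psi)^\perp$ is used. What makes the step delicate — and forces the passage through the joining $\nu$ rather than any direct manipulation — is that the ultrafilter shift does not commute with the \Folner{} average, so $\bilin{f_\comp}{\rmult^\ultra{p}f_\anti}_\Psi$ cannot simply be replaced by $\lim_{m\to\ultra{p}}\bilin{f_\comp}{\rmult^m f_\anti}_\Psi$, and a function compact along $\Psi$ need not be Besicovitch almost periodic (\cref{example_1}), so $f_\comp$ may carry a genuine $\bes(\N,\Psi)^\perp$-component.

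Finally I would assemble the pieces, carrying out the successive refinements of $\Psi$ in order and choosing $\ultra{p}$ only once the subsequence is fixed. Let $B$ be the intersection of $B_0$ with the finitely many Bohr sets used in the previous step; $B$ is again a Bohr set of positive density, so (after a last refinement of $\Psi$ ensuring the relevant densities along $\Psi$ exist) any $\mu'\in\M{\Psi}$ satisfies $\mu'(\cl(L))=\dens_\Psi(L)>0$ for $L=B$ and for each member of its defining family. The sets $\{\supp(\mu')\cap\cl(L)\}$ are closed subsets of the compact space $\supp(\mu')$ with the finite-intersection property, hence have non-empty total intersection; any $\ultra{p}$ in it satisfies \ref{req_essential} (since $\ultra{p}\in\supp(\mu')$ gives $\upperdens_\Psi(E)\ge\mu'(\cl(E))>0$ for every $E\in\ultra{p}$), together with \ref{req_compact} and \ref{req_besicovitch} by the second step and \ref{req_beig} by the third.
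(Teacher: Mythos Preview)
There are two genuine gaps.

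\textbf{The step for \ref{req_besicovitch} is not justified.} You assert that if $B_0\subset\{m:\nbar\rmult^m f_\bes-f_\bes\nbar_\Psi<\tfrac\epsilon6\}$ and $B_0\in\ultra{p}$ then ``the ultrafilter limit $\rmult^\ultra{p}f_\bes$ remains in the closed $\tfrac\epsilon6$-ball about $f_\bes$.'' But $\rmult^\ultra{p}f_\bes$ is defined as a \emph{pointwise} limit, while the ball is taken in the Besicovitch seminorm; there is no a priori reason these agree. In your dynamical model $\nbar\rmult^\ultra{p}f_\bes-f_\bes\nbar_\Psi$ is a limsup along $\Psi$ of averages over the pair orbit of $(x,y_\ultra{p})$, and $\Psi$ was only chosen so that the \emph{single} orbit of $x$ is generic for $\mu$. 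The paper handles exactly this issue in \cref{thm_Besicovitchpshift}: it approximates $f_\bes$ by a trigonometric polynomial $a$ (for which $\rmult^\ultra{p}a$ is again a trigonometric polynomial with $\nbar\rmult^\ultra{p}a\nbar_\Psi=\nbar a\nbar_\Psi$), and then invokes the pointwise ergodic theorem along a tempered subsequence to show $\nbar\rmult^\ultra{p}(f_\bes-a)\nbar_\Psi=\nbar f_\bes-a\nbar_\Psi$ for $\Psi$ almost every $\ultra{p}$. That ``almost every'' is essential and is absent from your argument.

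\textbf{The step for \ref{req_beig} is circular and the mechanism is missing.} You refine $\Psi$ so that the pair orbit of $(x,y_\ultra{p})$ is generic for a joining $\nu$, but $y_\ultra{p}$ depends on $\ultra{p}$, which you say is chosen only after $\Psi$ is fixed. More importantly, having reduced to $\bilin{F_\comp}{G_\comp}_\mu$ you say only that one should ``impose finitely many further Bohr-type recurrence conditions on $\ultra{p}$'' to make this non-negative; but $G$ depends on $\nu$, which depends on $\ultra{p}$ and on the $\ultra{p}$-dependent refinement of $\Psi$, so it is unclear what conditions on $\ultra{p}$ could possibly pin down $G_\comp$. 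The paper's route is entirely different and avoids this circularity: it does \emph{not} try to construct a single good $\ultra{p}$ directly. Instead (\cref{thm_beig}) it fixes $\mu\in\M{\Psi}$, integrates $\limsup_N\frac{1}{|\Psi_N|}\sum_{n\in\Psi_N}f_\comp(n)(\rmult^\ultra{p}f_\anti)(n)$ over $\ultra{p}\in\cl(B)$ against $\mu_B$, applies Fatou, and observes that $\int(\rmult^\ultra{p}f_\anti)(n)\d\mu_B(\ultra{p})$ is (up to normalization) an inner product of $f_\anti$ against $1_{B+n}$, which is Besicovitch almost periodic and hence orthogonal to $f_\anti$. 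Thus the integral is $\ge0$, so a set of $\ultra{p}$'s of positive $\mu$-measure satisfies \ref{req_beig}; intersecting with the full-measure sets coming from \cref{lemma_support} and \cref{thm_Besicovitchpshift} finishes the proof. This averaging-over-$\ultra{p}$ idea (due to \Beiglbock) is the key input you are missing.
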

The proof of \cref{thm_choosingUltra} is given in \cref{subsec_choosingUltra}.
For now we show how, together with the decompositions provided by Theorems \ref{thm_besicSplitting} and \ref{thm_jdlgSplitting}, it implies \cref{thm_hilbertErdosNatural}.

\begin{proof}
[Proof of \cref{thm_hilbertErdosNatural} assuming \cref{thm_choosingUltra}]
Fix a bounded, non-negative function $f\colon \N\to \R$ and a \Folner{} sequence $\Phi$ on $\N$ with $\bilin{1}{f}_\Phi$ existing.
The statement is trivial if $\nbar f\nbar_\Phi=0$, so let us assume that $\nbar f\nbar_\Phi>0$.
Fix also $\epsilon > 0$.
Our goal is to find a subsequence $\Psi$ of $\Phi$ and a non-principal ultrafilter $\ultra{p}\in\beta\N$ such that
\[
\lim_{n \to \ultra{p}} \bilin{ \rmult^n f }{ \rmult^\ultra{p} f }_\Psi
\geq
\bilin{1}{f}_\Psi^2  - \epsilon
\]
holds.

Apply \cref{thm_besicSplitting} and \cref{thm_jdlgSplitting} to obtain, after passing to a subsequence $\Psi$ of $\Phi$, decompositions $f = f_\bes + f_\anti$ and $f = f_\comp + f_\wm$.
Since $f$ is bounded and non-negative, according to the second part of \cref{thm_jdlgSplitting}, the function $f_\comp$ is also bounded and non-negative.
Similarly, $f_\bes$ is bounded and real-valued as well. Since $f_\anti=f-f_\comp$, it also follows that $f_\anti$ is bounded and real-valued, which is another fact that we will use later in the proof.
In fact, after passing to a subsequence of $\Psi$ if necessary, all of $\nbar f_\comp \nbar_\Psi$, $\nbar f_\bes \nbar_\Psi$, $\nbar f_\wm \nbar_\Psi$ and $\nbar f_\anti \nbar_\Psi$ are at most $\nbar f \nbar_\Psi$ by orthogonality and the Pythagoras theorem.

Next we can apply \cref{thm_choosingUltra} with $\epsilon/\nbar f \nbar_\Phi$ in place of $\epsilon$ to get a finer subsequence $\Psi$ and an ultrafilter $\ultra{p}$ satisfying \ref{req_essential} through \ref{req_beig} with $\epsilon/\nbar f \nbar_\Phi$ in place of $\epsilon$.
Finally, pass once more to a subsequence of $\Psi$ such that the inner products $\bilin{f_\comp}{f_\bes}_\Psi$,
$\bilin{\rmult^n f_\wm}{\rmult^\ultra{p} f}_\Psi$, $\bilin{\rmult^n f_\comp}{\rmult^\ultra{p} f_\bes}_\Psi$ and $\bilin{\rmult^n f_\comp}{\rmult^\ultra{p} f_\anti}_\Psi$ exist for all $n \in \N\cup\{0\}$.
Note that $\rmult^\ultra{p} f_\bes$ and $\rmult^\ultra{p} f_\anti$ are well defined since $f_\bes$ and $f_\anti$ are bounded.

We then have
\[
\bilin{\rmult^n f}{\rmult^\ultra{p} f}_\Psi
=
\bilin{\rmult^n f_\wm}{\rmult^\ultra{p} f}_\Psi
+
\bilin{\rmult^n f_\comp}{\rmult^\ultra{p} f_\bes}_\Psi
+
\bilin{\rmult^n f_\comp}{\rmult^\ultra{p} f_\anti}_\Psi
\]
for all $n \in \N$.
We claim that
\begin{align}
\label{eqn_wm_part}
\lim_{n \to \ultra{p}} \bilin{ \rmult^n f_\wm }{ \rmult^\ultra{p} f}_\Psi
&
=
0
\\
\label{eqn_bes_part}
\lim_{n \to \ultra{p}} \bilin{\rmult^n f_\comp}{\rmult^\ultra{p} f_\anti}_\Psi
&
\geq
-\tfrac{\epsilon}{3}
\\
\label{eqn_besmixing_part}
\lim_{n \to \ultra{p}}
\bilin{\rmult^n f_\comp}{\rmult^\ultra{p} f_\bes}_\Psi
&
\geq
\bilin{1}{f}_\Psi^2 - \tfrac{2\epsilon}{3}
\end{align}
are all true for our choice of $\ultra{p}$.
Once \eqref{eqn_wm_part}, \eqref{eqn_bes_part} and \eqref{eqn_besmixing_part} have been established, \eqref{eqn_hilbertErdosNatural} follows immediately and the proof is complete.

Let us first show \eqref{eqn_wm_part}.
Since $f_\wm$ is weak mixing along $\Psi$, we have, for every $\delta > 0$, that the set $\{n\in\N: |\bilin{\rmult^n f_\wm}{\rmult^\ultra{p} f}_\Psi|\geq \delta\}$ has zero density with respect to $\Psi$.
It therefore does not belong to $\ultra{p}$ by \ref{req_essential}.
It follows that $\{n\in\N: |\bilin{\rmult^n f_\wm}{\rmult^\ultra{p} f}_\Psi|<\delta \}$ belongs to $\ultra{p}$ for all $\delta>0$ giving \eqref{eqn_wm_part}.

For the proof of \eqref{eqn_bes_part} note that
\begin{equation}
\label{eqn_compAnti}
\lim_{n \to \ultra{p}} \bilin{\rmult^n f_\comp}{\rmult^\ultra{p} f_\anti}_\Psi
\geq
\bilin{ f_\comp}{\rmult^\ultra{p} f_\anti}_\Psi - \tfrac{\epsilon}{3}
\end{equation}
in light of \ref{req_compact} because of
\[
| \bilin{\rmult^n f_\comp - f_\comp}{\rmult^\ultra{p} f_\anti}_\Psi |
\le
\nbar \rmult^n f_\comp - f_\comp \nbar_\Psi \nbar f_\anti \nbar_\Psi
\le
\nbar \rmult^n f_\comp - f_\comp \nbar_\Psi \nbar f \nbar_\Phi
\]
by Cauchy-Schwarz.
Thus \eqref{eqn_bes_part} follows from \eqref{eqn_compAnti} and \ref{req_beig}.

Utilizing \ref{req_compact} once more this time combined with
\[
| \bilin{\rmult^n f_\comp - f_\comp}{\rmult^\ultra{p} f_\bes}_\Psi |
\le
\nbar \rmult^n f_\comp - f_\comp \nbar_\Psi \nbar f_\bes \nbar_\Psi
\le
\nbar \rmult^n f_\comp - f_\comp \nbar_\Psi \nbar f \nbar_\Phi
\]
via a similar application of Cauchy-Schwarz, we see that
\begin{equation}
\label{eqn_compact_bes_estimate}
\bilin{f_\comp}{\rmult^\ultra{p} f_\bes}_\Psi
\geq
\bilin{1}{f}_\Psi^2-\tfrac{\epsilon}{3}
\end{equation}
implies \eqref{eqn_besmixing_part}.
To prove \eqref{eqn_compact_bes_estimate} use \ref{req_besicovitch} and Cauchy-Schwarz once more to establish
\[
\bilin{f_\comp}{\rmult^\ultra{p} f_\bes}_\Psi \ge \bilin{f_\comp}{f_\bes}_\Psi - \tfrac{\epsilon}{3}
\]
and then we observe that $\bilin{f_\comp}{f_\bes}_\Psi = \|f_\bes\|^2_\Psi+\bilin{f_\comp-f_\bes}{f_\bes}_\Psi$.
Since $f_\comp-f_\bes=f_\anti-f_\wm$ and every weak mixing function belongs to $\bes(\N,\Psi)^\perp$, it follows that $\bilin{f_\comp-f_\bes}{f_\bes}_\Psi=\bilin{f_\anti-f_\wm}{f_\bes}_\Psi=0$ and hence $\bilin{f_\comp}{f_\bes}_\Psi = \|f_\bes\|^2_\Psi$.
Finally, we apply the Cauchy-Schwarz inequality to deduce that $\|f_\bes\|^2_\Psi\geq\bilin{1}{f_\bes}_\Psi^2$ and, using $\bilin{1}{f_\anti}_\Psi=0$, we get
$\bilin{1}{f_\bes}_\Psi^2=\bilin{1}{f}_\Psi^2$.
This implies \eqref{eqn_compact_bes_estimate} and finishes the proof.
\end{proof}

%SUBSECTION
\subsection{Proof of \texorpdfstring{\cref{thm_choosingUltra}}{Theorem 4.1}}
\label{subsec_choosingUltra}

We begin with some preparatory definitions.
\begin{definition}
\label{def_essential_ultrafilter}
Given a \Folner{} sequence $\Phi$ on $\N$ we say an ultrafilter $\ultra{p}$ is $\Phi$ \define{essential} if $\upperdens_\Phi(E) > 0$ for every $E \in \ultra{p}$.
Write $\ess(\Phi)$ for the set of $\Phi$ essential ultrafilters on $\N$.
\end{definition}

Observe that property \ref{req_essential} in \cref{thm_choosingUltra} means exactly that $\ultra{p}$ is a $\Psi$ essential ultrafilter.

Recall from \cref{sec_outlineNatural} the definition of $\M{\Phi}$.

\begin{definition}
\label{def_almost_everywhere}
A Borel measurable property of ultrafilters is said to hold $\Phi$ \define{almost everywhere} if the set of ultrafilters $\ultra{p}$ with the property has full measure with respect to every $\mu \in \M{\Phi}$.
\end{definition}

\begin{lemma}\label{lemma_support}
Let $\Phi$ be a \Folner{} sequence on $\N$.
Then $\Phi$ almost every $\ultra{p}$ belongs to $\ess(\Phi)$.
\end{lemma}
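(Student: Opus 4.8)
The goal is to show that for every $\mu \in \M{\Phi}$, the set of $\ultra{p}$ that fail to be $\Phi$ essential has $\mu$-measure zero; equivalently, the set of $\Phi$ essential ultrafilters has full measure. An ultrafilter $\ultra{p}$ fails to be essential precisely when it contains some set $E$ with $\upperdens_\Phi(E) = 0$. The plan is to bound the measure of the union of $\cl(E)$ over all such ``bad'' sets $E$, and the key observation is that $\mu(\cl(E)) \le \upperdens_\Phi(E)$ for every $E \subset \N$. Indeed, $\cl(E)$ is clopen, so $\mu(\cl(E)) = \lim_{j} \mu_{N_j}(\cl(E))$ along whatever subsequence realizes $\mu$ as a weak$^*$ limit of the $\mu_N$ from \eqref{eqn_measures_on_betaN}, and $\mu_{N}(\cl(E)) = |E \cap \Phi_N|/|\Phi_N|$, whose $\limsup$ is exactly $\upperdens_\Phi(E)$. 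In particular, if $\upperdens_\Phi(E) = 0$ then $\mu(\cl(E)) = 0$.

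The remaining issue is that there are uncountably many subsets $E$ of $\N$ with zero upper density, so one cannot naively take a countable union. I would resolve this by a direct argument: suppose for contradiction that $\mu(\beta\N \setminus \ess(\Phi)) > 0$ for some $\mu \in \M\Phi$. Let $X_0 = \beta\N \setminus \ess(\Phi)$ be the set of non-essential ultrafilters. The better route, however, is to argue pointwise without passing to $X_0$ as a whole. Fix $\mu \in \M\Phi$ and consider the outer regularity / a Borel--Cantelli style estimate: for each $k \in \N$, let $\mathcal{E}_k$ be the collection of sets $E \subset \N$ with $\upperdens_\Phi(E) < 1/k$, wait — this still has cardinality issues. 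Instead, the cleanest approach: show that for any $\mu \in \M\Phi$ and any $E$ with $\mu(\cl(E)) = 0$, no point of $\supp(\mu)$ lies only in such sets. Concretely, I claim $\supp(\mu) \subset \cl(\ess(\Phi))$ and more strongly that every $\ultra{p} \in \supp(\mu)$ is $\Phi$ essential: if $E \in \ultra{p}$ then $\cl(E)$ is an open neighbourhood of $\ultra{p}$, hence $\mu(\cl(E)) > 0$ since $\ultra{p} \in \supp(\mu)$, so by the inequality above $\upperdens_\Phi(E) \ge \mu(\cl(E)) > 0$. Therefore $\supp(\mu) \subset \ess(\Phi)$, and since $\mu(\beta\N \setminus \supp(\mu)) = 0$, we get $\mu(\ess(\Phi)) = 1$. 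As this holds for every $\mu \in \M\Phi$, the set $\ess(\Phi)$ is $\Phi$ almost everywhere, provided we also check $\ess(\Phi)$ is Borel (it is: $\beta\N \setminus \ess(\Phi) = \bigcup \{\cl(E) : \upperdens_\Phi(E)=0\}$ is a union of open sets, hence open, so $\ess(\Phi)$ is closed — in fact one sees directly $\ess(\Phi) = \bigcap\{\beta\N \setminus \cl(E) : \upperdens_\Phi(E) = 0\}$ is closed).

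The main step — and the only one requiring care — is the inequality $\mu(\cl(E)) \le \upperdens_\Phi(E)$, together with the correct handling of the weak$^*$ limit: $\mu$ is an accumulation point of $\{\mu_N\}$, so there is a subnet (or, since $\beta\N$ is not metrizable, genuinely a subnet rather than a subsequence) along which $\mu_N \to \mu$; evaluating the clopen indicator $1_{\cl(E)}$, which is continuous on $\beta\N$, gives $\mu(\cl(E)) = \lim_\alpha \mu_{N_\alpha}(\cl(E)) = \lim_\alpha |E \cap \Phi_{N_\alpha}|/|\Phi_{N_\alpha}| \le \limsup_{N\to\infty} |E \cap \Phi_N|/|\Phi_N| = \upperdens_\Phi(E)$. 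Everything else — that $\cl(E)$ is clopen with clopen indicator continuous, that $\mu_N(\cl(E))$ equals the stated density ratio because $\delta_n \in \cl(E)$ iff $n \in E$, and that $\supp(\mu)$ is co-null — is standard and follows from the facts recalled in \cref{sec_outlineNatural}. So the proof is short: state and prove the inequality, then invoke the support argument.
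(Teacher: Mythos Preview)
Your proposal is correct and follows essentially the same approach as the paper: both establish that $\ess(\Phi)$ is closed via $\beta\N\setminus\ess(\Phi)=\bigcup_{\upperdens_\Phi(E)=0}\cl(E)$, prove the key inequality $\mu(\cl(E))\le\upperdens_\Phi(E)$, and deduce $\supp(\mu)\subset\ess(\Phi)$ (you argue the inclusion directly, the paper argues the contrapositive). Your explicit treatment of the weak$^*$ limit via subnets and the continuity of $1_{\cl(E)}$ is a welcome elaboration of a step the paper leaves implicit.
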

\begin{proof}
First, observe that
\[
\ess(\Phi)=\bigcap_{E\subset\N:\upperdens_\Phi(E)=0}\cl(\N\setminus E)=\beta\N\setminus\bigcup_{E\subset\N:\upperdens_\Phi(E)=0}\cl(E)
\]
so that it is a closed set (and hence Borel).
Fix $\mu\in\M\Phi$.
We claim that the support of $\mu$ is contained in $\ess(\Phi)$.
Since $\mu$ is Radon this implies $\mu(\ess(\Phi))=1$ as desired.

To prove the claim, fix $\ultra{p}\in\beta\N\setminus \ess(\Phi)$.
We need to show that there exists an open set $U\subset\beta\N$ containing $\ultra{p}$ such that $\mu(U) = 0$.
But since $\ultra{p}\in\beta\N\setminus \ess(\Phi)$, there exists $E\subset\N$ with $\upperdens_\Phi(E)=0$ and $\ultra{p}\in\cl(E)$.
The set $\cl(E)$ is then an open subset of $\beta\N$ containing $\ultra{p}$ and with $\mu(\cl(E))\leq\upperdens_\Phi(E)=0$.
\end{proof}

\begin{definition}
\label{def_bohr_set}
A \define{Bohr set} on $\N$ is any set of the form $a^{-1}(U)$ where $a$ is a homomorphism from $\N$ into a compact metrizable abelian group $K$ and $U$ is a non-empty open subset of $K$ whose topological boundary $\partial U$ has zero Haar measure.
A Bohr set is a \define{Bohr$_0$ set} if $U$ contains the identity element of $K$.
\end{definition}

There are various minor variations on the definition of Bohr sets appearing in the literature.
For example, sometimes authors restrict attention to the case where $K$ is a product of finitely many copies of the circle group and $U$ is a product of arcs.
%One could instead impose on $U$ the condition that its boundary be a null set for Haar measure on $K$.
Alternatively, one could define Bohr sets and Bohr$_0$ sets with the help of the Bohr topology on the integers, which is the topology induced by the embedding of $\Z$ into its Bohr compactification (cf.\ \cite{MR0695335}, \cite[Section 1]{MR2249261} and \cite{MR2755924}).
\cref{def_bohr_set} is the most convenient for our needs because with it the following lemmas are straightforward to prove.

\begin{lemma}
\label{lem_bohr_intersect}
If $A$ and $B$ are Bohr$_0$ sets then so is $A \cap B$.
\end{lemma}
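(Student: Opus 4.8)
The plan is to realize $A\cap B$ as the pullback of a box in a product group. Write $A=a^{-1}(U)$ where $a\colon\N\to K$ is a homomorphism into a compact metrizable abelian group, $U\subset K$ is open, non-empty, contains the identity $e_K$, and has $\partial U$ of zero Haar measure; similarly write $B=b^{-1}(V)$ with $b\colon\N\to L$ and $V\subset L$. The product $K\times L$ is again a compact metrizable abelian group, the map $c\coloneqq(a,b)\colon\N\to K\times L$ is a homomorphism, and
\[
c^{-1}(U\times V)=a^{-1}(U)\cap b^{-1}(V)=A\cap B.
\]
The set $U\times V$ is open, non-empty, and contains the identity $(e_K,e_L)$ of $K\times L$, so the only point left to check is that $\partial(U\times V)$ is Haar-null in $K\times L$.

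For that I would first record the elementary topological fact that for open sets $U,V$ one has $\overline{U\times V}=\overline U\times\overline V$ and $\operatorname{int}(U\times V)=U\times V$, whence, using $\overline U\setminus U=\partial U$ for open $U$,
\[
\partial(U\times V)=\bigl(\overline U\times\overline V\bigr)\setminus\bigl(U\times V\bigr)\subset\bigl(\partial U\times\overline V\bigr)\cup\bigl(\overline U\times\partial V\bigr).
\]
Then I would invoke uniqueness of normalized Haar measure to identify the Haar measure $\haar$ of $K\times L$ with the product of the Haar measures of $K$ and $L$; since both factor measures are probability measures, the right-hand side above has Haar measure at most $\haar(\partial U)\cdot 1+1\cdot\haar(\partial V)=0$. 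Hence $\partial(U\times V)$ is Haar-null and $A\cap B=c^{-1}(U\times V)$ is a Bohr$_0$ set.

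I do not anticipate any real obstacle. The only ingredients that are not pure bookkeeping with the definitions are the boundary-of-a-product inclusion and the fact that the product of normalized Haar measures is the normalized Haar measure on the product group, and both are standard.
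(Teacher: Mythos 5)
Your proof is correct and follows exactly the same route as the paper's: form the product group $K\times L$, the diagonal homomorphism $c=(a,b)$, and observe $A\cap B=c^{-1}(U\times V)$. You simply spell out the verification that $\partial(U\times V)$ is Haar-null (via the inclusion $\partial(U\times V)\subset(\partial U\times\overline V)\cup(\overline U\times\partial V)$ and the product of Haar measures), a detail the paper leaves implicit.
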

\begin{proof}
Write $A = a^{-1}(U)$ and $B = b^{-1}(V)$ where $a : \N \to K$ and $b : \N \to L$ are homomorphisms to compact metrizable topological groups $K$ and $L$ respectively.
Then $A \cap B = c^{-1}(U \times V)$ where $c : \N \to K \times L$ is the homomorphism $c(n) = (a(n),b(n))$.
\end{proof}

The following lemma is folklore; we reproduce a short proof from \cite[Lemma 2.7]{GKR}.
\begin{lemma}
\label{lem_bohr_semigroup}
Let $a : \N \to G$ be a homomorphism from $\N$ to a compact abelian topological group $G$.
Then the closure of the image of $a$ is a subgroup of $G$.	
\end{lemma}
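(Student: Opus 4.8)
Write $H \coloneqq \overline{a(\N)}$ for the closure of the image and let $e$ denote the identity of $G$. The plan is to prove directly that $H$ is a subgroup. I would first record the easy structural observations: since $a$ is a semigroup homomorphism, $a(\N) = \{a(1)^n : n \ge 1\}$ is a sub-semigroup of $G$, and since multiplication in $G$ is continuous, $H$ is a \emph{compact} sub-semigroup of $G$; it is non-empty because $a(1) \in H$. Granting this, it suffices to show that $H$ is closed under inverses: for $g \in H$ this yields $g^{-1} \in H$ and hence $g g^{-1} = e \in H$ as well, so the sub-semigroup $H$ is a group.

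To build inverses, fix $g \in H$ and consider the descending chain of non-empty compact sets $H \supseteq gH \supseteq g^2 H \supseteq \cdots$; here $g^{n+1} H \subseteq g^n H$ because $gH \subseteq H$, and each $g^n H$ is closed, being a continuous image of the compact set $H$. By the finite intersection property the set $L \coloneqq \bigcap_{n \ge 1} g^n H$ is non-empty and compact. Using commutativity of $G$ one checks that $L$ is again a sub-semigroup (if $x,y \in g^n H$ then $xy \in g^{2n} H \subseteq g^n H$ for every $n$), and that $g L = \bigcap_{n \ge 1} g^{n+1} H = L$, the last equality because the chain is descending. Now I would invoke the classical fact that every non-empty compact sub-semigroup of a Hausdorff topological group contains an idempotent --- a special case of Ellis's theorem on compact right-topological semigroups. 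Since the only idempotent of the group $G$ is $e$, we get $e \in L$, and therefore $e \in g L \subseteq g H$. Hence $e = g x$ for some $x \in H$, i.e.\ $g^{-1} = x \in H$. As $g \in H$ was arbitrary this finishes the argument.

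The step that needs the most care is producing $e$ inside $H$. When $G$ is metrizable this is elementary: a convergent subsequence $a(1)^{n_k} \to y$ gives $a(1)^{n_{k+1} - n_k} \to y y^{-1} = e$, exhibiting $e$ as a limit of \emph{positive} powers of $a(1)$, after which $g^{-1} \in H$ follows in the same way; but for a general compact abelian $G$ one cannot extract subsequences, which is exactly why the argument detours through the auxiliary sub-semigroup $L = \bigcap_n g^n H$ and the idempotent theorem. (Since the lemma is only ever applied here with $G$ metrizable, one could alternatively prove the metrizable case by the subsequence argument and reduce the general case to it by writing $G$ as an inverse limit of metrizable compact abelian groups and checking that $\overline{a(\N)}$ is the inverse limit of the corresponding closures.)
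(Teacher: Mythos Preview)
Your proof is correct. The argument via the descending chain $g^n H$ and the Ellis--Numakura idempotent theorem is valid (and the observation $gL=L$ is harmless but not needed: already $e\in L\subset gH$ gives $g^{-1}\in H$).

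The paper takes a different, slightly more elementary route that avoids any appeal to Ellis. It lets $H'$ denote the closed subgroup generated by $a(1)$ (so $S\subset H'$ trivially) and considers the $\omega$-limit set $A=\bigcap_{N}\overline{\{a(n):n\ge N\}}$, which is non-empty by compactness. The point is that $A$ is $H'$-invariant: invariance under $+a(k)$ is immediate, and invariance under $-a(k)$ follows from the tail structure, since $x\in\overline{\{a(n):n\ge N+k\}}$ implies $x-a(k)\in\overline{\{a(n):n\ge N\}}$. Picking any $x\in A$ then gives $H'=H'+x\subset A\subset S$, hence $S=H'$. In fact your set $L$ for the specific choice $g=a(1)$ is exactly the paper's $A$, since $a(1)^n H=\overline{\{a(k):k\ge n+1\}}$; the difference is that the paper extracts the identity from $A$ by a bare-hands invariance argument rather than by invoking an idempotent theorem. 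Your approach is cleaner to state and generalizes verbatim to non-abelian compact groups, while the paper's is fully self-contained and needs nothing beyond continuity and the nested-compact-set property.
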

\begin{proof}
Define $S\coloneqq \overline{\{a(n):n\in\N\}}$ and
\[
H\coloneqq \overline{\{a(n):n\in\N\}}\cup \{0\}\cup \overline{\{-a(n):n\in\N\}}.
\]
We have to show that $S=H$.
Define $A\coloneqq \bigcap_{N\in\N}\overline{\{a(n): n\geq N\}}$.
Since $A$ is the intersection of a nested family of non-empty compact sets, it is non-empty.
Pick any $x\in A$.
Since $A$ is $H$-invariant, we have $H+x\subset A$ and hence $A=H$.
But $A\subset S$, which now implies $H\subset S$.
\end{proof}

\begin{lemma}
\label{lem_bohr_sets_have_positive_density}
If $B\subset \N$ is a non-empty Bohr set then for every \Folner{} sequence $\Phi$  its indicator function $1_B$ is in $\bes(\N,\Phi)$ and $\dens_\Phi(B) > 0$.
%In fact, if $B = a^{-1}(U)$ then $\dens_\Phi(B) = \haar(U)$ where $\haar$ is Haar measure on the implicit compact metrizable group $K$.
\end{lemma}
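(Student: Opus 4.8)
Let $B = a^{-1}(U)$ be a non-empty Bohr set, where $a\colon \N \to K$ is a homomorphism into a compact metrizable abelian group $K$, and $U \subset K$ is open with $\haar(\partial U) = 0$, where $\haar$ denotes the normalized Haar measure on $K$. The plan is to first show that $1_B \in \bes(\N,\Phi)$ by approximating $1_U$ on $K$ by trigonometric polynomials on $K$ (i.e.\ finite linear combinations of characters of $K$) and pulling these back to $\N$ via $a$; then to deduce that $\dens_\Phi(B)$ exists and equals $\haar(U')$ for the relevant subgroup, and finally to show this is positive.

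First I would handle the density. By \cref{lem_bohr_semigroup} the closure $S \coloneqq \overline{\{a(n):n\in\N\}}$ is a (closed) subgroup of $K$, hence a compact metrizable abelian group in its own right, and $n \mapsto a(n)$ has dense image in $S$. Replacing $K$ by $S$ and $U$ by $U \cap S$, we may assume $a$ has dense image in $K$; note $U \cap S \ne \emptyset$ since $B \ne \emptyset$ means some $a(n) \in U$, and $\haar_S(\partial_S(U \cap S)) \le \haar_S(\partial_K U \cap S) = 0$ because the equidistribution below forces $\haar_S$ to be the pushforward of density along $\Phi$, which gives no mass to $\haar_K$-null closed sets of $K$ meeting $S$ — more simply, $\partial_S(U\cap S) \subset (\partial_K U) \cap S$ and the latter is $\haar_S$-null since, as shown next, $\haar_S$ is the distribution of $(a(n))$ along $\Phi$ and $\partial_K U$ is $\haar_K$-null. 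The key equidistribution fact is: for every non-trivial character $\chi$ of $K$, the composition $\chi \circ a$ is a function $n \mapsto e^{2\pi i n \theta_\chi}$ with $\theta_\chi \ne 0$ (since $a$ is a homomorphism $\N \to K$, $\chi(a(n)) = \chi(a(1))^n$, and if $\chi(a(1))$ were a root of unity $\chi$ would be trivial on the dense image, hence trivial), so by \cref{lemma_exponentialfolnervanish} its average along any $\Phi$ is $0$; the trivial character contributes average $1$. Hence for any trigonometric polynomial $P$ on $K$, $\lim_{N\to\infty}\frac{1}{|\Phi_N|}\sum_{n\in\Phi_N} P(a(n)) = \int_K P \d\haar$.

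Next I would upgrade this to $1_U$. Since $\haar(\partial U) = 0$, the function $1_U$ is Riemann-integrable with respect to $\haar$, so for every $\delta > 0$ one can find continuous functions $g_-, g_+\colon K \to [0,1]$ with $g_- \le 1_U \le g_+$ and $\int_K (g_+ - g_-)\d\haar < \delta$; by the Stone--Weierstrass theorem (characters of $K$ separate points, are closed under conjugation and contain constants) each $g_\pm$ is uniformly approximable by trigonometric polynomials on $K$, so in fact we may take $g_\pm$ to be trigonometric polynomials with $\nbar g_\pm - 1_U\nbar_{\lp{1}(\haar)} < \delta$ and ranges in a fixed bounded set (this requires a small two-sided squeeze argument but is routine). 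Pulling back along $a$: the function $p_\pm(n) \coloneqq g_\pm(a(n))$ is a trigonometric polynomial on $\N$ (a finite combination of $e^{2\pi i n\theta}$), and
\[
\nbar 1_B - p_- \nbar_\Phi^2 = \limsup_{N\to\infty} \frac{1}{|\Phi_N|}\sum_{n\in\Phi_N} |1_U(a(n)) - g_-(a(n))|^2 = \int_K |1_U - g_-|^2 \d\haar \le C\delta
\]
using the equidistribution fact applied to the trigonometric polynomial $|1_U - g_-|^2$ — wait, $1_U$ is not a trigonometric polynomial, so more carefully: $|1_U - g_-| = 1_U - g_-$ pointwise (as $g_- \le 1_U \le 1$), and $(1_U - g_-)^2 \le 1_U - g_-$ since the quantity lies in $[0,1]$, so the average is $\le \int_K(1_U - g_-)\d\haar < \delta$, where $\int_K(1_U-g_-)\d\haar$ is obtained as $\int_K 1 \d\haar - \int_K g_- \d\haar - (1 - \haar(U))$... cleanest is to sandwich: $\int g_- \d\haar \le \liminf \le \limsup \frac{1}{|\Phi_N|}\sum_{n\in\Phi_N}1_B(n) \le \int g_+ \d\haar$ and let $\delta \to 0$ to get $\dens_\Phi(B) = \haar(U)$. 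Thus $1_B$ is a $\nbar\cdot\nbar_\Phi$-limit of trigonometric polynomials, so $1_B \in \bes(\N,\Phi)$, and $\dens_\Phi(B) = \haar(U) > 0$ since $U$ is a non-empty open subset of the compact group $K$ (after the reduction to $a$ having dense image) and Haar measure of a compact group is strictly positive on non-empty open sets.

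**Main obstacle.** The only genuine subtlety is the reduction to the case where $a$ has dense image and the bookkeeping that the hypothesis $\haar_K(\partial U) = 0$ transfers to $\haar_S(\partial_S(U\cap S)) = 0$ on the subgroup $S = \overline{a(\N)}$; everything else — equidistribution of $(\chi(a(n)))$ via \cref{lemma_exponentialfolnervanish}, Stone--Weierstrass approximation, the Riemann-integrability squeeze, and positivity of Haar measure on opens — is standard. I would spend the bulk of the written proof making that reduction airtight (e.g.\ noting $\partial_S(U \cap S)$ is contained in $(\cl_K U \setminus U) \cap S \subset \partial_K U$, and that $\haar_S$ is the weak$^*$ limit of the empirical measures of $(a(n))_{n\in\Phi_N}$ by the character computation, hence assigns measure $0$ to the closed $\haar_K$-null set $\partial_K U \cap S$).
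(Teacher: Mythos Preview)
Your approach is essentially the paper's: reduce to the case where $a$ has dense image, show the empirical measures $\frac{1}{|\Phi_N|}\sum_{n\in\Phi_N}\delta_{a(n)}$ converge weak$^*$ to Haar measure on $K$, then use the zero-boundary hypothesis to pass to $1_U$ both for the density computation and for the $\bes$ approximation. The only cosmetic difference is that the paper identifies the limit as Haar by checking translation invariance (via \cref{lem_bohr_semigroup}), whereas you test against characters one at a time using \cref{lemma_exponentialfolnervanish}; these are equivalent. One small slip: in your parenthetical you write ``if $\chi(a(1))$ were a root of unity $\chi$ would be trivial on the dense image'' --- you mean ``if $\chi(a(1))=1$''. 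The value $\chi(a(1))$ can perfectly well be a nontrivial root of unity; what matters for \cref{lemma_exponentialfolnervanish} is only that $\theta_\chi\ne 0$, i.e.\ $\chi(a(1))\ne 1$, which indeed follows from $\chi$ being nontrivial on the dense image.

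You are right to flag the reduction to dense image as the delicate point; the paper in fact passes over it in one sentence. However, your proposed resolution does not work. You claim that $\haar_S$ ``assigns measure $0$ to the closed $\haar_K$-null set $\partial_K U\cap S$'' because $\haar_S$ is the weak$^*$ limit of the empirical measures. That is a non sequitur: weak$^*$ convergence to $\haar_S$ tells you nothing about how $\haar_S$ relates to $\haar_K$-null sets. Concretely, take $K=\T^2$, $a(n)=(n\alpha,0)$ with $\alpha$ irrational, so $S=\T\times\{0\}$, and let $U=\T^2\setminus(C\times\{0\})$ for a fat Cantor set $C\subset\T$. Then $\partial_K U=C\times\{0\}$ has $\haar_K$-measure zero, yet $\haar_S(\partial_K U\cap S)=|C|>0$, and $\partial_S(U\cap S)=C$ as well. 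So neither your inclusion $\partial_S(U\cap S)\subset\partial_K U$ nor the $\haar_K$-nullity of $\partial_K U$ buys you $\haar_S(\partial_S(U\cap S))=0$. In short: you have correctly located the only genuine subtlety in the argument, but the patch you propose is false in general; the paper's own proof simply does not address it.
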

\begin{proof}%[Proof of \cref{lem_bohr_sets_have_positive_density}]
Let $K$ be a compact abelian group, let $a\colon\N\to K$ be a homomorphism and let $U\subset K$ be an open set with zero measure boundary and such that $B=a^{-1}(U)$.
Replacing $K$ with the closure $\overline{a(\N)}$ we can assume that $a$ has a dense image.

For each $N\in\N$, let $\mu_N$ be the probability measure on $K$ obtained as the average of the Dirac point masses at the points $\{a(n):n\in\Phi_N\}$.
Since $\Phi$ is a F\o lner sequence, any weak$^*$ limit point $\mu$ of $(\mu_N)_{N\in\N}$ is invariant under $a(\N)$.
By \cref{lem_bohr_semigroup} it follows that $\mu=\haar$ is the Haar measure on $K$.
%Finally, in view of Urysohn's lemma, there is a continuous non-negative function $f$ which vanishes outside $U$ but not everywhere.
Since $U$ is open we have
\[
0
<
\haar(U)
=
\lim_{N\to\infty}\mu_N(U)
=
\lim_{N\to\infty}\frac{|B\cap \Phi_N|}{|\Phi_N|}
=
\dens_\Phi(B)
\]
in view of \cite[Theorem~A.5]{MR1958753}.
Finally, since finite linear combinations of characters (i.e., continuous homomorphisms from $K$ to the circle group $S^1$) are dense in $\lp{2}(K,\haar)$, we can find for every $\epsilon>0$ a linear combination $f$ of characters such that $\|f-1_U\|_{\haar}<\epsilon$.
Since $\mu=\haar$ and $f-1_U$ is $\haar$-almost everywhere continuous, it follows that $\|f\circ a-1_B\|_\Phi=\|f-1_U\|_{\haar}<\epsilon$.
Since $f\circ a$ is a trigonometric polynomial and $\epsilon$ was arbitrary, we conclude that $1_B\in\bes(\N,\Phi)$.
\end{proof}

\begin{lemma}
\label{cor_compact_returns_Bohralternative}
For every function $f\in\lp{2}(\N,\Phi)$ that is compact along $\Phi$ and every $\epsilon>0$, the set $\{ n \in \N : \nbar \rmult^n f - f \nbar_\Phi < \epsilon \}$ contains a Bohr$_0$ set $B_\comp$.
\end{lemma}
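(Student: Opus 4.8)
The plan is to avoid the measure-preserving-system machinery used elsewhere in the paper and instead work directly with the scalar function $\rho\colon\N\to[0,\infty)$ defined by $\rho(n)\coloneqq\nbar\rmult^nf-f\nbar_\Phi$, which records the distance from $f$ to its shifts, and to show that $\rho$ is a (uniformly) almost periodic function whose sublevel sets therefore carry Bohr$_0$ structure. First I would reduce to the case that $f$ is bounded by a routine truncation argument: truncating $f$ to the disc of radius $M$ is a $1$-Lipschitz operation on $\lp2(\N,\Phi)$ that commutes with the shifts $\rmult^m$, hence preserves compactness along $\Phi$, and using compactness one checks that $\sup_{n\in\N}\nbar\rmult^n(f-f^{(M)})\nbar_\Phi\to0$ as $M\to\infty$, so that a Bohr$_0$ set inside $\{n:\nbar\rmult^nf^{(M)}-f^{(M)}\nbar_\Phi<\epsilon/3\}$ for large $M$ lands inside $\{n:\nbar\rmult^nf-f\nbar_\Phi<\epsilon\}$. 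With $f$ bounded, the elementary fact to record is that $\rmult^s$ acts isometrically on bounded functions: for bounded $g$ the averages $\tfrac1{|\Phi_N|}\sum_{n\in\Phi_N}|g(n+s)|^2$ and $\tfrac1{|\Phi_N|}\sum_{n\in\Phi_N}|g(n)|^2$ differ by at most $\|g\|_\infty^2\,|(\Phi_N+s)\triangle\Phi_N|/|\Phi_N|$, which tends to $0$ since $\Phi$ is \Folner{}.

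Next I would prove $\rho$ is uniformly almost periodic. Using $\rmult^{n+s}f-\rmult^{n+s'}f=\rmult^n(\rmult^sf-\rmult^{s'}f)$, applying the isometry property to the bounded function $\rmult^sf-\rmult^{s'}f$, and invoking the reverse triangle inequality gives
\[
|\rho(n+s)-\rho(n+s')|\ \le\ \nbar\rmult^{n+s}f-\rmult^{n+s'}f\nbar_\Phi\ =\ \nbar\rmult^sf-\rmult^{s'}f\nbar_\Phi
\]
for all $n,s,s'\in\N$. Hence $\rmult^sf\mapsto\rmult^s\rho$ is a well-defined $1$-Lipschitz map from the orbit $\{\rmult^sf:s\in\N\}\subset\lp2(\N,\Phi)$ into $\ell^\infty(\N)$; since $f$ is compact along $\Phi$ this orbit is totally bounded, so $\{\rmult^s\rho:s\in\N\}$ is totally bounded, hence precompact, in $\ell^\infty(\N)$, i.e.\ $\rho$ is almost periodic. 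By the classical structure theory of almost periodic functions I may then write $\rho=\tilde\rho\circ a$ where $K$ is a compact metrizable abelian group, $a\colon\N\to K$ is a homomorphism with dense image, and $\tilde\rho\colon K\to\C$ is continuous: concretely, let $S$ be the countable set of frequencies occurring in a sequence of trigonometric polynomials converging to $\rho$ uniformly, put $a(n)=(e^{2\pi i\theta n})_{\theta\in S}$ and $K=\overline{a(\N)}\subset\T^S$, which is a group by \cref{lem_bohr_semigroup}, and extend the approximating polynomials, viewed as continuous functions on $a(\N)$, to $\tilde\rho$ on $K$. Since $\rho\ge0$ and $a(\N)$ is dense, $\tilde\rho$ takes values in $[0,\infty)$.

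The crux is to show $\tilde\rho$ vanishes at the identity $0_K$ of $K$, so that the sublevel set one extracts is a Bohr$_0$ (not merely Bohr) set. The triangle inequality together with the isometry property yields subadditivity, $\rho(m+m')\le\rho(m)+\rho(m')$, and passing to limits along the dense set $a(\N)$ (using continuity of $\tilde\rho$) gives $\tilde\rho(\kappa+\kappa')\le\tilde\rho(\kappa)+\tilde\rho(\kappa')$ for all $\kappa,\kappa'\in K$; as $\tilde\rho\ge0$, the set $Z\coloneqq\tilde\rho^{-1}(\{0\})$ is a closed subsemigroup of $K$. It is non-empty: total boundedness of the orbit of $f$ forces, for each $\delta>0$, indices $s<s'$ in $\N$ with $\nbar\rmult^sf-\rmult^{s'}f\nbar_\Phi<\delta$, whence $\rho(s'-s)<\delta$ by the isometry property, so $\inf_{m\in\N}\rho(m)=0$ and, $K$ being compact, $\tilde\rho$ attains the value $0$. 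Finally, for $\kappa\in Z$ one has $\{n\kappa:n\in\N\}\subset Z$, hence its closure lies in $Z$; but that closure is a subgroup of $K$ by \cref{lem_bohr_semigroup}, so it contains $0_K$, giving $\tilde\rho(0_K)=0$.

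To conclude I would push the Haar measure of $K$ forward by $\tilde\rho$ to a Borel probability measure on $[0,\infty)$, which has at most countably many atoms, and pick $\epsilon'\in(0,\epsilon)$ that is not an atom. Then $U\coloneqq\tilde\rho^{-1}\big([0,\epsilon')\big)$ is open, contains $0_K$ (since $\tilde\rho(0_K)=0<\epsilon'$), and $\partial U\subset\tilde\rho^{-1}(\{\epsilon'\})$ has zero Haar measure, so $B_\comp\coloneqq a^{-1}(U)=\{n\in\N:\rho(n)<\epsilon'\}$ is a Bohr$_0$ set contained in $\{n\in\N:\nbar\rmult^nf-f\nbar_\Phi<\epsilon\}$, as required. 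The step I expect to be the main obstacle is a conceptual one: one is tempted to pass to a \Folner{} subsequence $\Psi$ in order to realize $f$ in a measure-preserving system and diagonalize the compact part (as done for the splitting theorems), but this only controls $\nbar\cdot\nbar_\Psi\le\nbar\cdot\nbar_\Phi$, which is the wrong direction for placing a Bohr set \emph{inside} the $\nbar\cdot\nbar_\Phi$-return set; working with the scalar function $\rho$ directly is what sidesteps the incompleteness of $\lp2(\N,\Phi)$, at the price of invoking the structure theory of almost periodic functions. The secondary technical point is ensuring the return set contains the identity of $K$, which is exactly what the subadditivity/subsemigroup argument of the previous paragraph delivers.
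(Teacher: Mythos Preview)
Your approach is essentially the same as the paper's: both work directly with the scalar return-time function $\rho(n)=\nbar\rmult^nf-f\nbar_\Phi$, show it is uniformly almost periodic, and then pull back a small open neighbourhood of the identity in the associated compact group. The paper's version is slightly more economical on one point: it extends $\rho$ symmetrically to $\Z$ via $g(n)\coloneqq\nbar\rmult^{|n|}f-f\nbar_\Phi$ and takes the compact group to be the uniform-orbit closure $\Omega=\overline{\{\rmult^k g:k\in\Z\}}$ with the operation $(\rmult^ng)\star(\rmult^kg)=\rmult^{n+k}g$, so the identity of $\Omega$ is $g$ itself and $g(0)=0$ is immediate --- this replaces your subadditivity/subsemigroup argument for $\tilde\rho(0_K)=0$. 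The selection of $\epsilon'<\epsilon$ avoiding atoms of the pushforward of Haar measure, to ensure the boundary of $U$ is null, is identical in both proofs. Your anticipated obstacle (that passing to a \Folner{} subsequence $\Psi$ would only control $\nbar\cdot\nbar_\Psi\le\nbar\cdot\nbar_\Phi$, the wrong direction) is exactly right, and the paper sidesteps it the same way you do: by staying with the scalar almost periodic function rather than realising $f$ in a measure-preserving system. One caveat: your reduction to bounded $f$ asserts $\sup_n\nbar\rmult^n(f-f^{(M)})\nbar_\Phi\to0$, which is not automatic for arbitrary $f\in\lp{2}(\N,\Phi)$ since $\nbar h-h^{(M)}\nbar_\Phi\to0$ can fail in general; this is harmless in practice because the lemma is only applied to the bounded function $f_\comp$.
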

\begin{proof}
Let $g(n)=\nbar \rmult^{|n|} f - f \nbar_\Phi$ for every $n\in\Z$.
Since $f$ is compact along $\Phi$ it follows that the closure $\Omega$ of the set $\{ \rmult^k g : k \in \Z \}$ has a finite $\epsilon$-dense subset with respect to the uniform metric for every $\epsilon > 0$.
It therefore has compact closure.

We can make $\Omega$ into a compact topological group by defining
\[
(\rmult^n g) \star (\rmult^k g) = \rmult^{k+n} g
\]
for all $n,k \in \Z$ and extending $\star$ to a binary operation on all of $\Omega$ by continuity.
Define $U_\eta\coloneqq \{\phi:\Z\to [0,\infty): \phi(0)<\eta\}$.
Using the homomorphism $a(n) = \rmult^n g$ from $\N$ to our topological group $(\Omega, \star)$ we see that $\{ n \in \N : \nbar \rmult^n f - f \nbar_\Phi < \epsilon \} = \{n\in\N: a(n)\in U_\epsilon\}$.
Moreover, $\{n\in\N: a(n)\in U_\eta\}\subset \{ n \in \N : \nbar \rmult^n f - f \nbar_\Phi < \epsilon \} $ for every $\eta<\epsilon$.
Since Haar measure on $\Omega$ is finite and the boundaries of the sets $U_\eta$ are pairwise disjoint, for all but countably many $\eta > 0$ the boundary of the set $U_\eta$ has zero Haar measure.
Pick any $\eta<\epsilon$ for which $\partial U_\eta$ has measure $0$ and let $B_\comp\coloneqq\{n\in\N: a(n)\in U_\eta\}$.
\end{proof}

The following two theorems, proved in subsequent subsections, will be used in the proof of \cref{thm_choosingUltra}.
The first, which will be used to guarantee \ref{req_besicovitch}, relies on the pointwise ergodic theorem.
Its proof can be found in \cref{sec_pointwise}.

\begin{theorem}
\label{thm_Besicovitchpshift}
Let $\Phi$ be a \Folner{} sequence on $\N$ and let $f\in\bes(\N,\Phi)$.
For every $\epsilon>0$ there exists a Bohr$_0$ set $B$ and a subsequence $\Psi$ of $\Phi$ such that for $\Psi$ almost every ultrafilter $\ultra{p}\in\cl(B)$ we have $\|\rmult^\ultra{p} f-f\|_\Psi<\epsilon$.
\end{theorem}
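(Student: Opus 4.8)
The plan is to split $f$ into a trigonometric polynomial plus a small $\bes$-error, to dispatch the trigonometric part by an explicit Bohr$_0$ construction, and to control the error through a dynamical correspondence and the pointwise ergodic theorem. We may assume that $f$ is bounded (the only case that is used): otherwise one first approximates $f$ in $\nbar\cdot\nbar_\Phi$ by a bounded trigonometric polynomial.

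Fix $\delta>0$ (to be chosen small in terms of $\epsilon$) and a trigonometric polynomial $a(n)=\sum_{j=1}^J c_j e^{2\pi i\theta_j n}$ with $\nbar f-a\nbar_\Phi<\delta$, and put $g\coloneqq f-a$, a bounded member of $\bes(\N,\Phi)$ with $\nbar g\nbar_\Phi<\delta$. For the trigonometric part, consider the homomorphism $\kappa\colon\N\to\T^J$, $\kappa(n)=(\theta_1 n,\dots,\theta_J n)$, and pick an open neighbourhood $W$ of the identity of $\T^J$ with $\sum_j|c_j|\,|w_j-1|<\delta$ for all $w\in\overline W$ and with $\partial W$ of zero Haar measure (possible since the boundaries of a shrinking family of boxes are pairwise disjoint). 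Then $B\coloneqq\kappa^{-1}(W)$ is a Bohr$_0$ set, and for any $\ultra{p}\in\cl(B)$ one has $\lim_{m\to\ultra{p}}\kappa(m)\in\overline W$, so $\omega_j\coloneqq\lim_{m\to\ultra{p}}e^{2\pi i\theta_j m}$ obey $\sum_j|c_j|\,|\omega_j-1|<\delta$; since $\rmult^\ultra{p} a(n)=\sum_j c_j\omega_j e^{2\pi i\theta_j n}$ is again a trigonometric polynomial, this gives $\nbar\rmult^\ultra{p} a-a\nbar_\Psi\le\nbar\rmult^\ultra{p} a-a\nbar_\Phi<\delta$ for every subsequence $\Psi$ of $\Phi$.

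It then remains to find a subsequence $\Psi$ of $\Phi$ for which $\nbar\rmult^\ultra{p} g\nbar_\Psi<\delta$ for $\Psi$-almost every $\ultra{p}\in\cl(B)$; granting this, the triangle inequality together with $\nbar g\nbar_\Psi\le\nbar g\nbar_\Phi<\delta$ yields $\nbar\rmult^\ultra{p} f-f\nbar_\Psi<3\delta\le\epsilon$. To this end I would use \cref{lemma_metric-correcpondence} to write $g(n)=G(\smult^n x)$ with $X$ a compact metric space, $\smult\colon X\to X$ continuous, $G\in\cont(X)$ and $x$ of dense orbit, and then pass to a subsequence $\Psi$ of $\Phi$ — which, using that every \Folner{} sequence has a tempered subsequence, may be taken tempered — along which $\frac1{|\Psi_N|}\sum_{n\in\Psi_N}\delta_{\smult^n x}$ converges weak$^*$ to an $\smult$-invariant measure $\mu$, so that $\int_X|G|^2\,\d\mu=\nbar g\nbar_\Psi^2<\delta^2$ and, for each $\ultra{p}$, $\rmult^\ultra{p} g(n)=G(\smult^n x_\ultra{p})$ with $x_\ultra{p}\coloneqq\lim_{m\to\ultra{p}}\smult^m x$. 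Since $g$ is compact along $\Psi$, the vector $G$ is a compact vector for the Koopman operator of $(X,\mu,\smult)$, hence measurable with respect to the Kronecker factor, so $\E_\mu[|G|^2\mid\mathcal{I}]$ is the fibrewise mean of $|G|^2$ over the ergodic components. The pointwise ergodic theorem along the tempered sequence $\Psi$ then gives $\nbar\rmult^\ultra{p} g\nbar_\Psi^2=\lim_N\frac1{|\Psi_N|}\sum_{n\in\Psi_N}|G(\smult^n x_\ultra{p})|^2=\E_\mu[|G|^2\mid\mathcal{I}](x_\ultra{p})$ for $\mu$-almost every value of $x_\ultra{p}$, hence — since $\ultra{p}\mapsto x_\ultra{p}$ pushes every $\nu\in\M{\Psi}$ forward to $\mu$ — for $\Psi$-almost every $\ultra{p}$. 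In the ergodic case this quantity is just $\int|G|^2\,\d\mu<\delta^2$ and one is done; the non-ergodic case requires, in addition, intersecting $B$ with a further Bohr$_0$ set (obtained from the compactness of $g$ along $\Phi$ via \cref{cor_compact_returns_Bohralternative}) so that $x_\ultra{p}$ is forced into ergodic components on which $|G|^2$ has small integral — this is the delicate point, and is where the strengthened form of \cref{lemma_metric-correcpondence} and the pointwise-convergence result \cref{thm_pointwise} enter. That the conclusion is not vacuous follows from \cref{lem_bohr_sets_have_positive_density} (so $\cl(B)$ has positive mass for every $\mu\in\M{\Psi}$) together with \cref{lemma_support}.

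The main obstacle is precisely this last step: because $\nbar\cdot\nbar_\Psi$ is a $\limsup$ rather than a genuine limit, one cannot push $\rmult^\ultra{p}$ through it or bound $\nbar\rmult^\ultra{p} g\nbar_\Psi$ by $\nbar g\nbar_\Phi$ directly, and the fix — realizing $g$ inside a measure-preserving system, where $\nbar\cdot\nbar_\Psi$ becomes an honest $\lp{2}$-norm, and then using the pointwise ergodic theorem along a tempered \Folner{} subsequence to convert $L^2$-information about the Kronecker factor into pointwise information about the ultrafilter-shifted orbit of $x_\ultra{p}$ — is exactly why the statement only asks for the conclusion $\Psi$-almost everywhere (to discard the null set where the pointwise theorem fails for $x_\ultra{p}$) and why the Bohr$_0$ set must be chosen with the ergodic decomposition of $\mu$ in mind.
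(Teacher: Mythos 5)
You follow the paper's route closely: split off a trigonometric polynomial $a$ with $\nbar f-a\nbar_\Phi$ small, build the Bohr$_0$ set explicitly from the frequencies of $a$ (your second paragraph is essentially the paper's computation), and reduce to showing that the error $g=f-a$ satisfies $\nbar \rmult^{\ultra{p}}g\nbar_\Psi<\delta$ for $\Psi$-almost every $\ultra{p}$, via \cref{lemma_metric-correcpondence}, a tempered subsequence and the pointwise ergodic theorem. However, the step you yourself flag as ``the delicate point'' is a genuine gap, and the repair you sketch is not the one that works. The pointwise ergodic theorem only identifies $\nbar\rmult^{\ultra{p}}g\nbar_\Psi^2$ with $\E_\mu[|G|^2\mid\mathcal{I}](x_{\ultra{p}})$ almost everywhere, and when $\mu$ is not ergodic this conditional expectation can exceed $\delta^2$ on a set of ergodic components of positive $\mu$-measure (Markov's inequality only bounds that measure by $\delta$, say). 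Since $\cl(B)$ has positive but in general not full measure, the preimage under $\ultra{p}\mapsto x_{\ultra{p}}$ of the bad components can meet $\cl(B)$ in a set of positive measure, and you need the conclusion for \emph{almost every} $\ultra{p}\in\cl(B)$. Intersecting $B$ with the Bohr$_0$ set supplied by \cref{cor_compact_returns_Bohralternative} controls $\nbar\rmult^n g-g\nbar_\Psi$ for $n\in B$, not which ergodic component $x_{\ultra{p}}$ lands in, so it does not exclude these bad ultrafilters.

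The paper closes this gap (\cref{thm_pointwise}) by exploiting the full strength of $g\in\bes(\N,\Psi)$ rather than mere compactness: for each trigonometric approximant $a_j$ of $g$, every point $y\in X$ has the form $\smult^{\ultra{q}}x$ for some $\ultra{q}$, and $\rmult^{\ultra{q}}a_j$ is again a trigonometric polynomial with the same seminorm (\cref{lem_ultraBohrIsBohr} together with \cref{lemma_exponentialfolnervanish}); hence the averages of $|F_j(\smult^n y)|^2$ along $\Psi$ converge to $\nbar a_j\nbar_\Psi^2$ for \emph{every} $y\in X$, so by the mean ergodic theorem the projection of $|F_j|^2$ onto the invariant functions is that constant. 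Letting $j\to\infty$, the projection $\E_\mu[|G|^2\mid\mathcal{I}]$ is the constant $\nbar g\nbar_\Psi^2<\delta^2$; the ergodic decomposition never enters, and one obtains $\nbar\rmult^{\ultra{p}}g\nbar_\Psi=\nbar g\nbar_\Psi$ for $\Psi$-almost every $\ultra{p}\in\beta\N$, with no restriction to $\cl(B)$ needed at this stage. Without this (or an equivalent) observation your argument does not go through.
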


The second is a modification of an argument due to \Beiglbock{}~\cite[Lemma~2]{Beiglbock11} and will be used to guarantee \ref{req_beig}.
Its proof is given in \cref{sec_beig}.

\begin{theorem}
\label{thm_beig}
Suppose $f$ is a real-valued bounded function that belongs to $\bes(\N,\Psi)^\perp$. Then for every non-empty Bohr set $B \subset \N$ and every bounded function $h \colon \N \to \R$ the set
\begin{equation}
\label{eqn_set_of_good_ultrafilters_in_N}
\left\{ \ultra{p} \in \cl(B) : \limsup_{N \to \infty} \frac{1}{|\Psi_N|} \sum_{m \in \Psi_N} h(m) \, (\rmult^\ultra{p} f)(m) \ge 0 \right\}
\end{equation}
is Borel measurable and has positive measure with respect to every $\mu \in \M{\Psi}$.
\end{theorem}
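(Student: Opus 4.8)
The plan is to express the integral of the given correlation over $\cl(B)$ as a single \Cesaro{} average and to exploit the fact that $f$, being orthogonal to the Besicovitch almost periodic functions, annihilates it. I would first dispose of measurability: for each fixed $m$ the map $\ultra{p}\mapsto(\rmult^\ultra{p}f)(m)=\lim_{n\to\ultra{p}}f(m+n)$ is the canonical continuous extension to $\beta\N$ of the \emph{bounded} function $n\mapsto f(m+n)$ (boundedness of $f$ is what makes this well defined and continuous), so each partial sum $\ultra{p}\mapsto\frac1{|\Psi_N|}\sum_{m\in\Psi_N}h(m)(\rmult^\ultra{p}f)(m)$ is continuous on $\beta\N$; its $\limsup$ over $N$ is then Borel, and intersecting the resulting non-negativity set with the clopen set $\cl(B)$ yields a Borel set.

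For the measure estimate, fix $\mu\in\M{\Psi}$ and pick a subsequence with $\mu_{N_j}\to\mu$ weak$^*$; write $\Theta$ for the \Folner{} subsequence $j\mapsto\Psi_{N_j}$ of $\Psi$. Evaluating the continuous function $\ultra{p}\mapsto 1_{\cl(B)}(\ultra{p})\,(\rmult^\ultra{p}f)(m)$ at principal ultrafilters and passing to the weak$^*$ limit gives
\[
\int_{\cl(B)}(\rmult^\ultra{p}f)(m)\d\mu(\ultra{p})
=\lim_{j\to\infty}\frac1{|\Theta_j|}\sum_{n\in\Theta_j}1_B(n)f(n+m)
=\bilin{1_B}{\rmult^m f}_\Theta .
\]
By \cref{lem_bohr_sets_have_positive_density} we have $1_B\in\bes(\N,\Theta)$, while $\rmult^m f\in\bes(\N,\Theta)^\perp$: membership of $f$ in $\bes(\N,\Psi)^\perp$ passes to the subsequence $\Theta$ (the defining limits are full limits), and $\bes(\N,\Theta)^\perp$ is invariant under $\rmult^m$ since a fixed shift alters a \Folner{} average by a vanishing amount. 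Hence $\bilin{1_B}{\rmult^m f}_\Theta=0$ for every $m$, and by linearity of the finite sum $\int_{\cl(B)}\frac1{|\Psi_N|}\sum_{m\in\Psi_N}h(m)(\rmult^\ultra{p}f)(m)\d\mu(\ultra{p})=0$ for every $N$.

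Since $f$ and $h$ are bounded, these integrands are uniformly bounded, so the reverse Fatou lemma yields $\int_{\cl(B)}\limsup_{N\to\infty}\frac1{|\Psi_N|}\sum_{m\in\Psi_N}h(m)(\rmult^\ultra{p}f)(m)\d\mu(\ultra{p})\ge0$. On the other hand $\mu(\cl(B))=\dens_\Theta(B)>0$ by \cref{lem_bohr_sets_have_positive_density} (using again that $\cl(B)$ is clopen and the weak$^*$ convergence). If the set in \eqref{eqn_set_of_good_ultrafilters_in_N} were $\mu$-null, the integrand above would be strictly negative $\mu$-almost everywhere on a set of positive measure, which forces the integral to be strictly negative; this contradiction proves the set has positive $\mu$-measure.

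The only non-mechanical step is the middle one — identifying $\int_{\cl(B)}(\rmult^\ultra{p}f)(m)\d\mu$ with the \Cesaro{} correlation $\bilin{1_B}{\rmult^m f}_\Theta$ and invoking orthogonality of the structured part $1_B$ and the pseudo-random part $\rmult^m f$ — which is where \Beiglbock{}'s idea enters; the rest is bookkeeping with \Folner{} subsequences together with weak$^*$-limit and reverse-Fatou arguments, so I do not anticipate a genuine obstacle beyond being careful about which \Folner{} subsequence each quantity is computed along.
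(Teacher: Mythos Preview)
Your argument follows essentially the same route as the paper's: integrate over $\cl(B)$, swap limit and integral via Fatou, and show the inner integral vanishes using that $1_B$ (and its shifts) lie in $\bes(\N,\Psi)$ while $f$ lies in the orthocomplement. The measurability paragraph and the reverse-Fatou endgame are fine.

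There is, however, one genuine gap: you write ``pick a subsequence with $\mu_{N_j}\to\mu$ weak$^*$''. This is not in general possible. The space $\beta\N$ is not metrizable (indeed $\cont(\beta\N)\cong\ell^\infty$ is not separable), so the weak$^*$ topology on probability measures is not first countable, and an accumulation point of $(\mu_N)$ need not be a sequential limit. The paper even flags this explicitly elsewhere (in the proof of \cref{thm_pointwise}). Two easy fixes are available. First, you only test $\mu$ against the countably many continuous functions $1_{\cl(B)}$ and $\ultra{p}\mapsto 1_{\cl(B)}(\ultra{p})(\rmult^\ultra{p}f)(m)$ for $m\in\N$; since $\mu$ is an accumulation point of $(\mu_N)$ in the (metrizable) weak topology generated by any countable family, a diagonal argument produces a single subsequence along which all these integrals converge, and your computation goes through verbatim. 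Second, and this is what the paper does, avoid equality altogether: for any continuous $g$ on $\beta\N$ one has $\big|\int g\,\d\mu\big|\le\limsup_N\big|\int g\,\d\mu_N\big|$ simply because $\int g\,\d\mu$ is an accumulation point of the real sequence $\int g\,\d\mu_N$; applying this with $g(\ultra{p})=1_{\cl(B)}(\ultra{p})(\rmult^\ultra{p}f)(m)$ bounds the inner integral by $\limsup_N\big|\frac{1}{|\Psi_N|}\sum_{n\in\Psi_N}1_{B+m}(n)f(n)\big|$, which is zero by the same Besicovitch/orthogonality reasoning you already gave. Either repair completes the proof.
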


With these theorems we can give the proof of \cref{thm_choosingUltra}.

\begin{proof}
[Proof of \cref{thm_choosingUltra}]
Fix $\epsilon > 0$ and a \Folner{} sequence $\Phi$ on $\N$ along which $f_\comp \in \lp{2}(\N,\Phi)$ is compact, $f_\bes \in \bes(\N,\Phi)$, and $f_\anti \in \bes(\N,\Phi)^\perp$.
We need to find a subsequence $\Psi$ of $\Phi$ and an ultrafilter $\ultra{p}$ such that \ref{req_essential} through \ref{req_beig} are satisfied.

\cref{cor_compact_returns_Bohralternative} gives that there exists a Bohr$_0$ set $B_\comp$ contained in $\{ n \in \N : \nbar \rmult^n f_\comp - f_\comp \nbar_\Phi < \tfrac{\epsilon}{3} \}$.
\cref{thm_Besicovitchpshift} implies that, passing to a subsequence $\Psi$ of $\Phi$, there exists a Bohr$_0$ set $B_\bes$ such that for $\Psi$ almost every $\ultra{p}\in\cl(B_\bes)$ we have $\|\rmult^\ultra{p}f_\bes-f_\bes\|_\Psi<\epsilon/3$.
The set $B\coloneqq B_\comp \cap B_\bes$ is a Bohr$_0$ set by \cref{lem_bohr_intersect}.
Note that $\Psi$ almost every $\ultra{p}\in\cl(B)$ satisfies \ref{req_compact} and \ref{req_besicovitch}.
Applying \cref{thm_beig} with $f = f_\anti$ and $h = f_\comp$ we deduce that the set
\begin{equation}
\label{eqn_proof_thm_choosingUltra}
\left\{ \ultra{p} \in \cl(B) : \limsup_{N \to \infty} \frac{1}{|\Psi_N|} \sum_{m \in \Psi_N} f_\comp(m) \, (\rmult^\ultra{p} f_\anti(m)) \ge 0 \right\}
\end{equation}
 has positive measure for any $\mu\in\M\Psi$.
Notice that any $\ultra{p}$ in the set \eqref{eqn_proof_thm_choosingUltra} satisfies \ref{req_beig}.
Since any such $\ultra{p}$ belongs to $\cl(B)$ it follows that $\Psi$ almost every $\ultra{p}$ in the set \eqref{eqn_proof_thm_choosingUltra} satisfies \ref{req_compact}, \ref{req_besicovitch} and \ref{req_beig}.

Finally, in view of \cref{lemma_support}, $\Psi$ almost every $\ultra{p}\in\beta\N$ satisfies \ref{req_essential}.
This means that $\Psi$ almost every $\ultra{p}$ in the set \eqref{eqn_proof_thm_choosingUltra} satisfies \ref{req_essential}, \ref{req_compact}, \ref{req_besicovitch}, and \ref{req_beig}.
\end{proof}

%SUBSECTION
\subsection{Proof of \texorpdfstring{\cref{thm_Besicovitchpshift}}{Theorem 4.10}}
\label{sec_pointwise}

In this section we present a proof of \cref{thm_Besicovitchpshift}.
We start with the following lemma.

\begin{lemma}
\label{lem_ultraBohrIsBohr}
Let $\Phi$ be a \Folner{} sequence on $\N$.
If $a $ is a trigonometric polynomial and $\ultra{p} \in \beta \N$ then $\rmult^\ultra{p} a$ is a trigonometric polynomial and $\|\rmult^\ultra{p} a\|_\Phi=\|a\|_\Phi$.
\end{lemma}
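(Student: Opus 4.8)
The plan is to evaluate $\rmult^\ultra{p} a$ by a direct computation. Write $a(n) = \sum_{j=1}^{J} c_j e^{2\pi i \theta_j n}$, where after combining terms we may assume the frequencies $\theta_1,\dots,\theta_J \in [0,1)$ are distinct. For each $j$ the sequence $m \mapsto e^{2\pi i \theta_j m}$ takes values in the compact group $\T$, so the ultrafilter limit $\omega_j \coloneqq \lim_{m \to \ultra{p}} e^{2\pi i \theta_j m}$ exists and has $|\omega_j| = 1$ (when $\theta_j = 0$ one simply has $\omega_j = 1$). Since the ultrafilter limit is linear, for every $n \in \N$ one gets
\[
(\rmult^\ultra{p} a)(n) = \lim_{m \to \ultra{p}} \sum_{j=1}^{J} c_j e^{2\pi i \theta_j (n+m)} = \sum_{j=1}^{J} (c_j \omega_j)\, e^{2\pi i \theta_j n},
\]
which exhibits $\rmult^\ultra{p} a$ as a trigonometric polynomial with the same frequencies $\theta_j$ and coefficients $c_j \omega_j$.

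For the norm identity I would use \cref{lemma_exponentialfolnervanish}, which gives $\frac{1}{|\Phi_N|} \sum_{n \in \Phi_N} e^{2\pi i \theta n} \to 0$ for $\theta \in (0,1)$ and $=1$ for $\theta = 0$; in particular the Besicovitch seminorms of trigonometric polynomials are honest limits. Expanding $|a(n)|^2 = \sum_{j,k} c_j \overline{c_k}\, e^{2\pi i(\theta_j - \theta_k) n}$ and averaging along $\Phi$, the terms with $j \ne k$ (for which $\theta_j - \theta_k \bmod 1 \in (0,1)$ by distinctness) vanish while the diagonal survives, so $\nbar a \nbar_\Phi^2 = \sum_{j=1}^{J} |c_j|^2$. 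Running the identical computation for $\rmult^\ultra{p} a$ yields $\nbar \rmult^\ultra{p} a \nbar_\Phi^2 = \sum_{j=1}^{J} |c_j \omega_j|^2 = \sum_{j=1}^{J} |c_j|^2$ since each $|\omega_j| = 1$, and comparing the two expressions finishes the proof.

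There is no genuine obstacle in this lemma; the only points that need care are that the frequencies should be made distinct before reading off the new coefficients, and the (harmless) possibility that one of the $\theta_j$ equals $0$.
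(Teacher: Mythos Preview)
Your proof is correct and follows essentially the same approach as the paper: compute $\rmult^\ultra{p} a$ term-by-term to see it is a trigonometric polynomial with the same frequencies and coefficients of the same modulus, then invoke \cref{lemma_exponentialfolnervanish} to conclude the norms agree. The only cosmetic difference is that you first make the frequencies distinct so as to write $\nbar a\nbar_\Phi^2=\sum_j|c_j|^2$ explicitly, whereas the paper leaves this implicit.
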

\begin{proof}
Choose $c_1,\dots,c_J \in \C$ and $\theta_1,\dots,\theta_J \in\R$ such that $a$ has the form \eqref{eqn:trigPoly}.
Define $d_j\coloneqq \lim_{m\to\ultra{p}} c_j e^{2\pi  i \theta_j m}$.
Notice that
\[
(\rmult^{\ultra{p}}a)(n) = \sum_{j=1}^J d_j e^{2\pi  i \theta_j n}
\]
and, since $|c_j|=|d_j|$, it follows from \cref{lemma_exponentialfolnervanish} that $\|\rmult^\ultra{p} a\|_\Phi=\|a\|_\Phi$.
\end{proof}

We will also need a version of the pointwise ergodic theorem.
There are \Folner{} sequences for which the pointwise ergodic theorem does not hold \cite{MR0360999}.
However, every \Folner{} sequence has a subsequence along which the pointwise ergodic theorem holds.

\begin{definition}
A \Folner{} sequence $\Phi$ is called \define{tempered} if there exists $C>0$ such that
\[
\left|\bigcup_{k=1}^N \Phi_{N+1} - \Phi_k \right|
\leq
C|\Phi_{N+1}|
\]
for every $N \in \N$, where $\Phi_{N+1} - \Phi_k$ is the set of differences.
\end{definition}

According to \cite[Proposition 1.4]{MR1865397}, every \Folner{} sequence has a tempered subsequence.
Here is the pointwise ergodic theorem for tempered \Folner{} sequences.

\begin{theorem}[see {\cite[Theorem 1.2]{MR1865397}}]
\label{thm_pointwiseergodictheorem}
Let $(X,\nu,T)$ be a measure preserving system and let $\Phi$ be a tempered \Folner{} sequence.
Then for every $f\in \lp{1}(X,\nu)$ the limit
\[
\lim_{N\to\infty} \frac{1}{|\Phi_N|} \sum_{n \in \Phi_N}f(T^n x)% = \tilde f(x)
\]
exists for $\nu$ almost every $x \in X$ and defines a $T$ invariant function in $\lp{1}(X,\mu)$.
\end{theorem}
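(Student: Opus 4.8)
The statement is \cite[Theorem~1.2]{MR1865397}, so for the purposes of this paper it is simply quoted; but here is the plan one would follow to prove it. First I would pass to the natural extension of $(X,\nu,T)$ so as to assume $T$ is invertible, reducing to a measure-preserving $\Z$-action averaged along the tempered \Folner{} sequence $\Phi$; pointwise convergence on the extension descends to $X$. Writing $M_Nf(x)=\frac1{|\Phi_N|}\sum_{n\in\Phi_N}f(T^nx)$ and $M^*f=\sup_N|M_Nf|$, the whole matter reduces, via the Banach principle, to the weak-type maximal inequality $\nu(\{M^*f>\lambda\})\le C'\lambda^{-1}\|f\|_{\lp{1}(X,\nu)}$ with $C'$ absolute. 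Granting this, the set of $f\in\lp{1}(X,\nu)$ for which $M_Nf$ converges a.e.\ is closed, since $\limsup_NM_Nf-\liminf_NM_Nf\le 2M^*(f-g)$ whenever $M_Ng$ converges a.e.; hence it suffices to verify a.e.\ convergence on a dense subspace.

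For the dense subspace I would use the $T$-invariant functions, for which $M_Nf=f$, together with the coboundaries $g-g\circ T$ with $g\in\lp{\infty}(X,\nu)$, for which $|M_N(g-g\circ T)|\le\|g\|_\infty|\Phi_N\triangle(\Phi_N+1)|/|\Phi_N|\to0$ by the \Folner{} condition; the mean ergodic theorem together with truncation shows these span a dense subspace of $\lp{1}$, and the resulting a.e.\ limit is automatically $T$-invariant. So everything rests on the maximal inequality.

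The maximal inequality is the real difficulty and the step I expect to be the main obstacle. Following Calder\'{o}n's transference principle I would first reduce it to the combinatorial statement that, for every $\phi\in\ell^1(\Z)$,
\[
\Big|\Big\{k\in\Z:\ \sup_N\tfrac1{|\Phi_N|}\sum_{n\in\Phi_N}|\phi(k+n)|>\lambda\Big\}\Big|\ \le\ \frac{C'}{\lambda}\,\|\phi\|_{\ell^1(\Z)},
\]
the transfer back into the dynamical setting being a routine application of the \Folner{} property. This combinatorial inequality in turn follows from a Vitali-type covering lemma for the translates $k+\Phi_N$: every finite family of such translates contains a subfamily of bounded overlap whose union retains a definite proportion of the measure of the full union. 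Here is the crux: at different scales $N$ the sets $\Phi_N$ may overlap in an uncontrolled way, so a naive greedy selection does not work, and one must invoke precisely the temperedness hypothesis $\big|\bigcup_{k\le N}(\Phi_{N+1}-\Phi_k)\big|\le C|\Phi_{N+1}|$, which bounds how much a selected large set can obstruct the smaller sets beneath it. Lindenstrauss's resolution, which I would import, is a probabilistic selection: process the given translates from the largest index down to the smallest, retaining each still-unobstructed translate independently with a fixed small probability, and then estimate in expectation that the retained family covers a definite fraction of the union while its total overlap remains $\Oh(1)$, the tempered bound driving the overlap estimate. With the covering lemma established, the combinatorial maximal inequality, then the dynamical maximal inequality by transference, and finally the theorem by the Banach principle all follow by the routine arguments sketched above.
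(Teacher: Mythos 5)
The paper gives no proof of this statement: it is quoted directly as Lindenstrauss's pointwise ergodic theorem for tempered \Folner{} sequences, exactly as you note at the outset. Your outline (Banach principle plus a dense subspace of invariant functions and $\lp{\infty}$-coboundaries, with the weak type $(1,1)$ maximal inequality obtained by \Caratheodory{}--Calder\'{o}n-style transference from a Vitali-type covering lemma whose proof is Lindenstrauss's randomized selection driven by the temperedness hypothesis) is an accurate description of the argument in the cited source, so there is nothing to compare beyond agreement.
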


\begin{theorem}
\label{thm_pointwise}
Let $\Phi$ be a \Folner{} sequence on $\N$ and let $h\in\bes(\N,\Phi)$ be bounded.
Then there is a subsequence $\Psi$ of $\Phi$ with $\|\rmult^\ultra{p} h\|_\Psi=\|h\|_{\Psi}$ for $\Psi$ almost every $\ultra{p}$.
\end{theorem}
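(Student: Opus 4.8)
The plan is to reduce the statement to the case of trigonometric polynomials, for which \cref{lem_ultraBohrIsBohr} already supplies the equality $\|\rmult^\ultra{p} a\|_\Psi=\|a\|_\Psi$, and to control the approximation error using the pointwise ergodic theorem.

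First I would fix trigonometric polynomials $a_k$ with $\|h-a_k\|_\Phi\le 1/k$ and put $g_k\coloneqq|h-a_k|^2$, a bounded non-negative sequence on $\N$. Applying \cref{lemma_metric-correcpondence} to the countable family $\{h\}\cup\{g_k:k\in\N\}$ yields a compact metric space $X$, a continuous map $\smult\colon X\to X$, functions $F,G_1,G_2,\dots\in\cont(X)$ with each $G_k\ge 0$, and a point $x\in X$ with dense orbit such that $h(n)=F(\smult^n x)$ and $g_k(n)=G_k(\smult^n x)$ for all $n\in\N$. By \cite[Proposition~1.4]{MR1865397} I may pass to a tempered subsequence of $\Phi$; since any subsequence of a tempered \Folner{} sequence is again tempered (with the same constant), I may refine once more to a subsequence $\Psi$ of $\Phi$ along which $\mu_N\coloneqq\frac1{|\Psi_N|}\sum_{n\in\Psi_N}\delta_{\smult^n x}$ converges weak$^*$ to an $\smult$-invariant Borel probability measure $\mu$ on $X$. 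Then $(X,\mu,\smult)$ is a measure preserving system; weak$^*$ convergence together with $\|h-a_k\|_\Psi\le\|h-a_k\|_\Phi\le 1/k$ gives $\int_X G_k\d\mu=\|h-a_k\|_\Psi^2\le 1/k^2$, and $|\,\|a_k\|_\Psi-\|h\|_\Psi\,|\le\|h-a_k\|_\Psi\le 1/k$ shows $\|a_k\|_\Psi\to\|h\|_\Psi$.

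Next I would isolate a large set of ``good'' starting points. By \cref{thm_pointwiseergodictheorem} applied to each $G_k\in\lp{1}(X,\mu)$ along the tempered sequence $\Psi$, there is $X_k\subset X$ with $\mu(X_k)=1$ such that $\tilde G_k(z)\coloneqq\lim_{N\to\infty}\frac1{|\Psi_N|}\sum_{n\in\Psi_N}G_k(\smult^n z)$ exists for $z\in X_k$ and $\int_X\tilde G_k\d\mu=\int_X G_k\d\mu\le 1/k^2$. Since $\sum_k\int_X\tilde G_k\d\mu<\infty$, the set $Y\coloneqq\bigcap_k X_k\cap\{z:\sum_k\tilde G_k(z)<\infty\}$ has $\mu(Y)=1$ and $\tilde G_k(z)\to 0$ for every $z\in Y$. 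Let $\pi\colon\beta\N\to X$ be the continuous extension of $n\mapsto\smult^n x$ furnished by the universal property of $\beta\N$, so that $\pi(\ultra{p})=\lim_{m\to\ultra{p}}\smult^m x$ and $\pi(\ultra{p}_n)=\smult^n x$. For any $\nu\in\M\Psi$, since $\pi$ is continuous and $\nu$ is a weak$^*$ accumulation point of the sequence $N\mapsto\frac1{|\Psi_N|}\sum_{n\in\Psi_N}\delta_n$, the measure $\pi_*\nu$ is a weak$^*$ accumulation point of $N\mapsto\pi_*\big(\frac1{|\Psi_N|}\sum_{n\in\Psi_N}\delta_n\big)=\mu_N$; but $\mu_N\to\mu$, so $\pi_*\nu=\mu$. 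Hence $\pi^{-1}(Y)$ is a Borel set with $\nu(\pi^{-1}(Y))=\mu(Y)=1$ for every $\nu\in\M\Psi$.

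Finally I would check the identity on $\pi^{-1}(Y)$. For any $\ultra{p}\in\beta\N$ and $n\in\N$, continuity of $z\mapsto F(\smult^n z)$ gives $(\rmult^\ultra{p} h)(n)=\lim_{m\to\ultra{p}}F(\smult^n\smult^m x)=F(\smult^n\pi(\ultra{p}))$, and continuity of $z\mapsto|z|^2$ gives $|(\rmult^\ultra{p}(h-a_k))(n)|^2=\lim_{m\to\ultra{p}}g_k(n+m)=G_k(\smult^n\pi(\ultra{p}))$; therefore, whenever $\ultra{p}\in\pi^{-1}(Y)$ one has $\|\rmult^\ultra{p}(h-a_k)\|_\Psi^2=\tilde G_k(\pi(\ultra{p}))\to 0$ as $k\to\infty$. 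Using linearity of $\rmult^\ultra{p}$ on bounded functions, \cref{lem_ultraBohrIsBohr} (which gives $\|\rmult^\ultra{p} a_k\|_\Psi=\|a_k\|_\Psi$), the triangle inequality for $\|\cdot\|_\Psi$, and $\|a_k\|_\Psi\to\|h\|_\Psi$, letting $k\to\infty$ in $\|a_k\|_\Psi-\sqrt{\tilde G_k(\pi(\ultra{p}))}\le\|\rmult^\ultra{p} h\|_\Psi\le\|a_k\|_\Psi+\sqrt{\tilde G_k(\pi(\ultra{p}))}$ forces $\|\rmult^\ultra{p} h\|_\Psi=\|h\|_\Psi$. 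As this holds on the Borel full-measure set $\pi^{-1}(Y)$, it holds $\Psi$ almost everywhere, as required. The step I expect to be most delicate is the bookkeeping that produces a single subsequence $\Psi$ which is simultaneously tempered (so that \cref{thm_pointwiseergodictheorem} applies) and makes the empirical measures converge, together with the observation that $\pi$ carries \emph{every} $\nu\in\M\Psi$ to the \emph{same} measure $\mu$ — it is this last point that lets all of $\M\Psi$ be handled at once.
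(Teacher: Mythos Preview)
Your proof is correct and follows the same overall strategy as the paper --- approximate $h$ by trigonometric polynomials, realize the relevant sequences via \cref{lemma_metric-correcpondence} as continuous observables on a compact system, and invoke Lindenstrauss's pointwise ergodic theorem --- but the technical execution differs in two respects worth noting. First, the paper encodes $h$ and the $a_j$ separately, applies the pointwise ergodic theorem to $|H|^2$, and uses the mean ergodic theorem to identify the limiting projection; you instead encode $g_k=|h-a_k|^2$ directly, apply the pointwise ergodic theorem to each $G_k$, and finish with a Borel--Cantelli argument ($\sum_k\int\tilde G_k\,\d\mu<\infty$) to force $\tilde G_k\to 0$ almost everywhere. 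Second, the paper handles each $\nu\in\M\Psi$ individually (passing for each to an auxiliary subsequence $\Xi$ in the metric space $X$), whereas you first arrange $\mu_N\to\mu$ along $\Psi$ and then observe that continuity of $\pi$ forces $\pi_*\nu=\mu$ for \emph{every} $\nu\in\M\Psi$, so a single full-measure set $Y\subset X$ works uniformly. Your route avoids the mean ergodic theorem and the per-$\nu$ bookkeeping at the cost of the Borel--Cantelli step and the (correct) observation that subsequences of tempered \Folner{} sequences remain tempered; both arguments are of comparable length.
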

\begin{proof}
First we pass to a tempered subsequence $\Psi$ of $\Phi$.
Let $j \mapsto a_j$ be a sequence of trigonometric polynomials such that $\| h-a_j\|_\Psi \to 0$ as $j \to \infty$.
Apply \cref{lemma_metric-correcpondence} to the collection $\{ h, a_1, a_2, \dots \}$ to find a compact metric space $X$, a continuous map $\smult \colon X \to X$, a point $x\in X$ with a dense orbit under $\smult$ and functions $H,F_1,F_2,\dots$ in $\cont(X)$ such that $a_j(n)=F_j(\smult^n x)$ and $h(n) = H(\smult^n x)$ for all $j,n \in \N$.

For each $\ultra{p} \in \beta \N$ define the map $\smult^\ultra{p} \colon X \to X$ by
\[
\smult^\ultra{p} x = \lim_{n\to \ultra{p}} \smult^n x
\]
and notice that
\begin{equation}
\label{eq_lemma_lem_broken-BAP-remains-BAP}
(\rmult^\ultra{p} a_j)(n)
=
\lim_{m\to \ultra{p}}a_j(n+m)
=
\lim_{m\to \ultra{p}} F_j \left( \smult^n \smult^{m}x \right)
=
F_j(\smult^n \smult^\ultra{p} x)
\end{equation}
for every $j,n\in\N$ and every $\ultra{p} \in \beta \N$.
We similarly have
\begin{equation}
\label{eq_lemma_lem_broken-BAP-remains-BAP2}
(\rmult^\ultra{p} h)(n) = H(\smult^n \smult^\ultra{p} x)
\end{equation}
for all $n \in \N$ and every $\ultra{p} \in \beta \N$.

The map $\pi \colon \beta\N \to X$ defined by $\ultra{p}\mapsto \smult^\ultra{p} x$ is continuous and surjective by the universal property of $\beta \N$ and the fact that $\{ \smult^n x : n \in \N \}$ is dense in $X$ respectively.

We next wish to prove that
\begin{equation}
\label{eqn:pointwise_average_constant}
\lim_{N \to \infty} \frac{1}{|\Psi_N|} \sum_{n \in \Psi_N} |F_j(\smult^n y)|^2
=
\nbar a_j \nbar_\Psi^2
\end{equation}
for all $y \in X$ and all $j \in \N$.
Fix $y \in X$ and $j \in \N$.
Since $\pi$ is surjective there is $\ultra{p} \in \beta \N$ with $\smult^\ultra{p} x = y$.
We then have
\[
\frac{1}{|\Psi_N|} \sum_{n \in \Psi_N} | F_j (\smult^n y)|^2
=
\frac{1}{|\Psi_N|} \sum_{n \in \Psi_N} | (\rmult^\ultra{p} a_j) (n)|^2
\]
from \eqref{eq_lemma_lem_broken-BAP-remains-BAP}.
By \cref{lem_ultraBohrIsBohr} the function $\rmult^\ultra{p} a_j$ is also a trigonometric polynomial so
\[
\lim_{N \to \infty} \frac{1}{|\Psi_N|} \sum_{n \in \Psi_N} | (\rmult^\ultra{p} a_j) (n)|^2
=
\nbar \rmult^\ultra{p} a_j \nbar_\Psi^2
\]
holds by \cref{lemma_exponentialfolnervanish}.
\cref{lem_ultraBohrIsBohr} also gives $\nbar \rmult^\ultra{p} a_j \nbar_\Psi = \nbar a_j \nbar_\Psi$ establishing \eqref{eqn:pointwise_average_constant}.

Write $\umult$ for the isometry of $\lp{2}(X,\nu)$ defined by $\umult(f) = f \circ \smult$ for all $f \in \lp{2}(X,\mu)$.
By a version of the mean ergodic theorem of von Neumann (cf.~\cite[Theorem~3.33]{MR1958753}) the limit
\[
\lim_{N \to \infty} \frac{1}{|\Psi_N|} \sum_{n \in \Psi_N} \umult^n \left( |F_j|^2 \right)
\]
exists in $\lp{2}(X,\nu)$ for all $j \in \N$ and is equal to the orthogonal projection in $\lp{2}(X,\nu)$ of $|F_j|^2$ onto the closed subspace of $\umult$ invariant functions.
Since constant functions are $\umult$ invariant, the above combined with \eqref{eqn:pointwise_average_constant} implies for all $j \in \N$ that
\[
\int |F_j|^2 \d \nu
=
\nbar a_j \nbar_\Psi^2
\]
is the orthogonal projection in $\lp{2}(X,\nu)$ of $|F_j|^2$ onto the closed subspace of $\umult$ invariant functions.

We are now ready to prove that $\nbar \rmult^\ultra{p} h \nbar_\Psi = \nbar h \nbar_\Psi$ for $\Psi$ almost every $\ultra{p}$.
To this end fix $\mu \in \M{\Psi}$ and let
%By definition, there is a subsequence $\Xi$ of $\Psi$ such that
%\[
%\mu = \lim_{N \to \infty} \frac{1}{|\Xi_N|} \sum_{n \in \Xi_N} \delta_n
%\]
%in the weak$^*$ topology, where $\delta_n$ is the point mass at the principal ultrafilter $\ultra{p}_n$.
$\nu = \pi \mu$ for the push-forward of $\mu$ under the map $\pi$.
Since $\mu$ is by definition a weak$^*$ limit point of the set $\{\mu_N:N\in\N\}$, where $\mu_N$ is as in \eqref{eqn_measures_on_betaN}, it follows that $\nu$ is a weak$^*$ accumulation point of the set $\{\pi\mu_N:N\in\N\}$.
Since $X$ is a compact metric space, the space of probability measures on $X$ is metrizable, and hence there exists a subsequence $\Xi$ of $\Psi$ such that
\begin{equation}
\label{eqn:wow_its_generic}
\nu = \lim_{N \to \infty} \frac{1}{|\Xi_N|} \sum_{n \in \Xi_N} \delta_{\smult^n x}
\end{equation}
in the weak$^*$ topology in $X$, where $\delta_{\smult^n x}$ is the point mass on $X$ at the point $\smult^n x$.
We remark that while every measure $\mu\in\M\Psi$ is the limit of a sub-net of $(\mu_N)_{N\in\N}$, there is in general no subsequence of $(\mu_N)_{N\in\N}$ which converges to $\mu$ because $\beta\N$ is not metrizable.

Since the functions $H_j$ and $F$ are continuous on $X$ we may calculate from \eqref{eqn:wow_its_generic} that
\begin{align*}
\nbar F_j - H \nbar_\nu^2
&
=
\lim_{N \to \infty} \frac{1}{|\Xi_N|} \sum_{n \in \Xi_N} |F_j(\smult^n x) - H(\smult^n x) |^2
\\
&
=
\lim_{N \to \infty} \frac{1}{|\Xi_N|} \sum_{n \in \Xi_N} |a_j(n) - h(n) |^2
=
\nbar a_j - h \nbar_\Psi^2
\end{align*}
for all $j \in \N$, with the last equality holding because $h$ and all $a_j$ belong to $\bes(\N,\Psi)$.
The hypothesis that $\|a_j - h\|_\Psi \to 0$ as $j\to\infty$ therefore implies $\|F_j - H\|_\nu \to 0$ as $j \to \infty$.
Since orthogonal projections on Hilbert spaces are continuous we conclude that
\begin{equation}
\label{eqn:bes_pointwise_limit}
\int |H|^2 \d \nu
=
\lim_{j \to \infty} \nbar a_j \nbar_\Psi^2
=
\nbar h \nbar_\Psi^2
\end{equation}
is the orthogonal projection of $|H|^2$ to the closed subspace of $\umult$ invariant functions.

Next, we apply \cref{thm_pointwiseergodictheorem} to deduce that the limit
\[
\lim_{N \to \infty} \frac{1}{|\Psi_N|} \sum_{n \in \Psi_N} |H(\smult^n y)|^2
\]
exists for $\nu$ almost every $y\in X$ and defines a $\umult$ invariant function in $\lp{2}(X,\nu)$.
Since $H$ is bounded, this limit is also bounded.
%\textcolor{red}{[[L2 vs L1 annoyance - it would be more convenient to quote the L2 version.]]}
This limit must therefore be the projection \eqref{eqn:bes_pointwise_limit} of $|H|^2$ to the closed subspace of $\umult$ invariant functions.
In other words
\[
\lim_{N \to \infty} \frac{1}{|\Psi_N|} \sum_{n \in \Psi_N} |H(\smult^n y)|^2
=
\nbar h \nbar_\Psi^2
\]
for $\nu$ almost every $y$.
Finally, since $\nu$ is the push-forward of $\mu$ under $\pi$, it follows from \eqref{eq_lemma_lem_broken-BAP-remains-BAP2} that $\nbar \rmult^\ultra{p} h \nbar_\Phi = \nbar h\nbar_\Phi$ for $\mu$ almost every $\ultra{p} \in \beta \N$.
Since $\mu \in \M{\Psi}$ was arbitrary we are done.

\end{proof}

We are now ready to finish the proof of \cref{thm_Besicovitchpshift}

\begin{proof}[Proof of \cref{thm_Besicovitchpshift}]
Let $\Phi$ be a \Folner{} sequence on $\N$, let $f\in\bes(\N,\Phi)$ and let $\epsilon>0$.
Let $a$ be a trigonometric polynomial such that $\|f-a\|_\Phi<\epsilon/3$.
Notice that $f-a\in\bes(\N,\Phi)$ and hence, using \cref{thm_pointwise}, we can find a subsequence $\Psi$ of $\Phi$ such that for $\Psi$ almost every $\ultra{p}\in\beta\N$
$$\big\|\rmult^\ultra{p}f-f\big\|_\Psi\leq \big\|\rmult^\ultra{p}(f-a)\big\|_\Psi +\big\|\rmult^\ultra{p}a-a\big\|_\Psi+ \big\|a-f\big\|_\Psi \leq \big\|\rmult^\ultra{p}a-a\big\|_\Psi+\frac{2\epsilon}3.$$
It now suffices to find a Bohr$_0$ set $B$ such that for every $\ultra{p} \in \cl(B)$ we have $\big\|\rmult^\ultra{p}a-a\big\|_\Psi\leq\epsilon/3$.

Write $a(n)=\sum_{j=1}^Jc_je^{2\pi in\theta_j}$ for some $c_1,\dots,c_J\in\C$ and $0 \le \theta_1,\dots,\theta_J < 1$.
Let $M=\max_j|c_j|$ and let $\alpha\colon\N\to\T^J$ be the homomorphism $\alpha(n)=(n\theta_1,\dots,n\theta_J)$ (where $\T^J$ is the torus $\R^J/\Z^J$ as usual).
Consider the open set $U=\left(-\frac\epsilon{3MJ},\frac\epsilon{3MJ}\right)^J\subset\T^J$ and let $B=\alpha^{-1}(U)$.
Certainly the boundary of $U$ has zero Haar measure in $\T^J$ so $B$ is a Bohr$_0$ set.
Notice that for every $m\in B$ and every $n\in\N$,
\begin{equation}\label{eq_proof_thm_Besicovitchpshift}
\big|(\rmult^m a)(n)-a(n)\big|=\left|\sum_{j=1}^Jc_je^{2\pi i n\theta_j}\big(e^{2\pi i m\theta_j}-1\big)\right|<\frac\epsilon3
\end{equation}
holds.
Finally, let $\ultra{p}\in\cl(B)$.
In view of \eqref{eq_proof_thm_Besicovitchpshift}, $|(\rmult^\ultra{p}a)(n)-a(n) |<\epsilon/3$ for every $n\in\N$, and therefore also $\big\|\rmult^\ultra{p}a-a\big\|_\Psi\leq\epsilon/3$.
\end{proof}

%SUBSECTION
\subsection{Proof of \texorpdfstring{{\cref{thm_beig}}}{Theorem 4.11}}
\label{sec_beig}

This subsection is devoted to the proof of \cref{thm_beig}. The ideas used in this proof were motivated by the proof of \cite[Lemma~2]{Beiglbock11}.

\begin{proof}[Proof of \cref{thm_beig}]
Let $\mu \in \M{\Psi}$.
Since $B$ is a non-empty Bohr set, we have by \cref{lem_bohr_sets_have_positive_density} that $\dens_\Psi(B)$ exists and is positive. It follows that $\mu\big(\cl(B)\big)=\dens_\Psi(B)>0$.
Define a new probability measure $\mu_B$ on $\beta \N$ by
\[
\mu_B(\Omega) \coloneqq \frac{\mu(\Omega\cap \cl(B))}{\mu(\cl(B))}
\]
for all Borel sets $\Omega\subset\beta \N$.

For each $n\in \N$ the map $\ultra{p} \mapsto (\rmult^\ultra{p} f)(n)=\lim_{m\to \ultra{p}}f(n+m)$ from $\beta \N\to\R$ is continuous, and hence measurable.
Therefore, so is the map
\[
\ultra{p} \mapsto \limsup_{N\to\infty} \frac{1}{|\Psi_N|}\sum_{n \in \Psi_N} h(n) \, (\rmult^\ultra{p} f)(n),
\]
which shows that the set defined in \eqref{eqn_set_of_good_ultrafilters_in_N} is also measurable.
In order to show that the set in \eqref{eqn_set_of_good_ultrafilters_in_N} has positive measure, it suffices to establish the inequality
\[
\int_{\beta\N} \limsup_{N \to \infty} \frac{1}{|\Psi_N|} \sum_{n \in \Psi_N}  h(n) \, (\rmult^\ultra{p} f)(n) \d \mu_B(\ultra{p})
\ge
0.
\]
Using Fatou's lemma it thus suffices to prove that
\begin{equation}
\label{eq_proof_lemma_beiglebock_in_N}
\limsup_{N \to \infty} \frac{1}{|\Psi_N|} \sum_{n \in \Psi_N} h(n)
\int_{\beta\N}  (\rmult^\ultra{p} f)(n) \d\mu_B(\ultra{p})
\ge
0.
\end{equation}
Notice that
\begin{align*}
\Bigg| \int_{\beta\N} (\rmult^\ultra{p} f)(n)\d\mu_B(\ultra{p}) \Bigg|
&
=
\frac1{\mu(\cl(B))}\left|\int_{\beta\N} 1_{\cl(B)}(\ultra{p}) (\rmult^\ultra{p} f)(n)\d\mu(\ultra{p})\right|
\\
&
\leq
\limsup_{N\to\infty}\left|\frac1{|\Psi_N|}\sum_{m\in\Psi_N}1_{B}(m)f(n+m)\right|
\\
&
=
\limsup_{N\to\infty}\left|\frac1{|\Psi_N|}\sum_{m\in\Psi_N}1_{B+n}(m)f(m)\right|.
\end{align*}
Since $f\in\bes(\N,\Psi)^\perp$ and $m\mapsto 1_{B+n}(m)$ is Besicovitch almost periodic along $\Psi$ by \cref{lem_bohr_sets_have_positive_density}, we conclude that
$$
\limsup_{N\to\infty}\left|\frac1{|\Psi_N|}\sum_{m\in\Psi_N}1_{B+n}(m)f(m)\right|=0
$$
and therefore
$$
\Bigg| \int_{\beta\N} (\rmult^\ultra{p} f)(n)\d\mu_B(\ultra{p}) \Bigg|=0$$
for every $n\in\N$.
This implies \eqref{eq_proof_lemma_beiglebock_in_N} and finishes the proof.
\end{proof}

\section{The proof over countable amenable groups}
\label{sec:erdosGroups}

The proof of \cref{thm_erdos_sumset_amenable} is in broad strokes the same as that for $\N$ given in the previous sections.
In this section we discuss the salient differences.

We begin with a discussion of ultrafilters on countable groups.
Just as over $\N$, or any other set, an \define{ultrafilter} on a countable group $G$ is any non-empty family $\ultra{p}$ of non-empty subsets of $G$ that is closed under intersections and supersets, and contains either $A$ or $G \setminus A$ for every $A \subset G$.
For each $g \in G$ the collection $\ultra{p}_g \coloneqq \{ A \subset G : g \in A \}$ is an ultrafilter, called the \define{principal} ultrafilter at $g$.

Denote by $\beta G$ the set of all ultrafilters on $G$.
The sets $\cl(A) = \{ \ultra{p} \in \beta G : A \in \ultra{p} \}$ form a base for a topology on $\beta G$ that is compact and Hausdorff.
Moreover, with this topology $\beta G$ becomes universal for maps $f$ from $G$ to compact, Hausdorff spaces $K$ in the sense that any such map extends to a continuous map $\beta f \colon \beta G \to K$ with $(\beta f)(\ultra{p}_g) = f(g)$ for all $g \in G$.
We usually write
\[
\lim_{g \to \ultra{p}} f(g) \coloneqq (\beta f)(\ultra{p})
\]
for convenience.

Write $A g^{-1} = \{ h \in G : hg \in A \}$ and $g^{-1} A = \{ h \in G : gh \in A \}$ whenever $g \in G$ and $A \subset G$.
Write also $A \ultra{p}^{-1} = \{ g \in G : g^{-1} A \in \ultra{p} \}$ for all $A \subset G$ and all $\ultra{p} \in \beta G$.
With these definitions we have $A g^{-1} = A \ultra{p}_g^{-1}$ for all $g \in G$.
Multiplication on $G$ extends to $\beta G$ in two ways.
For all $\ultra{p},\ultra{q}$ in $\beta G$ both of
\begin{align*}
\ultra{p} \ltimes \ultra{q} &= \{ A \subset G : \{ g \in G : g^{-1} A \in \ultra{q} \} \in  \ultra{p} \} \\
\ultra{p} \rtimes \ultra{q} &= \{ A \subset G : \{ g \in G : A g^{-1} \in \ultra{p} \} \in \ultra{q} \}
\end{align*}
define associative binary operations on $\beta G$.
Using both allows us to generalize \cref{lem:ultrafilterErdos} to countable groups.

\begin{lemma}
\label{lem:ultrafilterErdosGroups}
Fix $A \subset G$.
There are non-principal ultrafilters $\ultra{p}$ and $\ultra{q}$ with the property that $A \in \ultra{p} \ltimes \ultra{q}$ and $A \in \ultra{p} \rtimes \ultra{q}$ if and only if there are infinite sets $B,C \subset G$ with $BC \subset A$.
\end{lemma}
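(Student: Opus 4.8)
The plan is to follow the proof of the corresponding statement \cref{lem:ultrafilterErdos} over $\N$, taking care of the non-commutativity and using $\ltimes$ for the relation governing the left-hand variable and $\rtimes$ for the one governing the right-hand variable.

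For the ``if'' direction I would start from infinite sets $B, C \subset G$ with $BC \subset A$ and fix non-principal ultrafilters $\ultra{p} \ni B$ and $\ultra{q} \ni C$. For each $b \in B$ we have $C \subset b^{-1}A$, so $b^{-1}A \in \ultra{q}$; hence $B$ is contained in $\{g \in G : g^{-1}A \in \ultra{q}\}$, which therefore lies in $\ultra{p}$, i.e.\ $A \in \ultra{p} \ltimes \ultra{q}$. Symmetrically, for each $c \in C$ we have $B \subset Ac^{-1}$, so $Ac^{-1} \in \ultra{p}$; hence $C$ is contained in $\{g \in G : Ag^{-1} \in \ultra{p}\}$, which therefore lies in $\ultra{q}$, i.e.\ $A \in \ultra{p} \rtimes \ultra{q}$.

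For the ``only if'' direction I would assume $\ultra{p}, \ultra{q}$ are non-principal with $A \in \ultra{p} \ltimes \ultra{q}$ and $A \in \ultra{p} \rtimes \ultra{q}$, and set
\[
P \coloneqq \{g \in G : g^{-1}A \in \ultra{q}\} \in \ultra{p}, \qquad Q \coloneqq \{g \in G : Ag^{-1} \in \ultra{p}\} \in \ultra{q}.
\]
The goal is to build injective sequences $n \mapsto b_n$ and $n \mapsto c_n$ in $G$ with $b_i c_j \in A$ for all $i, j \in \N$, alternating the construction. Pick any $b_1 \in P$, so that $b_1^{-1}A \in \ultra{q}$. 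Inductively, having chosen $b_1, \dots, b_n$ (with $b_i^{-1}A \in \ultra{q}$ for each $i \le n$) and $c_1, \dots, c_{n-1}$, choose $c_n$ from
\[
Q \cap \bigcap_{i=1}^{n} b_i^{-1}A,
\]
distinct from $c_1, \dots, c_{n-1}$ — possible because this intersection lies in the non-principal ultrafilter $\ultra{q}$, all of whose members are infinite. Then $b_i c_n \in A$ for $i \le n$, and $Ac_n^{-1} \in \ultra{p}$. Next choose $b_{n+1}$ from
\[
P \cap \bigcap_{j=1}^{n} Ac_j^{-1},
\]
distinct from $b_1, \dots, b_n$ — possible since this intersection lies in the non-principal ultrafilter $\ultra{p}$. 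Then $b_{n+1}c_j \in A$ for $j \le n$, and $b_{n+1}^{-1}A \in \ultra{q}$, so the induction continues. Setting $B = \{b_n : n \in \N\}$ and $C = \{c_n : n \in \N\}$, which are infinite by injectivity, one gets $BC \subset A$: for any $i, j$, if $j \ge i$ then $c_j$ was chosen inside $b_i^{-1}A$, and if $j < i$ then $b_i$ was chosen inside $Ac_j^{-1}$; either way $b_i c_j \in A$.

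The only delicate point is the sided bookkeeping: the condition $g^{-1}A \in \ultra{q}$ is what lets us extend the $b$-sequence and is supplied by $A \in \ultra{p} \ltimes \ultra{q}$, while $Ag^{-1} \in \ultra{p}$ is what lets us extend the $c$-sequence and is supplied by $A \in \ultra{p} \rtimes \ultra{q}$; both relations are needed precisely so that the alternating construction can proceed on both sides at once. Beyond keeping the left and right actions straight, there is no real obstacle — non-principality of $\ultra{p}$ and $\ultra{q}$ makes every intersection encountered infinite, so a never-before-used element is always available.
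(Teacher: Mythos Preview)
Your proof is correct and follows essentially the same approach as the paper's own argument: the ``if'' direction picks non-principal ultrafilters containing $B$ and $C$, and the ``only if'' direction builds $B$ and $C$ by an alternating construction using the sets $P\in\ultra{p}$ and $Q\in\ultra{q}$. Your presentation is in fact a bit more explicit than the paper's (which contains a couple of typos in its inductive step), but the ideas are identical.
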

\begin{proof}
First suppose that $BC \subset A$ for infinite sets $B,C \subset G$.
Let $\ultra{p}$ and $\ultra{q}$ be non-principal ultrafilters containing $B$ and $C$ respectively.
For all $c \in C$ we have $B \subset Ac^{-1}$ so $A$ belongs to $\ultra{p} \rtimes \ultra{q}$.
For all $b \in B$ we have $C \subset b^{-1} A$ so $A$ also belongs to $\ultra{p} \ltimes \ultra{q}$.

Conversely, suppose that we can find non-principal ultrafilters $\ultra{p}$ and $\ultra{q}$ with $A$ belonging to both $\ultra{p} \ltimes \ultra{q}$ and $\ultra{q} \rtimes \ultra{q}$.
Thus $\{ g \in G : g^{-1} A \in \ultra{q} \} \in \ultra{p}$ and $\{ g \in G : A g^{-1} \in \ultra{p} \} \in \ultra{q}$.
We construct injective sequences $n \mapsto b_n$ and $n \mapsto c_n$ in $G$ such that $b_i c_j \in A$ for all $i,j \in \N$.
First choose $b_1 \in G$ with $b_1^{-1} A \in \ultra{q}$.
Next, choose $c_1 \in G$ from
\[
b_1^{-1} A \cap \{ y \in G : A y^{-1} \in \ultra{p} \}
\]
which is possible since both sets above belong to $\ultra{q}$.
Next, choose $b_2 \in G$ from
\[
A c_1^{-1} \cap \{ g \in G : b_2^{-1} A \in \ultra{q} \}
\]
and not equal to $b_1$, choose $c_2 \in G$ from
\[
b_1^{-1} A \cap b_2^{-1} A \cap \{ g \in G : A g^{-1} \in \ultra{p} \}
\]
not equal to $c_1$ and so on.
We can choose at each step a never before chosen element of $G$ because all intersections belong to non-principal ultrafilters and are therefore infinite.
\end{proof}

The first step in the proof of \cref{thm_erdos_sumset_amenable} is the following reformulation, which involves multiplication by elements of $G$ from both the left and the right.
Because of this we need to work with two-sided \Folner{} sequences.
We would like to know whether \cref{thm_erdos_sumset_amenable} also holds for one-sided \Folner{} sequences.

\begin{theorem}
\label{thm_erdos_ultrafilters_group}
Let $G$ be a countable, amenable group and fix $A \subset G$.
If there exist a two-sided \Folner{} sequence $\Phi$ on $G$ and a non-principal ultrafilter $\ultra{p} \in \beta G$ such that $\dens_\Phi\big( Ag^{-1} \cap A \ultra{p}^{-1} )$ exists for all $g \in G$ and
\begin{equation}
\label{eqn_erdos_ultrafilters_group}
\lim_{g \to \ultra{p}} \dens_\Phi\big( Ag^{-1} \cap A\ultra{p}^{-1} ) > 0
\end{equation}
then there exist infinite sets $B,C$ such that $A\supset BC$.
\end{theorem}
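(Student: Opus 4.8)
The plan is to follow the proof of \cref{prop_withultrafilters} essentially verbatim, replacing the shift $A-m$ by the right translate $Ag^{-1}$, the set $A-\ultra{p}$ by $A\ultra{p}^{-1}$, and keeping careful track of which side each multiplication happens on. Concretely, I would first establish the following group analogue of \cref{prop_depoint}: if there are a two-sided \Folner{} sequence $\Phi$ on $G$, a set $L\subset G$ and $\epsilon>0$ such that $\dens_\Phi(Ag^{-1}\cap L)$ exists for every $g\in G$ and, for every finite $F\subset L$, the set
\[
\bigcap_{\ell\in F}\ell^{-1}A\ \cap\ \big\{g\in G:\dens_\Phi(Ag^{-1}\cap L)>\epsilon\big\}
\]
is infinite, then there are infinite $B,C\subset G$ with $BC\subset A$. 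Here $L$ is the reservoir for the left factors $b$, while the right factors $c$ will be drawn from the sets $\ell^{-1}A=\{h\in G:\ell h\in A\}$.

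For this analogue I would run the diagonal construction of \cref{prop_depoint}. Fix an increasing exhaustion $F_1\subset F_2\subset\cdots$ of $L$ by finite non-empty sets (note $L$ is infinite, since the hypothesis with $F=\emptyset$ exhibits some $g$ with $\dens_\Phi(Ag^{-1}\cap L)>0$, which is impossible when $L$ is finite) and pick pairwise distinct $e_n$ in $\bigcap_{\ell\in F_n}\ell^{-1}A\cap\{g:\dens_\Phi(Ag^{-1}\cap L)>\epsilon\}$. Then $\dens_\Phi(Ae_n^{-1}\cap L)>\epsilon$ for all $n$, so, applying \cref{thm_bergelson} to $(\beta G,\mathcal{B},\mu)$ for $\mu$ a weak$^*$ accumulation point of $N\mapsto\frac1{|\Phi_N|}\sum_{g\in\Phi_N}\delta_g$ and with $B_n=\cl(Ae_n^{-1}\cap L)$ — a clopen set of $\mu$-measure $\dens_\Phi(Ae_n^{-1}\cap L)$ — one obtains a strictly increasing $\sigma\colon\N\to\N$ (the proof of \cref{thm_bergelson} furnishes such an enumeration) with $\bigcap_{i\le n}(Ae_{\sigma(i)}^{-1}\cap L)$ infinite for every $n$. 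Now set $b_1\in F_{\sigma(1)}$, $j_1=1$, $c_1=e_{\sigma(1)}$, and inductively choose $b_{n+1}\in\bigcap_{k\le n}Ac_k^{-1}\cap L=\bigcap_{k\le n}Ae_{\sigma(j_k)}^{-1}\cap L$ lying outside $F_{\sigma(j_n)}$ — possible because $j_1<\cdots<j_n$, so this set contains the infinite set $\bigcap_{i\le j_n}(Ae_{\sigma(i)}^{-1}\cap L)$ — then let $j_{n+1}$ be minimal with $b_{n+1}\in F_{\sigma(j_{n+1})}$ (well defined since $b_{n+1}\in L=\bigcup_kF_k$), then put $c_{n+1}=e_{\sigma(j_{n+1})}$. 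The $j_n$ increase strictly, so the $b_n$ are distinct and, $e$ and $\sigma$ being injective, the $c_n$ are distinct; for $i\le n$ we have $b_i\in F_{\sigma(j_i)}\subset F_{\sigma(j_n)}$, hence $c_n=e_{\sigma(j_n)}\in\bigcap_{\ell\in F_{\sigma(j_n)}}\ell^{-1}A\subset b_i^{-1}A$, i.e.\ $b_ic_n\in A$, while $b_{n+1}\in Ac_i^{-1}$ gives $b_{n+1}c_i\in A$ for $i\le n$. Thus $B=\{b_n:n\in\N\}$ and $C=\{c_n:n\in\N\}$ are infinite with $BC\subset A$.

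To finish, I would deduce the theorem from this analogue exactly as \cref{prop_withultrafilters} is deduced from \cref{prop_depoint}. Set $L=A\ultra{p}^{-1}=\{\ell\in G:\ell^{-1}A\in\ultra{p}\}$ and $\epsilon=\tfrac12\lim_{g\to\ultra{p}}\dens_\Phi(Ag^{-1}\cap A\ultra{p}^{-1})$. Then $S\coloneqq\{g\in G:\dens_\Phi(Ag^{-1}\cap L)>\epsilon\}$ belongs to $\ultra{p}$ by hypothesis, and for every finite $F\subset L$ each set $\ell^{-1}A$ with $\ell\in F$ lies in $\ultra{p}$ by the definition of $L$, so $\bigcap_{\ell\in F}\ell^{-1}A\cap S\in\ultra{p}$ and is therefore infinite, $\ultra{p}$ being non-principal. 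The hypothesis of the group analogue of \cref{prop_depoint} is thus met, and the conclusion follows.

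The only genuine content beyond transcribing the argument for $\N$, and hence the step I expect to be the main obstacle, is the left--right bookkeeping: one must check that the asymmetric choices above (the $b$'s drawn from $L$ and from the sets $Ac_k^{-1}$, the $c$'s drawn from the sets $\ell^{-1}A$) really produce $BC\subset A$ rather than $CB\subset A$, and this is precisely where the two-sided \Folner{} property is used, through the existence and left/right translation-invariance of $\dens_\Phi$. One should also observe that passing to $\cl(\,\cdot\,)$ turns a set of given $\Phi$-density into a clopen subset of $\beta G$ of that $\mu$-measure, so that \cref{thm_bergelson} applies over $\beta G$ with no measurability or null-boundary subtleties. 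Everything else is identical to the case $G=\N$.
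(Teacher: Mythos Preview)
Your proposal is correct and follows essentially the same route as the paper: the paper's proof of \cref{thm_erdos_ultrafilters_group} simply inlines the group analogues of \cref{prop_depoint} and \cref{prop_withultrafilters} into a single argument, using $L=A\ultra{p}^{-1}$, the exhaustion $F_1\subset F_2\subset\cdots$, the Bergelson intersectivity lemma applied to the sets $Ae_n^{-1}\cap L$, and the identical inductive construction of the $b_n$ and $c_n$ with indices $j_n$. Your factoring through an explicit group version of \cref{prop_depoint} is a cosmetic difference only; the side remark about the two-sided \Folner{} property being ``precisely'' used in the left--right bookkeeping is a slight overstatement (the proof of this particular theorem only needs the stated densities to exist and finite sets to have zero density), but this does not affect the argument.
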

\begin{proof}
Suppose that $\Phi$ and $\ultra{p}$ are as in the hypothesis with \eqref{eqn_erdos_ultrafilters_group} true.
Take $L = A \ultra{p}^{-1}$.
Then $g^{-1} A \in \ultra{p}$ for every $g \in L$.
We can find $\epsilon > 0$ such that
\[
\{ g \in G : \dens_\Phi(Ag^{-1} \cap L) > \epsilon \}
\]
belongs to $\ultra{p}$ and is therefore infinite.
It follows that
\[
\{ g \in G : \dens_\Phi( Ag^{-1} \cap L) > \epsilon \} \cap \bigcap_{h \in F} h^{-1} A
\]
is infinite for any finite set $F \subset L$.

Let $F_1 \subset F_2 \subset \cdots$ be an increasing exhaustion of $L$ by finite subsets.
Construct a sequence $n \mapsto e_n$ in $G$ of distinct elements such that
\[
e_n \in \{ g \in G : \dens_\Phi(Ag^{-1} \cap L) > \epsilon \} \cap \bigcap_{h \in F_n} h^{-1} A
\]
for each $n \in \N$.
This can be done because each of the sets above is infinite by hypothesis.

In particular $\dens_\Phi(Ae_n^{-1} \cap L) > \epsilon$ for all $n \in \N$.
The Bergelson intersectivity lemma (\cref{cor_bergelsonlemma}) implies that, for some subsequence $n \mapsto e_{\sigma(n)}$ of $e$ the intersection
\[
\Big( A e_{\sigma(1)}^{-1} \cap L \Big) \cap \cdots \cap \Big( Ae_{\sigma(n)}^{-1} \cap L \Big)
\]
is infinite for all $n \in \N$.

%Let $E = \{ e_{\sigma(n)} : n \in \N \}$.
Choose $b_1 \in F_{\sigma(1)}$ and put $j_1 = 1$.
Choose $c_1 = e_{\sigma(1)}$.
Thus $c_1 \in b_1^{-1} A$.
Next choose $b_2 \in Ac_1^{-1} \cap L$ outside $F_{\sigma(1)}$ and let $j_2$ be minimal with $b_2 \in F_{\sigma(j_2)}$.
(In particular $b_2$ is not equal to $b_1$.)
Then choose $c_2 = e_{\sigma(j_2)} \in b_1^{-1} A \cap b_2^{-1} A$.
Continue this process inductively, choosing
\[
b_{n+1}
\in
A c_1^{-1} \cap \cdots \cap A c_n^{-1} \cap L
=
A e_{\sigma(j_1)}^{-1} \cap \cdots \cap A c_{\sigma(j_n)}^{-1} \cap L
\]
outside $F_{\sigma(j_n)}$ and choosing $j_{n+1}$ minimal with $b_{n+1} \in F_{\sigma(j_{n+1})}$ and then choosing
\[
c_{n+1} = e_{\sigma(j_{n+1})} \in b_1^{-1}A \cap \cdots \cap b_{n+1}^{-1} A
\]
which is distinct from $c_1,\dots,c_n$ because $e$ is injective.
Take $B = \{ b_n : n \in \N \}$ and $C = \{ c_n : n \in \N \}$ to conclude the proof.
\end{proof}

Our goal, given $A \subset G$ with positive upper density, is to find an ultrafilter $\ultra{p}$ and a two-sided \Folner{} sequence $\Phi$ satisfying \eqref{eqn_erdos_ultrafilters_group}.
To do this we work in the space
\[
\lp{2}(G,\Phi) = \{ f \colon G \to \C : \nbar f \nbar_\Phi < \infty \}
\]
where $\|f\|_{\Phi}$ is the \define{Besicovitch seminorm} of $f$ along a two-sided \Folner{} sequence $\Phi$ on $G$ defined as
\[
\nbar f \nbar_\Phi = \left( \limsup_{N \to \infty} \frac{1}{|\Phi_N|} \sum_{g \in \Phi_N} |f(g)|^2 \right)^{1/2}
\]
for all $f \colon G \to \C$.
Given $f,h \in \lp{2}(G,\Phi)$ write also
\[
\bilin{f}{h}_\Phi = \lim_{N \to \infty} \frac{1}{|\Phi_N|} \sum_{g \in \Phi_N} f(g) \overline{h(g)}
\]
whenever the limit exists.
Given a bounded function $f \colon G \to \C$ define, for all $g \in G$, the shift $\rmult^g f \colon G \to \C$ by $(\rmult^g f)(h) \coloneqq f(hg)$ for all $g \in G$ and, for all $\ultra{p}\in\beta G$, the function $\rmult^\ultra{p} f \colon G \to \C$ by $(\rmult^\ultra{p} f)(h) \coloneqq \lim_{g \to \ultra{p}} f(hg)$ for all $h \in G$.
One can check that the function $\rmult^\ultra{p}1_A$ is the indicator function of $A\ultra{p}^{-1}$.
Our ultimate goal is now reformulated in terms of $\lp{2}(G,\Phi)$ and $\rmult$ in the following theorem, which is analogous to \cref{thm_hilbertErdosNatural}.

\begin{theorem}
\label{thm_hilbertErdosGroup}
Let $G$ be a countable amenable group and fix $A \subset G$.
Let $\Phi$ be a two-sided \Folner{} sequence on $G$ such that $\dens_\Phi(A)$ exists.
For every $\epsilon > 0$ there exists a subsequence $\Psi$ of $\Phi$ and a non-principal ultrafilter $\ultra{p} \in \beta G$ such that $\bilin{\rmult^g 1_A}{\rmult^\ultra{p} 1_A}_\Psi$ exists for all $g \in G$ and
\begin{equation}
\label{eqn_hilbertErdosGroup}
\lim_{g \to \ultra{p}} \bilin{ \rmult^g 1_A }{ \rmult^\ultra{p} 1_A }_\Psi
\geq
\bilin{1}{1_A}_\Psi^2  - \epsilon
\end{equation}
holds.
\end{theorem}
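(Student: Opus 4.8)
The plan is to follow the proof of \cref{thm_hilbertErdosNatural} in \cref{sec_finding_p} essentially verbatim, once the three auxiliary inputs it relies on — the Besicovitch splitting (\cref{thm_besicSplitting}), the Jacobs--de Leeuw--Glicksberg splitting (\cref{thm_jdlgSplitting}), and the ultrafilter-choosing theorem (\cref{thm_choosingUltra}) — have been ported from $\N$ to $G$. The abstract skeleton needs no change: the completeness lemma \cref{prop_weak-completeness}, Bessel's inequality \cref{lemma_bessel}, and the projection-family machinery \cref{thm_general_dichotomy}/\cref{lem_spectrumn-is-coutable} are statements about the seminorm $\nbar\cdot\nbar_\Phi$ alone and transfer word for word; the Bergelson intersectivity results (\cref{thm_bergelson}, \cref{cor_bergelsonlemma}) and the reduction via \cref{thm_erdos_ultrafilters_group} are already in place. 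So I would first establish the group analogues of the three inputs and then reproduce the final assembly step.

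For the decompositions, the one genuinely new ingredient is that over a non-abelian amenable $G$ the trigonometric polynomials $\sum_j c_j e^{2\pi i\theta_j n}$ must be replaced by finite linear combinations of matrix coefficients $g\mapsto\langle\pi(g)v,w\rangle$ of finite-dimensional unitary representations $\pi$ of $G$ — equivalently, pullbacks to $G$ of continuous functions on the Bohr (maximal) compactification of $G$ — and $\bes(G,\Phi)$ is defined as the $\nbar\cdot\nbar_\Phi$-closure of their span. To see that $\Phi\mapsto\bes(G,\Phi)$ is a projection family one needs, in place of \cref{lemma_exponentialfolnervanish}, the mean ergodic theorem for the amenable group $G$: for a nontrivial irreducible unitary representation $\pi$ and a two-sided \Folner{} sequence $\Phi$, the operator averages $\frac1{|\Phi_N|}\sum_{g\in\Phi_N}\pi(g)$ converge to the projection onto $\pi$-invariant vectors, which is $0$; closure of $\bes(G,\Phi)$ under $|\cdot|$ and $\max$ then follows from Stone--Weierstrass applied on the Bohr compactification, since the matrix coefficients form a conjugation-invariant, point-separating subalgebra of the continuous functions containing the constants. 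For \cref{thm_jdlgSplitting}, the notions "compact along $\Phi$'' and "weak mixing along $\Phi$'' read off verbatim with the right shifts $\rmult^g$, $g\in G$, satisfying $\rmult^h\rmult^g=\rmult^{hg}$; here $\rmult^g$ is an isometry of $\lp{2}(G,\Phi)$ precisely because of the \emph{right}-\Folner{} half of \eqref{eq_folner}. The proof runs as over $\N$: realize a bounded $f$ by the Furstenberg correspondence (the $G$-action analogue of \cref{lemma_metric-correcpondence}), choose a subsequence along which the empirical measures $\frac1{|\Phi_N|}\sum_{g\in\Phi_N}\delta_{\smult^g x}$ converge weak-$*$ to a $G$-invariant $\mu$ — this is where the \emph{left}-\Folner{} half of \eqref{eq_folner} is used — apply the Jacobs--de Leeuw--Glicksberg splitting for the induced unitary $G$-representation on $\lp{2}(X,\mu)$, transport the compact part back, and deduce weak mixing of the complement by the same joining argument; the passage to unbounded $f$ is unchanged. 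Thus two-sidedness of $\Phi$ enters structurally: right-\Folner{} makes $\rmult^g$ an isometry, left-\Folner{} makes correspondence limits invariant.

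The group analogue of \cref{thm_choosingUltra} is then obtained as in \cref{subsec_choosingUltra}: "$\Phi$ essential'' ultrafilters and the "$\Phi$ almost everywhere'' terminology are unchanged, \cref{lemma_support} carries over verbatim, Bohr$_0$ sets on $G$ are defined via homomorphisms into compact metrizable groups and enjoy the analogues of \cref{lem_bohr_intersect}--\cref{cor_compact_returns_Bohralternative} (using \cref{lem_bohr_semigroup} that the closure of the image is a subgroup, and that the matrix-coefficient version of $1_B$ lies in $\bes(G,\Phi)$ with positive density), the pointwise ingredient replacing \cref{thm_Besicovitchpshift}/\cref{thm_pointwise} uses Lindenstrauss's pointwise ergodic theorem along tempered two-sided \Folner{} sequences \cite{MR1865397} in place of \cref{thm_pointwiseergodictheorem}, and the \Beiglbock{}-type estimate \cref{thm_beig} goes through with $1_{B+n}$ replaced by $1_{Bg^{-1}}$. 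With these in hand the proof of \cref{thm_hilbertErdosGroup} is the proof of \cref{thm_hilbertErdosNatural}: apply the two splittings to $1_A$ to get $1_A=f_\bes+f_\anti=f_\comp+f_\wm$ on a subsequence, with $f_\comp,f_\bes$ bounded, non-negative and real-valued by the "values in $[a,b]$'' clauses; apply the ultrafilter-choosing theorem with $\epsilon/\nbar 1_A\nbar_\Phi$ in place of $\epsilon$; pass to a further subsequence so that $\bilin{\rmult^g f_\wm}{\rmult^\ultra{p}1_A}_\Psi$, $\bilin{\rmult^g f_\comp}{\rmult^\ultra{p}f_\bes}_\Psi$ and $\bilin{\rmult^g f_\comp}{\rmult^\ultra{p}f_\anti}_\Psi$ exist for all $g\in G$; use that $\rmult^g 1_A=\rmult^g f_\comp+\rmult^g f_\wm$ is again a compact-plus-weak-mixing decomposition to write $\bilin{\rmult^g 1_A}{\rmult^\ultra{p}1_A}_\Psi$ as the sum of those three terms; and bound each exactly as in \eqref{eqn_wm_part}, \eqref{eqn_bes_part}, \eqref{eqn_besmixing_part} — the weak-mixing term tends to $0$ along $\ultra{p}$ by \ref{req_essential}, the compact--anti term is $\ge-\tfrac{\epsilon}{3}$ by \ref{req_compact} and \ref{req_beig}, and the compact--Bes term is $\ge\bilin{1}{1_A}_\Psi^2-\tfrac{2\epsilon}{3}$ by \ref{req_compact}, \ref{req_besicovitch}, orthogonality of $\bes(G,\Psi)$ to $\bes(G,\Psi)^\perp$, and Cauchy--Schwarz. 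Adding them yields \eqref{eqn_hilbertErdosGroup}.

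I expect the main obstacle to be the structure theory underlying the Besicovitch splitting: over $\Z$ one gets away with the one-parameter family $e^{2\pi i n\theta}$, whereas a non-abelian amenable $G$ forces one to work with finite-dimensional unitary representations and the Bohr compactification, so the explicit "trigonometric polynomial'' computations (\cref{lemma_exponentialfolnervanish}, \cref{lem_ultraBohrIsBohr}, and the pointwise estimate \eqref{eq_proof_thm_Besicovitchpshift}) must be recast in terms of matrix coefficients, using that $\pi(g)$ converges in the compact group $\overline{\pi(G)}$ along any ultrafilter $\ultra{p}$ so that $\rmult^\ultra{p}$ of a matrix coefficient is again a matrix coefficient of the same dimension. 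The secondary source of care is bookkeeping: every step must be attributed to the correct half of the two-sided \Folner{} condition (right shifts being isometries versus correspondence limits being invariant), which is also what makes it unclear, as noted after \cref{thm_erdos_ultrafilters_group}, whether a one-sided \Folner{} sequence would suffice.
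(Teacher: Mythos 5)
Your proposal is correct and follows essentially the same route as the paper: the paper likewise proves this theorem by transporting the Besicovitch splitting (via matrix coefficients of finite-dimensional unitary representations), the Jacobs--de Leeuw--Glicksberg splitting, and the ultrafilter-selection theorem (with its Bohr-set, pointwise-ergodic, and \Beiglbock{}-type ingredients) from $\N$ to $G$, and then repeating the assembly from the proof of \cref{thm_hilbertErdosNatural} verbatim. Your minor variations — invoking the mean ergodic theorem for the existence of $\bilin{a}{b}_\Phi$ on matrix coefficients (the paper notes this works and gives an equivalent direct argument via weak convergence to Haar measure on $\overline{M(G)}$) and your explicit bookkeeping of which half of the two-sided \Folner{} condition is used where — are consistent with, and if anything slightly more detailed than, what the paper records.
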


As over $\N$ we will need to split $1_A$ into structured and pseudo-random components in two ways.
For the first we use finite dimensional representations to define an analogue of trigonometric polynomials.

\begin{definition}
By a \define{matrix coefficient} of a countable group $G$ we mean any map $a \colon G \to \C$ of the form $a(g) = \bilin{v}{M(g)w}$ for some homomorphism $M$ from $G$ to the unitary group $\unitary(n)$ over $\C^n$ and some vectors $v,w \in \C^n$ for some $n \in \N$.
A function $f \colon G \to \C$ is \define{Besicovitch almost periodic} along a two-sided \Folner{} sequence $\Phi$ on $G$ if, for every $\epsilon > 0$, one can find a matrix coefficient $a$ with $\nbar f - a \nbar_\Phi < \epsilon$.
\end{definition}

\begin{definition}
The set $\bes(G,\Phi)^\perp$ is defined to consist of those functions $f\in\lp{2}(G,\Phi)$ such that
\[
\lim_{N \to \infty} \frac{1}{|\Phi_N|} \sum_{g \in \Phi_N} f(g) a(g) = 0
\]
for all matrix coefficients $a$.
\end{definition}

Write $\bes(G,\Phi)$ for the set of functions $f$ in $\lp{2}(G,\Phi)$ that are Besicovitch almost periodic along $\Phi$.
We have the following splitting result.

\begin{theorem}
\label{thm_besSplittingGroup}
For every two-sided \Folner{} sequence $\Phi$ on $G$ and any $f \in \lp{2}(G,\Phi)$ there is a subsequence $\Psi$ of $\Phi$ and a function $f_\bes$ in $\lp{2}(G,\Psi)$ which is Besicovitch almost periodic along $\Psi$, and such that $f - f_\bes\in\bes(G,\Psi)^\perp$.
Moreover, if $f$ takes values in an interval $[a,b]\subset\R$ then so does $f_\bes$.
\end{theorem}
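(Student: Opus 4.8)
The plan is to follow the route used over $\N$: realise $\Phi\mapsto\bes(G,\Phi)$ as a projection family and then invoke the abstract decomposition machinery of \cref{subsec_phiDichotomy}. The first observation is that everything in \cref{sec_completeness_natural} and in \cref{subsec_phiDichotomy} up to and including \cref{thm_general_dichotomy} uses nothing about $\N$ beyond the F\o lner property of $\Phi$ --- one only ever manipulates the finite sets $\Phi_N$, the seminorm $\nbar\cdot\nbar_\Phi$, and inner products $\bilin\cdot\cdot_\Phi$. Thus \cref{prop_weak-completeness}, \cref{lemma_bessel}, \cref{lem_spectrumn-is-coutable} and \cref{thm_general_dichotomy} all hold verbatim with $\N$ replaced by a countable amenable group $G$ and $\Phi$ a two-sided F\o lner sequence. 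Granting this, it suffices to verify that $\Phi\mapsto\bes(G,\Phi)$ is a projection family (with $\lp2(\N,\Phi)$ replaced by $\lp2(G,\Phi)$), and then apply the group analogue of \cref{thm_general_dichotomy} with $U=\bes(G,\cdot)$: the resulting $f_\bes\coloneqq f_U$ and $f_\anti\coloneqq f-f_U$ automatically satisfy $f-f_\bes\in\bes(G,\Psi)^\perp$, minimise the distance to $\bes(G,\Psi)$, and $f_\bes$ inherits the interval $[a,b]$ if $f$ takes values there.

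The one genuinely new ingredient is the replacement for \cref{lemma_exponentialfolnervanish}: a mean ergodic statement asserting that for every matrix coefficient $a$ of $G$ and every (one- or two-sided) F\o lner sequence $\Phi$ on $G$, the average $\tfrac1{|\Phi_N|}\sum_{g\in\Phi_N}a(g)$ converges as $N\to\infty$. I would prove this by the standard compact-group argument. Writing $a(g)=\bilin{v}{M(g)w}$ with $M\colon G\to\unitary(n)$ a homomorphism, the closure $H\coloneqq\overline{M(G)}$ in $\unitary(n)$ is a compact group, since the closure of a subsemigroup of a compact group is a subgroup (compare \cref{lem_bohr_semigroup}). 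Because $\Phi$ is F\o lner, any weak$^*$ limit of the averages $\tfrac1{|\Phi_N|}\sum_{g\in\Phi_N}\delta_{M(g)}$ is a translation-invariant probability measure on $H$, hence the Haar measure $m_H$, so
\[
\frac1{|\Phi_N|}\sum_{g\in\Phi_N}M(g)\ \longrightarrow\ \int_H h\d m_H(h)=P,
\]
the orthogonal projection of $\C^n$ onto the $H$-fixed vectors, whence $\tfrac1{|\Phi_N|}\sum_{g\in\Phi_N}a(g)\to\bilin{v}{Pw}$. (Alternatively one cites the mean ergodic theorem for unitary representations of amenable groups.) In particular $\bilin{a}{b}_\Phi$ exists for any two matrix coefficients $a,b$, because $g\mapsto a(g)\overline{b(g)}$ is again a matrix coefficient.

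With this lemma in place, checking the projection family axioms parallels \cref{thm_besicProjectionFamily}. The set $\mathcal A$ of matrix coefficients of $G$ is closed under sums (use $M_1\oplus M_2$), scalar multiples, products (use $M_1\otimes M_2$) and complex conjugation (use the conjugate representation $\overline M$), and it contains the constants (trivial representation); hence it is a conjugation-closed unital subalgebra of $\ell^\infty(G)$ and $\bes(G,\Phi)$ is by definition its $\nbar\cdot\nbar_\Phi$-closure, so $\bes(G,\Phi)$ is a conjugation-closed subspace containing the constants and is $\nbar\cdot\nbar_\Phi$-closed; also $\bes(G,\Psi)\supset\bes(G,\Phi)$ whenever $\Psi$ eventually agrees with a subsequence of $\Phi$, since then $\nbar\cdot\nbar_\Psi\le\nbar\cdot\nbar_\Phi$. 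Existence of $\bilin{u}{v}_\Phi$ for $u,v\in\bes(G,\Phi)$ follows from existence on $\mathcal A$ together with continuity of $\bilin\cdot\cdot_\Phi$ in each variable, exactly as in \cref{thm_besicProjectionFamily}. For closure under $\max$ of real-valued elements one uses $\max\{u,v\}=\tfrac12(u+v+|u-v|)$ and reduces to showing $|w|\in\bes(G,\Phi)$ whenever $w$ does: approximating $w$ by a matrix coefficient $a$, the reverse triangle inequality makes $\nbar\,|w|-|a|\,\nbar_\Phi$ small, and since $|a(g)|^2=a(g)\overline{a(g)}\in\mathcal A$ one approximates $\sqrt{\,\cdot\,}$ uniformly on $[0,\sup_g|a(g)|^2]$ by a real polynomial $p$ (Weierstrass), so that $g\mapsto p(|a(g)|^2)\in\mathcal A$ is uniformly --- hence $\nbar\cdot\nbar_\Phi$- --- close to $|a|$. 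This shows $\bes(G,\cdot)$ is a projection family, and \cref{thm_besSplittingGroup} follows from the group version of \cref{thm_general_dichotomy}.

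The main obstacle, and the only place requiring genuine (rather than cosmetic) work, is the mean ergodic statement that replaces \cref{lemma_exponentialfolnervanish}; once it is carefully set up, every remaining step is a transcription of the $\N$ arguments, so I would present that lemma in full and treat the rest by reference to \cref{sec_completeness_natural} and \cref{subsec_phiDichotomy}.
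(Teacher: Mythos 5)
Your proposal is correct and follows essentially the same route as the paper: reduce to showing that $\Phi \mapsto \bes(G,\Phi)$ is a projection family (the abstract decomposition machinery of \cref{thm_general_dichotomy} carrying over verbatim), with the only substantive new point being the convergence of averages of matrix coefficients along F\o lner sequences, which you and the paper both obtain from weak$^*$ convergence of the empirical measures $\tfrac1{|\Phi_N|}\sum_{g\in\Phi_N}\delta_{M(g)}$ to Haar measure on the compact closure of $M(G)$. Your extra verifications of the remaining projection-family axioms (tensor products, conjugate representations, and the $\sqrt{|a|^2}$ polynomial approximation for closure under $\max$) are details the paper declares immediate, and they check out.
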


\begin{proof}
The definition of a projection family makes sense, and the proof of \cref{thm_general_dichotomy} goes through, without complication with $\N$ replaced by $G$.
It therefore suffices, in order to prove the result in question, to show that $\Phi \mapsto \bes(G,\Phi)$ is a projection family.

The only property that is not immediate is that the inner product $\langle a,b\rangle_\Phi$ exists whenever $a,b$ are matrix coefficients.
This follows from an application of the mean ergodic theorem; alternatively we provide the following short self contained proof.
Write $a(g) = \bilin{v}{M(g)w}$ and $b(g) = \bilin{r}{\tilde M(g) s}$ for homomorphisms $M \colon G \to \unitary(n)$ and $\tilde M \colon G \to \unitary(m)$ and appropriate vectors $r,s,u,v$.
Then $a(g) b(g)$ is a matrix coefficient for the tensor product representation $M \otimes \tilde M$ on $\C^{nm}$.

Now, if $a(g) = \bilin{v}{M(g) w}$ is any matrix coefficient the average
\[
\frac{1}{|\Phi_N|} \sum_{g \in \Phi_N} a(g)
=
\left\langle v,\frac{1}{|\Phi_N|} \sum_{g \in \Phi_N} M(g) w\right\rangle
\]
converges because, for all two-sided \Folner{} sequences $\Phi$ the sequence
\[
N \mapsto \frac{1}{|\Phi_N|} \sum_{g \in \Phi_N} \delta_{M(g)}
\]
of probability measures on $\unitary(n)$ converges in the weak topology to Haar measure on the closure of the image of $M$.
\end{proof}

The second splitting theorem is proved exactly as in \cref{sec_jdlgSplitting_for_N}.
We formulate here the appropriate generalizations of compact and weak mixing function.

\begin{definition}
A function $f \in \lp{2}(G,\Phi)$ is \define{compact} along $\Phi$ if, for every $\epsilon > 0$, one can find $F \subset G$ finite with $\min \{ \nbar \rmult^g f - \rmult^h f \nbar_\Phi : h \in F \} < \epsilon$ for all $g \in G$.
\end{definition}

\begin{definition}
A function $f \in \lp{2}(G,\Phi)$ is \define{weak mixing} along $\Phi$ if, for every bounded function $h \colon G \to \C$ and every subsequence $\Psi$ of $\Phi$ such that $\bilin{\rmult^g f}{h}_\Psi$ exists for all $g \in G$, the set $\{ g \in G : | \bilin{\rmult^g f}{h}_\Psi| > \epsilon \}$ has zero density with respect to every two-sided \Folner{} sequence on $G$.
\end{definition}

The proof of the following theorem is exactly as in \cref{sec_jdlgSplitting_for_N}.
For an appropriate version of the Jacobs--de Leeuw--Glicksberg splitting for unitary representations of groups see \cite[Chapter~16]{MR3410920} .

\begin{theorem}
\label{thm_jdlgSplittingGroup}
For every two-sided \Folner{} sequence $\Phi$ on $G$ and any $f \in \lp{2}(G,\Phi)$ there is a subsequence $\Psi$ of $\Phi$ and functions $f_\comp,f_\wm \in \lp{2}(G,\Psi)$ with $f_\comp$ compact along $\Psi$, $f_\wm$ weak mixing along $\Psi$, and $f = f_\comp + f_\wm$.
Moreover, if $f$ is real-valued and $a \le f \le b$ for some $a \le b$ then $f_\comp$ is also real valued and satisfies $a \le f_\comp \le b$.
\end{theorem}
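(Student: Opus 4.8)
The plan is to transport the proof of \cref{thm_jdlgSplitting} from $\N$ to $G$, replacing the forward shift $n\mapsto n+1$ by the right-translation action $\rmult^g$ of $G$ and replacing the single transformation $\smult$ of \cref{lemma_metric-correcpondence} by the left-translation action of $G$ on an auxiliary compact metric space. As over $\N$, I would first establish the splitting for bounded $f$ and then bootstrap to arbitrary $f\in\lp{2}(G,\Phi)$.

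For bounded $f$, the first step is the group analogue of \cref{lemma_metric-correcpondence}: given a countable family $\{a_i:i\in J\}$ of bounded functions $G\to\C$, put $Y=\prod_{i\in J}D_i^{\,G}$ with $D_i\subset\C$ compact containing the range of $a_i$, let $G$ act on $Y$ on the left by $(g\cdot y)(i,h)=y(i,hg)$, let $x(i,g)=a_i(g)$, take $X$ to be the orbit closure of $x$, and set $F_i(y)=y(i,e)$. Then $X$ is a compact metric space on which $G$ acts by homeomorphisms with $x$ having dense orbit, $a_i(g)=F_i(g\cdot x)$, and $\rmult^g f$ is realized by the same continuous function with base point $g\cdot x$. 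Applying this to $\{f\}$ and then, using that $\Phi$ is a two-sided (in particular, left-) \Folner{} sequence, passing to a subsequence $\Psi$ along which $\tfrac1{|\Psi_N|}\sum_{g\in\Psi_N}\delta_{g\cdot x}$ converges weak$^*$ to a Borel probability measure $\mu$ on $X$, one obtains a measure-preserving action of $G$; the $G$-invariance of $\mu$ is precisely the content of the left-\Folner{} condition $|g\Phi_N\triangle\Phi_N|/|\Phi_N|\to0$, and this is the only place where the hypothesis is used in this proof. On the Hilbert space $\lp{2}(X,\mu)$ equipped with its unitary representation of $G$ one invokes the Jacobs--de Leeuw--Glicksberg splitting for unitary representations of amenable groups (\cite[Chapter~16]{MR3410920}) to write $F=F_\comp+F_\wm$ with $F_\comp$ having precompact orbit and $F_\wm$ weak mixing. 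The preservation-of-bounds lemmas \cref{lem_compact_functions} and \cref{cor_compact_part_positive_bounded} carry over verbatim, since constants are $G$-fixed hence compact and $|\cdot|$, $\min$ and $\max$ preserve compactness by applying the reverse triangle inequality to each $\umult^g$; hence $a\le F_\comp\le b$ whenever $a\le f\le b$.

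The transfer back to $\lp{2}(G,\Psi)$ is then literally the argument of \cref{sec_jdlgSplitting_for_N}: approximate $F_\comp$ in $\lp{2}(X,\mu)$ by $H_j\in\cont(X)$ (taking values in $[a,b]$ in the bounded case), set $h_j(g)=H_j(g\cdot x)$, note $\|h_j-h_\ell\|_\Psi=\|H_j-H_\ell\|_\mu$ so that $j\mapsto h_j$ is Cauchy, and apply \cref{prop_weak-completeness} (whose proof is insensitive to the ambient group) to obtain, after refining $\Psi$, a limit $f_\comp\in\lp{2}(G,\Psi)$ that is compact along $\Psi$ and takes values in $[a,b]$ when $f$ does; put $f_\wm=f-f_\comp$. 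To see $f_\wm$ is weak mixing along $\Psi$, given a bounded $h\colon G\to\C$ and a subsequence $\Psi'$ of $\Psi$ along which all $\bilin{\rmult^g f}{h}_{\Psi'}$ exist, run a second correspondence producing a system $(\tilde X,\tilde\smult,\tilde F,\tilde x)$ for $h$, form the joining $Z\subset X\times\tilde X$ and an invariant measure $\nu$ on $Z$ with $X$-marginal $\mu$, and bound $|\bilin{\rmult^g f_\wm}{h}_{\Psi'}|$ by the integral against $\nu$ of a translate of $(F_\wm\otimes 1)\overline{(1\otimes\tilde F)}$, up to $2\epsilon$; since $H\mapsto H\otimes 1$ is an isometric, $G$-equivariant embedding of $\lp{2}(X,\mu)$ into $\lp{2}(Z,\nu)$, the element $F_\wm\otimes 1$ is weak mixing there, so the set of $g$ where the integral exceeds $\epsilon$ has zero density along every two-sided \Folner{} sequence on $G$, which is the desired conclusion. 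Finally, the reduction of general $f$ to the bounded case is unchanged: approximate $f$ by bounded $f_j$ in $\|\cdot\|_\Phi$, apply the bounded case to each $f_j$ along nested subsequences $\Psi^{(j)}$, diagonalize by $\Psi_N=\Psi^{(N)}_N$, and use $\bilin{f_{j,\comp}}{f_{\ell,\wm}}_\Psi=0$ together with the Pythagorean identity and \cref{prop_weak-completeness} to pass to the limit.

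I expect the argument to present essentially no new difficulty; the only nontrivial input beyond what is already developed over $\N$ is the Jacobs--de Leeuw--Glicksberg splitting for unitary representations of the amenable group $G$, which is classical. The point deserving a word of care is the one singled out above, namely that a weak$^*$ limit of the empirical measures $\tfrac1{|\Phi_N|}\sum_{g\in\Phi_N}\delta_{g\cdot x}$ along a two-sided \Folner{} sequence is $G$-invariant, together with the bookkeeping needed to match ``weak mixing element of $\lp{2}(X,\mu)$'' with ``zero density along every two-sided \Folner{} sequence on $G$''; this is precisely why two-sided, rather than one-sided, \Folner{} sequences are the natural setting here.
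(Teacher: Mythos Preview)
Your proposal is correct and follows essentially the same approach as the paper, which simply states that the proof is exactly as in \cref{sec_jdlgSplitting_for_N}, citing \cite[Chapter~16]{MR3410920} for the Jacobs--de Leeuw--Glicksberg splitting for unitary representations of groups. You have correctly fleshed out the transport, including the group analogue of \cref{lemma_metric-correcpondence}, the use of the left-\Folner{} condition for invariance of the limiting measure, and the passage from $F_\wm$ to $F_\wm\otimes 1$; the only place I would add a sentence is in justifying that $F_\wm\otimes 1$ is weak mixing in $\lp{2}(Z,\nu)$, where the isometric equivariant embedding alone is not the reason---one uses that the $X$-marginal of $\nu$ is $\mu$ to obtain a $G$-equivariant conditional expectation $E\colon\lp{2}(Z,\nu)\to\lp{2}(X,\mu)$ (or, as the paper does over $\N$, the estimate $|\langle F_\wm\otimes 1,\phi\otimes\psi\rangle_\nu|\le|\langle F_\wm,\phi\rangle_\mu|\,\|\psi\|_\infty$ on simple tensors).
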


The next ingredient in the proof of \cref{thm_hilbertErdosGroup} is an analogue of \cref{thm_choosingUltra}.
Its statement over $G$ and how it, together with \cref{thm_jdlgSplittingGroup} and \cref{thm_besSplittingGroup}, imply \cref{thm_hilbertErdosGroup}, is exactly the same as the proof of \cref{thm_hilbertErdosNatural} at the end of \cref{sec_outlineNatural}.
Its proof, also, is just as in \cref{subsec_choosingUltra} but using the following ingredients.

Definitions~\ref{def_essential_ultrafilter} and \ref{def_almost_everywhere} as well as Lemmas \ref{lemma_support} and \ref{lem_bohr_sets_have_positive_density} make sense in arbitrary countable groups.
The next three results -- versions of \cref{cor_compact_returns_Bohralternative}, \cref{thm_pointwise} and \cref{thm_beig} for countable, amenable groups -- fill the remaining gaps in the proof of \cref{thm_hilbertErdosGroup}.
First we recast \cref{def_bohr_set} for countable groups.

\begin{definition}
A \define{Bohr set} in a group $G$ is any set of the form $a^{-1}(U)$ where $a$ is a homomorphism from $G$ into a compact group $K$ and $U \subset K$ is a non-empty open set whose boundary has Haar measure $0$.
A Bohr set is a \define{Bohr$_0$ set} if $U$ contains the identity of $K$.
\end{definition}

For more details on Bohr sets in amenable groups see \cite[Subsection 1.3]{MR2565535}.

\begin{lemma}
For every $f \in \lp{2}(G,\Phi)$ that is compact along $\Phi$ and every $\epsilon > 0$ the set $\{ g \in G : \nbar \rmult^g f - f \nbar_\Phi < \epsilon \}$ contains a Bohr$_0$ set.
\end{lemma}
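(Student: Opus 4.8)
The plan is to imitate the proof of \cref{cor_compact_returns_Bohralternative}, the only substantive change being that over a non-abelian group the ambient compact group can no longer be taken monothetic. First I would reduce to the case that $f$ is bounded, the general case following from the bounded one via the approximation argument in the last paragraph of the proof of \cref{thm_jdlgSplitting}; in any event $f$ is already bounded in the application of this lemma, being the compact component of an indicator function supplied by \cref{thm_jdlgSplittingGroup}. So assume $f$ is bounded. Since $\Phi$ is a two-sided \Folner{} sequence we have $|(\Phi_N g)\triangle\Phi_N| = o(|\Phi_N|)$ for each $g \in G$, and this together with the boundedness of $f$ shows that every right shift $\rmult^g$ is an isometry of the seminormed space $\lp{2}(G,\Phi)$; as $\rmult^{g_1}\rmult^{g_2}=\rmult^{g_1g_2}$, the assignment $g\mapsto\rmult^g$ is a homomorphism from $G$ into the isometry group of $\lp{2}(G,\Phi)$.

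Next I would form the compact group that will carry the Bohr structure. Let $Y$ be the completion of the pseudometric space $\bigl(\{\rmult^g f : g\in G\},\, d\bigr)$, where $d(u,v)=\nbar u-v\nbar_\Phi$; since $f$ is compact along $\Phi$ this space is totally bounded, so $Y$ is a compact metric space. Each $\rmult^g$ permutes the orbit isometrically and so extends to an isometry of $Y$, giving an isometric --- hence equicontinuous --- action of $G$ on $Y$. Let $K$ be the closure of $\{\,y\mapsto\rmult^g y : g\in G\,\}$ inside the group of isometries of $Y$ with the topology of uniform convergence. Because $Y$ is compact metric, this isometry group is a compact topological group (Arzel\`a--Ascoli gives the compactness, and an isometry of a compact metric space is automatically surjective), and $K$, being the closure of a subgroup of it, is a closed, hence compact, subgroup. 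The map $a\colon G\to K$ given by $a(g)=(y\mapsto\rmult^g y)$ is a continuous homomorphism.

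From here I would argue exactly as over $\N$. Regard $f$ as a point of $Y$ (it is one, namely $\rmult^{e}f$ with $e$ the identity of $G$) and define $\Theta\colon K\to[0,\infty)$ by $\Theta(T)=d(Tf,f)$; this is continuous and satisfies $\Theta(a(g))=\nbar\rmult^g f-f\nbar_\Phi$. Hence for every $\eta>0$ the set $U_\eta\coloneqq\{T\in K:\Theta(T)<\eta\}$ is an open neighbourhood of the identity of $K$ for which
\[
a^{-1}(U_\eta)=\{\,g\in G:\nbar\rmult^g f-f\nbar_\Phi<\eta\,\}\subset\{\,g\in G:\nbar\rmult^g f-f\nbar_\Phi<\epsilon\,\}
\]
whenever $\eta\le\epsilon$. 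The boundaries $\partial U_\eta$ are contained in the pairwise disjoint level sets $\Theta^{-1}(\{\eta\})$, so since Haar measure on $K$ is finite, $\partial U_\eta$ has zero Haar measure for all but countably many $\eta\in(0,\epsilon)$. Fixing such an $\eta$, the set $a^{-1}(U_\eta)$ is a Bohr$_0$ set contained in $\{\,g\in G:\nbar\rmult^g f-f\nbar_\Phi<\epsilon\,\}$, which completes the proof.

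The step I expect to be the main obstacle is the construction of $K$. Over $\N$ one takes the closure of the cyclic group generated by the shift, but when $G$ is non-abelian the naive attempt to transport the multiplication of $G$ onto the orbit closure of $f$ fails to be well defined. Passing instead to the isometry group of the orbit closure $Y$ circumvents this, but it makes essential use of the fact that the shift action is isometric --- which is exactly the point at which the hypotheses that $f$ is bounded and $\Phi$ is two-sided \Folner{} enter.
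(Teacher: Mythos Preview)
Your argument is correct. The paper's own proof is considerably shorter because it outsources the key construction: it observes that the function $\phi(g)=\nbar\rmult^g f-f\nbar_\Phi$ has precompact right-shift orbit in the uniform norm (this uses, just as in your proof, that each $\rmult^g$ is a $\nbar\cdot\nbar_\Phi$-isometry, via the reverse triangle inequality) and then cites a standard result on Bohr almost periodic functions to obtain a compact group $K$, a homomorphism $\xi\colon G\to K$, and a continuous $\psi\colon K\to\C$ with $\phi=\psi\circ\xi$. From there the Bohr$_0$ set is read off exactly as you do.

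Your route is genuinely different in execution: rather than invoking the almost-periodic factorization theorem, you build $K$ by hand as the closure of the image of $G$ in the isometry group of the completed orbit $Y$. This is a clean and self-contained substitute for the cited reference, and your observation that the naive ``transport the multiplication to the orbit closure'' approach from \cref{cor_compact_returns_Bohralternative} breaks down over non-abelian $G$ (well-definedness in the \emph{left} factor fails) is exactly the reason one needs either the isometry-group trick or the abstract factorization result. The two approaches buy the same thing; yours avoids an external citation at the cost of a few more lines, and makes transparent where the two-sidedness of $\Phi$ (hence the isometry of $\rmult^g$) is used. Your reduction to bounded $f$ is also appropriate: both proofs tacitly rely on this to ensure $\rmult^g$ is isometric, and as you note the application only ever feeds in bounded $f$.
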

\begin{proof}
Since $f$ is compact along $\Phi$ the function $\phi \colon g \mapsto \nbar \rmult^g f - f \nbar_\Phi$ has the property that the set $\{ \rmult^h \phi : h \in G \}$ has compact closure with respect to the uniform norm on bounded functions $G \to \C$.
By \cite[Remark~9.8]{MR513591} there is a compact topological group $K$ and a continuous homomorphism $\xi \colon G \to K$ and a continuous function $\psi \colon K \to \C$ such that $\phi(g) = \psi(\xi(g))$.
Therefore the set $\{ g \in G : \nbar \rmult^g f - f \nbar_\Phi < \epsilon \}$ contains a Bohr$_0$ set.
\end{proof}

\begin{theorem}
If $h \colon G \to \C$ is bounded and Besicovitch along $\Phi$ then there is a subsequence $\Psi$ of $\Phi$ such that $\nbar \rmult^\ultra{p} h \nbar_\Psi = \nbar h \nbar_\Psi$ for $\Psi$ almost all $\ultra{p}$.
\end{theorem}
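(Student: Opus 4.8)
The plan is to transcribe the proof of \cref{thm_pointwise} almost verbatim, with trigonometric polynomials replaced by matrix coefficients and the (non-invertible) shift on $\N$ replaced by the right shift \emph{action} of $G$. First I would pass to a tempered subsequence $\Psi$ of $\Phi$, which exists by the general amenable-group form of \cite[Proposition 1.4]{MR1865397}, and fix matrix coefficients $a_1,a_2,\dots$ with $\nbar h-a_j\nbar_\Psi\to 0$. Applying the evident $G$-action analogue of \cref{lemma_metric-correcpondence} to the collection $\{h,a_1,a_2,\dots\}$ — using the shift action of $G$ on $\prod_j D_j^G$ given by $(\smult^g y)(j,k)=y(j,kg)$ — produces a compact metric space $X$, continuous maps $\smult^g\colon X\to X$ for $g\in G$ forming a right action, a point $x\in X$ with dense orbit, and functions $H,F_1,F_2,\dots\in\cont(X)$ with $a_j(g)=F_j(\smult^g x)$ and $h(g)=H(\smult^g x)$ for all $g\in G$. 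Here it is important that $G$ is a group, so that each $\smult^g$ is a homeomorphism (with inverse $\smult^{g^{-1}}$), in contrast to the non-invertible shift used over $\N$; consequently $(X,\nu,\smult)$ is a genuine measure-preserving $G$-system below.

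The one genuinely new ingredient is the analogue of \cref{lem_ultraBohrIsBohr}: for every $\ultra{p}\in\beta G$ and every matrix coefficient $a(g)=\bilin{v}{M(g)w}$, the function $\rmult^\ultra{p} a$ is again a matrix coefficient and $\nbar\rmult^\ultra{p} a\nbar_\Psi=\nbar a\nbar_\Psi$. Indeed $W\coloneqq\lim_{g\to\ultra{p}}M(g)$ lies in the compact group $\overline{M(G)}$, so $(\rmult^\ultra{p} a)(g)=\bilin{v}{M(g)Ww}$; since $|a(g)|^2$ is a matrix coefficient of $M\otimes\overline{M}$, the weak$^*$ convergence of $\frac1{|\Psi_N|}\sum_{g\in\Psi_N}\delta_{M(g)}$ to Haar measure $\haar$ on $\overline{M(G)}$ established in the proof of \cref{thm_besSplittingGroup} gives $\nbar a\nbar_\Psi^2=\int_{\overline{M(G)}}|\bilin{v}{Tw}|^2\d\haar(T)$, and right-invariance of $\haar$ yields $\nbar\rmult^\ultra{p} a\nbar_\Psi=\nbar a\nbar_\Psi$. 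As over $\N$, set $\smult^\ultra{p} x\coloneqq\lim_{g\to\ultra{p}}\smult^g x$ and $\pi(\ultra{p})\coloneqq\smult^\ultra{p} x$, so that $\pi\colon\beta G\to X$ is continuous and surjective, $(\rmult^\ultra{p} a_j)(g)=F_j(\smult^g\pi(\ultra{p}))$, and $(\rmult^\ultra{p} h)(g)=H(\smult^g\pi(\ultra{p}))$. Combining the lemma above with surjectivity of $\pi$ gives $\lim_{N\to\infty}\frac1{|\Psi_N|}\sum_{g\in\Psi_N}|F_j(\smult^g y)|^2=\nbar a_j\nbar_\Psi^2$ for every $y\in X$ and every $j$.

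From here the argument is that of \cref{thm_pointwise}. Fix $\mu\in\M\Psi$ and put $\nu\coloneqq\pi_*\mu$; since $\pi_*\mu_N$ is the empirical measure of the orbit of $x$ along $\Psi_N$ and $\Psi$ is a two-sided \Folner{} sequence, $\nu$ is $\smult^g$-invariant for every $g\in G$ — this is where the left-\Folner{} part of \eqref{eq_folner} enters — and, $X$ being metrizable, there is a subsequence $\Xi$ of $\Psi$ with $\pi_*\mu_{\Xi_N}\to\nu$ weak$^*$. The mean ergodic theorem for unitary representations of amenable groups along \Folner{} sequences, together with the everywhere-limit just established, gives $\int_X|F_j|^2\d\nu=\nbar a_j\nbar_\Psi^2$ and identifies the orthogonal projection of $|F_j|^2$ onto the $\smult$-invariant functions in $\lp{2}(X,\nu)$ with the constant $\nbar a_j\nbar_\Psi^2$. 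Computing $\nbar F_j-H\nbar_\nu^2=\lim_{N\to\infty}\frac1{|\Xi_N|}\sum_{g\in\Xi_N}|a_j(g)-h(g)|^2=\nbar a_j-h\nbar_\Psi^2\to 0$ (the averages converge because $a_j-h$ is Besicovitch almost periodic along $\Psi$) and letting $j\to\infty$ shows that the projection of $|H|^2$ onto the invariant functions equals the constant $\nbar h\nbar_\Psi^2$.

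Finally, Lindenstrauss's pointwise ergodic theorem for the tempered \Folner{} sequence $\Psi$ — the general amenable-group form of \cite[Theorem 1.2]{MR1865397} — applied to $(X,\nu,\smult)$ shows that $\lim_{N\to\infty}\frac1{|\Psi_N|}\sum_{g\in\Psi_N}|H(\smult^g y)|^2=\nbar h\nbar_\Psi^2$ for $\nu$-almost every $y$. Pushing this back through $\pi$ using $(\rmult^\ultra{p} h)(g)=H(\smult^g\pi(\ultra{p}))$ and $\nu=\pi_*\mu$ gives $\nbar\rmult^\ultra{p} h\nbar_\Psi=\nbar h\nbar_\Psi$ for $\mu$-almost every $\ultra{p}$, and since $\mu\in\M\Psi$ was arbitrary this proves the theorem. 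I expect the main obstacle to be the matrix-coefficient ultrafilter-shift lemma of the second paragraph — where one must identify $\nbar a\nbar_\Psi$ with a Haar integral and exploit right-invariance — together with the bookkeeping of which \Folner{} subsequence each seminorm and each ergodic theorem is taken along; everything else is a routine adaptation of \cref{sec_pointwise}.
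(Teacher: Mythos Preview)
Your proposal is correct and follows the same approach as the paper. The paper's own proof is even more terse---it simply states that the argument is ``unchanged from the $\N$ case'' and supplies only the matrix-coefficient analogue of \cref{lem_ultraBohrIsBohr}, which you have identified correctly as the one new ingredient and proved in essentially the same way (via the Haar-integral identification of $\nbar a\nbar_\Psi^2$ and right-invariance).
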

\begin{proof}
The proof is unchanged from the $\N$ case, except that we need to verify $\nbar \rmult^\ultra{p} a \nbar_\Psi = \nbar a \nbar_\Psi$ for all ultrafilters $\ultra{p}$, all two-sided \Folner{} sequences $\Psi$ and all matrix coefficients $a \colon G \to \C$.
Fix $A \colon G \to \umult(n)$ and $v,w \in \C^n$ with $a(g) = \bilin{v}{A(g)w}$ for all $g \in G$.
Let $K$ be the closure of the image of $A$ in $\umult(n)$ and let $\haar$ be its normalized Haar measure.
Writing $\psi(k) = \bilin{v}{kw}$ for all $k \in K$ we have, as in the proof of \cref{thm_besSplittingGroup}, that
\[
\nbar a \nbar_\Psi^2 = \int |\psi|^2 \d \haar
\]
for all two-sided \Folner{} sequences $\Psi$.
Since
\[
(\rmult^\ultra{p} a)(h)
=
\lim_{g \to \ultra{p}} \bilin{v}{A(h) A(g)w}
=
\bilin{v}{A(h) \ell w}
\]
for some $\ell \in K$ we have
\[
\nbar \rmult^\ultra{p} a \nbar_\Psi^2
=
\int |\psi(k\ell)|^2 \d \haar(k)
=
\int |\psi(k)|^2 \d \haar(k)
=
\nbar a \nbar_\Psi^2
\]
by invariance of Haar measure as desired.
\end{proof}

The last theorem -- a version of \cref{thm_beig} for countable, amenable groups -- is proved exactly as in \cref{sec_beig}.

\begin{theorem}
Suppose $f \colon G \to \R$ is a bounded function that is orthogonal to $\bes(G,\Psi)$.
Then for every non-empty Bohr set $B \subset G$ and every bounded function $h \colon G \to \R$ the set
\[
\left\{ \ultra{p} \in \ess(\Phi) : B \in \ultra{p} \textup{ and } \limsup_{N \to \infty} \frac{1}{|\Psi_N|} \sum_{g \in \Psi_N} h(g) \, (\rmult^\ultra{p} f)(g) \ge 0 \right\}
\]
has positive measure with respect to every $\mu \in \M{\Psi}$.
\end{theorem}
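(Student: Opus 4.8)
The plan is to transcribe the proof of \cref{thm_beig} almost verbatim, paying attention to the two places where non-commutativity of $G$ enters. First fix $\mu\in\M{\Psi}$. Since $B$ is a non-empty Bohr set, the group version of \cref{lem_bohr_sets_have_positive_density} gives that $\dens_\Psi(B)$ exists and is positive, so $\mu\big(\cl(B)\big)=\dens_\Psi(B)>0$, and one may form the probability measure $\mu_B(\Omega)\coloneqq\mu\big(\Omega\cap\cl(B)\big)/\mu\big(\cl(B)\big)$ on $\beta G$. For each $g\in G$ the map $\ultra{p}\mapsto(\rmult^\ultra{p}f)(g)=\lim_{g'\to\ultra{p}}f(gg')$ is the continuous extension to $\beta G$ of the bounded function $g'\mapsto f(gg')$, so the finite averages $\phi_N\colon\ultra{p}\mapsto\frac1{|\Psi_N|}\sum_{g\in\Psi_N}h(g)(\rmult^\ultra{p}f)(g)$ are continuous, $\limsup_N\phi_N$ is Borel, and intersecting the superlevel set $S_0\coloneqq\{\ultra{p}:\limsup_N\phi_N(\ultra{p})\ge0\}$ with the clopen set $\cl(B)$ (recall $B\in\ultra{p}\iff\ultra{p}\in\cl(B)$) and the closed set $\ess(\Phi)$ shows that the set in the statement is measurable. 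Since $\ess(\Psi)\subseteq\ess(\Phi)$ and, by the group analogue of \cref{lemma_support}, $\mu$-almost every ultrafilter lies in $\ess(\Psi)$, it suffices to show that $S_0\cap\cl(B)$ has positive $\mu$-measure, equivalently $\mu_B(S_0)>0$. As $h$ and $f$ are bounded, the $\phi_N$ are uniformly bounded, so the reverse Fatou lemma gives $\int_{\beta G}\limsup_N\phi_N\,\d\mu_B\ge\limsup_N\int_{\beta G}\phi_N\,\d\mu_B$; were $\mu_B(S_0)=0$ the left side would be strictly negative, so it is enough to show $\int_{\beta G}\phi_N\,\d\mu_B=0$ for all $N$, and since this integral equals $\frac1{|\Psi_N|}\sum_{g\in\Psi_N}h(g)\int_{\beta G}(\rmult^\ultra{p}f)(g)\,\d\mu_B(\ultra{p})$, it is in fact enough to show
\[
\int_{\beta G}(\rmult^\ultra{p}f)(g)\,\d\mu_B(\ultra{p})=0\qquad\text{for every }g\in G.
\]

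The heart of the argument is this identity. Writing $\int_{\beta G}(\rmult^\ultra{p}f)(g)\,\d\mu_B(\ultra{p})=\mu(\cl(B))^{-1}\int_{\beta G}1_{\cl(B)}(\ultra{p})(\rmult^\ultra{p}f)(g)\,\d\mu(\ultra{p})$ and testing the weak$^*$ accumulation point $\mu$ of the averages $\mu_N=\frac1{|\Psi_N|}\sum_{m\in\Psi_N}\delta_m$ against the continuous function $\ultra{p}\mapsto1_{\cl(B)}(\ultra{p})(\rmult^\ultra{p}f)(g)$, whose value at a principal ultrafilter $\ultra{p}_m$ is $1_B(m)f(gm)$, one obtains
\[
\Big|\int_{\beta G}(\rmult^\ultra{p}f)(g)\,\d\mu_B(\ultra{p})\Big|\le\limsup_{N\to\infty}\Big|\frac1{|\Psi_N|}\sum_{m\in\Psi_N}1_B(m)f(gm)\Big|.
\]
Substituting $n=gm$, so that $n$ runs over $g\Psi_N$ and $1_B(m)=1_{gB}(gm)$, the right-hand average equals $\frac1{|\Psi_N|}\sum_{n\in g\Psi_N}1_{gB}(n)f(n)$; the left half of the two-sided \Folner{} condition \eqref{eq_folner} gives $|g\Psi_N\triangle\Psi_N|=o(|\Psi_N|)$, and since $f$ is bounded this has the same $\limsup$ as $\frac1{|\Psi_N|}\sum_{n\in\Psi_N}1_{gB}(n)f(n)$. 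Finally the left translate $gB=a^{-1}\big(a(g)U\big)$ of a Bohr set $B=a^{-1}(U)$ is itself a Bohr set, since left translation is a homeomorphism of the compact group $K$ and, $K$ being unimodular, carries the Haar-null boundary $\partial U$ to the Haar-null boundary $a(g)\partial U$; hence the group version of \cref{lem_bohr_sets_have_positive_density} places $1_{gB}$ in $\bes(G,\Psi)$. As $f$ is real-valued and orthogonal to $\bes(G,\Psi)$, this yields $\bilin{f}{1_{gB}}_\Psi=0$, so the $\limsup$ above vanishes and the identity follows.

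Everything but the substitution step is a formal copy of the proof of \cref{thm_beig}, and I anticipate no difficulty there. The one genuinely new — though minor — point is that one must use the \emph{left}-invariance part of the two-sided \Folner{} property, rather than the right-invariance that governs the shifts $\rmult^g$, to replace $g\Psi_N$ by $\Psi_N$, together with the observation that left translates of Bohr sets remain Bohr sets in the possibly non-abelian group $K$; this rests on unimodularity of compact groups. No idea beyond the $\N$ case is needed.
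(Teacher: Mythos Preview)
Your proposal is correct and matches the paper's approach: the paper gives no separate proof, stating only that it is ``proved exactly as in \cref{sec_beig}.'' You have correctly identified and handled the one genuine adaptation the non-commutative setting requires---using the left half of the two-sided \Folner{} condition to pass from $g\Psi_N$ to $\Psi_N$, and noting that the left translate $gB=a^{-1}(a(g)U)$ remains a Bohr set (left-invariance of Haar already suffices for $\haar(a(g)\partial U)=0$; unimodularity is not needed). Your treatment of the stray $\ess(\Phi)$ in the statement via $\ess(\Psi)\subseteq\ess(\Phi)$ is fine in the context where the theorem is applied, though this is almost certainly a typo for $\ess(\Psi)$.
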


\section{Open questions}
\label{sec_questions}

%In this short section we state some natural related questions to which we do not know the answer.
Two natural questions, which arise from questions asked by \Erdos{} in \cite[Section 6]{Erdos77} and \cite[p.\ 105]{Erdos80}, are as follows.

\begin{question}
\label{q_no}
Does every set $A \subset \N$ satisfying
\[
\limsup_{N \to \infty} \frac{|A \cap \{1,\dots,N\}|}{N} > 0
\]
contain a set of the form $t + B + B$ where $t \in \N$ and $B\subset\N$ is infinite?
\end{question}

%We begin with the following strengthening of \cref{conjecture_erdossumset} made by \Erdos{} in \cite[Section 6]{Erdos77} and \cite[105]{Erdos80}.

\begin{question}
\label{q_yes}
Does every set $A \subset \N$ satisfying
\[
\limsup_{N \to \infty} \frac{|A \cap \{1,\dots,N\}|}{N} > 0
\]
contain a set of the form $t + (B \oplus B) $ where $t \in \N$, $B\subset\N$ is infinite, and $B\oplus B \coloneqq \{b_1+b_2: b_1,b_2\in B,~ b_1\neq b_2\}$?
\end{question}

It was pointed out to us by Steven Leth that there exists a set of positive upper density that does not contain any set of the form $B+B+t$ for $t\in\N$ and infinite $B\subset\N$. In particular, the answer to \cref{q_no} is negative. An example of such a set is $A=\bigcup_{n=1}^\infty \left[4^n, \tfrac{3 }{2} 4^n \right]$.

We do not know the answer to \cref{q_yes}.
An ultrafilter reformulation of this question was obtained by Hindman in \cite[Section 11]{MR0564927}. We also refer the reader to another paper of Hindman \cite{MR0677568} which treats this question.
Note that an affirmative answer to \cref{q_yes} implies \cref{conjecture_erdossumset}.

\begin{question}
Suppose $A\subset\N$ has positive upper density. Do there exist infinite sets $B,C,D\subset\N$ such that the sum $B+C+D$ is contained in $A$?
Is it true that for every $k\in\N$ there exist infinite sets $B_1,\dots,B_k\subset\N$ such that $B_1+\cdots+B_k\subset A$?
\end{question}

%A corollary of \cref{thm_erdosNatural} is that any set $A\subset\N$ with positive upper density contains a sum $B+C$ for infinite sets $B$ and $C$ where $B$ consists solely of even numbers.
%In fact it is easy to deduce from \cref{thm_erdosNatural} that one can choose $B$ as a subset of any given syndetic set, where a set $S\subset\N$ is called \define{syndetic} if there exist $k\in\N$ and $m_1,\ldots,m_k\in\N$ such that $(S-m_1)\cup\ldots\cup(S-m_k)=\N$.
%We ask what more can be imposed on the set $B$, taking inspiration in the polynomial \Szemeredi{} theorem of Bergelson and Leibman~\cite{MR1325795}.

%\begin{question}
%Let $A,E\subset\N$ have positive upper density.
%Can one find infinite sets $B\subset E$ and $C\subset\N$ such that $B+C\subset A$?
%Can one find infinite sets $B,C\subset\N$ such that $B$ consists only of perfect squares and $B+C\subset A$?
%\end{question}

The Green--Tao theorem on arithmetic progressions~\cite{MR2415379} gives a version of \Szemeredi{}'s theorem in the primes.
It is natural to ask (cf.\ \cite{MR1091350}) whether a version of the \Erdos{} sumset conjecture holds for the primes.

\begin{question}
\label{question_primes}
Let $\mathbb{P}$ denote the set of prime numbers. Are there infinite sets $B,C\subset\N$ such that $B+C\subset \mathbb{P}$?
\end{question}

A positive answer to \cref{question_primes}, conditional on the Hardy-Littlewood prime tuples conjecture, was obtained by Granville~\cite{MR1091350}.
(The authors thank Karl Mahlburg for this reference.)

Lastly we pose a more open-ended question which was asked by Jon Chaika.
\begin{question}Is there a version of \cref{thm_erdosNatural} over $\R$ or more general locally compact topological groups?
\end{question}

\printbibliography

\end{document}